\newtheorem{theorem}{Theorem}[section]
\newtheorem{lemma}[theorem]{Lemma}
\newtheorem{corollary}[theorem]{Corollary}
\newtheorem{proposition}[theorem]{Proposition}
\theoremstyle{definition}
\theoremstyle{remark}
\numberwithin{equation}{section}
\newcommand{\aqprod}[3]{\left(#1;#2\right)_{#3}}
\newcommand{\Floor}[1]{\left\lfloor #1 \right\rfloor}
\newcommand{\Fractional}[1]{\left\{#1\right\} }
\newcommand{\tmu}{\widetilde{\mu}}
\newcommand{\wt}[1]{\widetilde{#1}}
\newcommand{\SLTwo}{\mbox{SL}_2(\mathbb{Z})}
\newcommand{\sgn}[1]{{\rm sgn}\! \left(#1\right)}
\newcommand{\IM}[1]{{\rm Im}\! \left(#1\right)}
\newcommand{\RE}[1]{{\rm Re}\! \left(#1\right)}
\newcommand{\boxWithLabel}[4]{
	\draw (#2,#3) -- (#2-#1,#3) -- (#2-#1,#3-#1) -- (#2,#3-#1) -- (#2,#3);
	\node (a) at (#2-0.5*#1,#3-0.5*#1) {#4};
}
\author{CHRIS JENNINGS-SHAFFER}
\address{Mathematical Institute, University of Cologne, Weyertal 86-90, 50931 Cologne, Germany}
\email{cjenning@math.uni-koeln.de}
\author{Dillon Reihill}
\address{Mathematical Institute, University of Cologne, Weyertal 86-90, 50931 Cologne, Germany}
\email{dillon.reihill@ucdconnect.ie}
\keywords{integer partitions, partition ranks, rank differences, asymptotics, circle method,
rank inequalities, $M_2$-rank, harmonic Maass forms, modular forms, mock modular forms,
mock theta functions}
\subjclass[2010]{Primary 11P82, 11P81}
\title{Asymptotic Formulas related to the $M_2$-rank of Partitions without Repeated Odd Parts}
\begin{document}

\allowdisplaybreaks

\begin{abstract}
We give asymptotic expansions for the moments of the $M_2$-rank generating function
and for the $M_2$-rank generating function at roots of unity.
For this we apply the Hardy-Ramanujan circle method extended to
mock modular forms. Our formulas for the $M_2$-rank at roots of unity lead
to asymptotics for certain combinations of $N2(r,m,n)$ (the number of 
partitions without repeated odd parts of $n$ with $M_2$-rank congruent
to $r$ modulo $m$). This allows us to deduce inequalities among certain
combinations of $N2(r,m,n)$. In particular, we resolve a few conjectured 
inequalities of Mao.
\end{abstract}

\maketitle

\allowdisplaybreaks


\section{Introduction}

In this article we study a certain statistic defined on integer partitions.
In particular, we give asymptotics for the moments of the $M_2$-rank 
generating function and 
asymptotics for the $M_2$-rank generating function evaluated at roots of
unity. 
We recall that a partition of a non-negative integer $n$ is a non-increasing 
sequence of positive integers that sum to $n$. Rather than studying all
partitions, our attention will be focused on partitions without 
repeated odd parts. However, to describe our results and how they fit into 
the current theory, it is best to begin our discussion with ordinary
partitions. As an example, the partitions of $5$ are 
$5$, $4+1$, $3+2$, $3+1+1$, $2+2+1$, $2+1+1+1$, and $1+1+1+1+1$,
while the partitions of $5$ without repeated odd parts are
$5$, $4+1$, $3+2$, and $2+2+1$. We let $p(n)$ denote the number of partitions
of $n$ and let $p_2(n)$ denote the number of partitions of $n$ without
repeated odd parts.
 
A classic statistic defined on integer partitions is Dyson's rank of a partition
\cite{Dyson1}. The rank of a partition is defined as the largest part minus the
number of parts. With the partitions of $5$ listed above, the respective
ranks are $4$, $2$, $1$, $0$, $-1$, $-2$, and $-4$. We let $N(m,n)$ denote the
number of partitions of $n$ with rank $m$. Of course, the rank was
defined with a purpose in mind. Two of Ramanujan's three famous congruences for
$p(n)$ are $p(5n+4)\equiv0\pmod{5}$ and $p(7n+5)\equiv0\pmod{7}$. With 
$N(r,m,n)$ denoting the number of partitions
of $n$ with rank congruent to $r$ modulo $m$, Dyson conjectured that
$N(r,5,5n+4)=\frac{p(5n+4)}{5}$ and $N(r,7,7n+5)=\frac{p(7n+5)}{7}$. That is to
say, grouping the partitions of $5n+4$ according to their rank modulo $5$ gives
$5$ equinumerous sets and grouping the partitions of $7n+5$ according to their 
rank modulo $7$ gives $7$ equinumerous sets. One can verify this is indeed the 
case with the partitions of $5$ listed above. This conjecture was resolved by
Atkin and Swinnerton-Dyer by non-trivial means in \cite{AtkinSwinnertonDyer1}.

There is much interest in the rank past these two congruences. Before continuing,
we should give names to our generating functions. We let
\begin{align*}
P(q) &:= \sum_{n=0}^\infty p(n)q^n,&
P2(q) &:= \sum_{n=0}^\infty p_2(n)q^n,&
R(\zeta;q) &:= \sum_{n=0}^\infty\sum_{m=-\infty}^\infty N(m,n)\zeta^mq^n.
\end{align*}
A key issue is that while $P(q)$ (and $P2(q)$) is essentially a weight 
$-\frac{1}{2}$ modular form, the function $R(\zeta;q)$ is not. Many of Ramanujan's
odd order mock theta functions can be expressed in terms of $R(\zeta;q)$
(see \cite{GordonMcintosh1} where $R(\zeta;q)$ is $h_3(\zeta,q)$ and
\cite{HickersonMortenson1} where $R(\zeta;q)$ is $g(\zeta,q)$ up to minor 
factors). For this reason $R(\zeta;q)$ is called a universal mock theta function.
While once illusive, the automorphic properties of $R(\zeta;q)$ are now well 
understood. The rank generating function is 
essentially a mock Jacobi form and, when $\zeta$ is specialized to a
root of unity times a fractional power of $q$, $R(\zeta;q)$ is a mock modular form
(in that it is the so-called holomorphic part of a harmonic Maass form). The
definitions of these terms are somewhat involved, so we direct the reader to 
\cite{BringmannFolsomOnoRolen1, BringmannOno1, Zagier1}.

There is another rank function, the $M_2$-rank, and it fits into a similar 
framework. For a partition $\pi$, we let $\ell(\pi)$ denote the largest part 
of $\pi$ and $\#(\pi)$ denote the number of parts of $\pi$. The $M_2$-rank of $\pi$ is
defined as $\left\lceil\ell(\pi)/2\right\rceil-\#(\pi)$. 
To see this is a reasonable
definition from the standpoint of simple combinatorics, one should view partitions
in terms of their $2$-modular Ferrers diagram. For an ordinary Ferrers diagram
we take a partition $\pi=\pi_1+\pi_2+\dotsb+\pi_m$ and first draw a row of 
$\pi_1$ boxes, then $\pi_2$ boxes below that, and so on, ending with a row of
$\pi_m$ boxes. In this way each box has weight $1$. For a $2$-modular graph,
we instead write each part $\pi_i$ as a sequence of $2$'s possibly followed
by a single $1$. We then use this $2$-modular representation of each $\pi_i$
to draw and label the boxes. The $2$-modular Ferrers diagrams are given in Figure 1
for the partitions of $5$ without repeated odd parts.
While the rank of a partition is the length of the first row
minus the number of rows in the ordinary Ferrers diagram, the $M_2$-rank is
the length of the first row minus the number of rows in the 
$2$-modular Ferrers diagram. It turns out it is natural to consider the 
$M_2$-rank just for partitions without repeated odd parts. As a quick justification
for this, we notice that conjugating a Ferrers diagram (flipping the picture along
the main diagonal) of an ordinary partition results in another Ferrers diagram.
However, to conjugate a $2$-modular Ferrers diagram and get another $2$-modular
Ferrers diagram, the underlying partition must not have any repeated odd parts.
We let $N2(m,n)$ denote the number of partitions without repeated odd parts
of $n$ with $M_2$-rank $m$.

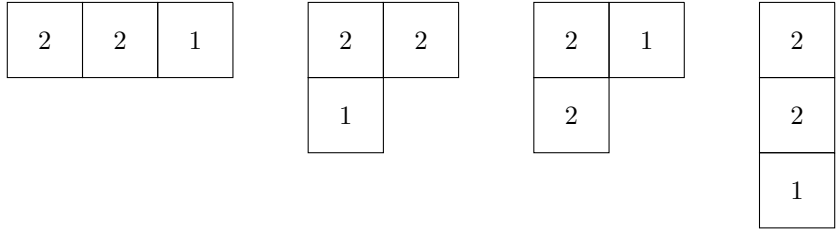
\begin{figure}\label{Figure 1}\caption{2-Modular Diagrams of $5$, $4+1$, $3+2$, and $2+2+1$}
\begin{tikzpicture}[scale=1]
\node at (0,0) {};	

\boxWithLabel{1}{0}{0}{2}
\boxWithLabel{1}{1}{0}{2}
\boxWithLabel{1}{2}{0}{1}

\boxWithLabel{1}{4}{0}{2}
\boxWithLabel{1}{5}{0}{2}
\boxWithLabel{1}{4}{-1}{1}

\boxWithLabel{1}{7}{0}{2}
\boxWithLabel{1}{8}{0}{1}
\boxWithLabel{1}{7}{-1}{2}
		
\boxWithLabel{1}{10}{0}{2}
\boxWithLabel{1}{10}{-1}{2}
\boxWithLabel{1}{10}{-2}{1}
\end{tikzpicture}
\end{figure}

The $M_2$-rank, as defined above, was introduced by Berkovich and Garvan in
\cite{BerkovichGarvan1}. The $M_2$-rank enjoys many of the same properties as
the ordinary rank. While it is not used to establish congruences for $p_2(n)$,
it is used in proving congruences for certain other partition functions
\cite{GarvanJenningsShaffer1}. It turns out the generating function for 
$N2(m,n)$, which we denote by $R2(\zeta;q)$, is also a universal mock theta function as many
even order mock theta functions can be expressed in terms of $R2(\zeta;q)$
(see \cite{GordonMcintosh1} where $R2(\zeta;q)$ is $h_2(\zeta,-q)$ and
\cite{HickersonMortenson1} where $R2(\zeta;q)$ is $k(\zeta^{\frac12},-q)$ up to minor
factors). The function $R2(\zeta;q)$ is in the same class of automorphic functions
as $R(\zeta;q)$. Furthermore, both the generating functions of the rank and the 
$M_2$-rank can be found among the identities in Ramanujan's lost notebook (for 
$R(\zeta;q)$ see \cite[Chapter 2]{AndrewsBerndt2} and for $R2(\zeta;q)$ see 
\cite[Chapter 12]{AndrewsBerndt1}).

Before finally explaining the contributions of this article, we must speak a 
bit about asymptotics. Hardy and Ramanujan's asymptotic for the partition 
function is $p(n)\sim \frac{1}{4n}\exp\left(\pi\sqrt{\frac{2n}{3}}\right)$ as 
$n\rightarrow\infty$, and their asymptotic expansion \cite{HardyRamanujan1}  is
\begin{align*}
p(n) = \frac{1}{\sqrt{2}\pi}\sum_{k=1}^{\lfloor\sqrt{n}\rfloor} \mathcal{A}_k(n) k^{\frac12}
\frac{d}{dn}\left(
	\frac{ \exp\left(\frac{\pi}{k}\sqrt{\frac{2}{3}\left( n-\frac{1}{24} \right)}\right) }
	{2\sqrt{n-\frac{1}{24}}}
\right)
+O\left(n^{-\frac14}\right),
\end{align*}
where 
\begin{align*}
\mathcal{A}_k(n)&:=\sum_{\substack{0\le h<k,\\(h,k)=1}}\omega_{h,k}\exp\left(-\tfrac{2\pi inh}{k}\right),
\end{align*}
and $\omega_{h,k}$ are certain $24$th roots of unity. Rademacher \cite{Rademacher1},
improving upon Hardy and Ramanujan's circle method, found that in fact 
\begin{align*}
p(n)&=\frac{1}{\sqrt{2}\pi}
\sum_{k=1}^\infty \mathcal{A}_k(n)k^{\frac{1}{2}}
\frac{d}{dn}
\left(
	\frac{\sin\left(\frac{\pi}{k}\sqrt{\frac{2}{3}\left( n-\frac{1}{24} \right)}\right)}
	{\sqrt{n-\frac{1}{24}}}
\right).
\end{align*}

As students of Rademacher, Dragonette \cite{Dragonette1} and Andrews 
\cite{Andrews2} established related asymptotics for the coefficients of $R(-1;q)$.
Both gave their estimates as exponential sums similar to Hardy and Ramanujan's 
formula for $p(n)$. They conjectured that a Rademacher type formula also existed. 
In particular, if $\alpha(n)$ is the coefficient of $q^n$ in $R(-1;q)$, then
\begin{align*}
\alpha(n)
&=
	\sum_{k=1}^\infty 
	\frac{ (-1)^{\lfloor\frac{k+1}{2}\rfloor} \mathcal{A}_{2k}\left(n-\frac{k(1+(-1)^k)}{4}\right)
		\exp\left(\frac{\pi}{k}\sqrt{\frac{n}{6}-\frac{1}{144}}\right)	}
	{\sqrt{k\left(4n-\frac{1}{6}\right)}}
.
\end{align*}
In \cite{BringmannOno2}, Bringmann and Ono proved this conjecture.

For a modern exposition of the circle method applied to the partition function,
one should consult \cite[Chapter 5]{Andrews1}.
Bringmann \cite{Bringmann1} demonstrated that one can extend this method to 
mock modular forms. In particular, Bringmann gave a formula for the coefficients of 
$R(e^{\frac{2\pi ia}{c}};q)$ in a form similar to that of Hardy and Ramanujan's
formula for $p(n)$. Going further Bringmann, Mahlburg, and Rhoades
\cite{BringmannMahlburgRhoades1} found it possible to also deduce such formulas
for the moments of $R(\zeta;q)$, which are defined as
$N_\ell(n):=\sum_{m=-\infty}^\infty m^\ell N(m,n)$.

This is the treatment we give $R2(\zeta;q)$ and is the main content of our article. 
Specifically, we first consider the $M_2$-rank moments $N2_\ell(n)$.
We deduce an asymptotic expansion for $N2_\ell(n)$ of a form similar to that 
of Hardy and Ramanujan for 
$p(n)$.  This formula is stated in Theorem \ref{TheoremRankMomentExpansions} and
the resulting asymptotic value of $N2_\ell(n)$ is \eqref{EqN2RankMomentAsymptoticValue}.
Second, we determine an expansion for $A\left(\frac{a}{c};n\right)$,
which are the coefficients of $R2(e^{\frac{2\pi ia}{c}};q)$. These formulas
are stated in Theorem \ref{TheoremAsymptoticForM2Rank}. 
Third, using Theorem \ref{TheoremAsymptoticForM2Rank} we determine asymptotic 
values for certain combinations
of $N2(r,m,n)$ (the number of partitions of $n$ without repeated odd parts
and $M_2$-rank congruent to $r$ modulo $m$). Using these asymptotic
values, we deduce a few inequalities among certain $N2(r,m,n)$. With these
inequalities are included the remaining conjectured inequalities of Mao from 
\cite{Mao1}. This is contained in Section 7. In the next section we give
the necessary definitions to state our results, introduce the various
functions relevant to our study, and end with an outline of the rest of the
article.

In particular, the conjectured inequalities of Mao that we prove are as
follows. We note that while our proofs of these inequalities are via
asymptotics, we do indeed prove that the inequalities hold for all
stated values of $n$.

\begin{theorem} Let $N2(r,m,n)$ denote the number of partitions without
repeated odd parts of $n$ with $M2$-rank congruent to $r$ modulo $m$.
Then the following inequalities hold,
\begin{align*}
N2(0,6,n)+N2(1,6,n)&> N2(2,6,n)+N2(3,6,n) &\mbox{for }n\ge0
,\\
N2(1,10,n)+N2(2,10,n)&> N2(3,10,n)+N2(4,10,n) &\mbox{for }n\ge3.
\end{align*}
\end{theorem}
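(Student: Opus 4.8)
The plan is to reduce each inequality to an asymptotic statement plus a finite check. Writing $D(r,s;m,n)$ for the relevant difference (e.g. $N2(0,6,n)+N2(1,6,n)-N2(2,6,n)-N2(3,6,n)$), one expresses $D$ as a linear combination of the ``rank modulo $m$'' functions. The standard device is that for an $m$-th root of unity $\zeta_m$, the generating function $\sum_n N2(r,m,n)q^n$ equals $\frac{1}{m}\sum_{j=0}^{m-1}\zeta_m^{-rj}R2(\zeta_m^j;q)$. Hence any fixed combination $\sum_r c_r N2(r,m,n)$ has generating function $\frac1m\sum_{j=0}^{m-1}\big(\sum_r c_r\zeta_m^{-rj}\big)R2(\zeta_m^j;q)$. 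The coefficient of $R2(1;q)=P2(q)$ will vanish for these particular combinations (both sides have the same total count, as one checks), so $D(r,s;m,n)$ is a finite linear combination of the coefficients $A(\tfrac{a}{c};n)$ of $R2(e^{2\pi i a/c};q)$ at the nontrivial roots of unity $\zeta_6^j$ (resp.\ $\zeta_{10}^j$).

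Next I would insert the asymptotic expansion for $A(\tfrac ac;n)$ furnished by Theorem~\ref{TheoremAsymptoticForM2Rank}. The dominant exponential growth of $A(\tfrac ac;n)$ comes from the principal part of the relevant harmonic Maass form, and its size is governed by a quantity depending on $c$ (and on whether $c$ is even or odd, from the level structure of $R2$); the main term is of the shape $(\text{const})\cdot n^{-\kappa}\exp\!\big(\tfrac{\pi}{c}\sqrt{\tfrac{n}{3}-\cdots}\big)$ up to the Kloosterman-type coefficient sums. Summing the contributions over the roots of unity appearing in $D$, the terms with the largest exponential rate dominate; I would check that after combining, the leading term of $D(r,s;m,n)$ is a \emph{positive} constant times that dominant exponential (this is where the specific choice of $\{r\}$ versus $\{s\}$ matters — Mao chose them precisely so that the leading coefficient does not cancel and has the right sign). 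Once $D(r,s;m,n)\sim (\text{positive})\cdot n^{-\kappa}e^{c_0\sqrt n}$ with an explicit error term $O(n^{-\kappa'}e^{c_1\sqrt n})$ with $c_1<c_0$, there is an effective $n_0$ beyond which $D>0$.

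The remaining work is the finite range $0\le n\le n_0$ (resp.\ $3\le n\le n_0$), which is handled by direct computation: generate $p_2(n)$ and the $M_2$-rank counts $N2(r,m,n)$ from the product/series side of $R2(\zeta;q)$ up to $q^{n_0}$ and verify the inequalities term by term. To make the effective bound $n_0$ small enough to be computationally feasible, I would be careful to track explicit constants in the error term of Theorem~\ref{TheoremAsymptoticForM2Rank} — bounding the tail of the Rademacher-type series and the Kloosterman sums by the standard $|\mathcal A_k(n)|\le k$ (or a divisor bound) — rather than using a soft $O$. For $n\ge3$ in the second inequality, I would note separately why $n=0,1,2$ must be excluded (the small-$n$ values simply fail), so the claim is sharp there.

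The main obstacle I anticipate is \emph{effectivity}: turning Theorem~\ref{TheoremAsymptoticForM2Rank} into an inequality valid for all $n\ge n_0$ with a \emph{numerically explicit} and not-too-large $n_0$. This requires careful bookkeeping of every implied constant — in the Bessel-function main terms, in the error from truncating the series, and in the contributions of the several nontrivial roots of unity $\zeta_6^j$ and $\zeta_{10}^j$ simultaneously — and then confirming that the residual gap between the dominant growth rate and the next-largest rate is wide enough that $n_0$ lands in range of a routine finite check. A secondary subtlety is making sure the coefficient of $P2(q)$ genuinely drops out and that no two distinct roots of unity contribute equal-rate terms that could conspire to cancel the leading coefficient; both are finite symbolic verifications but must be done explicitly for each of the two inequalities.
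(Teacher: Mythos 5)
Your proposal is correct and follows essentially the same route as the paper: express the rank-difference combinations in terms of the coefficients $A\!\left(\tfrac{a}{c};n\right)$ at sixth and tenth roots of unity (the paper uses the symmetry $N2(r,m,n)=N2(m-r,m,n)$ to reduce directly to $A\!\left(\tfrac16;n\right)>0$ and, up to a positive factor, $A\!\left(\tfrac1{10};n\right)-A\!\left(\tfrac3{10};n\right)>0$), then apply Theorem \ref{TheoremAsymptoticForM2Rank} with its explicit error bound \eqref{EqFinalBoundOnErrorTerm} to get effective thresholds ($n\ge 3823$ and $n\ge 1190$), and finish by checking the initial values by computer. The effectivity concern you flag is exactly what the paper's explicit constants are designed to handle, so no new ideas are needed beyond what you outline.
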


\section*{Acknowledgments}
The authors thank Kathrin Bringmann for suggesting this project and for
useful comments and discussion.

\section{Preliminaries and Statement of Main Results}

As discussed in the introduction, we let
$N2(m,n)$ denote the number of partitions of $n$ without repeated odd parts and
with $M_2$-rank equal to $m$. The generating function for $N2(m,n)$ is 
$R2(\zeta;q)$ and from \cite{LovejoyOsburn1} we have that
\begin{align}\label{M2RankAsGeneralizedLambert}
R2(\zeta;q)
&=
\sum_{n=0}^\infty
\frac{q^{n^2}\aqprod{-q}{q^2}{n}}{\aqprod{\zeta q^2,\zeta^{-1}q^2}{q^2}{n}}
=
\frac{\aqprod{-q}{q^2}{\infty}}{\aqprod{q^2}{q^2}{\infty}}
\left(1+
	\sum_{n=1}^\infty \frac{ (1-\zeta)(1-\zeta^{-1})(-1)^nq^{2n^2+n}(1+q^{2n}) }
		{(1-\zeta q^{2n})(1-\zeta^{-1}q^{2n})}
\right).
\end{align}
Here we use the standard $q$-Pochhammer notation given as
$\aqprod{a}{q}{n}:=\prod_{j=0}^{n-1}(1-aq^j)$, for $n$ either a non-negative
integer or $\infty$, and
$\aqprod{a_1,\dotsc,a_m}{q}{n}:=\aqprod{a_1}{q}{n}\dotsm\aqprod{a_m}{q}{n}$.
For $\ell$ a non-negative integer, we define the $\ell$-th $M_2$-rank moment and its
generating function by
\begin{align*}
N2_\ell(n) &:= \sum_{m=-\infty}^\infty m^\ell N2(m,n)
,&
R2_\ell(q) &:= \sum_{n=0}^\infty N2_\ell(n)q^n.
\end{align*}
Due to the symmetry $R2(\zeta ;q)=R2(\zeta^{-1};q)$, we have that $N2(m,n)=N2(-m,n)$.
In particular $N2_{\ell}(n)=0$ when $\ell$ is odd and so only the even moments are of interest to
us. 

Our goal is to determine asymptotics via the circle method for the coefficients 
of $R2_{2k}(q)$ and $R2(e^{\frac{2\pi ia}{c}};q)$. We state these formulas 
shortly, but first we require some additional definitions and notation. 
For $h$ an integer, we let $\widetilde{h}$ denote the value of
$h$ modulo $4$ with $\widetilde{h}\in\{-1,0,1,2\}$. 
For relatively prime integers $h$ and $k$, with $k>0$, we let $[-h]_k$ denote 
a choice of an inverse of $-h$ modulo $k$. When $k$ is odd, we choose $[-h]_k$
so that $32$ divides $[-h]_k$. We will make additional assumptions on how
to choose $[-h]_k$, which are discussed later. These additional assumptions
only appear in certain proofs, and are not needed to correctly read the statements
of propositions and theorems. For real $x$, we let $\lfloor x\rfloor$ denote the floor of $x$ and let
$\{x\}$ denote the fractional part of $x$ as given by $\{x\}:=x-\lfloor x\rfloor$.
Throughout this article, we let $\tau$ denote a point in the upper half-plane
$\mathcal{H}$ (so that $\IM{\tau}>0$) and let $q:=\exp\left(2\pi i\tau\right)$ 
(so that $|q|<1$). 
We use big $O$ notation and $\ll$ interchangeably, and use $O_\ell$ and $\ll_\ell$
to indicate dependencies of implicit constants.

Dedekind's  $\eta$-function is defined by 
$\eta(\tau):=q^{\frac{1}{24}}\aqprod{q}{q}{\infty}$.
This function satisfies the modular transformation
$\eta(A\tau)=\nu(A)\sqrt{\gamma\tau+\delta}\, \eta(\tau)$,
where $A=\begin{psmallmatrix}\alpha&\beta\\\gamma&\delta\end{psmallmatrix}
\in\SLTwo$ acts on the upper half-plane by Mobius transformations. 
We recall Mobius transformations are defined
as $A\tau=\frac{\alpha\tau+\beta}{\gamma\tau+\delta}$.
Here
$\nu(A)$ is a $24$th root of unity determined only by the matrix $A$ and
the standard branch of the square root is taken so that its value has positive
real part. Since $\nu(A)$ will appear in our formulas, we note a convenient form 
for the $\eta$-multiplier, which can be found as Theorem 2 in Chapter 4 of 
\cite{Knopp1}, is
\begin{align*}
\nu(A)
&=
\begin{cases}
\big(\frac{\delta}{|\gamma|} \big)
\exp\left(\frac{\pi i}{12}\left( (\alpha+\delta)\gamma - \beta\delta(\gamma^2-1) - 3\gamma\right)\right)
&
\mbox{ if } \gamma \equiv 1 \pmod{2},
\\				
\big(\frac{\gamma}{\delta} \big)
\exp\left(\frac{\pi i}{12}\left( 	(\alpha+\delta)\gamma - \beta\delta(\gamma^2-1) + 3\delta - 3 - 3\gamma\delta\right)\right)
&
\mbox{ if } \delta\equiv 1\pmod{2},
\end{cases}
\end{align*}
where $\left(\frac{m}{n}\right)$ is the generalized Legendre symbol as in
\cite{Shimura1}.

We define a related $24$th root of unity as follows. Suppose $h$ and $k$ 
are relatively prime integers with $k>0$. We see that
$\begin{psmallmatrix}h&-\frac{1+h[-h]_k}{k}\\k&-[-h]_k\end{psmallmatrix}$
is an element of $\SLTwo$ and 
$\frac{h+iz}{k}
=\begin{psmallmatrix}h&-\frac{1+h[-h]_k}{k}\\k&-[-h]_k\end{psmallmatrix}
\frac{[-h]_k+i/z}{k}$. We then define $\chi(h,[-h]_k,k)$ to be the 
$24$th root of unity given by
$\eta\left(\frac{h+iz}{k}\right) = \chi(h, [-h]_k, k) \sqrt{\frac{i}{z}}
\eta\left(\frac{[-h]_k+i/z}{k}\right)$. Using the above formula for 
$\nu(A)$ and properties of $\left(\frac{m}{n}\right)$, we see that
$\chi(h,[-h]_k,k)$ depends on the choice of $[-h]_k$ modulo $24k$
and $\chi(h,[-h]_k,k)^3$ depends on the choice of $[-h]_k$ modulo $8k$.

We let
\begin{align*}
\xi(h,[-h]_k,k)
&:=
	e^{\frac{\pi i}{4}}\chi\left( h,[-h]_k,k \right)^{-3}(-1)^{\frac{h-\wt{h}}{4}}
	\exp\left( \tfrac{\pi i\wt{h}}{8k} - \tfrac{\pi i[-h]_k\left(\wt{h}-h\right)^2}{16k} \right)
,\\
\xi_{\ell}^{\pm}(h,k)
&:=
	(-1)^{\ell+1}
	\exp\left(
		\tfrac{-\pi ih(2\ell+1)^2}{4k}
		\pm\tfrac{\pi i\left(\wt{h}-h\right)(2\ell+1)}{4k}
		+\tfrac{\pi i\wt{h}}{8k}		
	\right)
,\\
\alpha^{\pm}(\ell,k)
&:=
	\tfrac{1}{k}\left(-\ell + \tfrac{k-1}{2} \pm \tfrac{1}{4} \right)
.
\end{align*}
In principle, one can make any choice of $[-h]_k$, but must carry that specific
choice through all relevant calculations. To allow simplifications in various
formulas and calculations, we make some assumptions about our choice of 
$[-h]_k$. These assumptions deal only with fixed $k$ and $h$
and only appear in the proofs. When $k\equiv 0\pmod{4}$ we assume
$[-h]_{4k}=[-h]_k=[-h]_{k/4}$. In particular, when $k\equiv 0\pmod{4}$ we have
$\frac{1+[-h]_{k/4}}{2k}\equiv 0\pmod{4}$.  When $k\equiv 1\pmod{2}$, we assume
that $[-4h]_k=\frac{[-h]_k}{4}$. These are viewed as 
choices made for a fixed value of $k$ and $h$, so such a choice is possible. 
Clearly such choices would be impossible for all $k$ and $h$ simultaneously.

We let $I_\alpha(x)$ denote the modified Bessel function of the first kind. 
We recall the Bernoulli polynomials $B_n(x)$ are defined by
$\frac{t\exp(xt)}{\exp(t)-1}=:\sum_{n=0}^\infty \frac{B_n(x)t^n}{n!}$. Lastly
we define the constants $\kappa(a,b,c)$, for non-negative integers $a$, $b$, and $c$,
by $\kappa(a,b,c):=\frac{(-1)^{a+c} (2(a+b+c))! B_{2c}(\frac{1}{2})}{a!(2b+1)!(2c)!\pi^a4^{a+b}}$.
We now state our main results.

\begin{theorem}\label{TheoremRankMomentExpansions}
For $\ell$ a positive integer, and $N=\lfloor\sqrt{n}\rfloor$, we have the asymptotic expansion
\begin{align*}
N2_{2\ell}(n)
&=
	2\pi\hspace{-1em}
	\sum_{\substack{ 1\le k\le N,\\ k\equiv 0\pmod{4}}}	
	\frac{A_k(n)}{k}
	\sum_{a+b+c=\ell}
		k^a (8n-1)^{\frac{2a+4c-3}{4}} \kappa(a,b,c) 
		I_{a+2c-\frac{3}{2}}\left( \frac{\pi}{2k}\sqrt{8n-1} \right)
	\nonumber\\&\quad
	+
	\frac{\pi}{\sqrt{2}}\hspace{-1em}
	\sum_{\substack{ 1\le k\le N,\\ k\equiv\pm 1\pmod{4}}}	
	\frac{A_k(n)}{k}
	\sum_{a+b+c=\ell}
		(2k)^a (8n-1)^{\frac{2a+4c-3}{4}} \kappa(a,b,c)   
	I_{a+2c-\frac{3}{2}}\left( \frac{\pi}{4k}\sqrt{8n-1} \right)
	\nonumber\\&\quad
	+	
	E_{\ell}(n)
,
\end{align*}
where
\begin{align*}
A_k(n)
&:=
\begin{cases} \displaystyle
	-i\sum_{\substack{ 0\le h< k,\\ (h,k)=1}}	
	\wt{h} \exp\left(-\tfrac{2\pi ihn}{k}\right) \xi\left(h,[-h]_{4k},\tfrac{k}{4}\right) 
	&
	\mbox{ if } k\equiv 0\pmod{4},
	\\
	\displaystyle
	-\sum_{\substack{ 0\le h< k,\\ (h,k)=1}}	
	\frac{ \exp\left(-\frac{2\pi ihn}{k} - \frac{\pi i [-h]_k}{16k}\right)   \xi(4h,\tfrac{[-h]_k}{4},k)}
		{\sin(\tfrac{\pi k}{4})}
	&
	\mbox{ if } k\equiv \pm1\pmod{4},
\end{cases}
\\
E_{\ell}(n) 
&\ll_\ell 
\begin{cases}	
	n\log(n)
	&
	\mbox{ if } \ell=1,
	\\
	n^{2\ell-1}
	&
	\mbox{ if } \ell\ge 2,
\end{cases}
\end{align*}
as $n\rightarrow\infty$.
\end{theorem}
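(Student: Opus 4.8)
The plan is to apply the Hardy--Ramanujan circle method to the generating function $R2_{2\ell}(q)$, following the blueprint of Bringmann--Mahlburg--Rhoades \cite{BringmannMahlburgRhoades1} for the ordinary rank moments but tracking the specific modular data attached to $R2(\zeta;q)$. First I would set up the contour integral representation
\begin{align*}
N2_{2\ell}(n)=\frac{1}{2\pi i}\oint \frac{R2_{2\ell}(q)}{q^{n+1}}\,dq
\end{align*}
over a circle of radius $e^{-2\pi/n}$ (say), and dissect it into Farey arcs indexed by fractions $h/k$ with $0\le h<k\le N=\lfloor\sqrt n\rfloor$ and $(h,k)=1$. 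The first genuine step is to express $R2_{2\ell}(q)$ in terms of quantities with known modular behavior near each root of unity $e^{2\pi i h/k}$: one differentiates the Lambert-series side of \eqref{M2RankAsGeneralizedLambert} in $\zeta$ (applying $\big(\zeta\frac{d}{d\zeta}\big)^{2\ell}$ and setting $\zeta=1$) to produce $R2_{2\ell}(q)$ as an Eichler-type integral plus a modular piece built from $\frac{\aqprod{-q}{q^2}{\infty}}{\aqprod{q^2}{q^2}{\infty}}$, which is essentially a quotient of $\eta$-functions; the nonmodular correction term is the usual period-integral tail that, because $R2$ is mock modular, contributes to the completion via a nonholomorphic Eichler integral.

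Next I would carry out the local analysis on each arc. On the arc at $h/k$ I substitute $q=e^{2\pi i(h+iz)/k}$ with $z$ small, use the $\eta$-multiplier formula quoted in the excerpt (this is where $\chi(h,[-h]_k,k)$, and hence the constants $\xi$, $\xi_\ell^{\pm}$, $\alpha^{\pm}(\ell,k)$, enter) to transform $\frac{\aqprod{-q}{q^2}{\infty}}{\aqprod{q^2}{q^2}{\infty}}$ to the cusp $i\infty$, and expand the resulting principal part. The case split $k\equiv 0\pmod 4$ versus $k\equiv\pm1\pmod 4$ (and the even-but-not-divisible-by-4 case, which should contribute only to the error) arises precisely because the behavior of $(-q;q^2)_\infty/(q^2;q^2)_\infty$ at $h/k$ depends on $k\bmod 4$; this is exactly why $A_k(n)$ has two shapes. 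The leading contribution of each arc is then a Bessel integral: the integral $\int e^{2\pi nz/k}\cdot z^{-s}e^{c/(kz)}\,dz$ over the arc, extended to the Hankel contour, yields $I_{\nu}\big(\tfrac{\pi}{2k}\sqrt{8n-1}\big)$-type functions with $8n-1$ appearing because the relevant modular weight and the constant $\tfrac1{16}$ in the exponent of $P2(q)$ combine to give $n-\tfrac1{16}$ after scaling. Matching the combinatorial constants — the binomial/Bernoulli coefficients coming from differentiating the Lambert series $2\ell$ times — with $\kappa(a,b,c)$ requires a careful generating-function bookkeeping: one writes $\big(\zeta\frac{d}{d\zeta}\big)^{2\ell}$ acting on $\frac{(1-\zeta)(1-\zeta^{-1})}{(1-\zeta q^{2n})(1-\zeta^{-1}q^{2n})}$ at $\zeta=1$ as a polynomial in $n$ and $q^{2n}$, expand, and recognize the Bernoulli polynomial $B_{2c}(\tfrac12)$ via $\sum_j j^{2c}x^j$-type identities.

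The error term $E_\ell(n)$ comes from three sources: (i) the non-principal parts on the major arcs, (ii) the minor-arc/tail estimates, and (iii) crucially, the contribution of the nonholomorphic Eichler integral (the ``shadow'') completing the mock modular object. I would bound (iii) using the standard estimate for the period integral $\int_{-\bar z}^{i\infty}\frac{\overline{g(w)}}{(-i(w+\tau))^{3/2}}\,dw$ against a cusp form / unary theta shadow, which on the arc at $h/k$ gives a power-of-$k$ times $z$-power bound; summing over $h,k\le\sqrt n$ produces the $n^{2\ell-1}$ (and $n\log n$ when $\ell=1$, the borderline case where a harmonic-series logarithm appears). The main obstacle, and where I would spend the most care, is step two: correctly pinning down $\chi(h,[-h]_k,k)$ and the associated roots of unity uniformly in the case split — in particular verifying the multiplier computations modulo $8k$ and $24k$, using the assumptions on the choice of $[-h]_k$ (that $32\mid[-h]_k$ for odd $k$, that $[-h]_{4k}=[-h]_k=[-h]_{k/4}$ for $4\mid k$, and $[-4h]_k=[-h]_k/4$ for odd $k$) to force the needed congruences, and confirming that the $\sin(\pi k/4)$ in the denominator of $A_k(n)$ for odd $k$ emerges from the $(-q;q^2)_\infty$ factor rather than causing a spurious pole. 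The Bessel-integral asymptotics and the error bookkeeping are then routine, if lengthy, adaptations of \cite{BringmannMahlburgRhoades1, Bringmann1}.
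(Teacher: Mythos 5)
Your outer skeleton matches the paper: Farey dissection with $N=\lfloor\sqrt n\rfloor$, a case split governed by $k\bmod 4$ in which $k\equiv 2\pmod 4$ contributes only to the error, Hankel-contour evaluation of the arc integrals as Bessel functions, and an error analysis whose $\ell=1$ logarithm comes from the borderline sum $\sum_k k^{-1}$. The genuine gap is in your middle step, which is where all the work lies. The paper never differentiates the Lambert series \eqref{M2RankAsGeneralizedLambert} at $\zeta=1$; it transforms the two-variable function $R2(e^{2\pi iu};q)$ via \eqref{M2RankInTermsOfMu} and Zwegers' $\tmu$, $S$, $H$ apparatus (Propositions \ref{PropositionSTransformation}--\ref{PropositionMuModular}, Lemma \ref{LemmaR2Transformations}), shows the nonholomorphic $S$-pieces cancel, reduces the entire obstruction to modularity to Mordell integrals bounded in Lemma \ref{LemmaMordellIntegralBounds}, and only \emph{then} extracts the coefficient of $u^{2\ell}$: the main term is the explicit elementary function $f_k(u;z)$ (resp.\ $f_{2k}(u;2z)$) in Proposition \ref{PropositionMainRankTransformation}, whose Taylor expansion (BMR's Lemma 3.1) delivers $\kappa(a,b,c)z^{\frac12-a-2c}$ directly, with $B_{2c}(\tfrac12)$ arising from $u/\sinh(\pi u/z)$ via $te^{t/2}/(e^t-1)=\sum_n B_n(\tfrac12)t^n/n!$ --- not from $\sum_j j^{2c}x^j$ identities applied to a differentiated Lambert series. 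This yields Corollary \ref{CorollaryRankMomentTransformationsFinal} with the uniform error $a_{2\ell}(z)\ll_\ell k^{1/2}|z|^{1/2-2\ell}$, which is exactly what makes the final bookkeeping give $E_\ell(n)\ll n^{2\ell-1}$ (resp.\ $n\log n$).

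Your proposed order of operations --- differentiate first, then claim $R2_{2\ell}(q)$ is ``an Eichler-type integral plus a modular piece'' to be completed by a single shadow period integral --- mischaracterizes the object. Applying $(\zeta\frac{d}{d\zeta})^{2\ell}$ at $\zeta=1$ produces higher-order Appell-type sums with denominators $(1-\zeta q^{2n})^{j}$ for $j$ up to $2\ell+1$; these are quasimock objects whose completions are not one nonholomorphic Eichler integral, and establishing their transformations near every $h/k$ with the uniform $k$- and $z$-dependence needed for the stated error bound is precisely the content you would have to supply; the ``standard'' period-integral estimate against a unary theta shadow does not apply off the shelf here. Two smaller symptoms that the bookkeeping was not actually carried out: the argument $\sqrt{8n-1}$ comes from $n-\tfrac18$ (since $P2(q)=q^{1/8}\eta(2\tau)/(\eta(\tau)\eta(4\tau))$, and the $k\equiv0\pmod4$ main term carries $\exp(\pi/(4kz))$), not from $n-\tfrac1{16}$, which would give $16n-1$; and for odd $k$ the factor $\sin(\pi k/4)=\pm\tfrac1{\sqrt2}$, so there is no possible ``spurious pole'' --- it enters through the evaluation of the relevant $\mu$/theta quotient in the odd-$k$ transformation, not through $(-q;q^2)_\infty$.
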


When $\alpha$ is a half-integer,
$I_{\alpha}(x) \sim \frac{e^x}{\sqrt{2\pi x}}$ as $x\rightarrow\infty$,
and so taking the terms corresponding to $k=1$ and $c=\ell$ give the main 
asymptotic. In particular, we find that
\begin{align}\label{EqN2RankMomentAsymptoticValue}
	N2_{2\ell}(n)
	&\sim
	(-1)^{\ell}\sqrt{2}(8n)^{\ell-1}B_{2\ell}(\tfrac{1}{2}) \exp\left( \pi\sqrt{\tfrac{n}{2}} \right)
	,
\end{align}
as $n\rightarrow\infty$.

\begin{theorem}\label{TheoremAsymptoticForM2Rank}
Suppose $a$ and $c$ are integers with $c>0$ and $c\nmid 2a$.
Then with $A\left(\frac{a}{c};n\right)$ defined by
$R2(e^{\frac{2\pi ia}{c}};q)=\sum_{n=0}^\infty A\left(\frac{a}{c};n\right)q^n$,
and $N=\lfloor\sqrt{n}\rfloor$,
we have that
\begin{align*}
A\left(\frac{a}{c};n\right)
&=
-\frac{8i\sin(\frac{\pi a}{c})}{\sqrt{8n-1}}
\sum_{\substack{1\leq k\leq N,\\ k\equiv 0\!\!\pmod{4},\\ 2c\nmid ka }}
\frac{(-1)^{\Floor{\frac{ka}{2c}}}}{\sqrt{k}}
\sum_{j=1}^{5}
\sum_{m=0}^{M_j}
\cosh\left(\frac{\pi}{k}\sqrt{r_{j,a,c,k}(m)(8n-1)}\right)
D_{j,a,c,k,n}(m)
\\&\quad
+
\frac{8i\sin(\frac{\pi a}{c})}{\sqrt{8n-1}}
\sum_{\substack{1\leq k\leq N,\\ k\equiv 0\!\!\pmod{4},\\ \Fractional{\frac{ka}{2c}}>\frac{3}{4} }}
\frac{(-1)^{\Floor{\frac{ka}{2c}}}}{\sqrt{k}}
\cosh\left(\frac{2\pi(\Fractional{\frac{ka}{2c}}-\frac{3}{4})\sqrt{(8n-1)}}{k}\right)
D^{-}_{a,c,k,n}
\\&\quad
+
\frac{8i\sin(\frac{\pi a}{c})}{\sqrt{8n-1}}
\sum_{\substack{1\leq k\leq N,\\ k\equiv 0\!\!\pmod{4},\\ \Fractional{\frac{ka}{2c}}>\frac{1}{4} }}
\frac{(-1)^{\Floor{\frac{ka}{2c}}}}{\sqrt{k}}
\cosh\left(\frac{2\pi(\Fractional{\frac{ka}{2c}}-\frac{1}{4})\sqrt{(8n-1)}}{k}\right)
D^{+}_{a,c,k,n}
\\&\quad
-
\frac{4\sin(\frac{\pi a}{c})}{\sqrt{8n-1}}
\sum_{\substack{1\leq k\leq N,\\ k\equiv 0\!\!\pmod{4},\\ 2c\mid ka }}
\frac{(-1)^{\frac{ka}{2c}}}{\sqrt{k}}
\cosh\left(\frac{\pi \sqrt{8n-1} }{2k}\right)
C_{0,a,c,k,n}
\\&\quad
-
\frac{4\sin(\frac{\pi a}{c})}{\sqrt{8n-1}}
\sum_{\substack{1\leq k\leq N,\\ k\equiv 1\!\!\pmod{2},\\ c\nmid 2ka }}
\frac{(-1)^{\Floor{\frac{2ka}{c}}}}{\sqrt{k}}
\sum_{j=6}^{7}
\sum_{m=0}^{M_j}
\cosh\left(\frac{\pi}{k}\sqrt{r_{j,a,c,k}(m)(8n-1)}\right)
D_{j,a,c,k,n}(m)
\\&\quad
+
\frac{8\sin(\frac{\pi a}{c})}{\sqrt{8n-1}}
\sum_{\substack{1\leq k\leq N,\\ k\equiv 1\!\!\pmod{2},\\ \Fractional{\frac{2ka}{c}}>\frac{1}{2} }}
\frac{(-1)^{\Floor{\frac{2ka}{c}}}}{\sqrt{k}}
\cosh\left(\frac{\pi(\Fractional{\frac{2ka}{c}}-\frac{1}{2})\sqrt{(8n-1)}}{2k}\right)
D_{a,c,k,n}
\\&\quad
+
\frac{2i\sin(\frac{\pi a}{c})}{\sqrt{8n-1}}
\sum_{\substack{1\leq k\leq N,\\ k\equiv 1\!\!\pmod{2},\\ c\mid 2ka }}
\frac{(-1)^{\frac{2ka}{c}}}{\sqrt{k}}
\cosh\left(\frac{\pi \sqrt{8n-1} }{4k}\right)
C_{1,a,c,k,n}
+
O_{a,c}\left(\sqrt{n}\right),
\end{align*}
where
\begin{align*}
D_{1,a,c,k,n}(m)
&:=
	\sum_{\substack{0\leq h<k,\\ (h,k)=1}}
	\wt{h}	\xi\left(h,[-h]_{4k},\tfrac{k}{4} \right)
	\exp\left( 
		-\tfrac{2\pi inh}{k}
		+\tfrac{4\pi i[-h]_{4k}}{k} \left(
			-\Floor{\tfrac{ka}{2c}}^2 
			- (2m+\tfrac{3}{2})\Floor{\tfrac{ka}{2c}} 
		\right)	
	\right)	
,\\
D_{2,a,c,k,n}(m)
&:=
	\sum_{\substack{0\leq h<k,\\ (h,k)=1}}
	\wt{h} 	\xi\left(h,[-h]_{4k},\tfrac{k}{4}\right)
	\exp\left( 
		-\tfrac{2\pi inh}{k}
		+\tfrac{4\pi i[-h]_{4k}}{k} \left(
			-\Floor{\tfrac{ka}{2c}}^2 
			+ (2m+\tfrac{1}{2})\Floor{\tfrac{ka}{2c}} 
			+ 2m
			+ \tfrac{3}{2}
		\right)	
	\right)	
,\\
D_{3,a,c,k,n}(m)
&:=	
	\sum_{\substack{0\leq h<k,\\ (h,k)=1}}
	\wt{h}	\xi\left(h,[-h]_{4k},\tfrac{k}{4}\right)
	\exp\left( 
		-\tfrac{2\pi inh}{k}		
		+\tfrac{4\pi i[-h]_{4k}}{k} \left(
			-\Floor{\tfrac{ka}{2c}}^2 
			- (2m+\tfrac{1}{2})\Floor{\tfrac{ka}{2c}} 
		\right)	
	\right)	
,\\
D_{4,a,c,k,n}(m)
&:=
	\sum_{\substack{0\leq h<k,\\ (h,k)=1}}
	\wt{h} \xi\left(h,[-h]_{4k},\tfrac{k}{4}\right)
	\exp\left(
		-\tfrac{2\pi inh}{k}		
		+\tfrac{4\pi i[-h]_{4k}}{k} \left(
			-\Floor{\tfrac{ka}{2c}}^2 
			- (2m+\tfrac{1}{2})\Floor{\tfrac{ka}{2c}} 
			+ \tfrac{1}{2} 
		\right)	
	\right)	
,\\
D_{5,a,c,k,n}(m)
&:=
	\sum_{\substack{0\leq h<k,\\ (h,k)=1}}
	\wt{h}	\xi\left(h,[-h]_{4k},\tfrac{k}{4}\right)
	\exp\left( 
		-\tfrac{2\pi inh}{k}
		+\tfrac{4\pi i[-h]_{4k}}{k} \left(
			-\Floor{\tfrac{ka}{2c}}^2 
			+ (2m+\tfrac{3}{2})\Floor{\tfrac{ka}{2c}} 
			+2m
			+ \tfrac{5}{2}
		\right)	
	\right)	
,\\
D_{6,a,c,k,n}(m)
&:=
	\sum_{\substack{0\leq h<k,\\ (h,k)=1}}
	\cos\left( \tfrac{\pi}{2} \Floor{\tfrac{2ka}{c}}k \right)
	\csc\left( \tfrac{\pi k}{4} \right)
	\xi\left(4h,\tfrac{[-h]_k}{4},k\right)
	\exp\left( 
		-\tfrac{2\pi inh}{k}
	\right)
	\\[-3.75ex]&\quad\quad\,\,\qquad\times
	\exp\left(
		\tfrac{\pi i [-h]_k}{4k} \left(
			-\Floor{\tfrac{2ka}{c}}^2-(2m+1)\Floor{\tfrac{2ka}{c}}-\tfrac{1}{4}
		\right)
	\right)
,\\[1ex]
D_{7,a,c,k,n}(m)
&:=
	\sum_{\substack{0\leq h<k,\\ (h,k)=1}}
	i^{1+k}
	\sin\left( \tfrac{\pi}{2}  \Floor{\tfrac{2ka}{c}} k \right)
	\csc\left( \tfrac{\pi k}{4} \right)
	\xi\left(4h,\tfrac{[-h]_k}{4},k\right)
	\exp\left(
		-\tfrac{2\pi inh}{k}
	\right)
	\\[-3.75ex]&\quad\quad\,\,\qquad\times
	\exp\left(
		\tfrac{\pi i[-h]_k}{4k} \left(
			-\Floor{\tfrac{2ka}{c}}^2+(2m+1)\Floor{\tfrac{2ka}{c}}+\tfrac{7}{4} +2m
		\right) 	
	\right)
,\\[1ex]
D^{\pm}_{a,c,k,n}
&:=
	\sum_{\substack{0\leq h<k,\\ (h,k)=1} }
	\pm\wt{h}
	\xi\left(h,[-h]_{4k},\tfrac{k}{4}\right)
	\exp\left( 
		-\tfrac{2\pi inh}{k}
	\right)
	\\[-3.75ex]&\quad\quad\,\,\qquad\times
	\exp\left(
		\tfrac{4\pi i[-h]_{4k}}{k} \left(
			-\Floor{\tfrac{ka}{2c}}^2 
			- \Floor{\tfrac{ka}{2c}} 
			\mp\wt{h}(2\Floor{\tfrac{ka}{2c}}+1)\tfrac{h-\wt{h}}{4}			
			- \tfrac{1}{4}
		\right)	
		\mp
		\tfrac{\pi i\wt{h}}{k}\left(2\Floor{\tfrac{ka}{2c}}+1\right)
	\right)	
,\\[1ex]
D_{a,c,k,n}
&:=
	\sum_{\substack{0\leq h<k,\\ (h,k)=1} }
	\sin\left( 	\tfrac{\pi \left(2\Floor{\frac{2ka}{c}}+1\right) k }{4}  \right)
	\xi\left(4h,\tfrac{[-h]_k}{4},k\right)
	\exp\left( 
		-\tfrac{2\pi inh}{k}
		+\tfrac{\pi i[-h]_k}{4k}\left( -\Floor{\tfrac{2ka}{c}}^2-\Floor{\tfrac{2k}{c}} -\tfrac{1}{4} \right)
	\right)			
,\\
C_{0,a,c,k,n}
&:=
	\sum_{\substack{0\leq h<k,\\ (h,k)=1}}
	\wt{h}
	\csc\left( \tfrac{\pi a[-h]_{4k}}{c} \right)		
	\xi\left(h,[-h]_{4k},\tfrac{k}{4}\right)
	\exp\left( -\tfrac{2\pi inh}{k}	-\tfrac{\pi i[-h]_{4k} ka^2}{c^2} \right)
,\\
C_{1,a,c,k,n}
&:=
	\sum_{\substack{0\leq h<k,\\ (h,k)=1}}
	\frac{ \cos\left( \frac{\pi a (1+h[-h]_{k})}{c} \right) } 
		{ \sin\left(\frac{\pi k}{4} \right) \sin\left(\frac{\pi a[-h]_{k}}{2c}\right) }
	\xi\left(4h,\tfrac{[-h]_k}{4},k\right)
	\exp\left( 
		-\tfrac{2\pi inh}{k}
		-\tfrac{\pi ika^2[-h]_k}{c^2}
		-\tfrac{\pi i[-h]_k}{16k}
	\right)
,
\end{align*}
and
\begin{align*}
r_{1,a,c,k}(m)
&:=
	4\Fractional{\tfrac{ka}{2c}}^2 - 6\Fractional{\tfrac{ka}{2c}} 
	- 8m\Fractional{\tfrac{ka}{2c}} + \tfrac{1}{4}
,\\
r_{2,a,c,k}(m)
&:=
	4\Fractional{\tfrac{ka}{2c}}^2 + 2\Fractional{\tfrac{ka}{2c}} 
	- 8m(1-\Fractional{\tfrac{ka}{2c}}) - \tfrac{23}{4}
,\\
r_{3,a,c,k}(m)
&:=
	4\Fractional{\tfrac{ka}{2c}}^2 - 2\Fractional{\tfrac{ka}{2c}} 
	- 8m\Fractional{\tfrac{ka}{2c}} + \tfrac{1}{4}
,\\
r_{4,a,c,k}(m)
&:=
	4\Fractional{\tfrac{ka}{2c}}^2 - 2\Fractional{\tfrac{ka}{2c}} 
	- 8m\Fractional{\tfrac{ka}{2c}} - \tfrac{7}{4}
,\\
r_{5,a,c,k}(m)
&:=
	4\Fractional{\tfrac{ka}{2c}}^2 + 6\Fractional{\tfrac{ka}{2c}} 
	- 8m(1-\Fractional{\tfrac{ka}{2c}}) - \tfrac{39}{4}
,\\
r_{6,a,c,k}(m)
&:=
	\tfrac{1}{4}\Fractional{\tfrac{2ka}{c}}^2 - \tfrac{1}{4}\Fractional{\tfrac{2ka}{c}} 
	- \tfrac{m}{2}\Fractional{\tfrac{2ka}{c}} + \tfrac{1}{16}
,\\
r_{7,a,c,k}(m)
&:=
	\tfrac{1}{4}\Fractional{\tfrac{2ka}{c}}^2 + \tfrac{1}{4}\Fractional{\tfrac{2ka}{c}} 
	- \tfrac{m}{2}(1-\Fractional{\tfrac{2ka}{c}}) - \tfrac{7}{16}
.
\end{align*}
An explicit bound on the error term is given in \eqref{EqFinalBoundOnErrorTerm}.
\end{theorem}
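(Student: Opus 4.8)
The plan is to run the Hardy--Ramanujan--Rademacher circle method on $R2(e^{2\pi i a/c};q)$, following the template Bringmann developed for the ordinary rank at roots of unity and Bringmann--Mahlburg--Rhoades used for rank moments. First I would put \eqref{M2RankAsGeneralizedLambert} into a cleaner shape: a partial fraction decomposition gives $\frac{(1-\zeta)(1-\zeta^{-1})(1+x)}{(1-\zeta x)(1-\zeta^{-1}x)}=\frac{1-\zeta}{1-\zeta x}+\frac{1-\zeta^{-1}}{1-\zeta^{-1}x}$, and after replacing $n\mapsto-n$ in one of the two resulting sums the Lambert series collapses to a single bilateral Appell--Lerch-type sum; together with the identity $\frac{\aqprod{-q}{q^2}{\infty}}{\aqprod{q^2}{q^2}{\infty}}=q^{1/8}\frac{\eta(2\tau)}{\eta(\tau)\eta(4\tau)}$ this yields
\[
R2(e^{2\pi i a/c};q)=q^{1/8}\frac{\eta(2\tau)}{\eta(\tau)\eta(4\tau)}\,\bigl(1-e^{2\pi ia/c}\bigr)\sum_{n\in\mathbb{Z}}\frac{(-1)^nq^{2n^2+n}}{1-e^{2\pi ia/c}q^{2n}}.
\]
I would then write $A(\tfrac ac;n)$ as a Cauchy integral of $R2(e^{2\pi ia/c};q)q^{-n-1}$ over a circle of radius near $1$ and perform a Farey dissection of order $N=\lfloor\sqrt n\rfloor$, so that the integral becomes a sum of integrals over arcs $C_{h,k}$ about the points $e^{2\pi ih/k}$ with $0\le h<k\le N$, $(h,k)=1$; on $C_{h,k}$ I substitute $q=e^{2\pi i(h+iz)/k}$ with $z$ ranging over a short path in the region $\RE z>0$.

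On each arc the integrand is analyzed by transforming $\tau=\frac{h+iz}{k}$ to the cusp $i\infty$ via the matrix $\begin{psmallmatrix}h&\ast\\k&-[-h]_k\end{psmallmatrix}\in\SLTwo$ used in the preliminaries to define $\chi(h,[-h]_k,k)$ and hence $\xi$. The eta-quotient $q^{1/8}\eta(2\tau)/(\eta(\tau)\eta(4\tau))$ transforms by the explicit $\eta$-multiplier, introducing the roots of unity $\xi(h,[-h]_{4k},k/4)$ (when $4\mid k$) or $\xi(4h,[-h]_k/4,k)$ (when $k$ is odd) together with a power of $z$; this is where the dichotomy $k\equiv0\pmod4$ versus $k$ odd originates, since the $\eta(4\tau)$ factor is transformed directly when $4\mid k$ but requires reducing $4h$ modulo $k$ when $k$ is odd, and I expect the arcs with $k\equiv2\pmod4$ to contribute nothing beyond the error term. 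For the Appell--Lerch sum I would invoke its mock-Jacobi transformation formula: under the above matrix it equals, up to the automorphy factor and a Jacobi theta factor, a sum of the same Appell--Lerch sum in the new variable $q'=e^{2\pi i\tau'}$ plus a non-holomorphic Eichler-type integral. Expanding that Appell--Lerch sum in $q'$ and resumming the geometric series coming from the residue classes $n$ for which $e^{2\pi ia/c}q'^{\,2n}\to1$ produces a finite \emph{principal part}: a finite linear combination of terms ${q'}^{-\rho}$, where completing the square in the theta exponents $\tfrac{1}{8}(2n+\lambda)^2$ makes each exponent $\rho$ a quadratic polynomial in $\Fractional{\tfrac{ka}{2c}}$ (respectively $\Fractional{\tfrac{2ka}{c}}$). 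This is the mechanism producing the quantities $r_{j,a,c,k}(m)$, the ranges $0\le m\le M_j$ (only finitely many resummed terms are genuinely polar), the branch indices $j=1,\dots,5$ when $4\mid k$ and $j=6,7$ when $k$ is odd, and the coefficient sums $D_{j,a,c,k,n}(m)$.

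Each principal-part term ${q'}^{-\rho}$, after being multiplied by the transformed eta-quotient (itself $z^{1/2}$ times a $q'$-power), integrated against ${q'}^{-n-1}$ over the Farey arc, and summed over all arcs of the same $k$, yields a classical Rademacher integral $\int z^{1/2}\exp\!\bigl(\tfrac{2\pi\rho}{z}+2\pi z(\cdots)\bigr)\,dz$, which evaluates to a modified Bessel function of order $-\tfrac12$; since $I_{-1/2}(x)=\sqrt{2/(\pi x)}\cosh x$ this is precisely the source of the $\cosh$ factors and of the overall $1/\sqrt{8n-1}$ (the shift $n\mapsto n-\tfrac18$ and the rescaling by $8$ coming from the $q^{1/8}$ prefactor). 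The terms $D^{\pm}_{a,c,k,n}$, $D_{a,c,k,n}$, $C_{0,a,c,k,n}$, $C_{1,a,c,k,n}$ arise from degenerate configurations: the $C$-terms when $2c\mid ka$ (resp.\ $c\mid2ka$), so $\Fractional{\tfrac{ka}{2c}}=0$ and the dominant pole comes directly from the vanishing of $1-e^{2\pi ia/c}q^{2n}$ at the cusp, whose residue supplies the $\csc(\pi a[-h]_{4k}/c)$ and the trigonometric factor in $C_{1,a,c,k,n}$; and the $D^{\pm}$ and $D$-terms when $\Fractional{\tfrac{ka}{2c}}$ (resp.\ $\Fractional{\tfrac{2ka}{c}}$) exceeds one of the thresholds $\tfrac14,\tfrac34$ (resp.\ $\tfrac12$), so that a normally subdominant term of the resummation crosses zero exponent and must be added back in. Everything else — the non-holomorphic Eichler integral from the mock transformation, the tails of the Rademacher integrals, the non-polar part of the Appell--Lerch expansion, and the $k\equiv2\pmod4$ arcs — I would bound crudely and uniformly in $h$ by $O_{a,c}(\sqrt n)$, assembling these into the explicit bound \eqref{EqFinalBoundOnErrorTerm}.

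The hard part will be the bookkeeping of the transformation formula rather than any single analytic estimate: tracking the $24$th, $16$th, and $8$th roots of unity (the functions $\chi$ and $\xi$ and their dependence on the chosen inverse $[-h]_k$ modulo the various moduli) cleanly through the $\eta$-transformation and the Appell--Lerch transformation simultaneously, and carrying out the case analysis on $k\bmod4$ and on the location of $\Fractional{\tfrac{ka}{2c}}$ and $\Fractional{\tfrac{2ka}{c}}$ so that the resummed principal part is written in exactly the normalized form giving the stated $r_{j,a,c,k}(m)$ and $D$-coefficients. A secondary difficulty is ensuring the error is genuinely $O_{a,c}(\sqrt n)$ uniformly over all $N$ Farey arcs — in particular controlling the contribution of the non-holomorphic piece of the Appell--Lerch transformation, which is where the extension of the circle method to mock modular forms does its real work; here I would follow the standard estimates for the period integral of a weight-$\tfrac32$ theta function, as in Bringmann's and Bringmann--Mahlburg--Rhoades's treatments of the ordinary rank.
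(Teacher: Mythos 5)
Your plan is essentially the paper's proof: the paper also writes $R2$ via Appell--Lerch/Zwegers $\mu$-functions (its \eqref{M2RankInTermsOfMu}), runs the Farey-dissected circle method, transforms at each cusp with the matrix defining $\chi$ and $\xi$, extracts the finite principal part with exactly the case analysis on $k\bmod 4$, on $2c\mid ka$ (resp.\ $c\mid 2ka$), and on the thresholds for $\Fractional{\tfrac{ka}{2c}}$ and $\Fractional{\tfrac{2ka}{c}}$, evaluates the arc integrals through $I_{-1/2}(x)=\sqrt{2/(\pi x)}\cosh(x)$, and bounds the Mordell-integral/nonholomorphic piece, the $k\equiv2\pmod4$ arcs, and the tails into the explicit $O_{a,c}(\sqrt{n})$ error. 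Your only deviation is cosmetic: you split off an eta-quotient and a single bilateral Appell--Lerch sum, whereas the paper works with two $\mu$-functions whose transformation (via $\tilde\mu$) already carries the theta/eta factor.
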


We introduce the various functions that will arise in the proofs of these 
theorems. As is now common when dealing with mock modular forms, we use various
functions from Zwegers' groundbreaking thesis \cite{Zwegers1}.
For $x\in\mathbb{R}$ we let
\begin{align*}
\beta(x) &:= \int_{x}^\infty t^{-\frac{1}{2}} e^{-\pi t}dt
,&
E(x) &:= \sgn{x}(1-\beta(x^2)).
\end{align*}
For $u\in\mathbb{C}$ and $\tau\in\mathcal{H}$, we let
\begin{align*}
\vartheta(u;\tau)
:&=
	\sum_{n\in\frac12+\mathbb{Z}} q^{\frac12n^2} \exp\left( 2\pi in\left(u+\tfrac12\right)\right)
	=	
	-iq^{\frac18}e^{-\pi iu}
	\aqprod{ e^{2\pi iu}, e^{-2\pi iu}q, q}
		{q}{\infty}
,\\
S(u;\tau)
:&=
	\sum_{n\in \frac{1}{2}+\mathbb{Z}}	
	\left(
		\sgn{n} 
		- 
		E\left( \left(n+\tfrac{\IM{u}}{\IM{\tau}}\right)\sqrt{2\IM{\tau}} \right)		
	\right)	
	(-1)^{n-\frac{1}{2}}q^{-\frac12 n^2} e^{- 2\pi inu }
,\\
H(u;\tau)
:&=
	\int_{-\infty}^\infty
	\frac{\exp\left(\pi i\tau x^2 - 2\pi xu\right)}{\cosh(\pi x)}dx
.
\end{align*}
For $u,v\in \mathbb{C}\backslash (\mathbb{Z}\tau+\mathbb{Z})$ and $\tau\in\mathcal{H}$,
we let
\begin{align*}
\mu(u,v;\tau)
&:=
	\frac{ e^{\pi iu}}{\vartheta(v;\tau)}
	\sum_{n=-\infty}^\infty
	\frac{ (-1)^n q^{\frac12n(n+1)} e^{2\pi inv}  }{1 -  e^{2\pi iu}q^{n} }
,\\
\tmu(u,v;\tau) &:= \mu(u,v;\tau)+\tfrac{i}{2}S(u-v;\tau).
\end{align*}
Any use of these functions is under the assumption that the parameters
are chosen so that the functions are well defined, even if we do not state
these conditions explicitly.
These functions satisfy various elliptic and modular transformations.
In particular,
\begin{align}
\label{EqHEven}
H(-u;\tau) &= H(u;\tau)
,\\
\label{EqHEllipticProperty1}
H(u+\tau;\tau) &= -q^{\frac12}e^{2\pi iu}H(u;\tau)+2q^{\frac38}e^{2\pi iu}
,\\
\label{EqSEllipticProperty1}
S(u+1;\tau) &= -S(u;\tau)
,\\
\label{EqSModularProperty1}
S(u;\tau+1) &= e^{-\frac{\pi i}{4}}S(u;\tau)
,\\
\label{EqSModularProperty3}
S\left(\frac{u}{\tau};-\frac{1}{\tau}\right)	
&=
	\sqrt{-i\tau}\exp\left( -\tfrac{\pi iu^2}{\tau}\right)(H-S)(u;\tau)
.
\end{align}
If $k$, $\ell$, $m$, $n\in\mathbb{Z}$, then
\begin{align}
\label{EqMuTildeEllipticProperty}	
\tmu\left( u + k\tau + \ell, v + m\tau + n ;\tau  \right)
&=
	(-1)^{k+\ell+m+n} q^{\frac12(k-m)^2} e^{2\pi i(k-m)(u-v)}	
	\tmu\left(u,v;\tau \right)	
.
\end{align}
If $A=\begin{psmallmatrix}\alpha&\beta\\\gamma&\delta\end{psmallmatrix}\in\SLTwo$, then
\begin{align}
\label{EqMuTildeModularProperty}	
\tmu\left( \frac{u}{\gamma\tau+\delta}, \frac{v}{\gamma\tau+\delta} ; \frac{\alpha\tau+\beta}{\gamma\tau+\delta}  \right)
&=
\nu(A)^{-3}\sqrt{\gamma\tau+\delta} \exp\left(-\frac{\pi i\gamma(u-v)^2}{\gamma\tau+\delta}\right) \tmu\left(u,v;\tau\right)
.
\end{align}

As a matter of notation, sums indexed by $\pm$ should be read as
\begin{align*}
\sum_{\pm}f(\pm) &= f(+) + f(-),
\end{align*}
where $f(\pm)$ is a summand depending on the choice of $+$ or $-$.

By elementary rearrangements of the series on the far right side of
\eqref{M2RankAsGeneralizedLambert}, we find that
\begin{align}\label{M2RankInTermsOfMu}
R2(\zeta;\tau) &= i(1-\zeta)\left(\zeta^{-1}\mu(2u,-\tau;4\tau) - \mu(2u,\tau;4\tau)\right),
\end{align}
where $\zeta=\exp(2\pi iu)$. From this it is now clear that the functions introduced 
above are indeed relevant to our study of $R2(\zeta;\tau)$. We note this form of 
$R2(\zeta;\tau)$ has a removable singularity at $\zeta=\pm1$.

So that the purpose of our later calculations is clear, we briefly recall the
circle method. If $\frac{h_0}{k_0}$, $\frac{h}{k}$, and $\frac{h_1}{k_1}$
are three successive Farey fractions of order $N$, then we define
\begin{align*}
\vartheta^{\prime}_{0,1} 
&:=
	\frac{1}{N+1}
,&
\vartheta^{\prime}_{h,k} 
&:=
	\frac{h}{k} - \frac{h_0+h}{k_0+k}
	=
	\frac{1}{k(k_0+k)}		
	\quad\quad\mbox{ for }
	h>0
,\\
\vartheta^{\prime\prime}_{h,k} 
&:=
	\frac{h_1+h}{k_1+k} - \frac{h}{k} 
	=
	\frac{1}{k(k_1+k)}		
.
\end{align*}
We note that these measure the distance from $\frac{h}{k}$ to the mediants
with the neighboring Farey fractions. Using Cauchy's theorem, given a function
$F(q) = \sum_{n=0}^\infty a_n q^n$, 
we let $C$ be the circle centered at the origin of radius $\exp\left(-\frac{2\pi}{N^2}\right)$,
where $N = \lfloor\sqrt{n}\rfloor$ and find that
\begin{align*}
a_n
&=
	\frac{1}{2\pi i}\int_{C} \frac{F(q)}{q^{n+1}} dq
=
	\int_{0}^{1} F\left( \exp\left(-\tfrac{2\pi}{N^2} + 2\pi it\right) \right)
	\exp\left( \tfrac{2\pi n}{N^2} - 2\pi it \right) dt
\\
&=
	\sum_{\substack{ 0\le h< k\le N\\(h,k)=1}}	
	\exp\left(-\tfrac{2\pi ihn}{k}\right)
	\int_{-\vartheta^{\prime}_{h,k}}^{\vartheta^{\prime\prime}_{h,k}}
	F\left( \exp\left( \tfrac{2\pi i}{k} \left( h + i\left( \tfrac{k}{N^2}-ik\Phi \right) \right)\right)  \right)
	\exp\left( \tfrac{2\pi n}{k}\left( \tfrac{k}{N^2}-ik\Phi  \right)\right) d\Phi
.
\end{align*}
With $z=\frac{k}{N^2}-ik\Phi$, this becomes
\begin{align*}
a_n
&=
	\sum_{\substack{ 0\le h< k\le N\\(h,k)=1}} \exp\left(-\tfrac{2\pi ihn}{k}\right)
	\int_{-\vartheta^{\prime}_{h,k}}^{\vartheta^{\prime\prime}_{h,k}}
	F\left( \exp\left(\tfrac{2\pi i}{k}(h+iz)\right) \right) \exp\left( \tfrac{2\pi nz}{k}\right) d\Phi
.
\end{align*}
We look to apply a modular transformation with
$\frac{h+iz}{k}
=\begin{psmallmatrix}h&-\frac{1+h[-h]_k}{k}\\k&-[-h]_k\end{psmallmatrix}
\frac{[-h]_k+i/z}{k}$ and recognize the resulting integral
as representing a Bessel function 
(see equations \eqref{EqCircleMethodRankMoment4} and \eqref{EqCircleMethodRankMoment5},
as well as Proposition \ref{MainTermFromIntegrals}) and an error term.
This method applies to mock modular forms for the reason that while
they do no satisfy a modular transformation, mock modular
forms can be completed to harmonic Maass forms (which do satisfy a modular transformation),
and the part of the harmonic Maass form other than the mock modular form can
often be shown to only contribute to the error term.

We apply this method to $R2_{2\ell}(q)$ for Theorem \ref{TheoremRankMomentExpansions}
and to $R2(e^{\frac{2\pi ia}{c}};q)$ for Theorem \ref{TheoremAsymptoticForM2Rank}.
In determining the relevant transformation formula for 
$R2(e^{\frac{2\pi ia}{c}};q)$ we use \eqref{M2RankInTermsOfMu}. It turns out
we can also use \eqref{M2RankInTermsOfMu} to determine the relevant
transformation for $R2_{2\ell}(q)$ by considering the function
$\mathcal{R}2(u;q):=R2(e^{2\pi iu};q)$ and recognizing $\mathcal{R}2(u;q)$ as
\begin{align*}
\mathcal{R}2(u;q) &= \sum_{\ell=0}^\infty R2_\ell(q) \frac{(2\pi iu)^\ell}{\ell!}.
\end{align*}
The final transformation for $R2_{2\ell}(q)$ is stated in Corollary
\ref{CorollaryRankMomentTransformationsFinal}. Due to the more complicated nature
of the transformations for $R2(e^{\frac{2\pi ia}{c}};q)$, the final transformations
are stated in Propositions \ref{PropFinalBounds0Mod4}, \ref{PropFinalBounds2Mod4}
and \ref{PropFinalBoundsOdd}.

The rest of the article is organized as follows. 
In Section 3 we give the transformation formulas, bounds, and identities relevant 
to applying the circle method to $R2_{2\ell}(q)$. In Section 4 we
prove Theorem \ref{TheoremRankMomentExpansions} and discuss some calculations
to support its validity. In Section 5 we give the transformation formulas, 
bounds, and identities relevant to applying the circle method to 
$R2(e^{\frac{2\pi ia}{c}}q)$. This turns out to be more involved than the 
corresponding results for $R2_{2\ell}(q)$. Furthermore, since we use 
Theorem \ref{TheoremAsymptoticForM2Rank} to prove inequalities, we must keep track
of explicit upper bounds for the error terms. In Section 6 we prove
Theorem \ref{TheoremAsymptoticForM2Rank} and give an explicit upper bound for the
error term. In Section 7 we discuss and prove a few inequalities related to
$N2(r,m,n)$ (the number of partitions of $n$ without repeated odd parts and
with $M_2$-rank $m$). In Section 8 we give our final discussion and closing 
remarks.

\section{Identities Relevant to $N2_\ell(n)$}

We follow the development in \cite{BringmannMahlburgRhoades1}
and the culmination of this section is Corollary
\ref{CorollaryRankMomentTransformationsFinal}.
To understand
$R2_{2\ell}\big(e^{\frac{2\pi i(h+iz)}{k}}\big)$, we must determine 
modular transformations for $\mu\big(2u,\pm\frac{h+iz}{k};\frac{4(h+iz)}{k}\big)$.
For this we first investigate $S\left(u\mp\frac{h+iz}{4k}\frac{h+iz}{k}\right)$
and $\tmu\left(u, \pm\frac{h+iz}{4k}; \frac{h+iz}{k}\right)$.
Many of our proofs require lengthy but straightforward calculations. 
We omit the details when these calculations are nothing more than reducing various 
exponents and basic algebra.

\begin{proposition}\label{PropositionSTransformation}
Suppose $h$ and $k$ are relatively prime integers with $k>0$, 
$u,z\in\mathbb{C}$, and $\RE{z}>0$. Then
\begin{align*}
S\left( u \mp \tfrac{h+iz}{4k}; \tfrac{h+iz}{k} \right)
&=
	\frac{
		\exp\left(
			-\frac{\pi z}{16k}	
			+
			\frac{\pi\left(\widetilde{h} \mp 4ku\right)^2}{16kz}
			\mp
			\frac{\pi iu}{2}
		\right)	
	}{\sqrt{kz}}		
	\sum_{\ell=0}^{k-1}
	\xi^{\mp}_\ell(h,k)		
	(S-H)\left(
		\tfrac{iu}{z} \mp \tfrac{i\widetilde{h}}{4kz} + \alpha^{\pm}(\ell,k)
		 ; \tfrac{i}{kz}   
	\right)			
.
\end{align*}	
\end{proposition}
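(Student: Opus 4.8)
The plan is to derive the stated transformation for $S\left(u\mp\tfrac{h+iz}{4k};\tfrac{h+iz}{k}\right)$ by composing the three transformation properties of $S$ recorded in the excerpt, namely the periodicity \eqref{EqSEllipticProperty1}, the translation-by-$1$-in-$\tau$ rule \eqref{EqSModularProperty1}, and the inversion $S\left(\tfrac{u}{\tau};-\tfrac{1}{\tau}\right)=\sqrt{-i\tau}\exp\left(-\tfrac{\pi iu^2}{\tau}\right)(H-S)(u;\tau)$ in \eqref{EqSModularProperty3}. First I would write $\tau=\tfrac{h+iz}{k}$ and recognize that $\tau$ is the image of a point near $i\infty$ under the matrix $A=\begin{psmallmatrix}h&-\frac{1+h[-h]_k}{k}\\k&-[-h]_k\end{psmallmatrix}\in\SLTwo$, exactly as set up in the preliminaries: $\tfrac{h+iz}{k}=A\cdot\tfrac{[-h]_k+i/z}{k}$. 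Decomposing $A$ into a product of the generators $T\colon\tau\mapsto\tau+1$ and $\mathcal{S}\colon\tau\mapsto-1/\tau$ (via the continued-fraction / Euclidean expansion of $h/k$), one can in principle push the transformation of $S$ through step by step; but it is cleaner to instead perform a single inversion. Concretely, I would apply \eqref{EqSModularProperty3} with $\tau$ replaced by $\tfrac{i}{kz}$ (or its appropriate translate), writing $\tfrac{h+iz}{k}$ as $[-h]_k$-translate plus $-\tfrac{1}{k^2\tau'}$-type expression, so that after using \eqref{EqSModularProperty1} to absorb the integer translations $[-h]_k$ and \eqref{EqSEllipticProperty1} to handle shifts of the first argument, the right-hand side features $(S-H)$ evaluated at argument $\tfrac{i}{kz}$, matching the claimed form.

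The key computational steps, in order, are: (1) reduce the first argument $u\mp\tfrac{h+iz}{4k}$ modulo $1$ and modulo the lattice using \eqref{EqSEllipticProperty1}, extracting the relevant sign and phase factors; (2) split the integer part $[-h]_k$ of the translation in $\tau$ using \eqref{EqSModularProperty1}, which contributes a factor $e^{-\frac{\pi i}{4}[-h]_k}$ (here the normalization that $32\mid[-h]_k$ when $k$ is odd, stipulated in the preliminaries, is what makes this phase collapse nicely); (3) apply the inversion \eqref{EqSModularProperty3} once, which produces the prefactor $\sqrt{-i\tau}=1/\sqrt{kz}$ up to a unit and the Gaussian factor $\exp\!\left(\tfrac{\pi(\wt h\mp4ku)^2}{16kz}\right)$ together with $\exp\!\left(-\tfrac{\pi z}{16k}\right)$ coming from completing the square in the exponent; (4) expand the resulting theta-like sum over a complete residue system modulo $k$, re-indexing by $\ell\in\{0,\dots,k-1\}$, and collect the $\ell$-dependent root-of-unity coefficient — this is precisely $\xi^{\mp}_\ell(h,k)$ as defined in the excerpt — and the shifted argument $\alpha^{\pm}(\ell,k)=\tfrac1k\!\left(-\ell+\tfrac{k-1}{2}\pm\tfrac14\right)$; and (5) verify that the leftover phase $\mp\tfrac{\pi iu}{2}$ is exactly what remains. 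Throughout, one uses the $\eta$-multiplier formula and the defining relation for $\chi(h,[-h]_k,k)$ only insofar as $\xi^{\mp}_\ell$ and $\xi$ are already packaged; since the statement is phrased purely in terms of $S$ and $H$ (not $\mu$ or $\tmu$), the $\chi$-factor does not actually enter here, so step (4) is really just careful bookkeeping of quadratic Gauss-type exponents.

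The main obstacle is step (4): matching the re-indexed sum to the precise root-of-unity $\xi^{\mp}_\ell(h,k)$ and the shift $\alpha^{\pm}(\ell,k)$. The sum defining $S$ runs over $n\in\tfrac12+\mathbb{Z}$, and after inversion the exponent is a quadratic form in $n$ whose cross-term with $u$ and whose constant term must be reorganized by writing $n=\ell+kr+\tfrac12$ or similar and summing over $r$ to recover a single $S-H$ at modulus $\tfrac{i}{kz}$; keeping track of the half-integer offsets, the $(-1)^{n-\frac12}$ sign in the definition of $S$, and the term $\tfrac{\pi i\wt h}{8k}$ hidden inside $\xi^{\mp}_\ell$ is delicate, and sign errors here propagate. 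I expect this to be a "lengthy but straightforward" calculation in the sense the authors flag — reducing exponents and basic algebra — with the genuine content being the bookkeeping that confirms the $\pm$ and $\mp$ patterns in $\wt h\mp 4ku$, in $\xi^{\mp}_\ell$, and in $\alpha^{\pm}$ are mutually consistent. I would organize the write-up so that the completion of the square (step 3) and the lattice re-indexing (step 4) are done once and for all with the $\pm$ carried symbolically, rather than treating the two signs separately.
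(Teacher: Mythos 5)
There is a genuine gap in the route you propose: the only $\tau$-transformations available for $S$ are the integer shift \eqref{EqSModularProperty1} and the full inversion \eqref{EqSModularProperty3}, and for $k>1$ no single inversion combined with integer translations carries the modulus $\tfrac{h+iz}{k}$ to $\tfrac{i}{kz}$ — the Möbius map relating them has lower-left entry $k$, and the translations your step (2) wants to "absorb" are by the rationals $\tfrac{h}{k}$ and $\tfrac{[-h]_k}{k}$, not by integers, so \eqref{EqSModularProperty1} does not apply there. (Indeed $[-h]_k$, the matrix $A$, and the normalization $32\mid[-h]_k$ play no role in this proposition at all; they enter only in the companion transformation for $\tmu$, which you appear to be conflating with this one.) The paper resolves this by first invoking Proposition 2.3 of \cite{BringmannFolsom1}, which splits $S\left(u\mp\tfrac{h+iz}{4k};\tfrac{h+iz}{k}\right)$ into $k$ terms of the form $S(\cdot\,;kh+ikz)$, i.e.\ it rescales the modulus to $k(h+iz)$ \emph{before} any translation or inversion; only then is the $\tau$-translation an integer shift by $kh$, handled by \eqref{EqSEllipticProperty1} and \eqref{EqSModularProperty1}, and only then does \eqref{EqSModularProperty3} applied at $ikz$ land exactly on $(S-H)\left(\cdot\,;\tfrac{i}{kz}\right)$, after which the remaining work is the phase bookkeeping producing $\xi^{\mp}_\ell(h,k)$ and $\alpha^{\pm}(\ell,k)$.

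Your step (4) — re-indexing modulo $k$ \emph{after} the inversion — cannot substitute for that splitting. The inversion \eqref{EqSModularProperty3} is a functional equation for $S$ as a whole and already introduces the Mordell integral $H$, so there is no theta-like sum with a clean quadratic exponent left to reorganize termwise; moreover $S$ is non-holomorphic (each term carries the factor $\sgn{n}-E(\cdot)$), so the substitution $n=\ell+kr+\tfrac12$ and resummation over $r$ does not by itself return a function of the form $S$ at a larger modulus — establishing that is precisely the content of the Bringmann--Folsom splitting result, which would have to be quoted or proved first. If you reorder the argument as: split mod $k$ first, then remove the integer translation $kh$, then invert, the rest of your outline (completing the square to get $\exp\bigl(-\tfrac{\pi z}{16k}+\tfrac{\pi(\wt{h}\mp4ku)^2}{16kz}\bigr)$, the prefactor $\tfrac{1}{\sqrt{kz}}$, and the collection of the $\ell$-dependent phases into $\xi^{\mp}_\ell$ and $\alpha^{\pm}(\ell,k)$) does match the paper's computation.
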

\begin{proof}
By Proposition 2.3 of \cite{BringmannFolsom1} we have that
\begin{align*}
S\left( u \mp \tfrac{h+iz}{4k}; \tfrac{h+iz}{k} \right)
&=	
	\sum_{\ell=0}^{k-1}
	\exp\left( 
		\tfrac{\pi z(2\ell - k + 1 \mp 1)(2\ell -k + 1)}{4k}					
	+
		\tfrac{-\pi i(2\ell-k+1)(2\ell h - hk + 4ku + 2k + h \mp h)}{4k}  
	\right)
	\\&\quad\times
	S\left( ku + iz\left(\ell - \tfrac{k-1}{2} \mp \tfrac{1}{4}\right) \mp \tfrac{h}{4}
		+ \ell h + \tfrac{(1-h)(k-1)}{2}
		; kh+ikz   
	\right)
.
\end{align*}
By applying (\ref{EqSEllipticProperty1}), (\ref{EqSModularProperty1}), and
(\ref{EqSModularProperty3}) we compute
\begin{align*}
&S\left( ku + iz\left(\ell - \tfrac{k-1}{2} \mp \tfrac{1}{4}\right) \mp \tfrac{h}{4}
	+ \ell h + \tfrac{(1-h)(k-1)}{2}
	; kh+ikz   
\right)
\\
&=
	(-1)^{1+\ell h + \frac{(1-h)(k-1)}{2} + \frac{h-\widetilde{h}}{4}  }
	\exp\left(
		-\pi i\left(
			\tfrac{kh}{4}
			\pm
			\tfrac{(\widetilde{h} \mp 4ku)(4\ell - 2k + 2 \mp 1)}{8k}
		\right)
		+
		\tfrac{\pi(\widetilde{h} \mp 4ku)^2}{16kz} 
		-
		\tfrac{\pi z(4\ell - 2k + 2 \mp 1)^2}{16k}
	\right)	
	\\&\quad\times
	\frac{1}{\sqrt{kz}}		
	(S-H)\left(
		\tfrac{iu}{z} \mp \tfrac{i\widetilde{h}}{4kz} + \alpha^{\pm}(\ell,k)
		 ; \tfrac{i}{kz}   
	\right)			
.
\end{align*}
Direct calculations reveal that
\begin{align*}
&(-1)^{1+\ell h + \frac{(1-h)(k-1)}{2} + \frac{h-\widetilde{h}}{4}  }
\exp\left(-\pi i\left(
	\tfrac{kh}{4}
	\pm
	\tfrac{\left(\widetilde{h} \mp 4ku\right)(4\ell - 2k + 2 \mp 1)}{8k}
	+
	\tfrac{(2\ell-k+1)(2\ell h - hk + 4ku + 2k + h \mp h)}{4k}  
\right)\right)
\\
&=
	\exp\left( \mp\tfrac{\pi iu}{2} \right)
	\xi^{\mp}_\ell(h,k)
,\\
&\exp\left( \pi z\left(
	\tfrac{(2\ell - k + 1 \mp 1)(2\ell -k + 1)}{4k}			
		-
	\tfrac{(4\ell - 2k + 2 \mp 1)^2}{16k} 
\right)\right)	
=
\exp\left( -\tfrac{\pi z}{16k} \right)
,
\end{align*}
and so the proposition holds.
\end{proof}

\begin{proposition}\label{PropositionTildeMuTransformation}
Suppose $h$ and $k$ are relatively prime integers with $k>0$, 
$u,z\in\mathbb{C}$, and $\RE{z}>0$. Then
\begin{align*}
&\tmu\left( u, \pm\tfrac{h+iz}{4k} ; \tfrac{h+iz}{k}	\right)
\\
&=
	\exp\left(
		-\tfrac{\pi z}{16k}		
		+
		\tfrac{\pi\left(\widetilde{h} \mp 4ku\right)^2}{16kz}
		\mp
		\tfrac{\pi iu}{2}
	\right)
	\xi(h,[-h]_k,k)
	\frac{1}{\sqrt{z}}
	\tmu\left( \tfrac{iu}{z}, 
		\pm\tfrac{\widetilde{h}([-h]_k+i/z)}{4k} 
		\mp\tfrac{1+h[-h]_k}{4k}
		; \tfrac{[-h]_k+i/z}{k}	
	\right)
.
\end{align*}
\end{proposition}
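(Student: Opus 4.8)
The plan is to derive the identity from the modular transformation \eqref{EqMuTildeModularProperty} and the elliptic transformation \eqref{EqMuTildeEllipticProperty} for $\tmu$, applied with respect to the matrix $A:=\begin{psmallmatrix}h&-\frac{1+h[-h]_k}{k}\\k&-[-h]_k\end{psmallmatrix}\in\SLTwo$. Setting $\tau':=\frac{[-h]_k+i/z}{k}$, one has $\frac{h+iz}{k}=A\tau'$ and $\gamma\tau'+\delta=i/z$. Writing the first two arguments of the left-hand side as $\frac{u^*}{\gamma\tau'+\delta}$ and $\frac{v^*}{\gamma\tau'+\delta}$ forces $u^*=\frac{iu}{z}$ and $v^*=\pm\frac{h+iz}{4k}\cdot\frac{i}{z}=\pm\left(\frac{ih}{4kz}-\frac{1}{4k}\right)$, so \eqref{EqMuTildeModularProperty} gives immediately
\[
\tmu\left(u,\pm\tfrac{h+iz}{4k};\tfrac{h+iz}{k}\right)
=\nu(A)^{-3}\sqrt{\tfrac{i}{z}}\,\exp\!\left(-\pi kz(u^*-v^*)^2\right)\tmu(u^*,v^*;\tau'),
\]
where, as elsewhere in the paper, $u$ and $z$ are assumed generic enough that every function here is well defined.

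The next step is to replace $v^*$ by the second argument appearing in the statement. Using $i/z=k\tau'-[-h]_k$ one rewrites $v^*=\pm\frac{h\tau'}{4}\mp\frac{1+h[-h]_k}{4k}$, which differs from $\wt{v}:=\pm\frac{\widetilde{h}([-h]_k+i/z)}{4k}\mp\frac{1+h[-h]_k}{4k}=\pm\frac{\widetilde{h}\tau'}{4}\mp\frac{1+h[-h]_k}{4k}$ by exactly $\pm m_0\tau'$, where $m_0:=\frac{h-\widetilde{h}}{4}\in\mathbb{Z}$ since $\widetilde{h}\equiv h\pmod{4}$. Applying \eqref{EqMuTildeEllipticProperty} with no shift in the first variable and a shift by $\pm m_0\tau'$ in the second converts $\tmu(u^*,v^*;\tau')$ into $(-1)^{m_0}e^{\pi i\tau'm_0^2}e^{\mp2\pi im_0(u^*-\wt{v})}\tmu(u^*,\wt{v};\tau')$.

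It remains to collect the scalar prefactors. Comparing the defining $\eta$-transformation of $\chi$ with the $\eta$-multiplier $\nu$ identifies $\chi(h,[-h]_k,k)=\nu(A)$; together with $\sqrt{i/z}=e^{\pi i/4}/\sqrt{z}$ (valid for $\RE{z}>0$, since then $i/z$ lies off the negative real axis) and $(-1)^{m_0}=(-1)^{(h-\widetilde{h})/4}$, this shows that $\nu(A)^{-3}\sqrt{i/z}\,(-1)^{m_0}=\frac{e^{\pi i/4}}{\sqrt{z}}\chi(h,[-h]_k,k)^{-3}(-1)^{(h-\widetilde{h})/4}$, which becomes $\xi(h,[-h]_k,k)/\sqrt{z}$ after multiplication by the phase $\exp\!\left(\frac{\pi i\widetilde{h}}{8k}-\frac{\pi i[-h]_k(\widetilde{h}-h)^2}{16k}\right)$ built into $\xi$. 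Hence the proposition is equivalent to the scalar identity
\[
-\pi kz(u^*-v^*)^2+\pi i\tau'm_0^2\mp2\pi im_0(u^*-\wt{v})
=-\tfrac{\pi z}{16k}+\tfrac{\pi(\widetilde{h}\mp4ku)^2}{16kz}\mp\tfrac{\pi iu}{2}+\tfrac{\pi i\widetilde{h}}{8k}-\tfrac{\pi i[-h]_k(\widetilde{h}-h)^2}{16k},
\]
which I would verify by expanding both sides and matching the coefficient of $z$, the coefficient of $1/z$, and the remaining $z$-free (imaginary) terms. The $z$-coefficient matches trivially; the $1/z$-coefficient collapses by the elementary identity $(a-b)^2-(b-c)^2+2(b-c)(a-c)=(a-c)^2$ applied with $(a,b,c)=(\pm4ku,\pm h,\widetilde{h})$; and the $z$-free terms reorganize using $h-\widetilde{h}=4m_0$ together with $(h-\widetilde{h})^2=(\widetilde{h}-h)^2$.

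I expect the only real obstacle to be bookkeeping: keeping the $\pm/\mp$ signs consistent through the squaring $(u^*-v^*)^2$ and the elliptic shift, and interpreting the half-integer powers $\sqrt{i/z}$ and $e^{\pi i\tau'm_0^2}$ on the correct branch — on $\RE{z}>0$ there is no ambiguity. No ideas beyond the two transformation laws for $\tmu$ are needed; in fact, that the resulting exponential prefactor agrees with the one in Proposition \ref{PropositionSTransformation} is a convenient consistency check.
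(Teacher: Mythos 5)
Your proposal is correct and follows essentially the same route as the paper's proof: apply the modular law \eqref{EqMuTildeModularProperty} with the matrix $\begin{psmallmatrix}h&-\frac{1+h[-h]_k}{k}\\k&-[-h]_k\end{psmallmatrix}$, then the elliptic law \eqref{EqMuTildeEllipticProperty} to shift the second argument by $\pm\frac{h-\wt{h}}{4}\tau'$, and finally collect the exponential and root-of-unity prefactors into $\xi(h,[-h]_k,k)$, with the $1/z$-terms collapsing via exactly the completing-the-square identity you cite. The only nitpick is the parametrization of that identity: it should be read with $(a,b,c)=(\pm 4ku,\,h,\,\wt{h})$ (so $b=h$, not $\pm h$) to cover both sign choices, but this is a trivial bookkeeping fix and does not affect the argument.
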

\begin{proof}
By (\ref{EqMuTildeModularProperty}), with 
$A=\begin{psmallmatrix}h&-\frac{1+[-h]_k}{k}\\k&-[-h]_k\end{psmallmatrix}$, we have that
\begin{align*}
\tmu\left( u, \pm\tfrac{h+iz}{4k} ; \tfrac{h+iz}{k}	\right)
&=
	\tmu\left( u, \pm\tfrac{h+iz}{4k} ; A\left(\tfrac{[-h]_k+i/z}{k}\right)	\right)
\\			
&=
	\chi\left( h, [-h]_k, k \right)^{-3}
	e^{ \frac{\pi i}{4}} 
	\exp\left(
		-\tfrac{\pi z}{16k}		
		+\tfrac{\pi(h \mp 4ku)^2}{16kz}		
		+\tfrac{\pi i(h \mp 4ku)}{8k}		
	\right)
	\frac{1}{\sqrt{z}}
	\\&\quad\times
	\tmu\left( \tfrac{iu}{z}, 
		\pm\tfrac{(h-\widetilde{h})}{4}\tfrac{([-h]_k+i/z)}{k} 
		\pm\tfrac{\widetilde{h}([-h]_k+i/z)}{4k} 
		\mp\tfrac{1+h[-h]_k}{4k}
		; \tfrac{[-h]_k+i/z}{k}	\right)
.
\end{align*}
Using (\ref{EqMuTildeEllipticProperty}) we find that
\begin{align*}
&
\tmu\left( \tfrac{iu}{z}, 
	\pm\tfrac{\left(h-\widetilde{h}\right)}{4}\tfrac{([-h]_k+i/z)}{k} 
	\pm\tfrac{\widetilde{h}([-h]_k+i/z)}{4k} 
	\mp\tfrac{1+h[-h]_k}{4k}
	; \tfrac{[-h]_k+i/z}{k}	\right)
\\
&=	
	(-1)^{\frac{h-\widetilde{h}}{4}}	
	\exp\left( 
		-\tfrac{\pi i}{16k}\left(
			[-h]_k\big(h-\widetilde{h}\big)^2	
			+
			2\big(h-\widetilde{h}\big)	
		\right)
		-
		\tfrac{\pi}{16kz}\left(
			\big(h-\widetilde{h}\big)^2	
			+
			2\big(h-\widetilde{h}\big)\big(\widetilde{h}\mp 4ku\big)	
		\right)
	\right)	
	\\&\quad\times
	\tmu\left( \tfrac{iu}{z}, 
		\pm\tfrac{\widetilde{h}([-h]_k+i/z)}{4k} 
		\mp\tfrac{1+h[-h]_k}{4k}
		; \tfrac{[-h]_k+i/z}{k}	
	\right)
.
\end{align*}
Noting
\begin{align*}
\chi\left( h, [-h]_k, k \right)^{-3}
e^{ \frac{\pi i}{4} }
(-1)^{\frac{h-\widetilde{h}}{4}}	
\exp\left(
	\tfrac{\pi i(h \mp 4ku)}{8k}		
	 -
	 \tfrac{\pi i}{16k}\left(
		[-h]_k\big(h-\widetilde{h}\big)^2	
		+
		2\big(h-\widetilde{h}\big)	
	\right)
\right)
&=
	\exp\left( \mp\tfrac{\pi iu}{2} \right)
	\xi(h,[-h]_k,k)
,\\
\exp\left(
	\tfrac{\pi(h \mp 4ku)^2}{16kz}		
	-\tfrac{\pi}{16kz}\left(
		\big(h-\widetilde{h}\big)^2	
		+
		2\big(h-\widetilde{h}\big)\big(\widetilde{h}\mp 4ku\big)	
 	\right)
\right)	
&=
	\exp\left( \tfrac{\pi\left(\widetilde{h} \mp 4ku\right)^2}{16kz} \right)
,
\end{align*}
we see that the proposition follows.
\end{proof}

\begin{proposition}\label{PropositionMuModular}
Suppose $h$ and $k$ are relatively prime integers with $k>0$, 
$u,z\in\mathbb{C}$, and $\RE{z}>0$. If $h\not\equiv 2\pmod{4}$, then
\begin{align*}
&
\mu\left(u, \pm \tfrac{h+iz}{4k}; \tfrac{h+iz}{k}\right)	
\\
&=	
	\exp\left(
		-\tfrac{\pi z}{16k}		
		+
		\tfrac{\pi\left(\widetilde{h} \mp 4ku\right)^2}{16kz}
		\mp
		\tfrac{\pi iu}{2}
	\right)
	\frac{1}{\sqrt{z}}
	\Bigg(
		\xi(h,[-h]_k,k)
		\mu\left( \tfrac{iu}{z}, \pm\tfrac{\widetilde{h}([-h]_k+i/z)}{4k} 
			\mp\tfrac{1+h[-h]_k}{4k}; \tfrac{[-h]_k+i/z}{k}	
		\right)
		\\&\quad
		+
		\frac{i}{2\sqrt{k}}
		\sum_{\ell=0}^{k-1}
		\xi^{\mp}_\ell(h,k)		
		H\left(
			\tfrac{iu}{z} \mp \tfrac{i\widetilde{h}}{4kz} + \alpha^{\pm}(\ell,k)
		 	; \tfrac{i}{kz}   
		\right)			
	\Bigg)
.
\end{align*}
If $h\equiv 2\pmod{4}$, then
\begin{align*}
&
\mu\left(u, \pm \tfrac{h+iz}{4k}; \tfrac{h+iz}{k}\right)	
\\
&=	
	\exp\left(
		-\tfrac{\pi z}{16k}		
		+
		\tfrac{\pi\left(\widetilde{h} \mp 4ku\right)^2}{16kz}
		\mp
		\tfrac{\pi iu}{2}
	\right)
	\frac{1}{\sqrt{z}}
	\Bigg(
		\xi(h,[-h]_k,k)
		\mu\left( \tfrac{iu}{z}, \pm\tfrac{\widetilde{h}([-h]_k+i/z)}{4k} 
			\mp\tfrac{1+h[-h]_k}{4k}; \tfrac{[-h]_k+i/z}{k}	
		\right)
		\\&\quad
		+
		\frac{i}{2\sqrt{k}}
		\sum_{\ell=0}^{k-1}
		\xi^{\mp}_\ell(h,k)		
		H\left(
			\tfrac{iu}{z} \mp \tfrac{i\widetilde{h}}{4kz} + \alpha^{\pm}(\ell,k)
		 	; \tfrac{i}{kz}   
		\right)		
		\\&\quad	
		+
		\frac{i}{2}
		\xi(h,[-h]_k,k)
		\exp\left( 
			-\tfrac{\pi}{4kz} \pm \tfrac{\pi u}{z}				 
			+\tfrac{\pi i\left(\wt{h}[-h]_k - h[-h]_k - [-h]_k -1\right)}{4k} 
		\right)
		\\&\quad			
		-
		\frac{i}{\sqrt{2k}}
		\sum_{\ell=0}^{k-1}
		\xi^{\mp}_\ell(h,k)		
		\exp\left( -\tfrac{\pi}{4kz} \pm \tfrac{\pi u}{z} \mp \pi i\alpha^{\pm}(\ell,k) \right)
	\Bigg)		
.
\end{align*}
\end{proposition}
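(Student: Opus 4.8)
The plan is to build the transformation for $\mu$ out of the ones already proved for $\tmu$ and $S$, using the defining relation $\tmu(u,v;\tau) = \mu(u,v;\tau) + \tfrac i2 S(u-v;\tau)$ twice. First I would write
$$\mu\left(u, \pm\tfrac{h+iz}{4k}; \tfrac{h+iz}{k}\right) = \tmu\left(u, \pm\tfrac{h+iz}{4k}; \tfrac{h+iz}{k}\right) - \tfrac i2 S\left(u \mp \tfrac{h+iz}{4k}; \tfrac{h+iz}{k}\right),$$
apply Proposition~\ref{PropositionTildeMuTransformation} to the first term and Proposition~\ref{PropositionSTransformation} to the second, and pull out the common prefactor $\exp\!\left(-\tfrac{\pi z}{16k} + \tfrac{\pi(\wt{h}\mp 4ku)^2}{16kz}\mp\tfrac{\pi iu}{2}\right)\tfrac1{\sqrt z}$ carried by both. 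Then I would re-expand the image of $\tmu$ by the same relation, $\tmu(\tfrac{iu}{z}, \tilde v;\tilde\tau) = \mu(\tfrac{iu}{z},\tilde v;\tilde\tau) + \tfrac i2 S(\tfrac{iu}{z} - \tilde v;\tilde\tau)$, where $\tilde v = \pm\tfrac{\wt{h}([-h]_k+i/z)}{4k}\mp\tfrac{1+h[-h]_k}{4k}$ and $\tilde\tau = \tfrac{[-h]_k+i/z}{k}$. This splits the right-hand side into exactly three pieces: the claimed main term $\xi(h,[-h]_k,k)\,\mu(\tfrac{iu}{z},\tilde v;\tilde\tau)$; the claimed Mordell-integral sum $\tfrac i{2\sqrt k}\sum_{\ell=0}^{k-1}\xi^\mp_\ell(h,k)\,H(\tfrac{iu}{z}\mp\tfrac{i\wt{h}}{4kz}+\alpha^\pm(\ell,k);\tfrac i{kz})$, which comes from the $-H$ half of the $(S-H)$ in Proposition~\ref{PropositionSTransformation}; and a residual
$$\mathcal{S} := \tfrac i2\left(\tfrac{\xi(h,[-h]_k,k)}{\sqrt z}\,S\!\left(\tfrac{iu}{z} - \tilde v;\tilde\tau\right) - \tfrac1{\sqrt{kz}}\sum_{\ell=0}^{k-1}\xi^\mp_\ell(h,k)\,S\!\left(\tfrac{iu}{z}\mp\tfrac{i\wt{h}}{4kz}+\alpha^\pm(\ell,k);\tfrac i{kz}\right)\right).$$

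The heart of the argument is to show that $\mathcal{S} = 0$ when $h\not\equiv 2\pmod 4$, and that (after multiplying the prefactor back in) $\mathcal{S}$ equals the two extra exponential sums in the statement when $h\equiv 2\pmod 4$. I would begin by simplifying $\tfrac{iu}{z} - \tilde v = \tfrac{iu}{z}\mp\tfrac{i\wt{h}}{4kz}\pm\tfrac{1+(h-\wt{h})[-h]_k}{4k}$, which agrees with the arguments of the $S$'s in the $\ell$-sum in the $z$-dependent part and differs in the remaining part only by a rational multiple of the period, together with — precisely when $\wt{h}=2$ — the genuine half-period $\mp\tfrac12\tilde\tau$. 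I would then run $S(\cdot;\tilde\tau)$ through exactly the machinery used to prove Proposition~\ref{PropositionSTransformation} — the period-dissection identity (Proposition~2.3 of \cite{BringmannFolsom1}) followed by \eqref{EqSEllipticProperty1}, \eqref{EqSModularProperty1}, \eqref{EqSModularProperty3} — so that both halves of $\mathcal{S}$ are written through $S$ at a common cusp, and match the resulting Gaussian exponentials and roots of unity $\xi^\mp_\ell(h,k)$, $\xi(h,[-h]_k,k)$ term by term; for $h\not\equiv 2\pmod 4$ everything cancels and $\mathcal{S}$ vanishes. When $h\equiv 2\pmod 4$ the half-period shift cannot be absorbed by \eqref{EqSEllipticProperty1} alone, and one must use the non-homogeneous (quasi-periodic) transformation of $S$ under a shift by a period — the $S$-analogue of the inhomogeneous term $2q^{3/8}e^{2\pi iu}$ in \eqref{EqHEllipticProperty1}, which reflects that $\mu$, unlike $\tmu$ in \eqref{EqMuTildeEllipticProperty}, is only quasi-periodic — and the boundary contributions accumulated there are exactly the stated sums $\tfrac i2\,\xi(h,[-h]_k,k)\exp(\cdots) - \tfrac i{\sqrt{2k}}\sum_\ell\xi^\mp_\ell(h,k)\exp(\cdots)$.

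I expect this last step — the exponent-and-root-of-unity bookkeeping inside $\mathcal{S}$, and in particular pinning down the $\wt{h}=2$ correction in closed form — to be the main obstacle; everything else is the routine (if lengthy) reduction of exponents already seen in Propositions~\ref{PropositionSTransformation} and \ref{PropositionTildeMuTransformation}. I would carry the two sign choices $\pm$ through in parallel and keep the standing assumptions on $[-h]_k$ in reserve should they be needed to collapse the phases.
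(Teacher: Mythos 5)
Your first half coincides with the paper's proof: write $\mu=\tmu-\tfrac i2S$, apply Propositions \ref{PropositionSTransformation} and \ref{PropositionTildeMuTransformation}, re-expand the image $\tmu$ as $\mu+\tfrac i2S$, and reduce everything to showing that the residual difference of $S$-sums (your $\mathcal S$) vanishes for $h\not\equiv2\pmod4$ and produces the two extra exponential sums for $h\equiv2\pmod4$. The gap is in how you propose to handle $\mathcal S$. Your plan — re-dissect the image $S(\,\cdot\,;\tfrac{[-h]_k+i/z}{k})$ by the same machinery as Proposition \ref{PropositionSTransformation} and then match against the $\ell$-sum ``term by term at a common cusp'' — does not go through as described. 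First, the moduli do not line up: applying the dissection/inversion with $h\mapsto[-h]_k$, $z\mapsto 1/z$ lands you at modulus $\tfrac{iz}{k}$, not at the modulus $\tfrac{i}{kz}$ of the $\ell$-sum, so there is no common cusp to match at without further (nontrivial) moves. Second, and more fundamentally, $S$ by itself is not modular: any transport of $S$ across cusps via \eqref{EqSModularProperty3} produces $(S-H)$, so your ``pure $S$'' matching necessarily generates new Mordell-integral terms whose cancellation is exactly the content you are trying to prove; you flag this step as bookkeeping, but it is the key analytic input and is missing.

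The paper supplies that input by a different idea, which you would need (or an equivalent): after first reducing to $\tfrac{iu}{z}\in\mathbb{R}$ (so that all relevant $S$-values are of the form $S(a\tau-b;\tau)$ with real $a,b$ and fixed $a=\mp\tfrac{\wt h}{4}$), one expands each $S$ via \eqref{EqSNice1}/\eqref{EqSNice2} as a series $\sum_{n\in\mathbb{Q}\setminus\{0\}}a(n)\Gamma\big(\tfrac12;4\pi|n|\IM{\tau}\big)q^{-n}$ (plus, when $a=\mp\tfrac12$, the single leftover term $q^{\frac18}e^{-2\pi iab}$). Identity \eqref{EqWhyTheSComboIsHolomorphic} shows the combination $\mathcal S$ equals an expression built from $\mu$ and $H$, hence is holomorphic in $\tau$, and a holomorphic function with such a purely incomplete-Gamma expansion is identically zero (apply $\partial/\partial\overline{\tau}$). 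That kills $\mathcal S$ when $h\not\equiv2\pmod4$; when $\wt h=2$ the exceptional $n=-a$ terms in \eqref{EqSNice2} survive and, once computed, are precisely the two extra sums in the statement — consistent with your half-period heuristic, but obtained without any cross-cusp matching. Without this (or a genuinely worked-out substitute), your argument for the vanishing of $\mathcal S$ and for the closed form of the $\wt h=2$ correction is not established.
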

\begin{proof}
In both cases the left and right sides are
meromorphic functions of $z$ and $u$ for $\RE{z}>0$, so it suffices to prove these
identities in the case that $\frac{iu}{z}\in\mathbb{R}$.
By definition,
\begin{align*}	
	\mu\left(u, \pm \tfrac{h+iz}{4k}; \tfrac{h+iz}{k}\right)	
	&=	
		\tmu\left(u, \pm \tfrac{h+iz}{4k}; \tfrac{h+iz}{k}\right)	
		-
		\tfrac{i}{2}S\left(u \mp \tfrac{h+iz}{4k}; \tfrac{h+iz}{k}\right)
	,
\end{align*}	
and so by Propositions \ref{PropositionSTransformation} and 
\ref{PropositionTildeMuTransformation} we deduce that
\begin{align}\label{EqWhyTheSComboIsHolomorphic}
&
\mu\left(u, \pm \tfrac{h+iz}{4k}; \tfrac{h+iz}{k}\right)	
\nonumber\\
&=	
	\exp\left(
		-\tfrac{\pi z}{16k}		
		+
		\tfrac{\pi\left(\widetilde{h} \mp 4ku\right)^2}{16kz}
		\mp
		\tfrac{\pi iu}{2}
	\right)
	\frac{1}{\sqrt{z}}
	\Bigg(
		\xi(h,[-h]_k,k)
		\mu\left( \tfrac{iu}{z}, \pm\tfrac{\widetilde{h}([-h]_k+i/z)}{4k} 
			\mp\tfrac{1+h[-h]_k}{4k}; \tfrac{[-h]_k+i/z}{k}	
		\right)
		\nonumber\\&\quad
		+
		\frac{i}{2}
		\xi(h,[-h]_k,k)
		S\left( \tfrac{iu}{z} \mp\tfrac{\widetilde{h}([-h]_k+i/z)}{4k} 
			\pm\tfrac{1+h[-h]_k}{4k}; \tfrac{[-h]_k+i/z}{k}	
		\right)
		\nonumber\\&\quad	
		-
		\frac{i}{2\sqrt{k}}
		\sum_{\ell=0}^{k-1}
		\xi^{\mp}_\ell(h,k)		
		S\left(
			\tfrac{iu}{z} \mp \tfrac{i\widetilde{h}}{4kz} + \alpha^{\pm}(\ell,k)
		 	; \tfrac{i}{kz}   
		\right)		
		+
		\frac{i}{2\sqrt{k}}
		\sum_{\ell=0}^{k-1}
		\xi^{\mp}_\ell(h,k)		
		H\left(
			\tfrac{iu}{z} \mp \tfrac{i\widetilde{h}}{4kz} + \alpha^{\pm}(\ell,k)
		 	; \tfrac{i}{kz}   
		\right)			
	\Bigg)
.
\end{align}
Next we verify the cancellations between the $S(w;\tau)$ terms. For this
we follow the method used by Bringmann and Mahlburg in \cite{BringmannMahlburg1}
and by Bringmann, Mahlburg, and Rhoades in 
\cite{BringmannMahlburgRhoades1}. The key point is that if a function of the form
\begin{align*}
	\sum_{n\in\mathbb{Q}\backslash\{0\}}
	a(n)\Gamma\left(\tfrac{1}{2};4\pi|n|\IM{\tau} \right)q^{-n}
,
\end{align*}
where $\Gamma(a;x)$ is the incomplete Gamma function, is a holomorphic 
function of $\tau$,
then in fact the function is identically zero. One can quickly deduce this is the case
by using that 
$\frac{\partial}{\partial\overline{\tau}}$ annihilates any holomorphic function of $\tau$.

To begin we note that
\begin{align*}
\beta(x)
&=
	\int_{x}^{\infty} t^{-\frac{1}{2}}e^{-\pi t} dt
=
	\frac{1}{\sqrt{\pi}}
	\Gamma\left(\tfrac{1}{2}; \pi x\right)
.
\end{align*}
We rewrite $S(w;\tau)$ as
\begin{align*}
&S(w;\tau)
\\
&=
	\sum_{n\in \frac{1}{2}+\mathbb{Z}}	
	\left(
		\sgn{n} 
		- 
		\sgn{n+\tfrac{\IM{w}}{\IM{\tau}}}
		+
		\sgn{n+\tfrac{\IM{w}}{\IM{\tau}}}
		\beta\left( \left(n+\tfrac{\IM{w}}{\IM{\tau}}\right)^2 2\IM{\tau} \right)		
	\right)	
	(-1)^{n-\frac{1}{2}}
	q^{-\frac12n^2}
	e^{-2\pi inw}	
.
\end{align*}
For $a$ such that $-\frac{1}{2}<a<\frac{1}{2}$, we have
$\sgn{n}=\sgn{n+a}$ for all $n\in\frac{1}{2}+\mathbb{Z}$. Thus for
$a$, $b\in\mathbb{R}$ with $|a|<\frac{1}{2}$,
\begin{align}\label{EqSNice1}
S(a\tau-b;\tau)
&=
	\frac{1}{\sqrt{\pi}}
	\sum_{n\in \frac{1}{2}+\mathbb{Z}}	
	\sgn{n}
	(-1)^{n-\frac{1}{2}}
	q^{-\frac12n^2}
	e^{- 2\pi in(a\tau-b) }
	\Gamma\left(\tfrac{1}{2}; 2\pi(n+a)^2 \IM{\tau}\right)
.
\end{align}	

In the case when $h\not\equiv 2\pmod{4}$, we claim
\begin{align*}
\xi(h,[-h]_k,k)
S\left( \tfrac{iu}{z} \mp\tfrac{\widetilde{h}([-h]_k+i/z)}{4k} 
	\pm\tfrac{1+h[-h]_k}{4k}; \tfrac{[-h]_k+i/z}{k}	
\right)
-
\frac{1}{\sqrt{k}}
\sum_{\ell=0}^{k-1}
\xi^{\mp}_\ell(h,k)		
S\left(
	\tfrac{iu}{z} \mp \tfrac{i\widetilde{h}}{4kz} + \alpha^{\pm}(\ell,k)
 	; \tfrac{i}{kz}   
\right)		
&=
0.
\end{align*}
We set $\tau = \frac{i}{kz}$ and $w=\frac{iu}{z}$.
For $w,\alpha,\beta\in\mathbb{R}$ and $h\not\equiv 2\pmod{4}$, 
we have by \eqref{EqSNice1} that
\begin{align*}
&S\left( 
		\tfrac{iu}{z} \mp\tfrac{\widetilde{h}i}{4kz} + \alpha; \tfrac{i}{kz}
		+\beta	
	\right)
\\
&=
	\frac{1}{\sqrt{\pi}}
	\sum_{n\in \frac{1}{2}+\mathbb{Z}}	
	\sgn{n}
	(-1)^{n-\frac{1}{2}}
	\exp\left(
		-\pi in^2\left(\tau+\beta\right) 
		- 2\pi in\left(w \mp \tfrac{\wt{h}\tau}{4} + \alpha\right)
	\right)		
	\Gamma\left(
		\tfrac{1}{2}; 
		2\pi\left(n \mp \tfrac{\widetilde{h}}{4}\right)^2 \IM{\tau}
	\right)
.
\end{align*}
This implies
\begin{multline*}	
\exp\left(-\tfrac{\pi i\wt{h}^2\tau}{16}\right)
\Bigg(
	\xi(h,[-h]_k,k)
	S\left( \tfrac{iu}{z} \mp\tfrac{\widetilde{h}([-h]_k+i/z)}{4k} 
		\pm\tfrac{1+h[-h]_k}{4k}; \tfrac{[-h]_k+i/z}{k}	
	\right)
	\\
	-
	\frac{1}{\sqrt{k}}
	\sum_{\ell=0}^{k-1}
	\xi^{\mp}_\ell(h,k)		
	S\left(
		\tfrac{iu}{z} \mp \tfrac{i\widetilde{h}}{4kz} + \alpha^{\pm}(\ell,k)
 		; \tfrac{i}{kz}   
	\right)		
\Bigg)
\end{multline*}
is a holomorphic function of $\tau$,
because \eqref{EqWhyTheSComboIsHolomorphic} shows this difference is expressible in
terms of $\mu$ and $H$ functions,
with an expansion of the form
\begin{align*}
	\sum_{n\in\mathbb{Q}\backslash\{0\}}
	a(n)\Gamma\left(\tfrac{1}{2};4\pi|n| \IM{\tau} \right)q^{-n}
	,
\end{align*}\sloppy	
and so the function is identically zero. The assumption that $\frac{iu}{z}\in\mathbb{R}$
is so that the $\frac{iu}{z}$ term in 
$S\big( \frac{iu}{z} \mp\frac{\widetilde{h}i}{4kz} + \alpha; \tfrac{i}{kz} + \beta\big)$
contributes only to the $b$ term in $S(a\tau-b;\tau)$ (and not to the $a$ term).
	
\fussy	
In the case when $a=\mp \frac{1}{2}$, we have
$\sgn{n}=\sgn{n+a}$ for all $n\in\frac{1}{2}+\mathbb{Z}$ except for
$n=-a$. As such we have for $b\in\mathbb{R}$ and $a=\mp\frac12$ that
\begin{align}\label{EqSNice2}
S(a\tau-b;\tau)
&=
	q^{\frac18}e^{-2\pi iab}
	\nonumber\\&\quad
	+	
	\frac{1}{\sqrt{\pi}}
	\sum_{\substack{n\in \frac{1}{2}+\mathbb{Z},\\ n\not=-a}}	
	\sgn{n}
	(-1)^{n-\frac{1}{2}}
	q^{-\frac12n^2}
	e^{- 2\pi in(a\tau-b)}
	\Gamma\left(\tfrac{1}{2}; 2\pi(n+a)^2 \IM{\tau}\right)
.
\end{align}	
By a similar argument we find the $S(w;\tau)$ terms almost fully cancel when
$h\equiv2\pmod{4}$. However, there are terms that remain due to the 
$q^{\frac18}e^{-2\pi iab}$ in \eqref{EqSNice2}. Upon calculating these
terms, we find that they are as stated in the proposition.
\end{proof}

In Proposition \ref{PropositionMuModular} we found that in some cases
various $S(u;\tau)$ terms do not simplify to zero.
That $S(u;\tau)$ may at times contribute to the holomorphic part of $\tmu(u,v;\tau)$
is entirely expected. In particular one can verify that 
$S\left(\frac{\tau}{2};\tau\right)=q^{\frac18}$.


\begin{lemma}\label{LemmaR2Transformations}
Suppose $h$ and $k$ are relatively prime integers with $k>0$,
$u,z\in\mathbb{C}$, and $\RE{z}>0$.
\begin{enumerate}[leftmargin=*]
\item For $k\equiv 0\pmod{4}$, we have that
\begin{align*}
R2\left(e^{2\pi iu}; e^{\frac{2\pi i(h+iz)}{k}}\right)
&=
	\frac{2\sin(\pi u)}{\sqrt{z}}
	\exp\left( 
		-\tfrac{\pi z}{4k} 
		+ \tfrac{\pi ku^2}{z} 
		+ \tfrac{\pi}{4kz}  
	\right)
	\\&\quad\times
	\sum_{\pm}\mp 
	\exp\left( \mp\tfrac{\pi u\wt{h}}{z} \right)
	\Bigg(
		\xi\left(h, [-h]_{4k}, \tfrac{k}{4}\right)
		\mu\left( 
			\tfrac{2iu}{z}, \pm\tfrac{\wt{h} ( [-h]_{4k}+i/z ) }{k} 
			; \tfrac{4( [-h]_{4k} + i/z )}{k} 
		\right)
		\\&\quad			
		+
		\frac{i}{\sqrt{k}}\sum_{\ell=0}^{\frac{k}{4}-1}
		\xi_{\ell}^{\mp}\left(h, \tfrac{k}{4}\right)
		H\left( 
			\tfrac{2iu}{z} \mp\tfrac{i\wt{h}}{kz} + \alpha^{\pm}\left(\ell,\tfrac{k}{4}\right)
			; \tfrac{4i}{kz}  
		\right)
	\Bigg)
.
\end{align*}

\item For $k\equiv 2\pmod{4}$, we have that
\begin{align*}
R2\left(e^{2\pi iu}; e^{\frac{2\pi i(h+iz)}{k}}\right)
&=
	\frac{\sqrt{2}\sin(\pi u)}{\sqrt{z}}
	\exp\left( -\tfrac{\pi z}{4k} + \tfrac{\pi ku^2}{z} + \tfrac{\pi}{4kz}  \right)
	\sum_{\pm}\mp 
	\exp\left( \mp\tfrac{\pi u}{z} \right)
	\\&\quad\times	
	\Bigg(
		\xi\left(2h, [-2h]_{\frac{k}{2}}, \tfrac{k}{2}\right)
		\mu\left( 
			\tfrac{iu}{z}, \pm\tfrac{2[-2h]_{\frac{k}{2}}+i/z}{2k} 
			\mp \tfrac{1+2h[-2h]_{\frac{k}{2}}}{2k}
			; \tfrac{ 2[-2h]_{\frac{k}{2}} + i/z}{k}
		\right)
		\\&\quad			
		+
		\frac{i}{\sqrt{2k}}\sum_{\ell=0}^{\frac{k}{2}-1}
		\xi_{\ell}^{\mp}\left(2h, \tfrac{k}{2}\right)
		H\left( 
			\tfrac{iu}{z} \mp \tfrac{i}{2kz} + \alpha^{\pm}\left(\ell, \tfrac{k}{2}\right); 
			\tfrac{i}{kz}  
		\right)
	\Bigg)
.
\end{align*}

\item For $k\equiv 1 \pmod{2}$, we have that
\begin{align*}
R2\left(e^{2\pi iu}; e^{\frac{2\pi i(h+iz)}{k}}\right)
&=
	\frac{\sin(\pi u)}{\sqrt{z}}
	\exp\left( -\tfrac{\pi z}{4k} + \tfrac{\pi ku^2}{z}  \right)
	\sum_{\pm}\mp 
	\Bigg(
		\xi\left(4h,\tfrac{[-h]_k}{4},k\right)
		\mu\left( 
			\tfrac{iu}{2z}, \mp\tfrac{1+h[-h]_k}{4k} 
			; \tfrac{[-h]_k + i/z}{4k}  
		\right)
		\\&\quad			
		+
		\frac{i}{2\sqrt{k}}\sum_{\ell=0}^{k-1}
		\xi_{\ell}^{\mp}(4h,k)
		H\left( \tfrac{iu}{2z} + \alpha^{\pm}(\ell,k); \tfrac{i}{4kz}  \right)
	\Bigg)
.
\end{align*}
\end{enumerate}
\end{lemma}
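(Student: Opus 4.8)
The plan is to start from the representation \eqref{M2RankInTermsOfMu}, which with $\zeta=e^{2\pi iu}$ reads $R2(e^{2\pi iu};\tau)=i(1-e^{2\pi iu})\bigl(e^{-2\pi iu}\mu(2u,-\tau;4\tau)-\mu(2u,\tau;4\tau)\bigr)$, specialize $\tau=\tfrac{h+iz}{k}$, and rewrite each of the two values $\mu\bigl(2u,\pm\tfrac{h+iz}{k};\tfrac{4(h+iz)}{k}\bigr)$ in a shape to which Proposition~\ref{PropositionMuModular} applies. The correct rescaling depends on $k\bmod 4$: when $k\equiv0\pmod4$ one has $\mu\bigl(2u,\pm\tfrac{h+iz}{k};\tfrac{4(h+iz)}{k}\bigr)=\mu\bigl(2u,\pm\tfrac{h+iz}{4(k/4)};\tfrac{h+iz}{k/4}\bigr)$, so one invokes Proposition~\ref{PropositionMuModular} with parameters $(2u,h,z,k/4)$; when $k\equiv2\pmod4$ one sets $w:=2h+i(2z)$ and uses $\mu\bigl(2u,\pm\tfrac{h+iz}{k};\tfrac{4(h+iz)}{k}\bigr)=\mu\bigl(2u,\pm\tfrac{w}{4(k/2)};\tfrac{w}{k/2}\bigr)$, invoking the proposition with parameters $(2u,2h,2z,k/2)$; and when $k$ is odd one sets $w:=4h+i(4z)$ and uses $\mu\bigl(2u,\pm\tfrac{h+iz}{k};\tfrac{4(h+iz)}{k}\bigr)=\mu\bigl(2u,\pm\tfrac{w}{4k};\tfrac{w}{k}\bigr)$, invoking it with parameters $(2u,4h,4z,k)$. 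In all three cases the relevant pair is coprime because $(h,k)=1$ (using also that $k/2$, resp.\ $k$, is odd in the last two cases), and the new ``$z$'' has positive real part since $\RE{z}>0$.

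Since $(h,k)=1$, the first parameter is odd modulo $4$ when $k\equiv0\pmod4$, is $2h\equiv2\pmod4$ when $k\equiv2\pmod4$, and is $4h\equiv0\pmod4$ when $k$ is odd; thus for $k\equiv0\pmod4$ and $k$ odd the first (shorter) branch of Proposition~\ref{PropositionMuModular} applies, while for $k\equiv2\pmod4$ the second branch applies and produces extra elementary terms. After substituting, the remaining work is the lengthy but mechanical reduction of the exponential prefactors. Here one repeatedly uses $\wt h\in\{-1,1\}$, hence $\wt h^{\,2}=1$, when $k\equiv0\pmod4$; $\wt{2h}=2$ when $k\equiv2\pmod4$; $\wt{4h}=0$ when $k$ is odd; the normalization assumptions on the inverses (so that $[-h]_{k/4}$ may be taken equal to $[-h]_{4k}$ when $4\mid k$, and $[-4h]_k=\tfrac14[-h]_k$ when $k$ is odd); the identity $i(1-e^{2\pi iu})=2e^{\pi iu}\sin(\pi u)$; and the periodicity $\mu(\cdot,v+2;\cdot)=\mu(\cdot,v;\cdot)$, which lets one discard the shift $\mp\tfrac{1+h[-h]_{4k}}{k}$ in the second argument of $\mu$ in the case $k\equiv0\pmod4$, since $1+h[-h]_{4k}\equiv0\pmod{4k}$ makes it an even integer. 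The sign-dependent prefactors $e^{\mp\pi u\wt h/z}$ (respectively $e^{\mp\pi u/z}$, respectively nothing) are then kept inside the sum, and after combining with the factor from $i(1-e^{2\pi iu})$ one recognizes the result as the displayed $\sum_{\pm}$, modulo the cancellations addressed next.

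The only genuinely non-routine point is the cancellation of the extra elementary terms from the second branch in the case $k\equiv2\pmod4$. For each sign Proposition~\ref{PropositionMuModular} contributes a term proportional to $\xi\bigl(2h,[-2h]_{k/2},k/2\bigr)$ and a sum $\sum_{\ell=0}^{k/2-1}\xi_\ell^{\mp}\bigl(2h,k/2\bigr)\exp(\cdots)$. In the combination $e^{-2\pi iu}\mu(2u,-\tau;4\tau)-\mu(2u,\tau;4\tau)$ the two $\xi\bigl(2h,[-2h]_{k/2},k/2\bigr)$ terms have, once the $e^{\mp\pi u/z}$ and $e^{\mp\pi iu}$ factors are absorbed, identical exponents and hence cancel outright. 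For the two $\ell$-sums one checks the closed forms $\xi_\ell^{+}\!\bigl(2h,\tfrac k2\bigr)e^{\pi i\alpha^{-}(\ell,k/2)}=(-1)^{\ell+1}i\,e^{-2\pi i h(2\ell+1)(\ell+1)/k}$ and $\xi_\ell^{-}\!\bigl(2h,\tfrac k2\bigr)e^{-\pi i\alpha^{+}(\ell,k/2)}=(-1)^{\ell}i\,e^{-2\pi i h\ell(2\ell+1)/k}$, and then the substitution $\ell\mapsto\tfrac k2-1-\ell$ carries the first sum onto the second; this step uses that $\tfrac k2$ is odd and that $h(2\ell+1)(\ell+1)\equiv\tfrac k2+h\ell'(2\ell'+1)\pmod k$ for $\ell'=\tfrac k2-1-\ell$, because $h(2\ell+1)$ is odd forces $h(2\ell+1)\cdot\tfrac k2\equiv\tfrac k2\pmod k$. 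I expect this reindexing, together with the sheer density of the exponent bookkeeping across the three cases, to be the main obstacle; beyond Proposition~\ref{PropositionMuModular} and elementary manipulations nothing deeper is involved.
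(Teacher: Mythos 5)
Your proposal is correct and follows essentially the same route as the paper: it starts from \eqref{M2RankInTermsOfMu}, applies Proposition \ref{PropositionMuModular} with the substitutions $(2u,h,z,k/4)$, $(2u,2h,2z,k/2)$, $(2u,4h,4z,k)$ according to $k\bmod 4$, and disposes of the extra second-branch terms in the case $k\equiv2\pmod{4}$ by the same reindexing $\ell\mapsto\tfrac k2-1-\ell$ that the paper uses (your explicit closed forms for $\xi_\ell^{\pm}\bigl(2h,\tfrac k2\bigr)e^{\mp\pi i\alpha^{\pm}}$ check out and are equivalent to the paper's identity $\xi^-_\ell(2h,\tfrac k2)e^{-\pi i\alpha^+(\ell,k/2)}=\xi^+_{k/2-\ell-1}(2h,\tfrac k2)e^{\pi i\alpha^-(k/2-\ell-1,k/2)}$). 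Your added observations — absorbing the even-integer shift $\mp\tfrac{1+h[-h]_{4k}}{k}$ via $2$-periodicity of $\mu$ in $v$, and the outright cancellation of the $\xi\bigl(2h,[-2h]_{k/2},\tfrac k2\bigr)$ constant terms under $\sum_{\pm}\mp$ — are exactly the steps the paper leaves implicit or carries out in its displayed computation.
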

\begin{proof}
The proofs of all three cases follow from similar calculations and so we
only give the proof for the case when $k\equiv2\pmod{4}$. This case is actually
the most complicated, as it requires verifying the cancellation of additional
terms that are not present in the other two cases.
By \eqref{M2RankInTermsOfMu} we find
\begin{align*}
R2\left(e^{2\pi iu}; e^{\frac{2\pi i(h+iz)}{k}}\right)
&=
	i(1-e^{2\pi iu})\left(
		e^{-2\pi iu}\mu\left( 2u, -\tfrac{h+iz}{k}; \tfrac{4(h+iz)}{k} \right)
		-
		\mu\left( 2u, \tfrac{h+iz}{k}; \tfrac{4(h+iz)}{k} \right)
	\right)
\\	
&=
	i(1-e^{2\pi iu})e^{-\pi iu}
	\sum_{\pm} \mp e^{\pm \pi iu}
	\mu\left( 2u, \pm \tfrac{h+iz}{k}; \tfrac{4(h+iz)}{k}   \right)
\\	
&=
	2\sin(\pi u)
	\sum_{\pm} \mp e^{\pm \pi iu}
	\mu\left( 2u, \pm \tfrac{h+iz}{k}; \tfrac{4(h+iz)}{k}   \right)
.
\end{align*}

In the case that $k\equiv 2\pmod{4}$ we note that
$\gcd\left(2h,\frac{k}{2}\right)=1$ and $\wt{2h}=2$.
Applying Proposition \ref{PropositionMuModular} with
$k\mapsto \frac{k}{2}$, $h\mapsto 2h$, $u\mapsto 2u$,
and $z\mapsto 2z$ gives that
\begin{align*}
R2\left(e^{2\pi iu};e^{\frac{2\pi i(h+iz)}{k}}\right)
&=	
	\frac{\sqrt{2}\sin(\pi u)}{\sqrt{z}}
	\exp\left( -\tfrac{\pi z}{4k} + \tfrac{\pi}{4kz} + \tfrac{\pi ku^2}{z} \right)
	\sum_{\pm} \mp 
	\exp\left(	\mp \tfrac{\pi u}{z} \right)
	\\&\quad\times
	\Bigg(
		\xi\left(2h,[-2h]_{\frac{k}{2}},\tfrac{k}{2}\right)
		\mu\left( \tfrac{iu}{z}, \pm\tfrac{2[-2h]_{\frac{k}{2}}+i/z}{2k} 
			\mp\tfrac{1+2h[-2h]_{\frac{k}{2}}}{2k}
			; 
			\tfrac{2[-2h]_{\frac{k}{2}}+i/z}{k}	
		\right)
		\\&\quad
		+
		\frac{i}{\sqrt{2k}}
		\sum_{\ell=0}^{\frac{k}{2}-1}
		\xi^{\mp}_\ell\left(2h,\tfrac{k}{2}\right)		
		H\left(
			\tfrac{iu}{z} \mp \tfrac{i}{2kz} + \alpha^{\pm}\left(\ell,\tfrac{k}{2}\right)
		 	; \tfrac{i}{kz}   
		\right)			
		\\&\quad			
 		+
 		\frac{i}{2}
		\xi\left(2h,[-2h]_{\frac{k}{2}},\tfrac{k}{2}\right)
		\exp\bigg( 
			-\tfrac{\pi}{4kz}\pm\tfrac{\pi u}{z}				 
			+\tfrac{\pi i\big( 2[-2h]_{\frac{k}{2}} - 2h[-2h]_{\frac{k}{2}} - [-2h]_{\frac{k}{2}} -1 \big)}{2k} 
		\bigg)
		\\&\quad
		-
		\frac{i}{\sqrt{k}}
		\sum_{\ell=0}^{\frac{k}{2}-1}
		\xi^{\mp}_\ell\left(2h,\tfrac{k}{2}\right)		
		\exp\left(
			-\tfrac{\pi}{4kz} \pm \tfrac{\pi u}{z} 
			\mp \pi i\alpha^{\pm}\left(\ell,\tfrac{k}{2}\right)
		\right)
	\Bigg)		
.
\end{align*}
To finish the proof of $(2)$ we must verify
\begin{multline*}
0
=
\sum_{\pm} \mp 
\exp\left(	\mp \tfrac{\pi u}{z} \right)
\Bigg(
	\frac{i}{2}
	\xi\left(2h,[-2h]_{\frac{k}{2}},\tfrac{k}{2}\right)
	\exp\bigg( 
		-\tfrac{\pi}{4kz} \pm \tfrac{\pi u}{z}				 
		+\tfrac{\pi i\big( 2[-2h]_{\frac{k}{2}} - 2h[-2h]_{\frac{k}{2}} - [-2h]_{\frac{k}{2}} -1 \big)}{2k} 
	\bigg)
	\\
	-
	\frac{i}{\sqrt{k}}
	\sum_{\ell=0}^{\frac{k}{2}-1}
	\xi^{\mp}_\ell\left(2h,\tfrac{k}{2}\right)		
	\exp\left(
		-\tfrac{\pi}{4kz} \pm \tfrac{\pi u}{z} 
		\mp \pi i\alpha^{\pm}\left(\ell,\tfrac{k}{2}\right)
	\right)
\Bigg)
.
\end{multline*}
To begin we note
\begin{align*}
&
\sum_{\pm} \mp 
\exp\left(	\mp \tfrac{\pi u}{z} \right)
\Bigg(
	\frac{i}{2}
	\xi\left(2h,[-2h]_{\frac{k}{2}},\tfrac{k}{2}\right)
	\exp\bigg( 
		-\tfrac{\pi}{4kz}\pm\tfrac{\pi u}{z}				 
		+\tfrac{\pi i\big( 2[-2h]_{\frac{k}{2}} - 2h[-2h]_{\frac{k}{2}} - [-2h]_{\frac{k}{2}} -1\big)}{2k} 
	\bigg)
	\\&\qquad\qquad\qquad\qquad			
	-
	\frac{i}{\sqrt{k}}
	\sum_{\ell=0}^{\frac{k}{2}-1}
	\xi^{\mp}_\ell\left(2h,\tfrac{k}{2}\right)		
	\exp\left(
		-\tfrac{\pi}{4kz} \pm \tfrac{\pi u}{z} 
		\mp \pi i\alpha^{\pm}\left(\ell,\tfrac{k}{2}\right)
	\right)
\Bigg)
\\
&=
	\frac{i}{\sqrt{k}}
	\exp\left( -\tfrac{\pi}{4kz} \right)
	\sum_{\pm} \pm 
	\sum_{\ell=0}^{\frac{k}{2}-1}
	\xi^{\mp}_\ell\left(2h,\tfrac{k}{2}\right)		
	\exp\left(
		\mp \pi i\alpha^{\pm}\left(\ell,\tfrac{k}{2}\right)
	\right)
.
\end{align*}
A direct calculation reveals that
\begin{align*}
\xi^-_\ell \left(2h,\tfrac{k}{2}\right) \exp\left(-\pi i\alpha^+\left(\ell,\tfrac{k}{2}\right)\right)
&=
	\xi^+_{\frac{k}{2}-\ell-1}\left(2h,\tfrac{k}{2}\right)
	\exp\left( \pi i\alpha^-\left(\tfrac{k}{2}-\ell-1,\tfrac{k}{2}\right) \right)
	.
\end{align*}
From this it follows that
\begin{align*}
\sum_{\pm} \pm 
\sum_{\ell=0}^{\frac{k}{2}-1}
\xi^{\mp}_\ell\left(2h,\tfrac{k}{2}\right)		
\exp\left(
	\mp \pi i\alpha^{\pm}\left(\ell,\tfrac{k}{2}\right)
\right)
&=
	\sum_{\ell=0}^{\frac{k}{2}-1}
	\xi^{-}_\ell\left(2h,\tfrac{k}{2}\right)		
	\exp\left( - \pi i\alpha^{+}\left(\ell,\tfrac{k}{2} \right) \right)
	\\&\quad
	-
	\sum_{\ell=0}^{\frac{k}{2}-1}
	\xi^{+}_{\frac{k}{2}-\ell-1}\left(2h,\tfrac{k}{2}\right)		
	\exp\left( \pi i\alpha^{-}\left(\tfrac{k}{2}-\ell-1,\tfrac{k}{2}\right) \right)
=0.
\end{align*}
This establishes $(2)$.
\end{proof}

Next we need bounds relevant to the $H$ terms appearing in Lemma 
\ref{LemmaR2Transformations}.
For $h$ and $k$ integers with $k>0$, 
$-\frac{1}{2}<\alpha<\frac{1}{2}$,
and $u,z\in\mathbb{C}$ with
$\RE{z}>0$ we define the functions
\begin{align*}
H_{\pm,k,h,\alpha}(u;z)
&:=
H\left(
	\tfrac{iu}{z} + \alpha \mp \tfrac{\wt{h}i}{4kz} ; \tfrac{i}{kz}
\right)
,&
H^{(\ell)}_{\pm,k,h,\alpha}(u;z)
&:=
	\left( \frac{\partial}{\partial u} \right)^{\ell}	
	H_{\pm,k,h,\alpha}(u;z)
.
\end{align*}

\begin{lemma}\label{LemmaMordellIntegralBounds}
Suppose $h$ and $k$ are integers with $k>0$,
$-\frac{1}{2}<\alpha<\frac{1}{2}$, and $\RE{z}>0$.
Then
\begin{align*}
H^{(\ell)}_{\pm,k,h,\alpha}(0;z)	
&\ll_{\ell}
|z|^{-\ell} \exp\left( -\tfrac{\pi \wt{h}^2}{16k}\RE{\tfrac{1}{z}} \right)
.
\end{align*}	
\end{lemma}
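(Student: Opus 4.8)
The plan is to estimate $H^{(\ell)}_{\pm,k,h,\alpha}(0;z)$ directly from the integral representation
\[
H(w;\sigma) = \int_{-\infty}^\infty \frac{\exp\left(\pi i\sigma x^2 - 2\pi xw\right)}{\cosh(\pi x)}\, dx,
\]
with $\sigma = \frac{i}{kz}$ and $w = \alpha \mp \frac{\wt{h}i}{4kz}$, after differentiating $\ell$ times in $u$. First I would observe that each $u$-derivative of $H\left(\tfrac{iu}{z} + \alpha \mp \tfrac{\wt{h}i}{4kz};\tfrac{i}{kz}\right)$ brings down a factor $\tfrac{i}{z}\cdot(-2\pi x)$ inside the integrand, so that
\[
H^{(\ell)}_{\pm,k,h,\alpha}(0;z) = \left(\frac{-2\pi i}{z}\right)^{\ell}\int_{-\infty}^\infty \frac{x^\ell \exp\left(-\tfrac{\pi x^2}{kz} \pm \tfrac{\pi \wt{h} x}{2kz} - 2\pi \alpha x\right)}{\cosh(\pi x)}\, dx.
\]
This already explains the $|z|^{-\ell}$ factor; the remaining task is to bound the integral by $\ll_\ell \exp\left(-\tfrac{\pi\wt{h}^2}{16k}\RE{\tfrac1z}\right)$ uniformly.

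The key step is to complete the square in the exponent: the quadratic-plus-linear part is $-\tfrac{\pi}{kz}\left(x \mp \tfrac{\wt{h}}{4}\right)^2 + \tfrac{\pi\wt{h}^2}{16kz}$. Pulling out the constant $\exp\left(\tfrac{\pi\wt{h}^2}{16kz}\right)$ and noting $\left|\exp\left(\tfrac{\pi\wt{h}^2}{16kz}\right)\right| = \exp\left(\tfrac{\pi\wt{h}^2}{16k}\RE{\tfrac1z}\right)$ — wait, the sign is wrong; since $\RE{\tfrac{1}{z}} = \RE{z}/|z|^2 > 0$ and we need $\exp\left(-\tfrac{\pi\wt{h}^2}{16k}\RE{\tfrac1z}\right)$, I instead shift the contour of integration. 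Substituting $x \mapsto x \pm \tfrac{\wt{h}}{4}$ (a horizontal real shift, legitimate since the integrand is entire in $x$ with Gaussian decay and $\cosh(\pi x)$ has no real zeros on the shifted line since $\wt{h}/4 \in \{-\tfrac14, 0, \tfrac12\}$ avoids half-integers) turns the exponent into $-\tfrac{\pi x^2}{kz} - 2\pi\alpha x \mp \tfrac{\pi\wt{h}\alpha}{2}$ times a bounded factor, but now a new constant $\exp\left(-\tfrac{\pi\wt{h}^2}{16kz}\cdot\text{(something)}\right)$ — I need to be more careful. The cleanest route: write $-\tfrac{\pi x^2}{kz} \pm \tfrac{\pi\wt{h}x}{2kz} = -\tfrac{\pi}{kz}\left(x\mp\tfrac{\wt{h}}{4}\right)^2 + \tfrac{\pi\wt{h}^2}{16kz}$, and bound $\left|\exp\left(-\tfrac{\pi}{kz}\left(x\mp\tfrac{\wt{h}}{4}\right)^2\right)\right| = \exp\left(-\tfrac{\pi\RE{z}}{k|z|^2}\left(x\mp\tfrac{\wt{h}}{4}\right)^2\right) \le 1$ on the real line, while $\left|\exp\left(\tfrac{\pi\wt{h}^2}{16kz}\right)\right| = \exp\left(\tfrac{\pi\wt{h}^2}{16k}\RE{\tfrac{1}{z}}\right)$. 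Hmm — this gives the opposite-sign exponential, so I suspect the intended bound's sign is correct once one accounts for the $\exp\left(-2\pi\alpha x\right)$ factor and a contour shift $x \mapsto x + \tfrac{i\wt{h}\RE{z}}{4k|z|}\cdot(\pm1)$ in the \emph{imaginary} direction to kill the cross term, after which the surviving constant is $\exp\left(-\tfrac{\pi\wt{h}^2\RE{z}}{16k|z|^2}\right) = \exp\left(-\tfrac{\pi\wt{h}^2}{16k}\RE{\tfrac1z}\right)$, as desired; the $\cosh(\pi x)$ in the denominator on the shifted line stays bounded below in modulus provided the imaginary shift has size less than $\tfrac12$, which holds since $|\wt{h}|\RE{z}/(4k|z|) $ need not be small — here I would instead keep $|\wt h| \le 2$ and split by cases or use that only the exponential growth matters and $|\cosh(\pi(x+iy))|^{-1} \ll e^{-\pi|x|}$ uniformly for $y$ in a fixed range avoiding $y \in \tfrac12 + \mathbb{Z}$.

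So the key steps, in order, are: (i) differentiate under the integral sign $\ell$ times, producing the $(2\pi i/z)^\ell$ prefactor and an $x^\ell$ in the numerator; (ii) justify a contour shift in the imaginary direction to absorb the linear-in-$x$ term $\pm\tfrac{\pi\wt{h}x}{2kz}$ into the Gaussian, the shift being by an amount proportional to $\wt{h}\,\RE{z}/|z|^2$ and hence acceptable because $|\wt{h}|\le 2$ and we can bound $1/|\cosh(\pi(x+iy))| \ll e^{-\pi|x|}$ uniformly on the relevant horizontal lines; (iii) read off that the shift contributes exactly the factor $\exp\left(-\tfrac{\pi\wt{h}^2}{16k}\RE{\tfrac1z}\right)$; (iv) bound the remaining integral $\int x^\ell e^{-\pi\RE{z}x^2/(k|z|^2) - 2\pi\alpha x}/\cosh(\pi x)\, dx$ by an absolute constant depending only on $\ell$ (using $|\alpha|<\tfrac12$, the Gaussian/$\cosh$ decay, and $\RE z/(k|z|^2)>0$, though uniformity as $\RE z/|z|^2 \to 0$ needs the $\cosh$ decay alone — the Gaussian is not needed for convergence). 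The main obstacle I anticipate is step (ii)–(iii): making the contour shift rigorous and checking that the poles of $1/\cosh(\pi x)$ at $x \in \tfrac{i}{2} + i\mathbb{Z}$ are not crossed (or, if they are, that their residue contributions are themselves $O_\ell$ of the claimed bound), and tracking the sign of the resulting constant so that it genuinely comes out as $\exp\left(-\tfrac{\pi\wt h^2}{16k}\RE{\tfrac1z}\right)$ rather than its reciprocal. Everything else is routine Gaussian/hyperbolic integral estimation.
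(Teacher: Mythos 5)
Your overall plan (differentiate under the integral sign, pull out the exponential factor by completing the square and shifting the contour, then bound what remains) is exactly the paper's strategy, but the execution breaks at the decisive step because you dropped a factor of $i$: at $u=0$ the linear term in the exponent is $-2\pi x\bigl(\alpha \mp \tfrac{\wt{h}i}{4kz}\bigr)$, so the $1/z$-part of the linear coefficient is purely imaginary, $\pm\tfrac{\pi \wt{h} i\, x}{2kz}$, not $\pm\tfrac{\pi \wt{h} x}{2kz}$. With the $i$ in place, completing the square in that term alone gives $-\tfrac{\pi}{kz}\bigl(x\mp\tfrac{\wt{h}i}{4}\bigr)^2-\tfrac{\pi\wt{h}^2}{16kz}$, so the extracted constant is $\exp\bigl(-\tfrac{\pi\wt{h}^2}{16kz}\mp\tfrac{\pi i\wt{h}\alpha}{2}\bigr)$, whose modulus is precisely the claimed $\exp\bigl(-\tfrac{\pi\wt{h}^2}{16k}\RE{\tfrac{1}{z}}\bigr)$, and the required contour shift is by the fixed, $z$-independent, purely imaginary amount $\mp\tfrac{\wt{h}i}{4}$. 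There is no wrong-sign factor to repair, and your proposed repair is itself incorrect: a shift by $\tfrac{\wt{h}\,\RE{z}}{4k|z|}$ in the imaginary direction does not cancel the cross term (the cancellation condition forces the shift to be exactly $\wt{h}/4$), and the ``surviving constant'' you assert is never derived. Once the shift is back on $\mathbb{R}$, the Gaussian satisfies $|\exp(-\pi w^2/(kz))|\le 1$ and the leftover integral is $O_\ell(1)$, as you say.

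The second, more substantive gap is the case $\wt{h}=2$, which is where essentially all of the paper's proof lives. Since $\wt{h}\in\{-1,0,1,2\}$, the shift for $\wt{h}=2$ is by $i/2$, and the shifted line passes through a zero of the hyperbolic cosine: after the substitution the integrand $\bigl(w\pm\tfrac{\wt{h}i}{4}\bigr)^{\ell}\exp\bigl(-\tfrac{\pi w^2}{kz}-2\pi\alpha w\bigr)/\cosh\bigl(\pi\bigl(w\pm\tfrac{\wt{h}i}{4}\bigr)\bigr)$ has a simple pole at $w=0$. Your parenthetical claim that $\wt{h}/4\in\{-\tfrac14,0,\tfrac12\}$ ``avoids half-integers'' is false exactly here, and your fallback bound for $1/|\cosh(\pi(x+iy))|$, valid only for $y$ bounded away from $\tfrac12+\mathbb{Z}$, does not apply. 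The paper resolves this by shifting back to $\mathbb{R}$ with an indented contour: the residue at $w=0$ equals $-\tfrac{(\pm i)^{\ell+1}}{2^{\ell}\pi}$ and is $O_\ell(1)$, and the principal-value integral near $0$ is bounded uniformly in $\varepsilon$ and $z$ because $g(w)+g(-w)$ is regular at $0$. You flag the pole question as an anticipated obstacle but leave it unresolved, so as written the proposal does not establish the lemma for $\wt{h}=2$; restoring the $i$ and adding this residue/principal-value argument makes your plan coincide with the paper's proof.
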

\begin{proof}
We essentially use the proof of Lemma 3.4 from \cite{BringmannMahlburgRhoades1},
however we must take some care because in the case of
$\wt{h}=2$ the function $\cosh\left(\pi(w\pm\frac{\wt{h}i}{4})\right)$ has a zero at
$w=0$. However, this technicality amounts to only a small annoyance, and we only
supply the proof for this case.

From 
\begin{align*}
	H_{\pm,k,h,\alpha}(u;z)
	&=
	\int_{\mathbb{R}}
	\frac{\exp\left( -\frac{\pi x^2}{kz } -2\pi x\left(\frac{iu}{z}+\alpha \mp \frac{\wt{h}i}{4kz} \right)\right) }
	{\cosh(\pi x)}
	dx
,
\end{align*}
we see that
\begin{align*}
	H^{(\ell)}_{\pm,k,h,\alpha}(0;z)
	&=
	\left(-\frac{2\pi i}{z}\right)^\ell
	\int_{\mathbb{R}}
	\frac{x^{\ell} \exp\left( -\frac{\pi x^2}{kz} - 2\pi x\left( \alpha \mp \frac{\wt{h}i}{4kz} \right) \right) }
	{\cosh(\pi x)}
	dx
	\\
	&=
		\left(-\frac{2\pi i}{z}\right)^\ell
		\exp\left( -\tfrac{\pi\wt{h}^2}{16kz} \mp \tfrac{\pi i\wt{h}\alpha}{2} \right)
		\int_{\mathbb{R}\mp\frac{\wt{h}i}{4}}
			\frac{ 	
				\left(w\pm \frac{\wt{h}i}{4} \right)^{\ell}
				\exp\left( -\frac{\pi w^2}{kz} - 2\pi w\alpha \right) }
			{\cosh\left(\pi\left( w\pm \frac{\wt{h}i}{4} \right)\right)}
		dw
	.
\end{align*}
We wish to shift the path of integration back to $\mathbb{R}$ by use of the
residue theorem. When $\wt{h}=2$ the integrand,
\begin{align*}
g(w)
:=
\frac{ 
	\left(w\pm \frac{\wt{h}i}{4} \right)^{\ell}
	\exp\left( -\frac{\pi w^2}{kz} - 2\pi w\alpha \right) }
{\cosh\left(\pi\left( w\pm \frac{\wt{h}i}{4} \right)\right)},
\end{align*}
has a simple pole at $w=0$ with residue $-\frac{(\pm i)^{\ell+1}}{2^{\ell}\pi}$.
For $\varepsilon>0$ we let 
$\mathbb{R}_\varepsilon := \mathbb{R}- [-\varepsilon,\varepsilon]$. By the 
residue theorem we now have that
\begin{align*}
H^{(\ell)}_{\pm,k,h,\alpha}(0;z)
&=
	\lim_{\varepsilon\rightarrow 0}
	\left(-\frac{2\pi i}{z}\right)^\ell
	\exp\left( -\tfrac{\pi\wt{h}^2}{16kz} \mp \tfrac{\pi i\wt{h}\alpha}{2} \right)
	\\&\quad\times
	\left(
		\pm\frac{-i(\pm i)^{\ell+1}}{2^{\ell}}
		+
		\int_{\mathbb{R}_\varepsilon}
			\frac{
				\left(w\pm \frac{\wt{h}i}{4} \right)^{\ell}
			 	\exp\left( -\frac{\pi w^2}{kz} - 2\pi w\alpha \right) }
			{\cosh\left(\pi\left( w\pm \frac{\wt{h}i}{4} \right)\right)}
		dw
	\right)
.
\end{align*}
We obtain the stated bound for $H^{(\ell)}_{\pm,k,h,\alpha}(0;z)$,
if the integral near zero is bounded independently of
$\varepsilon$ and $z$. For this, we note
\begin{align*}
	\int_{-\frac{1}{2}}^{-\varepsilon} g(w) dw
	+
	\int_{\varepsilon}^{\frac{1}{2}} g(w) dw
	&=
		\int_{\varepsilon}^{\frac{1}{2}} g(w)+g(-w) dw
,
\end{align*}
and the latter integral is uniformly bounded because $g(w)+g(-w)$ does not have a pole at $w=0$
since the residue of $g(-w)$ is $\frac{(\pm i)^{\ell+1}}{2^{\ell}\pi}$.

\end{proof}

We now establish the main identity for
$R2\big(e^{2\pi iu}; e^{\frac{2\pi i(h+iz)}{k}}\big)$
that leads to the required transformation of $R2_{2\ell}(q)$.
As in \cite{BringmannMahlburgRhoades1}, we let
$
f_\nu(u;z)
:=
	\frac{
			\exp\left( \frac{\nu\pi u^2}{z} \right)	
		\sin(\pi u)}
	{\sinh\left(\frac{\pi u}{z}\right)}
.
$
Lemma 3.1 of \cite{BringmannMahlburgRhoades1} states
\begin{align*}
f_\nu(u;z)
&=
\sum_{r=0}^\infty \frac{(2\pi iu)^{2r}}{(2r)!}
\sum_{a+b+c=r}
\nu^a \kappa(a,b,c)z^{1-a-2c}
,
\end{align*}
where $\kappa(a,b,c)$ was defined in Section 2.
We note this expansion is valid for $|u|<|z|$.

\begin{proposition}\label{PropositionMainRankTransformation}
Suppose $h$ and $k$ are relatively prime integers with $k>0$, and
$u,z\in\mathbb{C}$ with $\RE{\frac{1}{z}}>\frac{k}{2}$ and $u$
sufficiently small. If $k\equiv 0\pmod{4}$, then
\begin{align*}
R2\left(e^{2\pi iu}; e^{\frac{2\pi i(h+iz)}{k}}\right)
&=
	-\frac{i\wt{h} \exp\left( \frac{\pi}{4kz} -\frac{\pi z}{4k}\right) \xi\left(h,[-h]_{4k},\tfrac{k}{4}\right) }
		{\sqrt{z}}
	f_k(u;z)
	+	
	\sum_{\ell=0}^\infty a_\ell(z) \frac{(2\pi iu)^\ell}{\ell!}
.
\end{align*}
If $k\equiv 2\pmod{4}$, then
\begin{align*}
R2\left(e^{2\pi iu}; e^{\frac{2\pi i(h+iz)}{k}}\right)
&=
	-\frac{i\sqrt{2} 
		\exp\left( -\frac{\pi z}{4k} - \frac{\pi i}{2k}\left(1+(2h-1)[-2h]_{\frac{k}{2}}\right) \right)
		\xi\left(2h,[-2h]_{\frac{k}{2}},\tfrac{k}{2}\right)  
	}{\sqrt{z}}
	f_k(u;z)
	\\&\quad
	+	
	\sum_{\ell=0}^\infty a_\ell(z) \frac{(2\pi iu)^\ell}{\ell!}
.
\end{align*}
If $k\equiv 1\pmod{2}$, then
\begin{align*}
R2\left(e^{2\pi iu}; e^{\frac{2\pi i(h+iz)}{k}}\right)
&=
	-\frac{ 
		\exp\left( -\tfrac{\pi z}{4k} + \tfrac{\pi}{16kz} - \tfrac{\pi i [-h]_k}{16k} \right)
		\xi(4h,\tfrac{[-h]_k}{4},k)}
	{2\sin(\tfrac{\pi k}{4})\sqrt{z} }
	f_{2k}(u;2z)
	+	
	\sum_{\ell=0}^\infty a_\ell(z) \frac{(2\pi iu)^\ell}{\ell!}
.
\end{align*}
Here $a_\ell(z) \ll_\ell k^{\frac{1}{2}}|z|^{\frac{1}{2}-\ell}$ 
as $z\rightarrow 0$, with the constants depending on $\ell$ but not  $k$.
\end{proposition}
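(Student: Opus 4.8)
The plan is to run, residue class by residue class in $k$, the three explicit transformation formulas of Lemma~\ref{LemmaR2Transformations}, each of which presents $R2\big(e^{2\pi iu};e^{\frac{2\pi i(h+iz)}{k}}\big)$ as a common prefactor times the sum of a ``$\mu$-part'' and an ``$H$-part''. The claimed main term will be produced by keeping \emph{only} the $n=0$ summand of the defining series of each $\mu$-function; the remaining terms of that series, the non-leading part of the $\vartheta$-quotient standing in front of it, and the whole $H$-part, will be shown to contribute solely to $\sum_\ell a_\ell(z)\tfrac{(2\pi iu)^\ell}{\ell!}$ with the stated bound. Two elementary remarks drive the error estimates: the hypothesis $\RE{\tfrac1z}>\tfrac k2$ forces $|z|<\tfrac2k$, so $k|z|\ll1$; and for every index occurring in the $H$-sums one has $\alpha^\pm(\ell,\cdot)\in\left(-\tfrac12,\tfrac12\right)$, so Lemma~\ref{LemmaMordellIntegralBounds} applies with no preliminary reduction. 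I give the details for $k\equiv0\pmod4$; the other two cases are entirely parallel.

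For the main term I examine the $\mu$-part of Lemma~\ref{LemmaR2Transformations}(1). Since $(h,k)=1$ and $4\mid k$ we have $\wt h\in\{-1,1\}$; the modular parameter is $\tau'=\tfrac{4([-h]_{4k}+i/z)}{k}$ and the second argument of $\mu$ is $\pm\tfrac{\wt h}{4}\tau'$, independent of $u$, while the first argument $u'=\tfrac{2iu}{z}$ is the only thing depending on $u$. From $\vartheta(v;\tau)=-iq^{\frac18}e^{-\pi iv}\aqprod{e^{2\pi iv},e^{-2\pi iv}q,q}{q}{\infty}$ one checks directly that $\vartheta\!\left(\pm\tfrac{\wt h}{4}\tau';\tau'\right)\to\mp i\wt h$ as $\IM{\tau'}\to\infty$, so the $n=0$ term of $\mu\!\left(u',\pm\tfrac{\wt h}{4}\tau';\tau'\right)$ tends to $\tfrac{e^{\pi iu'}}{\mp i\wt h\,(1-e^{2\pi iu'})}$. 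Substituting this into the combination $\sum_\pm\mp\, e^{\mp\pi u\wt h/z}(\,\cdots)$ of Lemma~\ref{LemmaR2Transformations}(1) and using $\tfrac{e^{\pi iu'}}{1-e^{2\pi iu'}}=\tfrac{1}{2\sinh(2\pi u/z)}$, $\sum_\pm e^{\mp\pi u\wt h/z}=2\cosh(\pi u/z)$ (legitimate since $\wt h^2=1$) and $\sinh(2x)=2\sinh x\cosh x$, the $\mu$-main collapses to $\tfrac{1}{2i\wt h\sinh(\pi u/z)}$; multiplying by the prefactor $\tfrac{2\sin(\pi u)}{\sqrt z}\exp\!\big(-\tfrac{\pi z}{4k}+\tfrac{\pi ku^2}{z}+\tfrac{\pi}{4kz}\big)$ and $\xi\!\left(h,[-h]_{4k},\tfrac k4\right)$, and using $\tfrac1{i\wt h}=-i\wt h$, reproduces $-i\wt h\,\tfrac{\exp\left(\frac{\pi}{4kz}-\frac{\pi z}{4k}\right)\xi\left(h,[-h]_{4k},\frac k4\right)}{\sqrt z}\,f_k(u;z)$ exactly. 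For $k\equiv2\pmod4$ one proceeds identically with $\wt{2h}=2$ and the $u$-independent second argument $\pm\tfrac12\tau''\mp\tfrac{1+2h[-2h]_{k/2}}{2k}$; for $k$ odd the second argument is the constant $\mp\tfrac{1+h[-h]_k}{4k}$, and since $\tfrac{1+h[-h]_k}{k}$ is odd (because $32\mid[-h]_k$) and congruent to $k$ modulo $8$, combining the two signs converts the difference of the $\tfrac1{1-e^{2\pi iv}}$-factors into $\tfrac1{\sin(\pi k/4)}$, the rescaled variable $\tfrac{iu}{2z}$ produces $f_{2k}(u;2z)$, and the phase $\exp\!\big(\tfrac{\pi}{16kz}-\tfrac{\pi i[-h]_k}{16k}\big)$ appears as the $q^{-1/8}$ of the nome $q=e^{\frac{2\pi i([-h]_k+i/z)}{4k}}$.

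For the errors, the deviation of $\mu(u',v';\tau')$ from its $n=0$ term is
\begin{align*}
\frac{e^{\pi iu'}}{1-e^{2\pi iu'}}\!\left(\frac{1}{\vartheta(v';\tau')}-\frac{1}{\vartheta_0}\right)
\;+\;\frac{e^{\pi iu'}}{\vartheta(v';\tau')}\sum_{n\neq0}\frac{(-1)^nq'^{\frac12n(n+1)}e^{2\pi inv'}}{1-e^{2\pi iu'}q'^n},
\end{align*}
where $\vartheta_0$ is the relevant limiting value and $q'=e^{2\pi i\tau'}$. The first parenthesis is $O(|q'|^{1/4})$ by the infinite-product expansion of $\vartheta$, and the tail sum is $O(|q'|^{1/4})$ because, once the (large, for $n\le-1$) denominators are accounted for, it is dominated by an $n\in\{-1,1\}$ term (with $|q''|^{1/2}$, resp.\ $|q'''|$, in the other two cases). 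Multiplying $\tfrac{e^{\pi iu'}}{1-e^{2\pi iu'}}$ by $2\sin(\pi u)\exp\!\big(\tfrac{\pi ku^2}{z}\big)$ produces a constant multiple of an $f_\nu(u;\,\cdot\,)$, whose $\ell$-th Taylor coefficient in $u$ is $\ll_\ell|z|^{1-\ell}$ with a $k$-\emph{independent} constant — here $k|z|\ll1$ is used to absorb the $\nu^a z^{1-a-2c}$ terms of the $f_\nu$-expansion (Lemma~3.1 of \cite{BringmannMahlburgRhoades1}) — and combining this with the harmless factors $e^{\mp\pi u\wt h/z}$ and $z^{-1/2}$ shows the contribution is $\ll_\ell|z|^{1/2-\ell}\,|q'|^{1/4}\,\big|\exp\!\big(\tfrac{\pi}{4kz}\big)\big|\ll_\ell|z|^{1/2-\ell}$, since $|q'|^{1/4}=\exp\!\big(-\tfrac{2\pi}{k}\RE{\tfrac1z}\big)$ beats $\big|\exp\!\big(\tfrac{\pi}{4kz}\big)\big|=\exp\!\big(\tfrac{\pi}{4k}\RE{\tfrac1z}\big)$; the tail-sum piece is bounded the same way after estimating its $u$-Taylor coefficients crudely. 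For the $H$-part, expanding each $H$ in powers of $u$ and applying Lemma~\ref{LemmaMordellIntegralBounds} gives $u^j$-coefficients $\ll_j|z|^{-j}\exp\!\big(-\tfrac{\pi}{4k}\RE{\tfrac1z}\big)$ when $k\equiv0,2\pmod4$ and $\ll_j|z|^{-j}$ when $k$ is odd; summing the $O(k)$ terms against the $k^{-1/2}$ prefactor contributes a factor $k^{1/2}$, the exponential $\exp\!\big(-\tfrac{\pi}{4k}\RE{\tfrac1z}\big)$ exactly cancels the $\exp\!\big(\tfrac{\pi}{4kz}\big)$ of the prefactor in the two even cases (there is no such factor when $k$ is odd), and absorbing $\exp\!\big(\tfrac{\pi ku^2}{z}\big)$ as above leaves $\ll_\ell k^{1/2}|z|^{1/2-\ell}$.

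The only real obstacle is the bookkeeping of exponential factors: one must verify that the potentially divergent $\exp\!\big(\tfrac{\pi}{4kz}\big)$ and $\exp\!\big(\tfrac{\pi ku^2}{z}\big)$ in the prefactor are in each case either absorbed into the main term or compensated \emph{exactly} — by the $q^{1/8}$-type decay hidden in the $\vartheta$-quotient for the main and tail terms, by the Mordell-integral decay of Lemma~\ref{LemmaMordellIntegralBounds} for the $H$-part, and by the constraint $k|z|\ll1$ for the $\nu^a$-coefficients of $\exp\!\big(\tfrac{\pi ku^2}{z}\big)$ — so that no spurious growth in $k$ or $|z|$ survives. One should also note that the $f_\nu$-expansion converges only for $|u|<|z|$, so ``$u$ sufficiently small'' is to be read relative to $z$; this is harmless, as the identity is ultimately used only coefficientwise in $u$.
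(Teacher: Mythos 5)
Your proposal follows essentially the same route as the paper's proof: start from Lemma \ref{LemmaR2Transformations}, extract the main term from the $n=0$ summand of the $\mu$-series, and push the deviation of the $\vartheta$-quotient from its limit, the $n\neq 0$ tail, and the whole $H$-part into $\sum_\ell a_\ell(z)\tfrac{(2\pi iu)^\ell}{\ell!}$, using Lemma \ref{LemmaMordellIntegralBounds} and the exact cancellation of $\left|\exp\left(\tfrac{\pi}{4kz}\right)\right|$ against the hidden $q^{1/8}$-type decay. The one methodological difference is how uniform estimates become coefficient bounds: the paper runs a Cauchy-theorem argument on the circle $|u|=\tfrac{|z|}{4\pi}$ (allowing at worst a simple pole at $u=0$ before multiplying by $\sin(\pi u)$), whereas you estimate the $u$-Taylor coefficients directly via the $f_\nu$-expansion and convolution estimates; both work, though the Cauchy device is cleaner for the $n\neq 0$ tail, where direct coefficient estimates are messier, so if you bound those "crudely" you should in effect reproduce that circle argument. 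Your identification of the odd-$k$ main term (using $32\mid[-h]_k$ to get $\sin\!\big(\tfrac{\pi(1+h[-h]_k)}{4k}\big)=\sin\!\big(\tfrac{\pi k}{4}\big)$) is correct and is a detail the paper omits, since it only writes out $k\equiv 0\pmod 4$.

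One caveat: the case $k\equiv 2\pmod 4$ is not "entirely parallel." There the first argument of $\mu$ is $\tfrac{iu}{z}$ rather than $\tfrac{2iu}{z}$, so the collapse via $\sinh(2x)=2\sinh x\cosh x$ is unavailable, and keeping only the $n=0$ terms with the leading $\vartheta$-value produces the stated constant times $f_k(u;z)\cosh\!\left(\tfrac{\pi u}{z}\right)$, i.e.\ a $\coth\!\left(\tfrac{\pi u}{z}\right)$ where the proposition has $\tfrac{1}{\sinh(\pi u/z)}$. The statement is still recovered, because in this case the prefactor's $\exp\!\left(\tfrac{\pi}{4kz}\right)$ is exactly cancelled by the $q^{1/8}$ coming from $\vartheta\!\left(\pm\tfrac{\tau}{2}\mp\tfrac{m}{4};\tau\right)$, so the main term does not grow, and the discrepancy $f_k(u;z)\left(\cosh\!\left(\tfrac{\pi u}{z}\right)-1\right)=\sin(\pi u)e^{\pi ku^2/z}\tanh\!\left(\tfrac{\pi u}{2z}\right)$ has $u^\ell$-coefficients $\ll_\ell |z|^{1-\ell}$ (using $k|z|<2$); after the $z^{-1/2}$ prefactor it is absorbed into $a_\ell(z)$. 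You should state this absorption explicitly rather than asserting the computation is identical; with that addition your argument is complete and matches the paper's.
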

\begin{proof}
We only give the proof for $k\equiv 0\pmod{4}$. When $k\equiv0\pmod{4}$, we have $\wt{h}=\pm 1$ and so
$\vartheta(\pm\wt{h}\tau;4\tau) = \pm\wt{h}\vartheta(\tau;4\tau)$.
Letting $\tau = \frac{[-h]_{4k}+i/z}{k}$, we calculate that
\begin{align*}
	&\mu\left(
		\tfrac{2iu}{z}, 
		\pm\tfrac{\wt{h}([-h]_{4k}+i/z)}{k}	
		; \tfrac{4([-h]_{4k}+i/z)}{k}	
	\right)
	\\
	&=
		\pm \frac{\wt{h} \exp\left(-\frac{2\pi u}{z}\right) }{\vartheta(\tau;4\tau)}	
		\sum_{n=-\infty}^\infty
		\frac{(-1)^n \exp\left( 4\pi i\tau n(n+1) \pm 2\pi i\wt{h}\tau n\right) }
		{1-\exp\left(-\frac{4\pi u}{z} + 8\pi i\tau n\right) }	
	\\
	&=	
		\pm \frac{i\wt{h}}{2\sinh\left(\frac{2\pi u}{z}\right)}
		\left(e^{2\pi i\tau},e^{6\pi i\tau},e^{8\pi i\tau} ;e^{8\pi i\tau }\right)_\infty^{-1}
		\\&\quad		
		\pm
		i\wt{h} \exp\left(-\tfrac{2\pi u}{z}\right)
		\left(e^{2\pi i\tau},e^{6\pi i\tau},e^{8\pi i\tau} ;e^{8\pi i\tau }\right)_\infty^{-1}
		\sum_{n\in\mathbb{Z}\backslash\{0\}}
		\frac{(-1)^n \exp\left(4\pi i\tau n(n+1) \pm 2\pi i\wt{h}\tau n\right) }
		{1-\exp\left(-\frac{4\pi u}{z} + 8\pi i\tau n\right)}	
	.
\end{align*}
We claim
\begin{align*}
\mu\left(
	\tfrac{2iu}{z}, 
	\pm\tfrac{\wt{h}([-h]_{4k}+i/z)}{k}	
	; \tfrac{4([-h]_{4k}+i/z)}{k}	
\right)
&=
	\pm \frac{i\wt{h}}{2\sinh\left(\frac{2\pi u}{z}\right)}
	+
	\frac{1}{u}
	\sum_{\ell=0}^\infty
	a_\ell(z) \frac{(2\pi i u)^\ell}{\ell!}	
,\\
\end{align*}
where $a_\ell(z) \ll_\ell  |z|^{1-\ell} \exp\left( -\frac{2\pi}{k}\RE{\frac{1}{z}} \right) $
as $z\rightarrow 0$.
We note that if we have two functions $f(u,z)$ and $g(u,z)$ and we wish to say,
\begin{align*}
f(u,z) 
&=
	g(u,z) + u^{-N}\sum_{\ell=0}^\infty a_{\ell}(z) \frac{(2\pi iu)^\ell}{\ell!}
,
\end{align*}
where $a_\ell(z) \ll_\ell |z|^{N-\ell}\epsilon(z)$ as $z\rightarrow 0$, then 
we can prove this by
checking that $f(u,z)-g(u,z)$ is meromorphic with at worst a pole of order $N$ at $u=0$, 
apply Cauchy's theorem along a circle $c_0 z \exp\left(2\pi i\theta\right)$
where $c_0$ is chosen small enough so no other singularities are inside the circle, and verify that
on this circle $|f(u,z)-g(u,z)| \ll_\ell \epsilon(z)$.
In our case we use the circle $\frac{z}{4\pi}\exp\left(2\pi i\theta\right)$, and must establish 
bounds when $u=\frac{zw}{4\pi}$ for $|w|=1$.
We see that
\begin{align*}
	&i \wt{h} \exp\left(-\tfrac{2\pi u}{z}\right)
	\left(e^{2\pi i\tau},e^{6\pi i\tau},e^{8\pi i\tau} ;e^{8\pi i\tau }\right)_\infty^{-1}
	\sum_{n\in\mathbb{Z}\backslash\{0\}}
	\frac{(-1)^n \exp\left(4\pi i\tau n(n+1) \pm 2\pi i\wt{h}\tau n\right) }
	{1-\exp\left(-\frac{4\pi u}{z} + 8\pi i\tau n\right)}	
\end{align*}
is analytic at $u=0$, and at $u=\frac{z w}{4\pi}$ is 
$O(e^{2\pi i\tau}) = O\left(\exp\left(-\frac{2\pi}{k}\RE{\frac{1}{z}}\right)  \right)$.
Additionally,
\begin{align*}
\pm \frac{i\wt{h}}{2\sinh\left(\frac{2\pi u}{z}\right)}
		\left(e^{2\pi i\tau},e^{6\pi i\tau},e^{8\pi i\tau} ;e^{8\pi i\tau }\right)_\infty^{-1}
\mp 
\frac{i\wt{h}}{2\sinh\left(\frac{2\pi u}{z}\right)}		
&=
	\pm
	\frac{i\wt{h}}{2\sinh\left(\frac{2\pi u}{z}\right)}
	\times O(e^{2\pi i\tau})
,
\end{align*}
and so this term has at worst a simple pole at $u=0$,  and at 
$u = \frac{z w}{4\pi}$ is $O\left( e^{2\pi i\tau} \right)$.

From this we see that
\begin{align*}
&\frac{2\sin(\pi u)}{\sqrt{z}}
\exp\left(
	-\tfrac{\pi z}{4k} 
	+ \tfrac{\pi ku^2}{z} 
	+ \tfrac{\pi}{4kz}  
\right)
\sum_{\pm}\mp 
\exp\left( \mp\tfrac{\pi u\wt{h}}{z} \right)
\xi\left(h,[-h]_{4k} ,\tfrac{k}{4}\right)
\mu\left( 
	\tfrac{2iu}{z}, \pm\tfrac{\wt{h} ( [-h]_{4k}+i/z ) }{k} 
	; \tfrac{4( [-h]_{4k} + i/z )}{k} 
\right)
\\
&=
	-
	\frac{i\wt{h} \sin(\pi u)}{\sqrt{z} \sinh\left(\frac{2\pi u}{z}\right)}
	\exp\left( 
		-\tfrac{\pi z}{4k} 
		+ \tfrac{\pi ku^2}{z} 
		+ \tfrac{\pi}{4kz}  
	\right)
	\sum_{\pm}
	\exp\left( \mp\tfrac{\pi u\wt{h}}{z} \right)
	\xi\left(h,[-h]_{4k},\tfrac{k}{4}\right)
	+
	\sum_{\ell=0}^\infty
	a_\ell(z) \frac{(2\pi i u)^\ell}{\ell!}	
,
\end{align*}
where $a_\ell(z) \ll_\ell |z|^{\frac{1}{2}-\ell}\exp\left(-\frac{7\pi}{4k}\RE{\frac{1}{z}}\right)$.
We notice that
\begin{align*}
-\frac{i\wt{h} \sin(\pi u)}{\sqrt{z} \sinh\left(\frac{2\pi u}{z}\right)}
\exp\left( 
	-\tfrac{\pi z}{4k} 
	+ \tfrac{\pi ku^2}{z} 
	+ \tfrac{\pi}{4kz}  
\right)
\sum_{\pm}
	\exp\left( \mp\tfrac{\pi u\wt{h}}{z} \right) 
	\xi\left(h,\tfrac{k}{4}\right)
&=
	-\frac{i\wt{h} \exp\left(\frac{\pi}{4kz} -\frac{\pi z}{4k}\right) \xi\left(h,\tfrac{k}{4}\right) }
		{\sqrt{z}}
	f_k(u;z)
	.
\end{align*}

To handle the contribution from 
$H\left( \frac{2iu}{z} \mp\frac{i\wt{h}}{kz} + \alpha^{\pm}\left(\ell,\frac{k}{4}\right)
; \frac{4i}{kz} \right)$,
we apply Lemma \ref{LemmaMordellIntegralBounds} with
$u\mapsto 2u$ and $k\mapsto \frac{k}{4}$ to find
\begin{align*}
H\left( 
	\tfrac{2iu}{z} \mp\tfrac{i\wt{h}}{kz} + \alpha^{\pm}\left(\ell,\tfrac{k}{4}\right)
; \tfrac{4i}{kz} \right)
&=
\sum_{\ell=0}^\infty a_\ell(z) \frac{(2\pi iu)^\ell}{\ell!}
,
\end{align*}
where $a_\ell(z) \ll_\ell |z|^{-\ell} \exp\left(-\frac{\pi}{4k}\RE{\frac{1}{z}}\right)$.
Thus
\begin{align*}
	\frac{\sin(\pi u) \exp\left(-\frac{\pi z}{4k}+\frac{\pi ku^2}{z} + \frac{\pi}{4kz}\mp\frac{\pi u\wt{h}}{z}\right) }
		{\sqrt{z}}
	H\left( 
		\tfrac{2iu}{z} \mp\tfrac{i\wt{h}}{kz} + \alpha^{\pm}\left(\ell,\tfrac{k}{4}\right)
	; \tfrac{4i}{kz} \right)
	&=
	\sum_{\ell=0}^\infty a_\ell(z) \frac{(2\pi iu)^\ell}{\ell!}
,
\end{align*}
where $a_\ell(z) \ll_\ell |z|^{\frac{1}{2}-\ell}$,
and so
\begin{align*}
&\frac{2i\sin(\pi u) \exp\left(-\frac{\pi z}{4k}+\frac{\pi ku^2}{z} + \frac{\pi}{4kz}\right) }
	{\sqrt{kz}}
\sum_{\pm}\exp\left( \mp\tfrac{\pi u\wt{h}}{z} \right)
\sum_{k=0}^{\frac{k}{4}-1}
\xi^{\mp}_\ell\left(h,\tfrac{k}{4}\right)
H\left( 
	\tfrac{2iu}{z} \mp\tfrac{i\wt{h}}{kz} + \alpha^{\pm}\left(\ell,\tfrac{k}{4}\right)
; \tfrac{4i}{kz} \right)
\\
&=
\sum_{\ell=0}^\infty a_\ell(z) \frac{(2\pi iu)^\ell}{\ell!}
,
\end{align*}
where $a_\ell(z) \ll_\ell k^{\frac{1}{2}}|z|^{\frac{1}{2}-\ell}$.

Altogether, along with part (1) of Lemma \ref{LemmaR2Transformations}, 
we find that
\begin{align*}
R2\left(e^{2\pi iu}; e^{\frac{2\pi i(h+iz)}{k}}\right)
&=
	-\frac{i\wt{h} \exp\left(\frac{\pi}{4kz} -\frac{\pi z}{4k}\right) \xi\left(h,[-h]_{4k},\tfrac{k}{4}\right) }
		{\sqrt{z}}
	f_k(u;z)
	+	
	\sum_{\ell=0}^\infty a_\ell(z) \frac{(2\pi iu)^\ell}{\ell!}
,
\end{align*}
where $a_\ell(z) \ll_\ell k^{\frac{1}{2}}|z|^{\frac{1}{2}-\ell}$.
\end{proof}

The following corollary follows from isolating the coefficient of $u^{2\ell}$ in 
Proposition \ref{PropositionMainRankTransformation}.

\begin{corollary}\label{CorollaryRankMomentTransformationsFinal}
Suppose $h$ and $k$ are relatively prime integers with $k>0$
and $\RE{\frac{1}{z}}>\frac{k}{2}$.
If $k\equiv 0\pmod{4}$, then
\begin{align*}
R2_{2\ell}\left(e^{\frac{2\pi i(h+iz)}{k}}\right)
&=
	-i\wt{h} \exp\left( \tfrac{\pi}{4kz} -\tfrac{\pi z}{4k} \right) 
	\xi\left(h,[-h]_{4k},\tfrac{k}{4}\right) 
	\sum_{a+b+c=\ell}
	k^a \kappa(a,b,c) z^{\frac{1}{2}-a-2c}
	+	
	a_{2\ell}(z)
.
\end{align*}
If $k\equiv 2\pmod{4}$, then
\begin{align*}
&R2_{2\ell}\left(e^{\frac{2\pi i(h+iz)}{k}}\right)
\\
&=
	-i\sqrt{2} \exp\left( -\tfrac{\pi z}{4k} - \tfrac{\pi i}{2k}\left(1+(2h-1)[-2h]_{\frac{k}{2}}\right) \right)
	\xi\left(2h,[-2h]_{\frac{k}{2}},\tfrac{k}{2}\right)  
	\sum_{a+b+c=\ell}
	k^a \kappa(a,b,c) z^{\frac{1}{2}-a-2c}
	+	
	a_{2\ell}(z)
.
\end{align*}
If $k\equiv 1\pmod{2}$, then
\begin{align*}
R2_{2\ell}\left(e^{\frac{2\pi i(h+iz)}{k}}\right)
&=
	-
	\frac{	\exp\left( -\frac{\pi z}{4k} + \frac{\pi}{16kz} - \frac{\pi i [-h]_k}{16k} \right)
		\xi\left(4h,\frac{[-h]}{4},k\right)}
	{\sin(\tfrac{\pi k}{4})}
	\sum_{a+b+c=\ell}
	k^a \kappa(a,b,c) z^{\frac{1}{2}-a-2c}2^{-2c}
	+	
	a_{2\ell}(z)
.
\end{align*}
Here $|a_{2\ell}(z)| \ll_{\ell} k^{\frac{1}{2}}|z|^{\frac{1}{2}-2\ell}$ 
as $z\rightarrow 0$, with the constants depending on $\ell$ but not $k$.
\end{corollary}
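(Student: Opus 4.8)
The plan is to read Corollary \ref{CorollaryRankMomentTransformationsFinal} off directly from Proposition \ref{PropositionMainRankTransformation} by extracting the coefficient of $\frac{(2\pi iu)^{2\ell}}{(2\ell)!}$. Recall that $\mathcal{R}2(u;q):=R2(e^{2\pi iu};q)=\sum_{\ell=0}^\infty R2_\ell(q)\frac{(2\pi iu)^\ell}{\ell!}$, and that $R2_\ell(q)=0$ for odd $\ell$ by the symmetry $R2(\zeta;q)=R2(\zeta^{-1};q)$; hence $R2_{2\ell}(q)$ is exactly the coefficient of $\frac{(2\pi iu)^{2\ell}}{(2\ell)!}$ in $\mathcal{R}2(u;q)$. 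For fixed $z$ with $\RE{1/z}>\frac{k}{2}$, both sides of each identity in Proposition \ref{PropositionMainRankTransformation} are analytic functions of $u$ in a neighbourhood of $0$ (note $R2(\zeta;\tau)$ has a removable singularity at $\zeta=\pm1$, i.e.\ at $u\in\tfrac12\mathbb{Z}$, and $f_\nu(u;z)$ is regular at $u=0$), so equating Taylor coefficients at $u=0$ is legitimate, and this is all the proof requires.

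For the main term I would substitute the expansion of Lemma 3.1 of \cite{BringmannMahlburgRhoades1}, namely $f_\nu(u;z)=\sum_{r=0}^\infty \frac{(2\pi iu)^{2r}}{(2r)!}\sum_{a+b+c=r}\nu^a\kappa(a,b,c)z^{1-a-2c}$, valid for $|u|<|z|$. In the cases $k\equiv0\pmod4$ and $k\equiv2\pmod4$ the main term of Proposition \ref{PropositionMainRankTransformation} is a $u$-independent constant times $z^{-\frac12}f_k(u;z)$, so its $\frac{(2\pi iu)^{2\ell}}{(2\ell)!}$-coefficient is that constant times $\sum_{a+b+c=\ell}k^a\kappa(a,b,c)z^{\frac12-a-2c}$, which is precisely the stated expression. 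In the case $k\equiv1\pmod2$ the main term is a constant times $\frac{1}{2\sqrt z}f_{2k}(u;2z)$; here I would expand $(2k)^a(2z)^{1-a-2c}=2^{1-2c}k^a z^{1-a-2c}$ and absorb the $\tfrac12$ together with the $z^{-\frac12}$ prefactor to obtain $\sum_{a+b+c=\ell}k^a\kappa(a,b,c)2^{-2c}z^{\frac12-a-2c}$, matching the corollary; the exponential prefactors and $\xi$-factors carry over verbatim.

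For the error term, the coefficient of $\frac{(2\pi iu)^{2\ell}}{(2\ell)!}$ in $\sum_{\ell=0}^\infty a_\ell(z)\frac{(2\pi iu)^\ell}{\ell!}$ is $a_{2\ell}(z)$, and Proposition \ref{PropositionMainRankTransformation} already supplies $a_\ell(z)\ll_\ell k^{\frac12}|z|^{\frac12-\ell}$ with implied constant independent of $k$; specializing to index $2\ell$ gives $|a_{2\ell}(z)|\ll_\ell k^{\frac12}|z|^{\frac12-2\ell}$ as $z\to0$, as claimed. There is essentially no genuine obstacle here, since all of the analytic input (the $S$-, $\tmu$-, and $\mu$-transformations, the Mordell-integral bounds, and the pole analysis producing the $a_\ell$) has already been carried out in Proposition \ref{PropositionMainRankTransformation}. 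The only steps demanding any care are the bookkeeping of the powers of $2$ in the odd case and checking that the error estimate of Proposition \ref{PropositionMainRankTransformation} specializes to the even index $2\ell$ with no loss, both of which are routine.
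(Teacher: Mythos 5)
Your proposal is correct and is exactly the paper's route: the authors state that the corollary follows by isolating the coefficient of $u^{2\ell}$ in Proposition \ref{PropositionMainRankTransformation}, and your coefficient extraction (using the expansion of $f_\nu(u;z)$, the power-of-$2$ bookkeeping in the odd case, and the specialization $a_{2\ell}(z)\ll_\ell k^{\frac12}|z|^{\frac12-2\ell}$) fills in precisely that computation. No gaps.
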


\section{The proof of Theorem \ref{TheoremRankMomentExpansions} and calculations}

\begin{proof}[Proof of Theorem \ref{TheoremRankMomentExpansions}]
We use the circle method as explained in Section 2 with $F(q) = R2_{2\ell}(q)$,
along with Corollary \ref{CorollaryRankMomentTransformationsFinal}, to find that
\begin{align}\label{EqCircleMethodRankMoment1}
&N2_{2\ell}(n)
=
	\sum_{\substack{ 0\le h< k\le N,\\(h,k)=1}}	
		\exp\left(-\tfrac{2\pi ihn}{k}\right)
	\int_{-\vartheta^{\prime}_{h,k}}^{\vartheta^{\prime\prime}_{h,k}}
		R2_{2\ell}\left( 
			e^{\frac{2\pi i}{k}(h+iz)}
		\right)
		\exp\left(\tfrac{2\pi nz}{k}\right)
	d\Phi
\nonumber\\
&=
	-i
	\sum_{\substack{ 0\le h< k\le N,\\ k\equiv 0\pmod{4},\\ (h,k)=1}}	
		\wt{h} \exp\left(-\tfrac{2\pi ihn}{k}\right)
		\xi\left(h,[-h]_{4k},\tfrac{k}{4}\right) 
	\sum_{a+b+c=\ell}
		k^a \kappa(a,b,c) 
	\nonumber\\&\qquad\times
		\int_{-\vartheta^{\prime}_{h,k}}^{\vartheta^{\prime\prime}_{h,k}}
			\exp\left(\tfrac{\pi}{4kz} -\tfrac{\pi z}{4k} + \tfrac{2\pi nz}{k}\right)
			z^{\frac{1}{2}-a-2c}
		d\Phi
	\nonumber\\&\quad		
	-
	i\sqrt{2} 
	\sum_{\substack{ 0\le h< k\le N,\\ k\equiv 2\pmod{4},\\ (h,k)=1}}	
		\exp\left(-\tfrac{2\pi ihn}{k} - \tfrac{\pi i}{2k}\left(1+(2h-1)[-2h]_{\frac{k}{2}}\right)  \right)
		\xi\left(2h,[-2h]_{\frac{k}{2}},\tfrac{k}{2}\right)  
	\sum_{a+b+c=\ell}
		k^a \kappa(a,b,c) 
	\nonumber\\&\qquad\times
	\int_{-\vartheta^{\prime}_{h,k}}^{\vartheta^{\prime\prime}_{h,k}}
		\exp\left(-\tfrac{\pi z}{4k} + \tfrac{2\pi nz}{k}\right)
		z^{\frac{1}{2}-a-2c}
	d\Phi
	\nonumber\\&\quad
	-
	\sum_{\substack{ 0\le h< k\le N,\\ k\equiv\pm 1\pmod{4} ,\\(h,k)=1}}	
		\frac{	\exp\left( -\frac{2\pi ihn}{k} - \frac{\pi i [-h]_k}{16k}  \right)
			\xi(4h,\tfrac{[-h]}{4},k)
		}{\sin(\tfrac{\pi k}{4})}
	\sum_{a+b+c=\ell}
		k^a \kappa(a,b,c) 2^{-2c}	
	\nonumber\\&\qquad\times
		\int_{-\vartheta^{\prime}_{h,k}}^{\vartheta^{\prime\prime}_{h,k}}
			\exp\left( -\tfrac{\pi z}{4k} + \tfrac{\pi}{16kz} + \tfrac{2\pi nz}{k} \right)
			z^{\frac{1}{2}-a-2c}
		d\Phi
	\nonumber\\&\quad
	+	
	\sum_{\substack{ 0\le h< k\le N,\\(h,k)=1}}	
		\exp\left( -\tfrac{2\pi ihn}{k} \right)
	\int_{-\vartheta^{\prime}_{h,k}}^{\vartheta^{\prime\prime}_{h,k}}
		a_{k,2\ell}(z)
		\exp\left(\tfrac{2\pi nz}{k}\right)
	d\Phi
,
\end{align}
where $a_{k,2\ell}(z) \ll_\ell  k^{\frac{1}{2}}|z|^{\frac{1}{2}-2\ell}$ .
In estimating the various error terms that appear, we recall
$z=\frac{k}{N^2}-ik\Phi$, so that
$\RE{z} = \frac{k}{N^2}\ge\frac{k}{n}$ and $\frac{k}{n}\le |z|\le \frac{\sqrt{2}}{N}$.
Additionally, from well known properties of Farey fractions, we have 
$\frac{1}{2kN}\le \vartheta^{\prime}_{h,k},\vartheta^{\prime\prime}_{h,k} < \frac{1}{k(N+1)}$ 
and $\RE{\frac{1}{z}}>\frac{k}{2}$.
Our estimates of integrals are only max-length type estimates.

We first estimate the integral involving the error term $a_{k,2\ell}(z)$. Here we 
have
\begin{align*}
\sum_{\substack{ 0\le h< k\le N,\\(h,k)=1}}	
	\exp\left(-\tfrac{2\pi ihn}{k}\right)
\int_{-\vartheta^{\prime}_{h,k}}^{\vartheta^{\prime\prime}_{h,k}}
	a_{k,2\ell}(z)
	\exp\left(\tfrac{2\pi nz}{k}\right)
d\Phi
&\ll_\ell
	\sum_{\substack{0\le h< k\le N,\\ (h,k)=1}}
	\frac{1}{k(N+1)}
	\left(\frac{k}{n}\right)^{\frac{1}{2}-2\ell}
	\sqrt{k}
\\
&\ll_\ell
	n^{2\ell-1}
	\sum_{k=1}^N k^{1-2\ell}	
	.
\end{align*}
We note for $\ell\ge 2$ that the series $\sum_{k=1}^\infty k^{1-2\ell}$ 
converges, whereas for $\ell=1$ we have that
$\sum_{k=1}^N k^{-1} \ll \log(N)$. 
As such,
\begin{align}\label{EqCircleMethodRankMoment2}
\sum_{\substack{ 0\le h< k\le N,\\(h,k)=1}}	
	\exp\left(-\tfrac{2\pi ihn}{k}\right)
\int_{-\vartheta^{\prime}_{h,k}}^{\vartheta^{\prime\prime}_{h,k}}
	a_{k,2\ell}(z)
	\exp\left(\tfrac{2\pi nz}{k}\right)
d\Phi
&\ll_\ell
	n^{2\ell-1}(1+\delta_{1,\ell}\log(n))
,
\end{align}
where $\delta_{1,\ell} = 1$ if $\ell=1$ and $\delta_{1,\ell}=0$ if $\ell\not=1$. 

Next we bound the integral involving $\exp\left(-\frac{\pi z}{4k} + \frac{2\pi nz}{k}\right)$.
We let $\omega = \frac{1}{N^2} - ik\Phi$, so that $z=k\omega$, and
\begin{align*}
	\int_{-\vartheta^{\prime}_{h,k}}^{\vartheta^{\prime\prime}_{h,k}}
		\exp\left(-\tfrac{\pi z}{4k} + \tfrac{2\pi nz}{k}\right)
		z^{\frac{1}{2}-a-2c}
	d\Phi
	&=
		-ik^{-\frac{1}{2}-a-2c}
		\int_{\frac{1}{N^2}-i\vartheta^{\prime\prime}_{h,k}}^{\frac{1}{N^2}+i\vartheta^{\prime}_{h,k}}
		\omega^{\frac{1}{2}-a-2c}
			\exp\left( \tfrac{(8n-1)\pi\omega}{4} \right)
		d\omega
	.
\end{align*}
We consider two cases given by $a+2c=0$ and $a+2c\ge 1$. When 
$a+2c=0$, the above yields the bound
\begin{align}\label{EqCircleMethodRankMoment3a}
	\left|
		\int_{-\vartheta^{\prime}_{h,k}}^{\vartheta^{\prime\prime}_{h,k}}
			\exp\left( -\tfrac{\pi z}{4k} + \tfrac{2\pi nz}{k} \right)
			z^{\frac{1}{2}-a-2c}
		d\Phi
	\right|
	&\le
		\frac{2}{k^{\frac{1}{2}}(N+1)}
		\left(\frac{1}{N^4} + \frac{1}{k^2(N+1)^2} \right)^{\frac{1}{4}}	
		\exp\left(\frac{(8n-1)\pi}{4N^2}\right)
	\ll
	k^{-\frac{5}{2}}n^{-\frac{3}{4}}
	.
\end{align}
When $a+2c\ge 1$, we instead have that
\begin{align}\label{EqCircleMethodRankMoment3b}
	\left|
		\int_{-\vartheta^{\prime}_{h,k}}^{\vartheta^{\prime\prime}_{h,k}}
			\exp\left( -\tfrac{\pi z}{4k} + \tfrac{2\pi nz}{k} \right)
			z^{\frac{1}{2}-a-2c}
		d\Phi
	\right|
	&\le
		\frac{ 2k^{-\frac{1}{2}-a-2c} }{k(N+1)} 
		\left(\frac{k}{n}\right)^{\frac{1}{2}-a-2c}
		\exp\left( \frac{(8n-1)\pi}{4N^2} \right)
	\ll
		k^{-1-2a-4c}n^{a+2c-1}
	.
\end{align}

In handling the remaining two integrals, we wish to express them in terms of a
Bessel function and an error term. While we could go through the calculations, 
this is well known as a general result. A form meeting our needs can
be found in \cite[Lemma 6.1]{Waldherr1}. In particular,
\begin{align}
	\label{EqCircleMethodRankMoment4}
	\int_{-\vartheta^{\prime}_{h,k}}^{\vartheta^{\prime\prime}_{h,k}}
		\exp\left(\tfrac{\pi}{4kz} -\tfrac{\pi z}{4k} + \tfrac{2\pi nz}{k}\right)
		z^{\frac{1}{2}-a-2c}
	d\Phi
	&=
		\frac{2\pi (8n-1)^{\frac{2a+4c-3}{4}}  }{k}  
		I_{a+2c-\frac{3}{2}}\left( \tfrac{\pi}{2k}\sqrt{8n-1} \right)
		+
		O\left( 
			\frac{ n^{\left|\frac{1}{2}-a-2c\right|-\frac{1}{2}} }
			{ k^{1+\left|\frac{1}{2}-a-2c\right|}}  
		\right)
	,\\
	\label{EqCircleMethodRankMoment5}
	\int_{-\vartheta^{\prime}_{h,k}}^{\vartheta^{\prime\prime}_{h,k}}
		\exp\left(\tfrac{\pi}{16kz} -\tfrac{\pi z}{4k} + \tfrac{2\pi nz}{k}\right)
		z^{\frac{1}{2}-a-2c}
	d\Phi
	&=
		\frac{2^{a+2c-\frac{1}{2}} \pi (8n-1)^{\frac{2a+4c-3}{4}}  }{k}  
		I_{a+2c-\frac{3}{2}}\left( \tfrac{\pi}{4k}\sqrt{8n-1} \right)
		\nonumber\\&\quad
		+
		O\left( 
			\frac{ n^{\left|\frac{1}{2}-a-2c\right|-\frac{1}{2}} }
			{ k^{1+\left|\frac{1}{2}-a-2c\right|}}  
		\right)
	.
\end{align}
We note the error terms in (\ref{EqCircleMethodRankMoment4}) and
(\ref{EqCircleMethodRankMoment5})
are at least as large as the bounds in 
(\ref{EqCircleMethodRankMoment3a}) and (\ref{EqCircleMethodRankMoment3b}).

With equations (\ref{EqCircleMethodRankMoment1}) through 
(\ref{EqCircleMethodRankMoment5}), to complete the proof of Theorem 
\ref{TheoremRankMomentExpansions}, it only remains to verify that 
\begin{align*}
\sum_{\substack{ 0\le h< k\le N,\\ (h,k)=1}}	
\sum_{a+b+c=\ell}
	\frac{ n^{\left|\frac{1}{2}-a-2c\right|-\frac{1}{2}} }
	{ k^{1+\left|\frac{1}{2}-a-2c\right| - a}}  
& \ll_\ell
n^{2\ell-1}
.
\end{align*}
This bound is easily deduced by breaking up the inner sum according to
$a+2c=0$, $a+2c\not=0$ with $c<\ell$, and $a+2c\not=0$ with $c=\ell$.
\end{proof}

It is worth running some calculations, which we perform with MAPLE, to see 
these asymptotics are accurate. In Table \ref{TableForRatios} we list 
approximations of the ratio given by the 
estimate in \eqref{EqN2RankMomentAsymptoticValue} divided by the exact value of 
$N2_{2\ell}(n)$, as well as the ratio given by the sums in Theorem 
\ref{TheoremRankMomentExpansions} (with the real and imaginary parts rounded to 
the nearest integers) divided by the exact value of $N2_{2\ell}(n)$. In Table 
\ref{TableForEstimates} we list the exact value of $N2_{2\ell}(n)$ along with 
the sums in Theorem \ref{TheoremRankMomentExpansions} (with the real and 
imaginary parts rounded to the nearest integers). For these values of $n$, the 
imaginary parts all round to $0$. We include fewer values in Table 
\ref{TableForEstimates} because of the difficulty in displaying such large 
numbers. In particular, $N2_2(10000)$ is a $96$ digit integer.

\begin{longtable}[h]{llcc}
\caption{Ratios For Asymptotic Estimates}\label{TableForRatios}
\\[-2ex]
	\toprule
	$\ell$ & 		$n$ 		& Ratio for (\ref{EqN2RankMomentAsymptoticValue})
		& Ratio for Theorem \ref{TheoremRankMomentExpansions}
	\\\midrule
	1 &			10		& 1.892666 		& 1.057143
	\\\midrule
	1 &			100		& 1.170779 		& $1+2.5\times 10^{-8}$
	\\\midrule
	1 &			1000	& 1.049075		& $1+1.3\times 10^{-28}$
	\\\midrule	
	1 &			10000	& 1.015085		& $1+4.2\times 10^{-94}$
	\\\midrule
	2 &			10		& 4.999495		& 1.172507
	\\\midrule
	2 &			100		& 1.447874		& $1+7.0\times 10^{-7}$
	\\\midrule
	2 &			1000	& 1.117096		& $1+2.5\times 10^{-25}$
	\\\midrule
	2 &			10000	& 1.035043		& $1+8.0\times 10^{-91}$
	\\\midrule
	3 &			10		& 23.68219		& 1.389405
	\\\midrule
	3 &			100		& 2.082709		& 1.000007
	\\\midrule
	3 &			1000	& 1.245479		& $1+2.0\times 10^{-24}$
	\\\midrule
	3 &			10000	& 1.070652		& $1+6.0\times 10^{-88}$
	\\\bottomrule	
\end{longtable}

\begin{longtable}[h]{llll}
\caption{Exact Values For Asymptotic Estimates}\label{TableForEstimates}
\\[-2ex]
	\toprule
	$\ell$ & $n$ & 	$N2_{2\ell}(n)$ & Theorem \ref{TheoremRankMomentExpansions} Estimate
	\\\midrule
	1 &		10 		& 70									& 74
	\\\midrule	
	1 & 	100		& 447153528								& 447153539
	\\\midrule
	1 &		1000	& 362167772560345987220442602052		& 362167772560345987220442602098
	\\\midrule  	
	2 &		10 		& 742									& 870
	\\\midrule
	2 & 	100		& 101241563496							& 101241634569
	\\\midrule
	2 &		1000	& 952322772130308063286982695330572     & 952322772130308063286982719167207
	\\\midrule  	
	3 &		10 		& 9910									& 13769
	\\\midrule
	3 & 	100		& 44527325322888						& 44527640083065
	\\\midrule
	3 &		1000	& 5403854807373412384336767926688986652	
					& 5403854807373412384336778811025822044
	\\
	\bottomrule
\end{longtable}

\section{Identities relevant to $R2\left(e^{\frac{2\pi ia}{c}};q\right)$}

We must determine appropriate transformations for 
$R2\big(e^{\frac{2\pi ia}{c}};e^{\frac{2\pi i(h+iz)}{k}}\big)$. While we
can reuse some of the calculations from Section 3, many must be redone. The
reason for this is in Section 3 we worked with
$R2\big(e^{2\pi iu};e^{\frac{2\pi i(h+iz)}{k}}\big)$,
took $u$ as a variable, and exploited a series expansion at $u=0$. But in 
this section $u=\frac{a}{c}$, which is a fixed constant and not near zero.
Furthermore, we require explicit upper bounds on the error terms.
To apply the circle method we need transformations for 
$R2\big(e^{\frac{2\pi ia}{c}};e^{\frac{2\pi i(h+iz)}{k}}\big)$
that allow us to easily determine the negative powers of $q$ in the resulting 
$\mu(u,v;\tau)$.
The required transformations are stated in Propositions
\ref{PropFinalBounds0Mod4}, \ref{PropFinalBounds2Mod4}, and \ref{PropFinalBoundsOdd},
however it is a slow process to deduce these results. The calculations
of this section iteratively refine transformations and bounds until we
arrive at these propositions.

\begin{proposition}\label{PropositionMuModularForRankAlt2}
Suppose $h$ and $k$ are relatively prime integers with $k>0$,
$u\in\mathbb{R}$ with $u\not\in\mathbb{Z}$, and $\RE{z}>0$. Then
\begin{align*}
&\mu\left(	u, \pm\tfrac{h+iz}{4k}	; \tfrac{h+iz}{k}\right)
\\
&=
	(-1)^{\Floor{ku}}
	\exp\left(
		-\tfrac{\pi z}{16k} + \tfrac{\pi}{kz}\left(\Fractional{ku} \mp \tfrac{\wt{h}}{4}\right)^2 
	 	\mp \tfrac{\pi iu}{2}
	\right)
	z^{-\frac{1}{2}}
	\Bigg(
		\exp\left(	\tfrac{\pi i\Floor{ku}}{k}\left(
			-[-h]_k\Floor{ku} \pm \tfrac{1+h[-h]_k-\wt{h}[-h]_k}{2} 
		\right)\right)
		\\&\quad\times
		\xi(h,[-h]_k,k)
		\mu\left(	
			\Fractional{ku}\tfrac{[-h]_k+i/z}{k} - u[-h]_k,	
			\pm\tfrac{\wt{h}([-h]_k+i/z)}{4k} \mp\tfrac{1+h[-h]_k}{4k}; 
			\tfrac{[-h]_k+i/z}{k}	
		\right)
		\\&\quad
		+
		\frac{i}{2\sqrt{k}}
		\sum_{\ell=0}^{k-1}
		\xi_{\ell}^{\mp}(h,k)
		\exp\left(	2\pi i\Floor{ku}\alpha^{\pm}(\ell,k)	\right)
		H\left(
			\left(\Fractional{ku}\mp\tfrac{\wt{h}}{4}\right)\tfrac{i}{kz}  + \alpha^{\pm}(\ell,k) ; \tfrac{i}{kz}
		\right)
	\Bigg)
	+E_1(u,h,k,z)
,
\end{align*}
where $E_1(u,h,k,z)$ is defined as follows.
When $\Fractional{ku}\mp\tfrac{\wt{h}}{4}=-\frac{1}{2}$
\begin{align*}
E_1(u,h,k,z)
&:=
	(-1)^{\Floor{ku}}
	\exp\left( -\tfrac{\pi z}{16k} \mp \tfrac{\pi iu}{2} \right)
	z^{-\frac{1}{2}}
	\\&\quad\times
	\bigg(
		\frac{i}{2}\xi(h,[-h]_k,k)
		\exp\left(
			\tfrac{\pi i[-h]_k}{k}\left(
				-\Floor{ku}^2+\Floor{ku} \pm (2\Floor{ku}-1)\tfrac{h-\wt{h}}{4} -\tfrac{1}{4}
			\right)
			\pm \tfrac{\pi i(2\Floor{ku}-1)}{4k}
		\right)
		\\&\quad		
		-
		\frac{i}{2\sqrt{k}}\sum_{\ell=0}^{k-1}\xi_\ell^{\mp}(h,k)
		\exp\left( \pi i(2\Floor{ku}-1)\alpha^{\pm}(\ell,k) \right)
	\bigg)
,
\end{align*}
when $-\frac{1}{2}<\Fractional{ku}\mp\tfrac{\wt{h}}{4}<\frac{1}{2}$
\begin{align*}
E_1(u,h,k,z) &:= 0,
\end{align*}
when $\Fractional{ku}\mp\tfrac{\wt{h}}{4}=\frac{1}{2}$
\begin{align*}
E_1(u,h,k,z)
&:=
	(-1)^{\Floor{ku}}
	\exp\left( -\tfrac{\pi z}{16k} \mp \tfrac{\pi iu}{2} \right)
	z^{-\frac{1}{2}}
	\\&\quad\times
	\bigg(
		\frac{i}{2}\xi(h,[-h]_k,k)
		\exp\left(
			\tfrac{\pi i[-h]_k}{k}\left(
				-\Floor{ku}^2-\Floor{ku} \pm (2\Floor{ku}+1)\tfrac{h-\wt{h}}{4} -\tfrac{1}{4}
			\right)
			\pm \tfrac{\pi i(2\Floor{ku}+1)}{4k}
		\right)
		\\&\quad
		-
		\frac{i}{2\sqrt{k}}\sum_{\ell=0}^{k-1}\xi_\ell^{\mp}(h,k)
		\exp\left( \pi i(2\Floor{ku}+1)\alpha^{\pm}(\ell,k) \right)
	\bigg)
,
\end{align*}
and when $\frac{1}{2}<\Fractional{ku}\mp\tfrac{\wt{h}}{4}<\frac{3}{2}$
\begin{align*}
E_1(u,h,k,z)
&:=
	(-1)^{\Floor{ku}}
	\exp\left( -\tfrac{\pi z}{16k}  
		+ \tfrac{\pi}{kz}\left(\Fractional{ku}\mp\tfrac{\wt{h}}{4}-\tfrac{1}{2}\right)^2  
		\mp \tfrac{\pi iu}{2}
	\right)
	z^{-\frac{1}{2}}
	\\&\quad\times
	\bigg(
		i\xi(h,[-h]_k,k)
		\exp\left(
			\tfrac{\pi i[-h]_k}{k}\left(
				-\Floor{ku}^2-\Floor{ku} \pm (2\Floor{ku}+1)\tfrac{h-\wt{h}}{4} -\tfrac{1}{4}
				\right)
			\pm \tfrac{\pi i(2\Floor{ku}+1)}{4k}
		\right)
		\\&\quad
		-
		\frac{i}{\sqrt{k}}\sum_{\ell=0}^{k-1}\xi_\ell^{\mp}(h,k)
		\exp\left( \pi i (2\Floor{ku}+1)\alpha^{\pm}(\ell,k) \right)
	\bigg)
.
\end{align*}
\end{proposition}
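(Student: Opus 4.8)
The plan is to build on the intermediate identity \eqref{EqWhyTheSComboIsHolomorphic} (equivalently, on Proposition \ref{PropositionMuModular}) and to further reduce the ``large'' first arguments $\tfrac{iu}{z}$ occurring there modulo the period lattices of the transformed moduli, writing $ku=\Floor{ku}+\Fractional{ku}$. Starting from $\mu=\tmu-\tfrac{i}{2}S$ and applying Propositions \ref{PropositionSTransformation} and \ref{PropositionTildeMuTransformation} produces \eqref{EqWhyTheSComboIsHolomorphic}, which expresses $\mu\!\left(u,\pm\tfrac{h+iz}{4k};\tfrac{h+iz}{k}\right)$, up to the prefactor $\exp\!\left(-\tfrac{\pi z}{16k}+\tfrac{\pi(\wt h\mp 4ku)^2}{16kz}\mp\tfrac{\pi iu}{2}\right)z^{-\frac12}$, in terms of $\xi(h,[-h]_k,k)\mu\!\left(\tfrac{iu}{z},v';\tau'\right)$ with $\tau'=\tfrac{[-h]_k+i/z}{k}$ and $v'=\pm\tfrac{\wt h([-h]_k+i/z)}{4k}\mp\tfrac{1+h[-h]_k}{4k}$, an $S(\,\cdot\,;\tau')$ term, and the two sums $\sum_\ell\xi_\ell^{\mp}(h,k)S\!\left(\cdot;\tfrac{i}{kz}\right)$ and $\sum_\ell\xi_\ell^{\mp}(h,k)H\!\left(\cdot;\tfrac{i}{kz}\right)$. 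As in the proof of Proposition \ref{PropositionMuModular}, by meromorphicity it suffices to establish the identity when $\tfrac{iu}{z}$ is real.

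The first main step reduces $\mu\!\left(\tfrac{iu}{z},v';\tau'\right)$. One checks $\tfrac{iu}{z}=\left(\Fractional{ku}\tfrac{[-h]_k+i/z}{k}-u[-h]_k\right)+\Floor{ku}\tau'$, so passing to $\tmu$, applying \eqref{EqMuTildeEllipticProperty} with a shift of $\Floor{ku}$ periods in the first argument, and passing back to $\mu$ produces the $\mu$ term of the statement together with the factor $(-1)^{\Floor{ku}}\exp\!\left(\tfrac{\pi i\Floor{ku}}{k}\left(-[-h]_k\Floor{ku}\pm\tfrac{1+h[-h]_k-\wt h[-h]_k}{2}\right)\right)$; a routine simplification shows that the half-integer powers of $q'=e^{2\pi i\tau'}$ coming from \eqref{EqMuTildeEllipticProperty} collapse $\tfrac{(\wt h\mp 4ku)^2}{16kz}$ in the prefactor to $\tfrac{1}{kz}\left(\Fractional{ku}\mp\tfrac{\wt h}{4}\right)^2$ and that the $\Fractional{ku}$-dependence in the phase cancels. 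This conversion back to $\mu$ reintroduces an $S$ term whose argument is $\left(\Fractional{ku}\mp\tfrac{\wt h}{4}\right)\tau'-u[-h]_k\pm\tfrac{1+h[-h]_k}{4k}$. In parallel, each $H\!\left(\tfrac{iu}{z}\mp\tfrac{i\wt h}{4kz}+\alpha^{\pm}(\ell,k);\tfrac{i}{kz}\right)$ is reduced by $\Floor{ku}$ periods using \eqref{EqHEllipticProperty1} iteratively, which yields the stated $H\!\left(\left(\Fractional{ku}\mp\tfrac{\wt h}{4}\right)\tfrac{i}{kz}+\alpha^{\pm}(\ell,k);\tfrac{i}{kz}\right)$ with the factor $\exp\!\left(2\pi i\Floor{ku}\alpha^{\pm}(\ell,k)\right)$, plus finitely many monomial corrections from the $2q^{3/8}e^{2\pi iu}$ in \eqref{EqHEllipticProperty1}.

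The remaining work is to reconcile all $S$-type contributions and to show that what does not cancel is exactly $E_1(u,h,k,z)$. Following the argument in Proposition \ref{PropositionMuModular}, after reducing every $S$ argument to the form $a\tau+b$ with $a=\Fractional{ku}\mp\tfrac{\wt h}{4}$ (or $a$ differing from this by $1$ when $\Fractional{ku}\mp\tfrac{\wt h}{4}\in(\tfrac12,\tfrac32)$), we apply \eqref{EqSNice1} when $|a|<\tfrac12$ and \eqref{EqSNice2} when $|a|=\tfrac12$, together with the fact that a $\Gamma(\tfrac12;\cdot)$-series which is holomorphic in $\tau$ vanishes identically. This forces all $S$-contributions to cancel except for: (i) the $q^{1/8}e^{-2\pi iab}$ boundary terms of \eqref{EqSNice2}, occurring precisely when $\Fractional{ku}\mp\tfrac{\wt h}{4}\in\{-\tfrac12,\tfrac12\}$; (ii) the extra terms already appearing in the $h\equiv 2\pmod 4$ case of Proposition \ref{PropositionMuModular}, i.e. when $\wt h=2$; and (iii) the residue produced by the one additional period shift needed when $\Fractional{ku}\mp\tfrac{\wt h}{4}\in(\tfrac12,\tfrac32)$. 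Collecting these, together with the monomial corrections from the $H$-reduction, and evaluating them explicitly in each of the four ranges of $\Fractional{ku}\mp\tfrac{\wt h}{4}$ gives the four displayed formulas for $E_1$.

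The main obstacle is the bookkeeping in this last step: tracking the signs $(-1)^{\Floor{ku}}$, the half-integer powers of $q=e^{2\pi i\tau}$, and the phases $\xi_\ell^{\mp}(h,k)$, $\alpha^{\pm}(\ell,k)$ through the iterated period shifts, and disentangling the interaction between the boundary values $a=\pm\tfrac12$ and the case $\wt h=2$, which is the most delicate just as in Lemma \ref{LemmaMordellIntegralBounds}. Once this is organized, each of the four expressions for $E_1$ is a direct, if lengthy, evaluation of the surviving boundary terms, and no idea beyond the vanishing principle for $\Gamma(\tfrac12;\cdot)$-series and the elliptic transformations \eqref{EqMuTildeEllipticProperty} and \eqref{EqHEllipticProperty1} is required.
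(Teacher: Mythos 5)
Your plan is correct and its skeleton is the paper's: start from \eqref{EqWhyTheSComboIsHolomorphic}, write $ku=\Floor{ku}+\Fractional{ku}$ and shift $\tmu$ by $\Floor{ku}$ periods via \eqref{EqMuTildeEllipticProperty}, reduce the non-holomorphic pieces to arguments governed by $\Fractional{ku}\mp\tfrac{\wt{h}}{4}$, and then identify $E_1$ case by case from the boundary terms that survive the vanishing principle for holomorphic $\Gamma(\tfrac12;\cdot)$-series. The one place you diverge is the treatment of the Mordell-integral terms: you shift $H$ alone by iterating \eqref{EqHEllipticProperty1}, which produces $\Floor{ku}$ inhomogeneous monomial corrections per term, whereas the paper shifts the combination $(S-H)$ as a unit using the homogeneous elliptic law $(S-H)(w+n\tau;\tau)=(-1)^n e^{2\pi inw+\pi in^2\tau}(S-H)(w;\tau)$ (Zwegers, Propositions 1.2 and 1.9), so no additive corrections ever appear. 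Your route can be made to work, but be aware that those $H$-corrections do not end up in $E_1$: since $(S-H)$ transforms homogeneously, they must cancel exactly against the matching inhomogeneous terms produced when the accompanying $S(\cdot;\tfrac{i}{kz})$ argument is reduced (or, if you leave that $S$ unreduced, against the extra sgn-difference terms its large multiplier $\gamma=ku\mp\tfrac{\wt{h}}{4}$ generates in the $\Gamma$-series analysis); your phrasing "collecting these together with the monomial corrections from the $H$-reduction" would otherwise overcount. Two smaller points: the "extra terms from the $h\equiv 2\pmod 4$ case of Proposition \ref{PropositionMuModular}" are themselves boundary terms of the same $S$-analysis, so if you start from \eqref{EqWhyTheSComboIsHolomorphic} they should not be added separately; and for the range $\tfrac12<\Fractional{ku}\mp\tfrac{\wt{h}}{4}<\tfrac32$ the paper does the sgn-analysis directly with $\gamma$ in that range (exactly one sign discrepancy, at $n=-\tfrac12$) rather than performing an additional period shift — the extra-shift device you describe is what the paper uses later, in Corollary \ref{CorollaryMuTransformationWithBounds}, and either choice gives the stated $E_1$.
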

\begin{proof}
As is the proof of Proposition \ref{PropositionMuModular},
\begin{align*}
&\mu\left(	u, \pm\tfrac{h+iz}{4k}; \tfrac{h+iz}{k} \right)
\\&=
	\exp\left(
		-\tfrac{\pi z}{16k} + \tfrac{\pi\left(\wt{h} \mp4ku \right)^2}{16kz} \mp \tfrac{\pi iu}{2}
	\right)
	z^{-\frac{1}{2}}
	\Bigg(
		\xi(h,[-h]_k,k)
		\wt{\mu}\left( \tfrac{iu}{z}, \pm\tfrac{\wt{h}([-h]_k+i/z )}{4k} \mp\tfrac{1+h[-h]_k}{4k} ; \tfrac{[-h]_k+i/z}{k}   \right)
		\\&\quad
		-
		\frac{i}{2\sqrt{k}}
		\sum_{\ell=0}^{k-1}
		\xi_{\ell}^{\mp}(h,k)
		(S-H)\left( \tfrac{iu}{z} \mp \tfrac{i\wt{h}}{4kz} + \alpha^{\pm}(\ell,k); \tfrac{i}{kz} \right)
	\Bigg)
.
\end{align*}

We begin with an elliptic shift for $\tmu$ using that
\begin{align*}
\frac{iu}{z}
&=
	ku\frac{[-h]_k+i/z}{k} -u[-h]_k.
\end{align*}
With \eqref{EqMuTildeEllipticProperty} we find that
\begin{align*}
&\wt{\mu}\left(	\tfrac{iu}{z}, \pm\tfrac{\wt{h}([-h]_k+i/z)}{4k} \mp\tfrac{1+h[-h]_k}{4k}	; \tfrac{[-h]_k+i/z}{k}	\right)
\\
&=
	(-1)^{\Floor{ku}}
	\exp\left(
		-\tfrac{\pi}{kz}\left(  (ku)^2-	\Fractional{ku}^2 \mp\tfrac{\wt{h}\Floor{ku}}{2} \right) 
		+
		\tfrac{\pi i\Floor{ku}}{k}\left(
			-[-h]_k\Floor{ku} \pm \tfrac{1+h[-h]_k-\wt{h}[-h]_k}{2} 
		\right)
	\right)
	\\&\quad\times
	\wt{\mu}\left(	
		\Fractional{ku}\tfrac{[-h]_k+i/z}{k} - u[-h]_k,	
		\pm\tfrac{\wt{h}([-h]_k+i/z)}{4k} \mp\tfrac{1+h[-h]_k}{4k}	; \tfrac{[-h]_k+i/z}{k}	\right)
.
\end{align*}
By Propositions 1.2 and 1.9 of \cite{Zwegers1}, we deduce that for $n\in\mathbb{Z}$,
\begin{align*}
(S-H)(w+n\tau;\tau)
&=
	(-1)^n e^{ 2\pi inw + \pi in^2\tau} (S-H)(w;\tau)
.
\end{align*}
Thus
\begin{align*}
(S-H)\left(	\tfrac{iu}{z} \mp \tfrac{i\wt{h}}{4kz} + \alpha^{\pm}(\ell,k) ; \tfrac{i}{kz} \right)
&=
	(-1)^{\Floor{ku}}
	\exp\left(
		-\tfrac{\pi}{kz}\left(
			(ku)^2 - \Fractional{ku}^2 \mp\tfrac{\wt{h}}{2}\Floor{ku}
		\right)	
		+
		2\pi i\Floor{ku}\alpha^{\pm}(\ell,k)
	\right)
	\\&\quad\times
	(S-H)\left(
		\Fractional{ku}\tfrac{i}{kz} \mp\tfrac{i\wt{h}}{4kz} + \alpha^{\pm}(\ell,k) ; \tfrac{i}{kz}
	\right)
.
\end{align*}
Since
\begin{align*}
\tfrac{\left(\wt{h}\mp 4ku\right)^2}{16} - (ku)^2 + \Fractional{ku}^2 \pm \tfrac{\wt{h}}{2}\Floor{ku}
&=
	\left(\Fractional{ku} \mp \tfrac{\wt{h}}{4}\right)^2 
,
\end{align*}
we have
\begin{align*}
&\mu\left(	u, \pm\tfrac{h+iz}{4k}	; \tfrac{h+iz}{k}\right)
\\
&=
	(-1)^{\Floor{ku}}
	\exp\left(
		-\tfrac{\pi z}{16k} + \tfrac{\pi}{kz}\left(\Fractional{ku} \mp \tfrac{\wt{h}}{4}\right)^2 
	 	\mp \tfrac{\pi iu}{2}
	\right)
	z^{-\frac{1}{2}}
	\Bigg(
		\exp\left(
			\tfrac{\pi i\Floor{ku}}{k}\left(
				-[-h]_k\Floor{ku} \pm \tfrac{1+h[-h]_k-\wt{h}[-h]_k}{2} 
			\right)
		\right)
		\\&\quad\times
		\xi(h,[-h]_k,k)
		\mu\left(	
			\Fractional{ku}\tfrac{[-h]_k+i/z}{k} - u[-h]_k,	
			\pm\tfrac{\wt{h}([-h]_k+i/z)}{4k} \mp\tfrac{1+h[-h]_k}{4k}; 
			\tfrac{[-h]_k+i/z}{k}	
		\right)
		\\&\quad
		-
		\frac{i}{2\sqrt{k}}
		\sum_{\ell=0}^{k-1}
		\xi_{\ell}^{\mp}(h,k)
		\exp\left(	2\pi i\Floor{ku}\alpha^{\pm}(\ell,k)	\right)
		(S-H)\left(
			\left(\Fractional{ku}\mp\tfrac{\wt{h}}{4}\right)\tfrac{i}{kz}  + \alpha^{\pm}(\ell,k) ; \tfrac{i}{kz}
		\right)
		\\&\quad	
		+
		\exp\left(
			\tfrac{\pi i\Floor{ku}}{k}\left(
				-[-h]_k\Floor{ku} \pm \tfrac{1+h[-h]_k-\wt{h}[-h]_k}{2} 
			\right)
		\right)
		\\&\quad\times
		\frac{i\xi(h,k)}{2}	
		S\left(	
			\left(\Fractional{ku}\mp\tfrac{\wt{h}}{4}\right)\tfrac{[-h]_k+i/z}{k} 
			- u[-h]_k \pm\tfrac{1+h[-h]_k}{4k}; \tfrac{[-h]_k+i/z}{k}	
		\right)
	\Bigg)
.
\end{align*}

We next verify the cancellations between the $S(w;\tau)$ terms. These follow 
from arguments similar to those in the proof of Proposition 
\ref{PropositionMuModular}. We let 
$\tau=\frac{i}{kz}$ and suppose $\alpha,\beta,\gamma\in\mathbb{R}$.
We define $\delta_{\frac{1}{2},\gamma}$ by
\begin{align*}
\delta_{\frac{1}{2},\gamma}
&:=\begin{cases}
	1	&	\mbox{ if }\gamma\in\frac{1}{2}+\mathbb{Z},
	\\
	0	&	\mbox{ else}.
\end{cases}
\end{align*}
We find that
\begin{align*}
&S\left( \gamma\tau+\alpha;\tau+\beta \right)
\\
&=
	\frac{ q^{\frac12\gamma^2} }{\sqrt{\pi}}
	\sum_{\substack{n\in\frac{1}{2}+\mathbb{Z},\\n\not=-\gamma}}
	(-1)^{n-\frac{1}{2}}
	q^{-\frac12(n+\gamma)^2}
	e^{ -\pi in^2\beta - 2\pi in\alpha 	}
	\mbox{sgn}(n+\gamma)\Gamma\left(\tfrac{1}{2};\pi(n+\gamma)^2 2\IM{\tau}\right)
	\\&\quad
	+
	q^{\frac12\gamma^2}
	\sum_{\substack{n\in\frac{1}{2}+\mathbb{Z},\\ n\not=-\gamma}}
	(-1)^{n-\frac{1}{2}}
	q^{-\frac12(n+\gamma)^2}
	e^{ -\pi in^2\beta - 2\pi in\alpha }
	\left( 
		\mbox{sgn}(n)-\mbox{sgn}(n+\gamma)
	\right)	
	\\&\quad
	+
	\delta_{\frac{1}{2},\gamma}
	(-1)^{\gamma-\frac{1}{2}}\mbox{sgn}(\gamma)
	q^{\frac12\gamma^2}
	e^{ 	-\pi i\gamma^2\beta +2\pi i\gamma\alpha }	
.
\end{align*}
We note the series involving the incomplete $\Gamma$-function is of the form
\begin{align*}
	\sum_{n\in\mathbb{Q}\backslash\{0\}}a(n)\Gamma\left( \tfrac{1}{2}; 4\pi|n|\IM{\tau}\right)q^{-n}
,
\end{align*}
and we recall that a function of this form is identically zero when holomorphic. As such, the 
contribution from the various $S(w;\tau)$ terms is dependent on
whether or not $\gamma$ is a half integer and what values of
$n$ satisfy $\mbox{sgn}(n)=\mbox{sgn}(n+\gamma)$. We now apply
the above with $\gamma=\Fractional{ku}\mp\frac{\wt{h}}{4}$.
The calculations for the four ranges of $\Fractional{ku}\mp\frac{\wt{h}}{4}$
are similar, and so we only give the details for the final case.

When $\frac{1}{2}<\Fractional{ku}\mp\frac{\wt{h}}{4}<\frac{3}{2}$, we 
calculate that
\begin{align*}
&\,\,
\frac{i}{2}\xi(h,[-h]_k,k)
\exp\left( 
	\tfrac{\pi i\Floor{ku}}{k}\left(
		-[-h]_k\Floor{ku} \pm \tfrac{1+h[-h]_k-\wt{h}[-h]_k}{2}
	\right)
\right)
\\&\times
S\left(	\left(\Fractional{ku}\mp\tfrac{\wt{h}}{4}\right)\tfrac{[-h]_k+i/z}{k}
	-u[-h]_k \pm \tfrac{1+h[-h]_k}{4k} ; \tfrac{[-h]_k+i/z}{k}
\right)
\\&
-
\frac{i}{2\sqrt{k}}\sum_{\ell=0}^{k-1}\xi_\ell^{\mp}(h,k)
\exp\left( 2\pi i\Floor{ku}\alpha^{\pm}(\ell,k) \right)
S\left(	\left(\Fractional{ku}\mp\tfrac{\wt{h}}{4}\right)\tfrac{i}{kz} 
	+\alpha^{\pm}(\ell,k); \tfrac{i}{kz}
\right)
\\
&=
	i\xi(h,[-h]_k,k)
	\exp\left(
		\tfrac{\pi i\Floor{ku}}{k}\left(
			-[-h]_k\Floor{ku} \pm \tfrac{1+h[-h]_k-\wt{h}[-h]_k}{2}
		\right)	
		- \tfrac{\pi}{kz} \left(\Fractional{ku}\mp\tfrac{\wt{h}}{4}\right)^2
		- \tfrac{\pi i[-h]_k}{4k}
	\right)
	\\&\quad\times
	\exp\left(
		\tfrac{\pi}{kz} \left(\Fractional{ku}\mp\tfrac{\wt{h}}{4}-\tfrac{1}{2}\right)^2
		+ \pi i\left(
			\left(\Fractional{ku}\mp\tfrac{\wt{h}}{4}\right) \tfrac{[-h]_k}{k}
			-u[-h]_k \pm \tfrac{1+h[-h]_k}{4k}
		\right)
	\right)
	\\&\quad
	-
	\frac{i}{\sqrt{k}}\sum_{\ell=0}^{k-1}\xi_\ell^{\mp}(h,k)
	\exp\left( \pi i (2\Floor{ku}+1)\alpha^{\pm}(\ell,k)
		- \tfrac{\pi}{kz} \left(\Fractional{ku}\mp\tfrac{\wt{h}}{4}\right)^2
		+ \tfrac{\pi}{kz} \left(\Fractional{ku}\mp\tfrac{\wt{h}}{4}-\tfrac{1}{2}\right)^2
	\right)
\\
&=
	i\xi(h,[-h]_k,k)
	\exp\left(
		- \tfrac{\pi}{kz} \left(\Fractional{ku}\mp\tfrac{\wt{h}}{4}\right)^2
		+ \tfrac{\pi}{kz} \left(\Fractional{ku}\mp\tfrac{\wt{h}}{4}-\tfrac{1}{2}\right)^2
	\right)
	\\&\quad\times
	\exp\left(
		\tfrac{\pi i[-h]_k}{k}\left(
			-\Floor{ku}^2-\Floor{ku} \pm (2\Floor{ku}+1)\tfrac{h-\wt{h}}{4} -\tfrac{1}{4}
		\right)
		\pm \tfrac{\pi i(2\Floor{ku}+1)}{4k}
	\right)
	\\&\quad
	-
	\frac{i}{\sqrt{k}}\sum_{\ell=0}^{k-1}\xi_\ell^{\mp}(h,k)
	\exp\left( \pi i (2\Floor{ku}+1)\alpha^{\pm}(\ell,k)
		- \tfrac{\pi}{kz} \left(\Fractional{ku}\mp\tfrac{\wt{h}}{4}\right)^2
		+ \tfrac{\pi}{kz} \left(\Fractional{ku}\mp\tfrac{\wt{h}}{4}-\tfrac{1}{2}\right)^2
	\right)
.	
\end{align*}
The proposition then follows after elementary cancellations.
\end{proof}

Next we bound the $H$ terms appearing in Proposition
\ref{PropositionMuModularForRankAlt2}. As was the case with $N2_{2\ell}(n)$,
these will not contribute to the main term for $A\left(\frac{a}{c};n\right)$.

\begin{proposition}\label{PropositionHBounds}
Suppose $k$ is a positive integer, $\alpha,\beta\in\mathbb{R}$ with
$|\alpha|<\frac{1}{2}$ and $-\frac{1}{2}\le\beta<\frac{1}{2}$, and $\RE{z}>0$.
Then
\begin{align*}
\left|H\left(\tfrac{i\beta}{kz}+\alpha;\tfrac{i}{kz} \right)\right|
&\le
\begin{cases}
	\left|\sec(\pi \beta)\right| k^{\frac{1}{2}}\RE{\frac{1}{z}}^{-\frac{1}{2}} 
		\exp\left( -\frac{\pi\beta^2}{k}\RE{\frac{1}{z}} + \pi k\alpha^2\RE{\frac{1}{z}}^{-1} \right)
	&\mbox{if }\beta\not=-\frac{1}{2}
	,\\[.5ex]
	\left( 1+k^{\frac{1}{2}}\RE{\frac{1}{z}}^{-\frac{1}{2}} \right) 
		\exp\left( -\frac{\pi}{4k}\RE{\frac{1}{z}}  \right)
	&\mbox{if }\beta=-\frac{1}{2}
.
\end{cases}
\end{align*}
\end{proposition}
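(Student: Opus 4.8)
The plan is to estimate the Mordell-type integral directly from its defining integral, isolating a Gaussian factor by completing the square and then moving the path of integration, in the spirit of the proof of Lemma \ref{LemmaMordellIntegralBounds}. First I would write
\[
H\left(\tfrac{i\beta}{kz}+\alpha;\tfrac{i}{kz}\right)=\int_{\mathbb{R}}\frac{\exp\left(-\tfrac{\pi x^2}{kz}-2\pi x\left(\tfrac{i\beta}{kz}+\alpha\right)\right)}{\cosh(\pi x)}\,dx
\]
and complete the square in the exponent, rewriting it as $-\tfrac{\pi(x+i\beta)^2}{kz}-\tfrac{\pi\beta^2}{kz}-2\pi\alpha x$. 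The substitution $w=x+i\beta$ then moves the contour to $\mathbb{R}+i\beta$, produces the scalar prefactor $\exp\left(-\tfrac{\pi\beta^2}{kz}+2\pi i\alpha\beta\right)$ (whose modulus is $\exp\left(-\tfrac{\pi\beta^2}{k}\RE{\tfrac1z}\right)$, using $\RE{z}>0\Rightarrow\RE{\tfrac1z}>0$), and leaves an integrand of the shape $\exp\left(-\tfrac{\pi w^2}{kz}-2\pi\alpha w\right)\big/\cosh(\pi w-\pi i\beta)$.

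When $\beta\ne-\tfrac12$, so that $-\tfrac12<\beta<\tfrac12$, the poles of $1/\cosh(\pi w-\pi i\beta)$ sit at $w=i\left(\beta+n+\tfrac12\right)$ with $n\in\mathbb{Z}$, and none of them lies in the closed horizontal strip bounded by $\mathbb{R}$ and $\mathbb{R}+i\beta$; together with the super-exponential decay of the Gaussian along vertical lines this lets me push the contour back to $\mathbb{R}$ by Cauchy's theorem. On $\mathbb{R}$ I would bound $|\cosh(\pi x-\pi i\beta)|^{-1}\le|\sec(\pi\beta)|$ via the identity $|\cosh(\pi x-\pi i\beta)|^2=\cos^2(\pi\beta)+\sinh^2(\pi x)\ge\cos^2(\pi\beta)$, and complete the square in $x$ once more to get $\left|\exp\left(-\tfrac{\pi w^2}{kz}-2\pi\alpha w\right)\right|=\exp\left(\pi k\alpha^2\RE{\tfrac1z}^{-1}\right)\exp\left(-\tfrac{\pi}{k}\RE{\tfrac1z}\big(x+\tfrac{\alpha k}{\RE{1/z}}\big)^2\right)$ on $\mathbb{R}$. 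The Gaussian integrates to $k^{1/2}\RE{\tfrac1z}^{-1/2}$, and multiplying these three factors gives the first claimed bound.

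The case $\beta=-\tfrac12$ is where the real work lies, and I expect it to be the main obstacle. Here $\cosh(\pi w-\pi i\beta)=\cosh\big(\pi w+\tfrac{\pi i}{2}\big)=i\sinh(\pi w)$, so after the substitution $w=x-\tfrac i2$ the integrand $g(w):=\exp\left(-\tfrac{\pi w^2}{kz}-2\pi\alpha w\right)\big/(i\sinh\pi w)$ has a simple pole at $w=0$ lying exactly on the line $\mathbb{R}$ to which I want to return. I would shift from $\mathbb{R}-\tfrac i2$ up to $\mathbb{R}$ with a small semicircular indentation below $w=0$; in the limit of vanishing radius this picks up a term of modulus $1$ (the half-residue, since $\operatorname{Res}_{w=0}g=\tfrac{1}{i\pi}$) and leaves the principal value $\mathrm{P.V.}\int_{\mathbb{R}}g$. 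The key trick is to symmetrise: $\mathrm{P.V.}\int_{\mathbb{R}}g(x)\,dx=\int_0^\infty\big(g(x)+g(-x)\big)\,dx$, where $g(x)+g(-x)=2i\exp\left(-\tfrac{\pi x^2}{kz}\right)\tfrac{\sinh(2\pi\alpha x)}{\sinh(\pi x)}$ is regular at $0$ and, because $|2\alpha|<1$ and $\sinh$ is increasing on $[0,\infty)$, satisfies $\big|\tfrac{\sinh(2\pi\alpha x)}{\sinh(\pi x)}\big|\le1$. Hence $|g(x)+g(-x)|\le2\exp\left(-\tfrac{\pi x^2}{k}\RE{\tfrac1z}\right)$, so the principal value is at most $k^{1/2}\RE{\tfrac1z}^{-1/2}$, and combining this with the modulus-$1$ indentation term and the prefactor modulus $\exp\left(-\tfrac{\pi}{4k}\RE{\tfrac1z}\right)$ (since $\beta^2=\tfrac14$) yields the second bound. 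It is exactly this symmetrisation that eliminates both the $\alpha$-dependence and the secant factor in the $\beta=-\tfrac12$ estimate; the remaining steps — justifying the contour shifts by the decay of the Gaussian and evaluating the Gaussian integrals — are routine and parallel Lemma \ref{LemmaMordellIntegralBounds}.
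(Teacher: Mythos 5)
Your proposal is correct and follows essentially the same route as the paper's proof: the substitution $w=x+i\beta$ producing the prefactor of modulus $\exp\left(-\frac{\pi\beta^2}{k}\RE{\frac1z}\right)$, the shift back to $\mathbb{R}$ with the $|\sec(\pi\beta)|$ bound and Gaussian integral when $\beta\neq-\frac12$, and, for $\beta=-\frac12$, the half-residue term of modulus $1$ plus the symmetrized principal value estimated via $\left|\sinh(2\pi\alpha w)/\sinh(\pi w)\right|\le 1$. Only cosmetic details differ, such as your exact modulus identity for $\cosh$ in place of the paper's triangle-inequality bound.
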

\begin{proof}
By definition,
\begin{align*}
H\left(\tfrac{i\beta}{kz}+\alpha;\tfrac{i}{kz} \right)
&=
	\int_{\mathbb{R}}
	\frac{ \exp\left(-\frac{\pi x^2}{kz}-2\pi x\left(\frac{i\beta}{kz}+\alpha\right)\right) }
		{\cosh(\pi x)}dx
\\
&=
	\exp\left(-\tfrac{\pi\beta^2}{kz}+2\pi i\alpha\beta\right)
	\int_{\mathbb{R}+i\beta}
	\frac{\exp\left( -\frac{\pi w^2}{kz} -2\pi w\alpha \right)}{\cosh(\pi(w-i\beta))}
	dw,
\end{align*}
where we have used the substitution $w=x+i\beta$. We wish to shift the
path of integration back to $\mathbb{R}$. For this we first note that
the integrand tends to zero as $|\RE{w}|\rightarrow\infty$.

When $\beta\not=-\frac{1}{2}$, the integrand has no poles
as $w$ varies from $0$ to $\beta$, and so for $\beta\not=-\frac{1}{2}$
we have that
\begin{align*}
H\left(\tfrac{i\beta}{kz}+\alpha;\tfrac{i}{kz} \right)
&=
	\exp\left(-\tfrac{\pi\beta^2}{kz}+2\pi i\alpha\beta\right)
	\int_{\mathbb{R}}
	\frac{\exp\left(-\frac{\pi w^2}{kz} -2\pi w\alpha\right) }{\cosh(\pi(w-i\beta))}
	dw
.
\end{align*}
Using the trivial bound
\begin{align*}
\frac{1}{|\cosh(\pi(w-i\beta))|}
&=
\frac{2}{|e^{\pi(w-i\beta)}+e^{-\pi(w-i\beta)}| }
\le
\frac{2}{|e^{-\pi i\beta}+e^{\pi i\beta} |}
=
|\sec(\pi \beta)|
,
\end{align*}
we find that
\begin{align*}
\left|H\left(\tfrac{i\beta}{kz}+\alpha;\tfrac{i}{kz} \right)\right|
&\le
	|\sec(\pi \beta)|
	\exp\left( -\tfrac{\pi\beta^2}{k}\RE{\tfrac{1}{z}} + \pi k\alpha^2\RE{\tfrac{1}{z}}^{-1}  \right)
	k^{\frac{1}{2}} \RE{\tfrac{1}{z}}^{-\frac{1}{2}}
.
\end{align*}

However, when $\beta=\frac{1}{2}$, the integrand has a simple pole at $w=0$
with residue $-\frac{i}{\pi}$. As such, we instead have
\begin{align*}
H\left(-\tfrac{i}{2kz}+\alpha;\tfrac{i}{kz} \right)
&=
	\exp\left(-\tfrac{\pi}{4kz}-\pi i\alpha\right)
	\left(
		1
		+
		\lim_{\varepsilon\rightarrow 0^+}\left(
			\int_{-\infty}^{-\varepsilon}
			+
			\int_{\varepsilon}^{\infty}
		\right)
		\frac{\exp\left(-\frac{\pi w^2}{kz} -2\pi w\alpha \right)}{\cosh(\pi(w+i/2))}  dw
	\right)
\\
&=
	\exp\left(-\tfrac{\pi}{4kz}-\pi i\alpha\right)
	\left(
		1
		+
		2i\int_{0}^{\infty} \exp\left( -\tfrac{\pi w^2}{kz} \right)
		\frac{\left( e^{2\pi w\alpha} - e^{-2\pi w\alpha} \right)}
			{\left( e^{\pi w}-e^{-\pi w} \right)} dw
	\right)
.
\end{align*}
Since $-\frac{1}{2}<\alpha<\frac{1}{2}$, we have that
$	
\left|\frac{e^{2\pi w\alpha} - e^{-2\pi w\alpha}}{e^{\pi w}-e^{-\pi w} }\right|
\le 1
$.
Thus
\begin{align*}
\left| H\left(-\tfrac{i}{2kz}+\alpha;\tfrac{i}{kz} \right) \right|
&\le
	\exp\left(-\tfrac{\pi}{4k}\RE{\tfrac{1}{z}}\right)
	\left(
		1
		+
		2\int_{0}^{\infty} \exp\left(-\tfrac{\pi w^2}{k}\RE{\tfrac{1}{z}} \right)
	\right)
\\
&=
	\exp\left(-\tfrac{\pi}{4k}\RE{\tfrac{1}{z}}\right)
	\left(
		1
		+
		k^{\frac{1}{2}} \RE{\tfrac{1}{z}}^{-\frac{1}{2}} 
	\right)
.
\end{align*}
\end{proof}

From Propositions \ref{PropositionMuModularForRankAlt2} and 
\ref{PropositionHBounds}, we deduce the following corollary.
This corollary contains the necessary transformation for the relevant
$\mu(u,v;\tau)$ and initial bounds.

\begin{corollary}\label{CorollaryMuTransformationWithBounds}
Suppose $h$ and $k$ are relatively prime integers with $k>0$,
$u\in\mathbb{R}$ with $u\not\in\mathbb{Z}$, and
$\RE{z}>0$.
If $-\frac{1}{2}\le \Fractional{ku}\mp\frac{\wt{h}}{4}\le\frac{1}{2}$, then
\begin{align*}
&\mu\left(u, \pm\tfrac{h+iz}{4k}; \tfrac{h+iz}{k}\right)
\\
&=
	(-1)^{\Floor{ku}}
	\exp\left(
		-\tfrac{\pi z}{16k}
		+\tfrac{\pi}{kz}\left(\Fractional{ku}\mp\tfrac{\wt{h}}{4}\right)^2
		\mp\tfrac{\pi iu}{2}
	\right)
	z^{-\frac{1}{2}}
	\exp\left(
		\tfrac{\pi i\Floor{ku}}{k}\left(
			-[-h]_k\Floor{ku} \pm \tfrac{1+h[-h]_k-\wt{h}[-h]_k}{2} 
		\right)
	\right)
	\\&\quad\times
	\xi(h,[-h]_k,k)
	\mu\left(
		\Fractional{ku}\tfrac{[-h]_k+i/z}{k} - u[-h]_k,
		\pm\tfrac{\wt{h}([-h]_k+i/z)}{4k} \mp \tfrac{1+h[-h]_k}{4k}; 
		\tfrac{[-h]_k+i/z}{k}
	\right)
	+
	E_2(u,h,k,z).
\end{align*}
If $\frac{1}{2}< \Fractional{ku}\mp\frac{\wt{h}}{4}<\frac{3}{2}$, then
\begin{align*}
&\mu\left(u, \pm\tfrac{h+iz}{4k}; \tfrac{h+iz}{k}\right)
\\
&=
	(-1)^{\Floor{ku}}
	\exp\left(
		-\tfrac{\pi z}{16k}
		+\tfrac{\pi}{kz}\left(\Fractional{ku}\mp\tfrac{\wt{h}}{4}\right)^2
		\mp\tfrac{\pi iu}{2}
	\right)
	z^{-\frac{1}{2}}
	\exp\left(
		\tfrac{\pi i\Floor{ku}}{k}\left(
			-[-h]_k\Floor{ku} \pm \tfrac{1+h[-h]_k-\wt{h}[-h]_k}{2} 
		\right)
	\right)
	\\&\quad\times
	\xi(h,[-h]_k,k)
	\mu\left(
		\Fractional{ku}\tfrac{[-h]_k+i/z}{k} - u[-h]_k,
		\pm\tfrac{\wt{h}([-h]_k+i/z)}{4k} \mp \tfrac{1+h[-h]_k}{4k}; 
		\tfrac{[-h]_k+i/z}{k}
	\right)
	\\&\quad
	+
	i(-1)^{\Floor{ku}}
	\exp\left(
		-\tfrac{\pi z}{16k}  
		+ \tfrac{\pi}{kz}\left(\Fractional{ku}\mp\tfrac{\wt{h}}{4}-\tfrac{1}{2}\right)^2  
		\mp \tfrac{\pi iu}{2}
	\right)
	z^{-\frac{1}{2}}\xi(h,[-h]_k,k)
	\\&\quad\times
	\exp\left(
		\tfrac{\pi i[-h]_k}{k}\left(
			-\Floor{ku}^2-\Floor{ku} \pm (2\Floor{ku}+1)\tfrac{h-\wt{h}}{4} -\tfrac{1}{4}
		\right)
		\pm \tfrac{\pi i(2\Floor{ku}+1)}{4k}
	\right)
	+
	E_2(u,h,k,z).
\end{align*}
Here $E_2(u,h,k,z)$ is bounded as follows,
\begin{align*}
|E_2(u,h,k,z)|
&\le
\begin{cases}
	\exp\left( -\frac{\pi\RE{z}}{16k} \right)
	|z|^{-\frac{1}{2}}
	\left(
		\tfrac{1}{2} + k^{\frac{1}{2}}+\tfrac{k}{2}\RE{\tfrac{1}{z}}^{-\frac{1}{2}}
	\right)
	&
	\mbox{if } \left|\Fractional{ku}\mp\frac{\wt{h}}{4}\right|=\frac{1}{2},
	\\[1.5ex]
	\left|\sec\left(\pi\left(\Fractional{ku}\mp\tfrac{\wt{h}}{4}\right)\right)\right|	
	\exp\left( -\frac{\pi\RE{z}}{16k} + \frac{\pi k}{4}\RE{\frac{1}{z}}^{-1} \right)
	|z|^{-\frac{1}{2}}
	\frac{k}{2}\RE{\frac{1}{z}}^{-\frac{1}{2}}
	&
	\mbox{else}.
\end{cases}
\end{align*}
\end{corollary}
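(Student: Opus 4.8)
The plan is to feed the bounds of Proposition \ref{PropositionHBounds} into the expansion of $\mu\bigl(u,\pm\tfrac{h+iz}{4k};\tfrac{h+iz}{k}\bigr)$ supplied by Proposition \ref{PropositionMuModularForRankAlt2}, retaining the $\mu$-term (and, on one range, one further explicit term) and collecting everything else into $E_2(u,h,k,z)$. Write $\beta:=\Fractional{ku}\mp\tfrac{\wt h}{4}$ and $\tau:=\tfrac{i}{kz}$. Since $\Fractional{ku}\in[0,1)$ and $\tfrac{\wt h}{4}\in\{-\tfrac14,0,\tfrac14,\tfrac12\}$ we always have $-\tfrac12\le\beta<\tfrac32$, so the two cases of the statement exhaust all possibilities, and the $H$-functions appearing in Proposition \ref{PropositionMuModularForRankAlt2} are all of the form $H\bigl(\beta\tau+\alpha^{\pm}(\ell,k);\tau\bigr)$ with $|\alpha^{\pm}(\ell,k)|=\tfrac1k\bigl|-\ell+\tfrac{k-1}{2}\pm\tfrac14\bigr|\le\tfrac{2k-1}{4k}<\tfrac12$. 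The organising observation is that the factor $\exp\bigl(\tfrac{\pi\beta^2}{kz}\bigr)$ carried by the Proposition \ref{PropositionMuModularForRankAlt2} prefactor has real part $\tfrac{\pi\beta^2}{k}\RE{\tfrac1z}$, which exactly cancels the decaying factor $\exp\bigl(-\tfrac{\pi\beta^2}{k}\RE{\tfrac1z}\bigr)$ in the Proposition \ref{PropositionHBounds} bounds; what remains is governed by the $\sec$, $\exp(\pi k\alpha^2\RE{\tfrac1z}^{-1})$ and $k^{1/2}\RE{\tfrac1z}^{-1/2}$ factors, and summing $k$ of them against the normalisation $\tfrac{i}{2\sqrt k}$ yields the claimed $\tfrac k2\RE{\tfrac1z}^{-1/2}$ (using $\exp(\pi k\alpha^2\RE{\tfrac1z}^{-1})\le\exp(\tfrac{\pi k}{4}\RE{\tfrac1z}^{-1})$ since $|\alpha^{\pm}(\ell,k)|<\tfrac12$).

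First I would treat $-\tfrac12\le\beta\le\tfrac12$. On this range $E_1(u,h,k,z)$ is either $0$ or the two displayed summands, which carry only the innocuous prefactor $\exp\bigl(-\tfrac{\pi z}{16k}\mp\tfrac{\pi iu}{2}\bigr)z^{-1/2}$ and bounded phases, hence contribute at most $\exp\bigl(-\tfrac{\pi\RE z}{16k}\bigr)|z|^{-1/2}\bigl(\tfrac12+\tfrac{\sqrt k}{2}\bigr)$ to $E_2$. For the $H$-sum I would invoke Proposition \ref{PropositionHBounds} directly when $-\tfrac12\le\beta<\tfrac12$, and when $\beta=\tfrac12$ first use \eqref{EqHEven} to rewrite $H\bigl(\tfrac12\tau+\alpha^{\pm}(\ell,k);\tau\bigr)=H\bigl((-\tfrac12)\tau-\alpha^{\pm}(\ell,k);\tau\bigr)$, whose parameter $|-\alpha^{\pm}(\ell,k)|<\tfrac12$ puts it in the $\beta=-\tfrac12$ case of Proposition \ref{PropositionHBounds}. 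In each instance the prefactor's $\exp\bigl(\tfrac{\pi\beta^2}{kz}\bigr)$ cancels the bound's decay as above, and collecting everything yields the two subcases ($|\beta|=\tfrac12$ and $|\beta|<\tfrac12$) of the stated $E_2$-bound on this range.

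The substantive case is $\tfrac12<\beta<\tfrac32$. Here I would apply the elliptic transformation \eqref{EqHEllipticProperty1} (equivalently, shift the path in $H\bigl(\beta\tau+\alpha^{\pm}(\ell,k);\tau\bigr)=\int_{\mathbb R}\cosh(\pi x)^{-1}\exp\bigl(\pi i\tau x^2-2\pi x(\beta\tau+\alpha^{\pm}(\ell,k))\bigr)\,dx$ past its single simple pole at $w=i(\beta-\tfrac12)$) to write
\begin{align*}
H\bigl(\beta\tau+\alpha^{\pm}(\ell,k);\tau\bigr)
&=
e^{-\frac{\pi\beta^2}{kz}}e^{2\pi i\beta\alpha^{\pm}(\ell,k)}
\int_{\mathbb R}\frac{\exp\bigl(-\frac{\pi w^2}{kz}-2\pi w\alpha^{\pm}(\ell,k)\bigr)}{\cosh\bigl(\pi(w-i\beta)\bigr)}\,dw
\\&\quad
+2\,e^{\frac{\pi}{kz}(\frac14-\beta)}e^{\pi i\alpha^{\pm}(\ell,k)}.
\end{align*}
The integral over $\mathbb R$ is estimated just as in the proof of Proposition \ref{PropositionHBounds} (now $|\cosh(\pi(w-i\beta))|\ge|\cos(\pi\beta)|$ for $w\in\mathbb R$ gives the factor $|\sec(\pi\beta)|=|\sec(\pi(\Fractional{ku}\mp\tfrac{\wt h}{4}))|$), so the first line feeds $E_2$ exactly as before. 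Multiplying the second line by the Proposition \ref{PropositionMuModularForRankAlt2} prefactor $(-1)^{\Floor{ku}}\exp\bigl(-\tfrac{\pi z}{16k}+\tfrac{\pi\beta^2}{kz}\mp\tfrac{\pi iu}{2}\bigr)z^{-1/2}$ and by $\tfrac{i}{2\sqrt k}\xi_{\ell}^{\mp}(h,k)\exp\bigl(2\pi i\Floor{ku}\alpha^{\pm}(\ell,k)\bigr)$, then summing over $\ell$, produces (using $\beta^2+\tfrac14-\beta=(\beta-\tfrac12)^2$ and $e^{2\pi i\Floor{ku}\alpha^{\pm}}e^{\pi i\alpha^{\pm}}=e^{\pi i(2\Floor{ku}+1)\alpha^{\pm}}$) precisely
\[
(-1)^{\Floor{ku}}\exp\!\Bigl(-\tfrac{\pi z}{16k}+\tfrac{\pi}{kz}\bigl(\Fractional{ku}\mp\tfrac{\wt h}{4}-\tfrac12\bigr)^2\mp\tfrac{\pi iu}{2}\Bigr)z^{-\frac12}\,\frac{i}{\sqrt k}\sum_{\ell=0}^{k-1}\xi_{\ell}^{\mp}(h,k)\exp\!\bigl(\pi i(2\Floor{ku}+1)\alpha^{\pm}(\ell,k)\bigr),
\]
which is exactly the negative of the $\sum_{\ell}$-term inside $E_1(u,h,k,z)$ on this range. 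The two cancel, and the only non-$\mu$, non-error contribution left is the $\xi(h,[-h]_k,k)$ summand of that $E_1$ — precisely the extra explicit term in the statement.

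The step I expect to be the main obstacle is this last one: producing the residue/additive term in exactly the right normalisation so that the $\sum_{\ell}$ pieces cancel and the displayed secondary term emerges, while simultaneously verifying that every remaining contribution genuinely decays like $\exp(-\tfrac{\pi\RE z}{16k})$ times the asserted powers of $k$ and $\RE{\tfrac1z}$; the endpoint handling at $\beta=\pm\tfrac12$ via \eqref{EqHEven} is a minor but necessary technicality, mirroring the $\beta=-\tfrac12$ subtlety already encountered in the proof of Proposition \ref{PropositionHBounds}.
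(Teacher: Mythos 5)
Your proposal is correct and follows essentially the same route as the paper: direct application of Proposition \ref{PropositionHBounds} (with the \eqref{EqHEven} trick at $\Fractional{ku}\mp\tfrac{\wt{h}}{4}=\tfrac12$) on the first range, and on $\tfrac12<\Fractional{ku}\mp\tfrac{\wt{h}}{4}<\tfrac32$ the use of \eqref{EqHEllipticProperty1} — your direct contour shift past the pole at $w=i(\beta-\tfrac12)$ is exactly equivalent — so that the resulting exponential term cancels the $\sum_{\ell}$ part of $E_1$ and the $\xi(h,[-h]_k,k)$ summand survives as the displayed secondary term, with the shifted integral absorbed into $E_2$ via the $|\sec(\pi\beta)|$ estimate. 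Your bookkeeping of the cancellation $\exp(\tfrac{\pi\beta^2}{kz})$ against the decay in the $H$-bounds and the resulting $\tfrac{k}{2}\RE{\tfrac1z}^{-\frac12}$ factor matches the stated bounds, so nothing is missing.
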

\begin{proof}
When $-\frac12\le\Fractional{ku}\mp\tfrac{\wt{h}}{4}<\frac12$,
the corollary follows by directly applying the bounds in Proposition
\ref{PropositionHBounds} and noting that $|\alpha^{\pm}(\ell,k)|<\frac{1}{2}$.
We omit these calculations.

When $\Fractional{ku}\mp\tfrac{\wt{h}}{4}=\frac{1}{2}$, we note
by \eqref{EqHEven} that
\begin{align*}
H\left( \tfrac{i}{2kz} + \alpha^{\pm}(\ell,k) ; \tfrac{i}{kz} \right)
&=
H\left( -\tfrac{i}{2kz} - \alpha^{\pm}(\ell,k) ; \tfrac{i}{kz} \right)
.
\end{align*}
As such we obtain the same bounds as when 
$\Fractional{ku}\mp\tfrac{\wt{h}}{4}=-\tfrac{1}{2}$.

When $\frac{1}{2}<\Fractional{ku}\mp\tfrac{\wt{h}}{4}<\tfrac{3}{2}$, we 
use \eqref{EqHEllipticProperty1} to obtain that
\begin{align*}
H\left( \beta\tau+\alpha; \tau \right)
&=
	2\exp\left( \pi i((\beta-1)\tau +\alpha) + \tfrac{3\pi i\tau}{4}	\right)
	-
	\exp\left( 2\pi i\alpha + \pi i(2\beta-1)\tau \right)
	H\left( (\beta-1)\tau+\alpha ; \tau \right)
.
\end{align*}
We see that the additional contribution of 
\begin{align*}
&(-1)^{\Floor{ku}}
\exp\left(
	-\tfrac{\pi z}{16k} + \tfrac{\pi}{kz}\left(\Fractional{ku} \mp \tfrac{\wt{h}}{4}\right)^2 
 	\mp \tfrac{\pi iu}{2}
\right)
z^{-\frac{1}{2}}
\\&\times
\frac{i}{\sqrt{k}}
\sum_{\ell=0}^{k-1}
\xi_{\ell}^{\mp}(h,k)
\exp\left(
	2\pi i\Floor{ku}\alpha^{\pm}(\ell,k)	
	+\pi i\left( \Fractional{ku}\mp\tfrac{\wt{h}}{4} - 1 \right)\tfrac{i}{kz} +\pi i\alpha^{\pm}(\ell,k) 	 
	-\tfrac{3\pi }{4kz}	
\right)
\end{align*}
cancels exactly with the contribution of
\begin{align*}
&-(-1)^{\Floor{ku}}
\exp\left( -\tfrac{\pi z}{16k}  
	+ \tfrac{\pi}{kz}\left(\Fractional{ku}\mp\tfrac{\wt{h}}{4}-\tfrac{1}{2}\right)^2  
	\mp \tfrac{\pi iu}{2}
\right)
z^{-\frac{1}{2}}
\frac{i}{\sqrt{k}}\sum_{\ell=0}^{k-1}\xi_\ell^{\mp}(h,k)
\exp\left( \pi i (2\Floor{ku}+1)\alpha^{\pm}(\ell,k) \right)
\end{align*}
from $E_1(u,h,k,z)$.
Furthermore, when $\frac{1}{2}<\beta<\frac{3}{2}$, $|\alpha|<\frac{1}{2}$, and 
$\tau=\frac{i}{kz}$, we have that
\begin{align*}
&\left| 	\exp\left( -\tfrac{\pi(2\beta-1)}{kz} \right)
H\left( \tfrac{(\beta-1)i}{kz} + \alpha; \tfrac{i}{kz} \right)
\right|	
\\
&\le
	\left| \sec\left( \pi(\beta-1) \right) \right|
	\exp\left(
		-\tfrac{\pi(2\beta-1)}{k}\RE{\tfrac{1}{z}}
		-\tfrac{\pi(\beta-1)^2}{k}\RE{\tfrac{1}{z}}
		+\tfrac{\pi k}{4}\RE{\tfrac{1}{z}}^{-1}
	\right)
	k^{\frac{1}{2}} \RE{\tfrac{1}{z}}^{-\frac{1}{2}}
\\
&=
	\left| \sec\left( \pi\beta \right) \right|
	\exp\left(
		-\tfrac{\pi\beta^2}{k}\RE{\tfrac{1}{z}}
		+\tfrac{\pi k}{4}\RE{\tfrac{1}{z}}^{-1}
	\right)
	k^{\frac{1}{2}} \RE{\tfrac{1}{z}}^{-\frac{1}{2}}
.
\end{align*}
Thus we obtain the same bound when
$\frac{1}{2}<\Fractional{ku}\mp\tfrac{\wt{h}}{4}<\tfrac{3}{2}$,
as when
$-\frac{1}{2}<\Fractional{ku}\mp\tfrac{\wt{h}}{4}<\tfrac{1}{2}$.
\end{proof}

Depending on the value of $k$ modulo $4$, 
we will apply one of three transformations to
$R2\big(e^{\frac{2\pi ia}{c}};e^{\frac{2\pi i(h+iz)}{k}}\big)$. Each
of these three transformations results in two $\mu$-functions. For
each of these $\mu$-functions we must determine the $q$-terms with negative
exponents and explicitly bound the remaining terms. This is a 
straightforward, but lengthy process, and consumes the next six 
propositions. The proofs each require a similar set of calculations and as
such we omit many of the proofs of the later propositions.
The following proposition is for one of the two $\mu$-functions
corresponding the case when $k\equiv0\pmod{4}$.

\begin{proposition}\label{PropositionMuMainTerm1}
Suppose $u_1,u_2\in\mathbb{R}$ with $0\le u_1<1$, and $|q|^{\frac{1}{4}}<\frac{1}{2}$.
If $u_1=0$, then
\begin{align*}
q^{-\frac{1}{2}(u_1-\frac{1}{4})^2}
\mu\left( u_1\tau+u_2, \tfrac{\tau}{4};\tau \right)
&=
	-\frac{q^{-\frac{1}{32}}}{2\sin(\pi u_2)}+E
	,
\end{align*} 
where 
\begin{align*}
|E|
&\le 
	\frac{|q|^{\frac{23}{32}}}{1-|q|}
	+
	\frac{|q|^{\frac{7}{32}}}{1-2|q|^{\frac{1}{4}}}\left(
		\frac{1}{2|\sin(\pi u_2)|} +	\frac{1}{ 1-|q| }
	\right)
	+
	\frac{|q|^{\frac{39}{32}}	(1+|q|^{\frac{1}{4}})}
		{ (1-2|q|^{\frac{1}{4}})(1-|q|)^2 }
.
\end{align*}
If $u_1>0$, then
\begin{align*}
q^{-\frac{1}{2}(u_1-\frac{1}{4})^2}
\mu\left( u_1\tau+u_2, \tfrac{\tau}{4};\tau \right)
&=
	i\sum_{m=0}^{M_1} e^{\pi iu_2(2m+1)} q^{-\frac{u_1^2}{2} + \frac{3u_1}{4} -\frac{1}{32}+mu_1 }
	\\&\quad	
	+
	i\sum_{m=0}^{M_2} e^{-\pi iu_2(2m+1)} q^{-\frac{u_1^2}{2} - \frac{u_1}{4} +\frac{23}{32}+m(1-u_1) }
	+
	E
	,
\end{align*}
where $M_1=\left\lceil\frac{16u_1^2-56u_1+1}{32u_1}\right\rceil$,
$M_2=\left\lceil\frac{16u_1^2+40u_1-55}{32(1-u_1)}\right\rceil$, and
\begin{align*}
|E|
&\le 
	\frac{|q|^{\frac{39}{32}} (1+|q|^{\frac{1}{4}})}
		{ (1-2|q|^{\frac{1}{4}})(1-|q|)^2 }
	+
	\left(	\frac{1}{1-|q|^{u_1}} + \frac{1}{1-|q|^{1-u_1}}	\right)
	\left(	1+	\frac{|q|^{\frac{7}{32}}}{1-2|q|^{\frac{1}{4}}}	\right)
.
\end{align*}
\end{proposition}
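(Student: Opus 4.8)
The plan is to substitute the defining Appell--Lerch series for $\mu$, absorb the normalizing factor, and then read off the negative powers of $q$ by hand. Writing $q=e^{2\pi i\tau}$ and using $\vartheta(\tfrac{\tau}{4};\tau)=-i(q^{1/4},q^{3/4},q;q)_\infty$ (immediate from the product form of $\vartheta$) together with $e^{\pi i(u_1\tau+u_2)}=q^{u_1/2}e^{\pi iu_2}$ and $e^{2\pi i(u_1\tau+u_2)}q^n=q^{u_1+n}e^{2\pi iu_2}$, I would first establish
\begin{align*}
q^{-\frac12\left(u_1-\frac14\right)^2}\mu\left(u_1\tau+u_2,\tfrac{\tau}{4};\tau\right)
&=\frac{i\,q^{-\frac{u_1^2}{2}+\frac{3u_1}{4}-\frac{1}{32}}e^{\pi iu_2}}{(q^{1/4},q^{3/4},q;q)_\infty}
\sum_{n\in\mathbb{Z}}\frac{(-1)^n q^{\frac12 n(n+1)+\frac n4}}{1-q^{u_1+n}e^{2\pi iu_2}},
\end{align*}
the rearrangement being legitimate because the general term decays like $q^{\frac12 n^2}$ as $n\to\pm\infty$ (for $n\to-\infty$ one uses $q^{\frac12 n(n+1)+\frac n4}/q^{u_1+n}=q^{\frac12 n^2-\frac n4-u_1}$).

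Next I would split the sum by the sign of $n$. Since $|q|^{1/4}<\tfrac12$ and $0\le u_1<1$: for $n\ge0$ (with $u_1>0$, or $n\ge1$ when $u_1=0$) one has $|q^{u_1+n}|<1$, so $\frac{1}{1-q^{u_1+n}e^{2\pi iu_2}}=\sum_{m\ge0}q^{m(u_1+n)}e^{2\pi imu_2}$; for $n=-\ell$ with $\ell\ge1$ one has $|q^{u_1-\ell}|>1$, so $\frac{1}{1-q^{u_1-\ell}e^{2\pi iu_2}}=-\sum_{m\ge1}q^{m(\ell-u_1)}e^{-2\pi imu_2}$. Inserting these expresses the right-hand side as an absolutely convergent series in fractional powers of $q$. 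Inspecting the exponents shows that the $n\ge1$ and $n\le-2$ terms all carry $q$-powers $\ge\frac{39}{32}$, so the potentially large contributions come only from $n=0$ and from $n=-1$; together with the constant term $1$ of $\frac{1}{(q^{1/4},q^{3/4},q;q)_\infty}$, and after the reindexing $m\mapsto m+1$ for the $n=-1$ sum and the simplification $i\,e^{\pi iu_2}e^{\pm2\pi imu_2}=i\,e^{\pm\pi iu_2(2m+1)}$, keeping the finitely many terms with $m\le M_1$, respectively $m\le M_2$, reproduces exactly the stated main term. Here $M_1$ and $M_2$ are the smallest integers for which the first omitted term of each series has nonnegative $q$-exponent, which is precisely the displayed formula. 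When $u_1=0$ the $n=0$ term is instead the pole value $\frac{1}{1-e^{2\pi iu_2}}=\frac{i\,e^{-\pi iu_2}}{2\sin(\pi u_2)}$, and combined with the constant $1$ this produces $-\tfrac{q^{-1/32}}{2\sin(\pi u_2)}$.

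For the error $E$ I would gather three sources: the tails $m>M_1$ and $m>M_2$ of the $n=0$ and $n=-1$ series; the complete $n\ge1$ and $n\le-2$ contributions; and the main-term series multiplied by $\frac{1}{(q^{1/4},q^{3/4},q;q)_\infty}-1$. Because $\frac{1}{(a;q)_\infty}$ has nonnegative power-series coefficients, one has $\bigl|\frac{1}{(q^{1/4},q^{3/4},q;q)_\infty}\bigr|\le\frac{1}{(|q|^{1/4},|q|^{3/4},|q|;|q|)_\infty}$ and the same for the difference from $1$; the elementary bound $(t;t)_\infty\ge1-\frac{t}{1-t}$ with $t=|q|^{1/4}<\tfrac12$ then yields $\bigl|\frac{1}{(q^{1/4},q^{3/4},q;q)_\infty}\bigr|\le\frac{1-|q|^{1/4}}{1-2|q|^{1/4}}$ and $\bigl|\frac{1}{(q^{1/4},q^{3/4},q;q)_\infty}-1\bigr|\le\frac{|q|^{1/4}}{1-2|q|^{1/4}}$. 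Geometric summation of the $n\ge1$, $\ell\ge2$ pieces (using $\frac12 n^2+\frac34 n\ge\frac54 n$ and $1-|q|^{u_1+n}\ge1-|q|$) gives the $\frac{|q|^{39/32}}{(1-|q|)^2}$ factor, the tails of the $n=0$ and $n=-1$ series give the $\frac{1}{1-|q|^{u_1}}$ and $\frac{1}{1-|q|^{1-u_1}}$ factors, and carrying the normalization (which contributes at most $|q|^{-1/32}$ to the leading coefficient, since $-\frac{u_1^2}{2}+\frac{3u_1}{4}\ge0$ for $0\le u_1\le1$) through the theta-quotient estimate turns $|q|^{1/4}$ into $|q|^{7/32}$; assembling these gives the stated inequalities. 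The only real difficulty is bookkeeping: checking that no $q$-power below the chosen thresholds is hidden in the $n\ge1$ or $\ell\ge2$ sums, so that $M_1$ and $M_2$ really do capture every potentially unbounded term, and then keeping each of the many geometric estimates sharp enough to land on the claimed explicit constants.
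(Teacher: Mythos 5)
Your proposal is correct and follows essentially the same route as the paper's proof: insert the Appell--Lerch series with $\vartheta\!\left(\tfrac{\tau}{4};\tau\right)=-i\aqprod{q^{1/4},q^{3/4},q}{q}{\infty}$, isolate the $n=0$ and $n=-1$ terms, expand them geometrically with the cutoffs $M_1,M_2$ chosen exactly as you describe, and bound the remaining series together with the reciprocal theta-product by elementary majorants. The only differences are cosmetic (you expand every reciprocal geometrically and track exponents, while the paper bounds the $n\notin\{0,-1\}$ series wholesale, and your product estimate via $(t;t)_\infty\ge\frac{1-2t}{1-t}$ is a slightly sharper substitute for the paper's $p(n)\le 2^{n-1}$ bound), so the stated constants are recovered the same way.
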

\begin{proof}
By definition,
\begin{align*}
&q^{-\frac{1}{2}(u_1-\frac{1}{4})^2}
\mu\left( u_1\tau+u_2, \tfrac{\tau}{4};\tau \right)
=
	\frac{iq^{-\frac{1}{2}(u_1-\frac{1}{4})^2 +\frac{u_1}{2}} e^{\pi iu_2} }
		{\aqprod{q^{\frac{1}{4}}, q^{\frac{3}{4}}, q}{q}{\infty}}
	\sum_{n=-\infty}^\infty
	\frac{(-1)^n q^{\frac{n(n+1)}{2} +\frac{n}{4}}}
		{1- e^{2\pi iu_2}q^{n+u_1}}	
\\
&=
	\frac{iq^{-\frac{u_1^2}{2} +\frac{3u_1}{4} - \frac{1}{32}} e^{\pi iu_2} }
		{\aqprod{q^{\frac{1}{4}}, q^{\frac{3}{4}}, q}{q}{\infty}}
	\left(
		\frac{1}{1-e^{2\pi iu_2}q^{u_1}}		
		+
		\frac{ e^{-2\pi iu_2} q^{\frac{3}{4}-u_1}}{1-e^{-2\pi iu_2}q^{1-u_1}}
		+
		\sum_{n\in\mathbb{Z}\backslash\{0,-1\}}
		\frac{(-1)^n q^{\frac{n(n+1)}{2} +\frac{n}{4}}}
			{1-e^{2\pi iu_2}q^{n+u_1}}	
	\right)
	.
\end{align*}

First we bound the series term as
\begin{align*}
\left|		
	\sum_{n\in\mathbb{Z}\backslash\{0,-1\}}	
	\frac{(-1)^n q^{\frac{n(n+1)}{2} +\frac{n}{4}}}{1-e^{2\pi iu_2}q^{n+u_1}}	
\right|
&\le
	\frac{|q|^{\frac{1}{4}}}{1-|q|}
	\sum_{n=1}^\infty |q|^{\frac{n(n+1)}{2}}
	+
	\frac{|q|^{\frac{1}{2}}}{1-|q|}
	\sum_{n=1}^\infty |q|^{\frac{n(n+1)}{2}}
\le
	\frac{|q|^{\frac{5}{4}}+|q|^{\frac{3}{2}}}{(1-|q|)^2}
.
\end{align*}
In consideration of the infinite product, we let 
$p_{2,4}(n)$ denote the number of partitions of $n$ into
parts not congruent to $2$ modulo $4$. Thus
\begin{align*}
\left|\frac{1}{\aqprod{q^{\frac{1}{4}},q^{\frac{3}{4}},q}{q}{\infty}}\right|
&\le
\sum_{n=0}^\infty p_{2,4}(n)|q|^{\frac{n}{4}}
\le
\sum_{n=0}^\infty p(n)|q|^{\frac{n}{4}}
\le
\sum_{n=0}^\infty 2^n|q|^{\frac{n}{4}}
=
\frac{1}{1-2|q|^{\frac{1}{4}}}
.
\end{align*}
As such,
\begin{align*}	
\left| 
	\frac{iq^{-\frac{u_1^2}{2} +\frac{3u_1}{4} - \frac{1}{32}} e^{\pi iu_2} }
		{\aqprod{q^{\frac{1}{4}}, q^{\frac{3}{4}}, q}{q}{\infty}}
	\sum_{n\in\mathbb{Z}\backslash\{0,-1\}}
	\frac{(-1)^n q^{\frac{n(n+1)}{2} +\frac{n}{4}}} {1-e^{2\pi iu_2}q^{n+u_1}}	
\right|
&\le
	\frac{|q|^{\frac{39}{32}} (1+|q|^{\frac{1}{4}}) }
		{(1-2|q|^{\frac{1}{4}})(1-|q|)^2}
.
\end{align*}
Furthermore, we find that
\begin{align*}
&\left|
	iq^{-\frac{u_1^2}{2} +\frac{3u_1}{4} - \frac{1}{32}} e^{\pi iu_2}
	\left(	\frac{1}{\aqprod{q^{\frac{1}{4}}, q^{\frac{3}{4}}, q}{q}{\infty}} -1\right)
	\left(
		\frac{1}{1-e^{2\pi iu_2}q^{u_1}}		
		+
		\frac{e^{-2\pi iu_2}q^{\frac{3}{4}-u_1}}{1-e^{-2\pi iu_2}q^{1-u_1}}
	\right)
\right|
\\
&\le
	\frac{|q|^{-\frac{u_1^2}{2} +\frac{3u_1}{4} - \frac{1}{32}+\frac{1}{4}}}
		{1-2|q|^{\frac{1}{4}}}
	\left(
		\frac{1}{|1-e^{2\pi iu_2}q^{u_1}|}		
		+
		\frac{|q|^{\frac{3}{4}-u_1}}{1-|q|^{1-u_1}}
	\right)
\\
&\le
	\frac{|q|^{\frac{7}{32}}}{1-2|q|^{\frac{1}{4}}}
	\left(
		\frac{1}{|1-e^{2\pi iu_2}q^{u_1}|}		
		+
		\frac{1}{1-|q|^{1-u_1}}
	\right)
.
\end{align*}

We see the contribution to the main term is of a different form depending on whether $u_1=0$
or $u_1>0$. First we handle the case when $u_1=0$. We see that
\begin{align*}
\frac{iq^{-\frac{1}{32}} e^{\pi iu_2} }{1- e^{2\pi iu_2} }
&=
-\frac{q^{-\frac{1}{32}}}{2\sin(\pi u_2)}.
\end{align*}
The remaining term to bound is
\begin{align*}
\left|
\frac{iq^{-\frac{1}{32}+\frac{3}{4}} e^{-\pi iu_2} }{1-e^{-2\pi iu_2}q}
\right|
&\le
	\frac{|q|^{\frac{23}{32}}}{1-|q|}.
\end{align*}
Since the main term is as stated in the proposition, we need only verify the
error term $E$ is bounded as claimed. For this, we note the error
term is bounded by
\begin{align*}
&
\frac{|q|^{\frac{23}{32}}}{1-|q|}
+
\frac{|q|^{\frac{7}{32}}}{1-2|q|^{\frac{1}{4}}}
\left(	\frac{1}{|1-e^{2\pi iu_2}|}	+ \frac{1}{1-|q|} \right)
+
\frac{|q|^{\frac{39}{32}} (1+|q|^{\frac{1}{4}}) }
	{(1-2|q|^{\frac{1}{4}})(1-|q|)^2}
\\
&=
\frac{|q|^{\frac{23}{32}}}{1-|q|}
+
\frac{|q|^{\frac{7}{32}}}{1-2|q|^{\frac{1}{4}}}
\left(	\frac{1}{2|\sin(\pi u_2)|}	+ \frac{1}{1-|q|} \right)
+
\frac{|q|^{\frac{39}{32}} (1+|q|^{\frac{1}{4}}) }
	{(1-2|q|^{\frac{1}{4}})(1-|q|)^2}
.
\end{align*}

Next, when $u_1>0$, we begin by setting $b_1:=\frac{16u_1^2-24u_1+1}{32u_1}$ 
and $b_2:=\frac{16u_1^2+8u_1-23}{32(1-u_1)}$. In particular, this yields
\begin{align*}
&-\tfrac{u_1^2}{2} + \tfrac{3u_1}{4}-\tfrac{1}{32}+b_1u_1=0,\\
&-\tfrac{u_1^2}{2} + \tfrac{3u_1}{4}-\tfrac{1}{32}+\tfrac{3}{4}-u_1+b_2(1-u_1)=0,
\end{align*}
and $M_1=\lceil b_1-1\rceil$, $M_2=\lceil b_2-1\rceil$. As such,
$-\tfrac{u_1^2}{2} + \tfrac{3u_1}{4}-\tfrac{1}{32}+mu_1\ge 0$
exactly when $m>M_1$, and
$-\tfrac{u_1^2}{2} + \tfrac{3u_1}{4}-\tfrac{1}{32}+\tfrac{3}{4}-u_1+m(1-u_1)\ge 0$
exactly when $m>M_2$. We then write
\begin{align*}
\frac{1}{1-e^{2\pi iu_2}q^{u_1}}
&=
	\sum_{m=0}^{M_1}e^{2\pi iu_2m}q^{mu_1}
	+
	\frac{e^{2\pi iu_2(M_1+1)} q^{(M_1+1)u_1}}{1-e^{2\pi iu_2} q^{u_1}}
,\\
\frac{ e^{-2\pi iu_2} q^{\frac{3}{4}-u_1}}{1- e^{-2\pi iu_2} q^{1-u_1}}
&=
	\sum_{m=0}^{M_2}e^{-2\pi iu_2(m+1)} q^{\frac{3}{4}-u_1+m(1-u_1)}
	+
	\frac{ e^{-2\pi iu_2(M_2+2)} q^{\frac{3}{4}-u_1+(M_2+1)(1-u_1)}}
		{1- e^{ -2\pi iu_2} q^{1-u_1}}
,
\end{align*}
and observe the bounds
\begin{align*}
&\left|	iq^{-\frac{u_1^2}{2} +\frac{3u_1}{4} - \frac{1}{32}}e^{\pi iu_2}
\left(	
	\frac{ e^{2\pi iu_2(M_1+1)} q^{(M_1+1)u_1}}{1 - e^{2\pi iu_2} q^{u_1}}
	+
	\frac{e^{-2\pi iu_2(M_2+2)} q^{\frac{3}{4}-u_1+(M_2+1)(1-u_1)}}
		{1 - e^{-2\pi iu_2} q^{1-u_1}}
\right)\right|
\\
&\le
	\frac{1}{1-|q|^{u_1}}
	+
	\frac{1}{1-|q|^{1-u_1}}
.
\end{align*}
We see then the main term, when $u_1>0$, is as stated in the proposition.
Furthermore, we find the error term $E$ is bounded by
\begin{align*}
\frac{|q|^{\frac{39}{32}} (1+|q|^{\frac{1}{4}}) }{(1-2|q|^{\frac{1}{4}})(1-|q|)^2}
+
\left(	\frac{1}{1-|q|^{u_1}}	+	\frac{1}{1-|q|^{1-u_1}}	\right)
\left( 1+	\frac{|q|^{\frac{7}{32}}}{1-2|q|^{\frac{1}{4}}}	\right)
.
\end{align*}
\end{proof}

The next proposition handles the other $\mu$-function
corresponding to the case when $k\equiv0\pmod{4}$.

\begin{proposition}\label{PropositionMuMainTerm2}
Suppose $u_1,u_2\in\mathbb{R}$ with $0\le u_1<1$, and $|q|^{\frac{1}{4}}<\frac{1}{2}$.
If $u_1=0$, then
\begin{align*}
q^{-\frac{1}{2}(u_1+\frac{1}{4})^2}
\mu\left( u_1\tau+u_2, -\tfrac{\tau}{4};\tau \right)
&=
	\frac{q^{-\frac{1}{32}}}{2\sin(\pi u_2)}+E
	,
\end{align*} 
where 
\begin{align*}
|E|
&\le 
	\frac{|q|^{\frac{7}{32}}}{2|\sin(\pi u_2)|}\left(  
		1 +	\frac{1}{1-2|q|^{\frac{1}{4}}}
	\right)
	+
	\frac{|q|^{\frac{39}{32}}}{1-|q|}
	+
	\frac{|q|^{\frac{7}{32}}}{(1-2|q|^{\frac{1}{4}})(1-|q|)}
	+		
	\frac{|q|^{\frac{15}{32}} (1+|q|^{\frac{7}{4}})}
		{ (1-2|q|^{\frac{1}{4}})(1-|q|)^2 }
.
\end{align*}
If $u_1>0$, then
\begin{align*}
q^{-\frac{1}{2}(u_1+\frac{1}{4})^2}
\mu\left( u_1\tau+u_2, \tfrac{\tau}{4};\tau \right)
&=
	-i\sum_{m=0}^{M_3} e^{\pi iu_2(2m+1)} q^{-\frac{u_1^2}{2} + \frac{u_1}{4} -\frac{1}{32}+mu_1 }
	-i\sum_{m=0}^{M_4} e^{\pi iu_2(2m+1)} q^{-\frac{u_1^2}{2} + \frac{u_1}{4} +\frac{7}{32}+mu_1 }
	\\&\quad
	-i\sum_{m=0}^{M_5} e^{-\pi iu_2(2m+1)} q^{-\frac{u_1^2}{2} - \frac{3u_1}{4} +\frac{39}{32}+m(1-u_1) }
	+
	E
	,
\end{align*}
where $M_3=\left\lceil\frac{16u_1^2-40u_1+1}{32u_1}\right\rceil$,
$M_4=\left\lceil\frac{16u_1^2-40u_1-7}{32u_1}\right\rceil$,
$M_5=\left\lceil\frac{16u_1^2+56u_1-71}{32(1-u_1)}\right\rceil$, and
\begin{align*}
|E|
&\le 
	\frac{2}{1-|q|^{u_1}} 
	+ 
	\frac{1}{1-|q|^{1-u_1}}	
	+
	\frac{|q|^{\frac{7}{32}}}{1-2|q|^{\frac{1}{4}}}
		\left(	\frac{1}{1-|q|^{u_1}} + \frac{1}{1-|q|^{1-u_1}}	\right)
	+
	\frac{|q|^{\frac{15}{32}} (1+|q|^{\frac{7}{4}})}
		{ (1-2|q|^{\frac{1}{4}})(1-|q|)^2 }
.
\end{align*}
\end{proposition}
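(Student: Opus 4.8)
The plan is to mirror the proof of Proposition \ref{PropositionMuMainTerm1} essentially verbatim, since the structure is identical: expand $\mu$ by its defining Lambert-type series, isolate the finitely many $q$-monomials of nonpositive exponent, and bound everything else explicitly. The preliminary step is to record the theta value. Using the product expansion of $\vartheta$ together with $\vartheta(-v;\tau)=-\vartheta(v;\tau)$, one gets $\vartheta\big(-\tfrac{\tau}{4};\tau\big)=-\vartheta\big(\tfrac{\tau}{4};\tau\big)=i\aqprod{q^{\frac14},q^{\frac34},q}{q}{\infty}$, so that after pulling in the prefactor $q^{-\frac12(u_1+\frac14)^2}$ and the factor $e^{\pi iu_1\tau}=q^{u_1/2}$ coming from $e^{\pi iu}$,
\begin{align*}
q^{-\frac12\left(u_1+\frac14\right)^2}\mu\left(u_1\tau+u_2,-\tfrac{\tau}{4};\tau\right)
&=
\frac{-i\,q^{-\frac{u_1^2}{2}+\frac{u_1}{4}-\frac{1}{32}}\,e^{\pi iu_2}}{\aqprod{q^{\frac14},q^{\frac34},q}{q}{\infty}}
\sum_{n=-\infty}^{\infty}\frac{(-1)^n q^{\frac{n^2}{2}+\frac{n}{4}}}{1-e^{2\pi iu_2}q^{u_1+n}}.
\end{align*}
The exponent $\tfrac{n^2}{2}+\tfrac{n}{4}$ is minimized over $n\in\mathbb{Z}$ at $n=0$ and $n=-1$, so I would split the series into the $n=0$ term $\tfrac{1}{1-e^{2\pi iu_2}q^{u_1}}$, the $n=-1$ term (rewritten, since $|e^{2\pi iu_2}q^{u_1-1}|>1$, as $\sum_{m\ge0}e^{-2\pi iu_2(m+1)}q^{\frac54-u_1+m(1-u_1)}$), and a tail over $n\notin\{0,-1\}$. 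The tail is estimated by the crude bounds $|1-e^{2\pi iu_2}q^{u_1+n}|^{-1}\le(1-|q|)^{-1}$ for $n\ge1$ and $\le|q|^{-(u_1+n)}(1-|q|)^{-1}$ for $n\le-2$, exactly as in Proposition \ref{PropositionMuMainTerm1}; its dominant contributions come from $n=1$ and $n=-2$ and produce the factor $|q|^{\frac{15}{32}}(1+|q|^{\frac74})$ in the claimed error bound.

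The one genuinely new point — and the reason the statement carries three sums rather than two — is that the infinite product must be expanded to one more order than in Proposition \ref{PropositionMuMainTerm1}: write $\aqprod{q^{\frac14},q^{\frac34},q}{q}{\infty}^{-1}=\sum_{n\ge0}p_{2,4}(n)q^{\frac n4}=1+q^{\frac14}+R(q)$, where $p_{2,4}$ counts partitions into parts not congruent to $2$ modulo $4$, so $p_{2,4}(n)\le p(n)\le 2^n$ and $|R(q)|\le\sum_{n\ge2}2^n|q|^{\frac n4}$. When $u_1=0$ the $q^{\frac14}$ factor only raises exponents into the positive range, so the main term is just $-i\cdot e^{\pi iu_2}q^{-\frac1{32}}/(1-e^{2\pi iu_2})=q^{-\frac1{32}}/(2\sin(\pi u_2))$, and the rest — the whole $n=-1$ contribution, the tail, and $(q^{\frac14}+R(q))$ times the $n=0$ term — goes into $E$. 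When $u_1>0$, however, multiplying the geometric expansion of the $n=0$ term by $q^{\frac14}$ produces monomials of negative exponent once $u_1$ is near $1$; these give the $M_4$ sum, while the ``$1$'' part of the product gives the $M_3$ sum and the $n=-1$ part gives the $M_5$ sum. Concretely, in each of the three cases I would truncate $\tfrac{1}{1-e^{2\pi iu_2}q^{u_1}}=\sum_{m=0}^{M}e^{2\pi iu_2m}q^{mu_1}+\tfrac{e^{2\pi iu_2(M+1)}q^{(M+1)u_1}}{1-e^{2\pi iu_2}q^{u_1}}$ (and the analogous identity for the $n=-1$ series), choosing $M$ to be the largest index for which the overall $q$-exponent is still negative; solving the resulting linear inequalities in $m$ reproduces $M_3=\lceil\tfrac{16u_1^2-40u_1+1}{32u_1}\rceil$, $M_4=\lceil\tfrac{16u_1^2-40u_1-7}{32u_1}\rceil$, $M_5=\lceil\tfrac{16u_1^2+56u_1-71}{32(1-u_1)}\rceil$.

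What remains is bookkeeping: collect the error sources — the tail over $n\notin\{0,-1\}$; the truncation remainders $\tfrac{1}{1-|q|^{u_1}}$ and $\tfrac{1}{1-|q|^{1-u_1}}$ from the $n=0$ and $n=-1$ geometric series; $R(q)$ (and, when $u_1=0$, also $q^{\frac14}+R(q)$) times the $n=0$ and $n=-1$ pieces — multiply each through by $|q|^{-\frac{u_1^2}{2}+\frac{u_1}{4}-\frac1{32}}$ and, where a bound on the full product is needed, by $|\aqprod{q^{\frac14},q^{\frac34},q}{q}{\infty}^{-1}|\le(1-2|q|^{\frac14})^{-1}$, and check the sum is majorized by the stated bound on $|E|$. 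The main obstacle is therefore not conceptual but the precise tracking of exponents and constants so that the final estimate matches exactly; the only step requiring real thought is confirming that no monomial of nonpositive exponent is overlooked — that is, that $q^{\frac12}$ (and higher) times any of the extracted geometric series, and $q^{\frac14}$ times the $n=-1$ series, always land in the positive range — which one verifies by minimizing the relevant quadratics in $u_1$ over $[0,1)$.
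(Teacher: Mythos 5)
Your proposal follows essentially the same route as the paper's proof: the same Lambert-series expansion with $\vartheta(-\tau/4;\tau)=i\aqprod{q^{1/4},q^{3/4},q}{q}{\infty}$, the same split into the $n=0$ term, the rewritten $n=-1$ term, and a tail bounded via $(1-2|q|^{1/4})^{-1}$ for the product, and—crucially—the same second-order expansion $1+q^{1/4}+R(q)$ of the product whose $q^{1/4}$ piece against the $n=0$ geometric series produces the $M_4$ sum, with the truncation indices $M_3,M_4,M_5$ obtained exactly as in the paper by solving for when the exponents become nonnegative. The error bookkeeping you outline matches the paper's term-by-term estimates, so the plan is correct.
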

\begin{proof}
By definition,
\begin{align*}
&q^{-\frac{1}{2}(u_1+\frac{1}{4})^2}
\mu\left( u_1\tau+u_2, -\tfrac{\tau}{4};\tau \right)
\\
&=
	\frac{-iq^{-\frac{u_1^2}{2} +\frac{u_1}{4} - \frac{1}{32}} e^{\pi iu_2} }
		{\aqprod{q^{\frac{1}{4}}, q^{\frac{3}{4}}, q}{q}{\infty}}
	\left(
		\frac{1}{1- e^{2\pi iu_2} q^{u_1}}		
		+
		\frac{ e^{-2\pi iu_2} q^{\frac{5}{4}-u_1}}{1- e^{-2\pi iu_2} q^{1-u_1}}
		+
		\sum_{n\in\mathbb{Z}\backslash\{0,-1\}}
		\frac{(-1)^n q^{\frac{n(n+1)}{2} -\frac{n}{4}}}
			{1- e^{2\pi iu_2} q^{n+u_1}}	
	\right)
	.
\end{align*}
We bound the series term as
\begin{align*}
\left|		
	\sum_{n\in\mathbb{Z}\backslash\{0,-1\}	}
	\frac{(-1)^n q^{\frac{n(n+1)}{2} -\frac{n}{4}}}{1- e^{2\pi iu_2} q^{n+u_1}}	
\right|
&\le
	\frac{|q|^{\frac{3}{4}}+|q|^{\frac{7}{2}-u_1}}{(1-|q|)^2}
.
\end{align*}
Again we bound the infinite product by
$
\Big|\frac{1}{\aqprod{q^{\frac{1}{4}},q^{\frac{3}{4}},q}{q}{\infty}}\Big|
\le
\frac{1}{1-2|q|^{\frac{1}{4}}}$.
As such, we have that
\begin{align*}	
\left| 
	\frac{-iq^{-\frac{u_1^2}{2} +\frac{u_1}{4} - \frac{1}{32}} e^{\pi iu_2} }
		{\aqprod{q^{\frac{1}{4}}, q^{\frac{3}{4}}, q}{q}{\infty}}
	\sum_{n\in\mathbb{Z}\backslash\{0,-1\}}
	\frac{(-1)^n q^{\frac{n(n+1)}{2} -\frac{n}{4}}} {1- e^{2\pi iu_2} q^{n+u_1}}	
\right|
&\le
	\frac{|q|^{\frac{15}{32}} (1+|q|^{\frac{7}{4}}) }
		{(1-2|q|^{\frac{1}{4}})(1-|q|)^2}
.
\end{align*}
Furthermore,
\begin{align*}
\left|
	-iq^{-\frac{u_1^2}{2} +\frac{u_1}{4} - \frac{1}{32}} e^{\pi iu_2}
	\left(	\frac{1}{\aqprod{q^{\frac{1}{4}}, q^{\frac{3}{4}}, q}{q}{\infty}} -1\right)
	\frac{ e^{-2\pi iu_2} q^{\frac{5}{4}-u_1}}{1-e^{-2\pi iu_2} q^{1-u_1}}
\right|
&\le
	\frac{|q|^{\frac{7}{32}}}{(1-2|q|^{\frac{1}{4}})(1-|q|^{1-u_1})}
.
\end{align*}
We use that
\begin{align*}
\frac{1}{\aqprod{q^{\frac{1}{4}}, q^{\frac{3}{4}}, q}{q}{\infty}}
&= 
	1+q^{\frac{1}{4}}+\sum_{n=2}^\infty p_{2,4}(n)q^{\frac{n}{4}}
,
\end{align*}
where $p_{2,4}(n)$ is as in the proof of Proposition 
\ref{PropositionMuMainTerm1}, to obtain the bound
\begin{align*}
\left|
	-iq^{-\frac{u_1^2}{2} +\frac{u_1}{4} - \frac{1}{32}} e^{\pi iu_2}
	\left(	\frac{1}{\aqprod{q^{\frac{1}{4}}, q^{\frac{3}{4}}, q}{q}{\infty}} -1-q^{\frac{1}{4}}\right)
	\frac{1}{1- e^{2\pi iu_2} q^{u_1}}
\right|
&\le
	\frac{|q|^{\frac{7}{32}}}{(1-2|q|^{\frac{1}{4}})|1- e^{2\pi iu_2} q^{u_1}|}
.
\end{align*}

We see the contribution to the main term is different depending on whether $u_1=0$
or $u_1>0$. We first handle the case when $u_1=0$. We see that
\begin{align*}
\frac{-iq^{-\frac{1}{32}}e^{\pi iu_2}}{1-e^{2\pi iu_2}}
&=
\frac{q^{-\frac{1}{32}}}{2\sin(\pi u_2)}.
\end{align*}
The remaining  terms to bound are
\begin{align*}
\left|
	\frac{-iq^{-\frac{1}{32}+\frac{1}{4}}}{1-e^{2\pi iu_2}}
\right|
&\le
	\frac{|q|^{\frac{7}{32}}}{2|\sin(\pi u_2)|}
,&
\left|
	\frac{-iq^{-\frac{1}{32}+\frac{5}{4}} e^{-\pi iu_2} }{1-e^{-2\pi iu_2}q}
\right|
&\le
	\frac{|q|^{\frac{39}{32}}}{1-|q|}.
\end{align*}
The main term is as stated in the proposition. Furthermore, the error term
$E$ is bounded by
\begin{align*}
&
	\frac{|q|^{\frac{7}{32}}}{2|\sin(\pi u_2)|}
	+
	\frac{|q|^{\frac{39}{32}}}{1-|q|}
	+
	\frac{|q|^{\frac{7}{32}}}{2|\sin(\pi u_2)|(1-2|q|^{\frac{1}{4}})}
	+
	\frac{|q|^{\frac{7}{32}}}{(1-2|q|^{\frac{1}{4}})(1-|q|)}
	+
	\frac{|q|^{\frac{15}{32}} (1+|q|^{\frac{7}{4}}) }
		{(1-2|q|^{\frac{1}{4}})(1-|q|)^2}
.
\end{align*}

Next, when $u_1>0$, we begin by setting $b_3:=\frac{16u_1^2-8u_1+1}{32u_1}$,
$b_4:=\frac{16u_1^2-8u_1-7}{32u_1}$, and 
$b_5:=\frac{16u_1^2+24u_1-39}{32(1-u_1)}$. In particular, this yields
\begin{align*}
&-\tfrac{u_1^2}{2} + \tfrac{u_1}{4}-\tfrac{1}{32}+b_3u_1=0,\\
&-\tfrac{u_1^2}{2} + \tfrac{u_1}{4}-\tfrac{1}{32}+\tfrac{1}{4}+b_4u_1=0,\\
&-\tfrac{u_1^2}{2} + \tfrac{u_1}{4}-\tfrac{1}{32}+\tfrac{5}{4}-u_1+b_5(1-u_1)=0,
\end{align*}
and $M_j=\lceil b_j-1\rceil$. As such,
$-\tfrac{u_1^2}{2} + \tfrac{u_1}{4}-\tfrac{1}{32}+mu_1\ge 0$
exactly when $m>M_3$,
$-\tfrac{u_1^2}{2} + \tfrac{u_1}{4}-\tfrac{1}{32}+\frac{1}{4}+mu_1\ge 0$
exactly when $m>M_4$, and 
$-\tfrac{u_1^2}{2} + \tfrac{3u_1}{4}-\tfrac{1}{32}+\tfrac{5}{4}-u_1+m(1-u_1)\ge 0$
exactly when $m>M_5$. We then write
\begin{align*}
\frac{1+q^{\frac{1}{4}}}{1- e^{2\pi iu_2} q^{u_1}}
&=
	\sum_{m=0}^{M_3}e^{2\pi iu_2m} q^{mu_1}
	+
	\sum_{m=0}^{M_4}e^{2\pi iu_2m} q^{mu_1+\frac{1}{4}}
	+
	\frac{e^{2\pi iu_2(M_3+1)} q^{(M_3+1)u_1}}{1- e^{2\pi iu_2} q^{u_1}}
	\\&\quad
	+
	\frac{e^{2\pi iu_2(M_4+1)}q^{(M_4+1)u_1+\frac{1}{4}}}{1-e^{2\pi iu_2}q^{u_1}}
,\\
\frac{ e^{-2\pi iu_2} q^{\frac{5}{4}-u_1}}{1-e^{-2\pi iu_2} q^{1-u_1}}
&=
	\sum_{m=0}^{M_5} e^{-2\pi iu_2(m+1)} q^{\frac{5}{4}-u_1+m(1-u_1)}
	+
	\frac{ e^{-2\pi iu_2(M_5+2)} q^{\frac{5}{4}-u_1+(M_5+1)(1-u_1)}}
		{1- e^{-2\pi iu_2} q^{1-u_1}}
,
\end{align*}
and observe the bounds
\begin{align*}
&\left|	-iq^{-\frac{u_1^2}{2} +\frac{u_1}{4} - \frac{1}{32}} e^{\pi iu_2}
\left(	
	\frac{ e^{2\pi iu_2(M_3+1)} q^{(M_3+1)u_1}}{1- e^{2\pi iu_2} q^{u_1}}
	+
	\frac{ e^{2\pi iu_2(M_4+1)} q^{(M_4+1)u_1+\frac{1}{4}}}{1- e^{2\pi iu_2} q^{u_1}}
	\right.\right.\\&\qquad\qquad\qquad\qquad\qquad\qquad\qquad\qquad\qquad\qquad\qquad\qquad\left.\left.	
	+
	\frac{ e^{-2\pi iu_2(M_5+2)} q^{\frac{5}{4}-u_1+(M_5+1)(1-u_1)}}
		{1- e^{-2\pi iu_2} q^{1-u_1}}
\right)\right|
\\
&\le
	\frac{2}{1-|q|^{u_1}}
	+
	\frac{1}{1-|q|^{1-u_1}}
.
\end{align*}
We see then the main term, when $u_1>0$, is as stated in the proposition.
Furthermore, we find the error term $E$ is bounded 
as claimed.
\end{proof}

\sloppy
With Propositions \ref{PropositionMuMainTerm1} and \ref{PropositionMuMainTerm2},
we establish the transformation and required bounds
for $R2\big(e^{2\pi i u}; e^{\frac{2\pi i(h+iz)}{k}}\big)$ when
$k\equiv0\pmod{4}$.

\fussy

\begin{proposition}\label{PropFinalBounds0Mod4}
Suppose $a,c,k,n\in\mathbb{Z}$ with $c,k,n>0$ and $k\equiv 0\pmod{4}$,
$u=\frac{a}{c}$ with $2u\not\in\mathbb{Z}$, 
and $z\in\mathbb{C}$ with
$\RE{z}=\frac{k}{n}$ and
$\RE{\frac{1}{z}}>\frac{k}{2}$.
If $c\mid\frac{ka}{2}$, then
\begin{align*}
R2\left(e^{2\pi i u}; e^{\frac{2\pi i(h+iz)}{k}}\right)
&=
	-\wt{h}(-1)^{\frac{ku}{2} }
	\frac{\sin(\pi u)}{\sin(\pi u[-h]_{4k})}		
		z^{-\frac{1}{2}}
	\xi\left(h,[-h]_{4k},\tfrac{k}{4}\right)
	\exp\left(
		-\tfrac{\pi z}{4k} 
		+\tfrac{\pi}{4kz}
		-\pi i[-h]_{4k} ku^2
	\right)
	\\&\quad
	+E_3(u,k,n),
\end{align*}
where
\begin{align*}
|E_3(u,k,n)|
&\le 
	|\sin(\pi u)| \sqrt{\tfrac{n}{k}} 
	\left(0.208\csc(\tfrac{\pi}{c})+0.288 + 1.49\sqrt{k}
	\right)
.
\end{align*}
If $c\nmid\frac{ka}{2}$, then
\begin{align*}
&R2\left(e^{2\pi i u}; e^{\frac{2\pi i(h+iz)}{k}}\right)
=
	2i\sin(\pi u)(-1)^{\Floor{\frac{ku}{2}}} \xi\left(h,[-h]_{4k},\tfrac{k}{4}\right) z^{-\frac{1}{2}}
	\\&\quad\times\Bigg(
	-\wt{h}
	\sum_{m=0}^{M_1}
	\exp\left(
		\tfrac{4\pi i[-h]_{4k}}{k} \left(
			-\Floor{\tfrac{ku}{2}}^2 
			- (2m+\tfrac{3}{2})\Floor{\tfrac{ku}{2}} 
		\right)	
		-\tfrac{\pi z}{4k} 
		+\tfrac{8\pi }{kz}\left(
			\tfrac{1}{2}\Fractional{\tfrac{ku}{2}}^2 - \tfrac{3}{4}\Fractional{\tfrac{ku}{2}} 
			-m\Fractional{\tfrac{ku}{2}} + \tfrac{1}{32} 
		\right)
	\right)
	\\&\quad
	-
	\wt{h}
	\sum_{m=0}^{M_2}
	\exp\left(
		\tfrac{4\pi i[-h]_{4k}}{k} \left(
			-\Floor{\tfrac{ku}{2}}^2 
			+ (2m+\tfrac{1}{2})\Floor{\tfrac{ku}{2}} 
			+ 2m
			+ \tfrac{3}{2}
		\right)
	\right)
	\\&\qquad\times	
	\exp\left(
		-\tfrac{\pi z}{4k} 
		+\tfrac{8\pi }{kz}\left(
			\tfrac{1}{2}\Fractional{\tfrac{ku}{2}}^2 + \tfrac{1}{4}\Fractional{\tfrac{ku}{2}} 
			-m\left(1-\Fractional{\tfrac{ku}{2}}\right) - \tfrac{23}{32} 
		\right)
	\right)
	\\&\quad
	-
	\wt{h}
	\sum_{m=0}^{M_3}
	\exp\left(
		\tfrac{4\pi i[-h]_{4k}}{k} \left(
			-\Floor{\tfrac{ku}{2}}^2 
			- (2m+\tfrac{1}{2})\Floor{\tfrac{ku}{2}} 
		\right)	
		-\tfrac{\pi z}{4k} 
		+\tfrac{8\pi }{kz}\left(
			\tfrac{1}{2}\Fractional{\tfrac{ku}{2}}^2 - \tfrac{1}{4}\Fractional{\tfrac{ku}{2}} 
			-m\Fractional{\tfrac{ku}{2}} + \tfrac{1}{32} 
		\right)
	\right)
	\\&\quad
	-
	\wt{h}
	\sum_{m=0}^{M_4}
	\exp\left(
		\tfrac{4\pi i[-h]_{4k}}{k} \left(
			-\Floor{\tfrac{ku}{2}}^2 
			- (2m+\tfrac{1}{2})\Floor{\tfrac{ku}{2}} 
			+ \tfrac{1}{2} 
		\right)	
		-\tfrac{\pi z}{4k} 
		+\tfrac{8\pi }{kz}\left(
			\tfrac{1}{2}\Fractional{\tfrac{ku}{2}}^2 - \tfrac{1}{4}\Fractional{\tfrac{ku}{2}} 
			-m\Fractional{\tfrac{ku}{2}} - \tfrac{7}{32} 
		\right)
	\right)
	\\&\quad
	-
	\wt{h}
	\sum_{m=0}^{M_5} 
	\exp\left(
		\tfrac{4\pi i[-h]_{4k}}{k} \left(
			-\Floor{\tfrac{ku}{2}}^2 
			+ (2m+\tfrac{3}{2})\Floor{\tfrac{ku}{2}} 
			+2m
			+ \tfrac{5}{2}
		\right)	
	\right)
	\\&\qquad\times
	\exp\left(
		-\tfrac{\pi z}{4k} 
		+\tfrac{8\pi }{kz}\left(
			\tfrac{1}{2}\Fractional{\tfrac{ku}{2}}^2 + \tfrac{3}{4}\Fractional{\tfrac{ku}{2}} 
			-m\left(1-\Fractional{\tfrac{ku}{2}}\right) - \tfrac{39}{32} 
		\right)
	\right)
	\\&\quad
	-
	\delta_{1}
	\exp\left(
		\tfrac{4\pi i[-h]_{4k}}{k} \left(
			-\Floor{\tfrac{ku}{2}}^2 
			- \Floor{\tfrac{ku}{2}} 
			+ (2\Floor{\tfrac{ku}{2}}+1)\tfrac{h-\wt{h}}{4}			
			- \tfrac{1}{4}
		\right)	
		+
		\tfrac{\pi i}{k}(2\Floor{\tfrac{ku}{2}}+1)
	\right)
	\\&\qquad\times
	\exp\left(
		-\tfrac{\pi z}{4k} 
		+\tfrac{4\pi }{kz}\left(
			\Fractional{\tfrac{ku}{2}} - \tfrac{\wt{h}}{4} - \tfrac{1}{2} 
		\right)^2
	\right)
	\\&\quad
	+
	\delta_{2}
	\exp\left(
		\tfrac{4\pi i[-h]_{4k}}{k} \left(
			-\Floor{\tfrac{ku}{2}}^2 
			- \Floor{\tfrac{ku}{2}} 
			- (2\Floor{\tfrac{ku}{2}}+1)\tfrac{h-\wt{h}}{4}			
			- \tfrac{1}{4}
		\right)	
		-
		\tfrac{\pi i}{k}(2\Floor{\tfrac{ku}{2}}+1)
	\right)
	\\&\qquad\times
	\exp\left(
		-
		\tfrac{\pi z}{4k} 
		+\tfrac{4\pi }{kz}\left(
			\Fractional{\tfrac{ku}{2}} + \tfrac{\wt{h}}{4} - \tfrac{1}{2} 
		\right)^2
	\right)
	\Bigg)
	+
	E_3(u,k,n)
,
\end{align*}
where \sloppy
$M_1=\left\lceil\frac{16\Fractional{\frac{ku}{2}}^2-56\Fractional{\frac{ku}{2}}+1}
	{32\Fractional{\frac{ku}{2}}}\right\rceil$,
$M_2=\left\lceil\frac{16\Fractional{\frac{ku}{2}}^2+40\Fractional{\frac{ku}{2}}-55}
	{32(1-\Fractional{\frac{ku}{2}})}\right\rceil$,
$M_3=\left\lceil\frac{16\Fractional{\frac{ku}{2}}^2-40\Fractional{\frac{ku}{2}}+1}
	{32\Fractional{\frac{ku}{2}}}\right\rceil$,
$M_4=\left\lceil\frac{16\Fractional{\frac{ku}{2}}^2-40\Fractional{\frac{ku}{2}}-7}
	{32\Fractional{\frac{ku}{2}}}\right\rceil$,
$M_5=\left\lceil\frac{16\Fractional{\frac{ku}{2}}^2+56\Fractional{\frac{ku}{2}}-71}
	{32(1-\Fractional{\frac{ku}{2}})}\right\rceil$,
\begin{align*}
\delta_1
&=
\begin{cases}
	1	&	\text{if } \frac{1}{2}<\Fractional{\frac{ku}{2}}-\frac{\wt{h}}{4} < \frac{3}{2}
	\\
	0	&	 else
\end{cases}
,&
\delta_2
&=
\begin{cases}
	1	&	\text{if } \frac{1}{2}<\Fractional{\frac{ku}{2}}+\frac{\wt{h}}{4} < \frac{3}{2}
	\\
	0	&	 else
\end{cases}
,
\end{align*}
and
\begin{align*}
|E_3(u,k,n)|
&\le
	|\sin(\pi u)| \sqrt{\tfrac{n}{k}}
	\left(
		\frac{10.6}{1-\exp\left(-\frac{4\pi}{c}\right)}+0.0062
		+
		\frac{ 2.962+4.01\sqrt{k} }{ \sin(\frac{\pi}{2c}) }	
	\right)
.
\end{align*}
\end{proposition}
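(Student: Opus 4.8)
The plan is to follow the route of Proposition~\ref{PropositionMainRankTransformation}, but since here $u=\tfrac ac$ is a fixed rational rather than a small parameter, I would replace its Taylor-expansion step by the fixed-$u$ transformation of Corollary~\ref{CorollaryMuTransformationWithBounds} together with the explicit main-term extractions of Propositions~\ref{PropositionMuMainTerm1} and~\ref{PropositionMuMainTerm2}. First I would start from \eqref{M2RankInTermsOfMu} with $\zeta=e^{2\pi ia/c}$ and $\tau=\tfrac{h+iz}{k}$, which after the factorization used in the proof of Lemma~\ref{LemmaR2Transformations} yields
\[
R2\!\left(e^{\frac{2\pi ia}{c}};e^{\frac{2\pi i(h+iz)}{k}}\right)
=2\sin\!\left(\tfrac{\pi a}{c}\right)\sum_{\pm}\mp\, e^{\pm\frac{\pi ia}{c}}\,
\mu\!\left(\tfrac{2a}{c},\pm\tfrac{h+iz}{k};\tfrac{4(h+iz)}{k}\right).
\]
Because $k\equiv0\pmod4$ I would rewrite $\tfrac{4(h+iz)}{k}=\tfrac{h+iz}{k/4}$ and $\pm\tfrac{h+iz}{k}=\pm\tfrac{h+iz}{4(k/4)}$, so each $\mu$ above falls under Corollary~\ref{CorollaryMuTransformationWithBounds} with $k\mapsto k/4$, $h\mapsto h$, $u\mapsto\tfrac{2a}{c}$, $z\mapsto z$; here $\gcd(h,k/4)=1$, $\tfrac{2a}{c}\notin\mathbb Z$ by hypothesis, and $k\equiv0\pmod4$ forces $h$ odd so $\wt h\in\{-1,1\}$. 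The standing assumption $[-h]_{4k}=[-h]_{k/4}$ is used so that only one inverse symbol occurs.

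Next I would apply Corollary~\ref{CorollaryMuTransformationWithBounds}. The governing quantity is $\Fractional{\tfrac k4\cdot\tfrac{2a}{c}}\mp\tfrac{\wt h}{4}=\Fractional{\tfrac{ka}{2c}}\mp\tfrac14\in(-\tfrac14,\tfrac54)$; whether it lies in $[-\tfrac12,\tfrac12]$ or in $(\tfrac12,\tfrac32)$ is precisely what the indicators $\delta_1,\delta_2$ of the statement record, and in the latter case the corollary contributes the pure-exponential $\delta_i$-terms. In either case the corollary writes each $\mu$ as an explicit prefactor times $\mu\!\left(\Fractional{\tfrac{ka}{2c}}\tau''-\tfrac{2a}{c}[-h]_{4k},\ \pm\tfrac{\wt h([-h]_{4k}+i/z)}{k}\mp\tfrac{1+h[-h]_{4k}}{k};\ \tau''\right)$ with $\tau''=\tfrac{4([-h]_{4k}+i/z)}{k}$, plus an error $E_2$ bounded by Proposition~\ref{PropositionHBounds}. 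Since $h[-h]_{4k}\equiv-1\pmod{4k}$, the number $\tfrac{1+h[-h]_{4k}}{k}$ is a multiple of $4$; removing it from the elliptic variable via $\mu(w,v+2m;\tau)=\mu(w,v;\tau)$ and using $\wt h=\pm1$ reduces the second argument to $\pm\tfrac{\tau''}{4}$, so the surviving $\mu$ has the shape $\mu(u_1\tau''+u_2,\pm\tfrac{\tau''}{4};\tau'')$ with $u_1=\Fractional{\tfrac{ka}{2c}}$ and $u_2=-\tfrac{2a}{c}[-h]_{4k}$.

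Then I would multiply and divide by $q''^{-\frac12(u_1\mp\wt h/4)^2}$, where $q''=e^{2\pi i\tau''}$ satisfies $|q''|^{1/4}=e^{-2\pi\RE{\tfrac1z}/k}<e^{-\pi}<\tfrac12$ since $\RE{\tfrac1z}>\tfrac k2$, so Propositions~\ref{PropositionMuMainTerm1} and~\ref{PropositionMuMainTerm2} apply (with $0\le u_1<1$). If $c\mid\tfrac{ka}{2}$ then $u_1=0$, and the $u_1=0$ clauses give the single $\csc$-term; collecting prefactors — the $(u_1\mp\wt h/4)^2/(kz)$ pieces in the corollary prefactor and in $q''^{-\frac12(u_1\mp\wt h/4)^2}$ cancel, and $e^{\pm\pi ia/c}$ from the first display cancels the $\mp\pi ia/c$ in the corollary prefactor — this reduces to the first displayed formula of the Proposition. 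If $c\nmid\tfrac{ka}{2}$ then $u_1=\Fractional{\tfrac{ka}{2c}}\ge\tfrac1{2c}$ and the $u_1>0$ clauses give the sums over $0\le m\le M_j$; the $M_j$ are exactly those of Propositions~\ref{PropositionMuMainTerm1}, \ref{PropositionMuMainTerm2} with $u_1=\Fractional{\tfrac{ka}{2c}}$, so the retained terms are precisely the ones whose $\tfrac1z$-exponent is positive. Substituting $\tau=\tau''$ into the main terms $q^{-u_1^2/2+\cdots+mu_1}$ of those propositions produces the stated $\tfrac{8\pi}{kz}(\cdots)$-exponents (eight times these are the $r_{j,a,c,k}(m)$), while the accompanying phases $\tfrac{4\pi i[-h]_{4k}}{k}(\cdots)$ and $\pm\tfrac{\pi i}{k}(2\Floor{ka/(2c)}+1)$ arise from combining the corollary phases with $e^{\pi iu_2(2m+1)}$ and the residual phase of $q''^{\frac12(u_1\mp\wt h/4)^2}$.

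Finally I would assemble $E_3$ as the sum of the $E_2$ contributions and the tail errors $E$ from Propositions~\ref{PropositionMuMainTerm1}, \ref{PropositionMuMainTerm2}, all multiplied by the overall factor $\tfrac{2|\sin(\pi a/c)|}{|z|^{1/2}}$. On the Farey arc one has $\RE{z}=\tfrac kn$, $|z|\ge\tfrac kn$, and $\RE{\tfrac1z}\ge\tfrac k2$, so $|z|^{-1/2}\le\sqrt{n/k}$, every $\tfrac1z$-exponent surviving in the error is $\le0$ hence $\le1$, the $|q''|$-series in the $E$'s are $O(1)$ since $|q''|\le e^{-4\pi}$, the factors $\tfrac1{1-|q''|^{u_1}}+\tfrac1{1-|q''|^{1-u_1}}$ are $\le\tfrac2{1-e^{-2\pi/c}}$ using $u_1,1-u_1\ge\tfrac1{2c}$, and $|\sec(\pi(\Fractional{ka/(2c)}\mp\tfrac14))|$ is of size $\csc(\tfrac\pi{2c})$ because $\Fractional{\tfrac{ka}{2c}}$ is a multiple of $\tfrac1{2c}$, keeping $\pi(\Fractional{\tfrac{ka}{2c}}\mp\tfrac14)$ at distance $\ge\tfrac\pi{2c}$ from $\pm\tfrac\pi2$ outside the $\delta_i$ cases. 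Tallying the finitely many terms against these bounds produces the explicit numerical constants. The main obstacle will be exactly this constant bookkeeping: one must confirm that after the $M_j$ main terms are removed no positive $\tfrac1z$-exponent survives anywhere, and then propagate the constants from Proposition~\ref{PropositionHBounds}, Propositions~\ref{PropositionMuMainTerm1}--\ref{PropositionMuMainTerm2}, and the several $\delta_i$ and boundary ($\Fractional{\tfrac{ka}{2c}}\in\{\tfrac14,\tfrac34\}$) cases through the combination without disturbing the exact $\sec$/$\csc$ denominators — routine but voluminous, which is why the analogous later propositions are largely stated without proof.
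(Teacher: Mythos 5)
Your plan is exactly the paper's proof: factor $R2$ into the two $\mu$-functions via \eqref{M2RankInTermsOfMu}, apply Corollary \ref{CorollaryMuTransformationWithBounds} with $u\mapsto 2u$, $k\mapsto k/4$ (using $[-h]_{k/4}=[-h]_{4k}$ and $\wt h=\pm1$), feed the resulting $\mu$'s with $u_1=\Fractional{\tfrac{ku}{2}}$, $u_2=-2u[-h]_{4k}$ into Propositions \ref{PropositionMuMainTerm1} and \ref{PropositionMuMainTerm2}, split according to $c\mid\tfrac{ka}{2}$ versus $c\nmid\tfrac{ka}{2}$ with the $\delta_1,\delta_2$ terms coming from the $\tfrac12<\Fractional{\tfrac{ku}{2}}\mp\tfrac{\wt h}{4}<\tfrac32$ branch, and then propagate the explicit error constants. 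The remaining items you defer (combining the two $\pm$ branches into the single $\csc$ main term in the divisible case and the numerical constant bookkeeping) are precisely the routine computations carried out in the paper, so the proposal is correct and essentially identical in approach.
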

\begin{proof}
We recall that
\begin{align*}
R2\left(e^{2\pi i u}; e^{\frac{2\pi i(h+iz)}{k}}\right)
&=
2\sin(\pi u)\sum_{\pm}\mp e^{\pm \pi iu}
\mu\left(
	2u, \pm \tfrac{h+iz}{k} ; \tfrac{4(h+iz)}{k}
\right)
.
\end{align*}

We apply 
Corollary \ref{CorollaryMuTransformationWithBounds}
with $u\mapsto 2u$, $k\mapsto k/4$ and note that
$|\wt{h}|=1$ and $[-h]_{k/4}=[-h]_{4k}$. We only consider the case when 
$-\frac{1}{2}\le \Fractional{\frac{ku}{2}}\mp\frac{\wt{h}}{4} \le \frac{1}{2}$,
as the other case follows by similar cancellations and estimates.
We find that
\begin{align*}
&R2\left(e^{2\pi i u}; e^{\frac{2\pi i(h+iz)}{k}}\right)
=
	2\sin(\pi u)\sum_{\pm}\mp e^{\pm \pi iu}
	\mu\left( 2u, \pm \tfrac{h+iz}{k} ; \tfrac{h+iz}{k/4} \right)
\\
&=
	2\wt{h}\sin(\pi u)\sum_{\pm}\mp (-1)^{\Floor{\frac{ku}{2}}}
	\exp\left(
		-\tfrac{\pi z}{4k} 
		- \tfrac{4\pi i([-h]_{4k}+i/z) }{k}\left( \Fractional{\tfrac{ku}{2}} \mp \tfrac{1}{4}\right)^2
	\right)
	z^{-\frac{1}{2}}
	\xi\left( h,[-h]_{4k},\tfrac{k}{4} \right)
	\\&\quad\times
	\exp\left(
		\tfrac{4\pi i[-h]_{4k}}{k} \left(
			-\Floor{\tfrac{ku}{2}}^2 
			\mp
			\tfrac{1}{2}\Floor{\tfrac{ku}{2}} 
			+ \left( \Fractional{\tfrac{ku}{2}} \mp \tfrac{1}{4}\right)^2
		\right)
	\right)
	\\&\quad\times
	\mu\left(
		\Fractional{\tfrac{ku}{2}}\tfrac{4([-h]_k+i/z)}{k} - 2u[-h]_{4k}, 
		\pm \tfrac{[-h]_{4k}+i/z}{k}; 
		\tfrac{4([-h]_{4k}+i/z)}{k}
	\right)
	\\&\quad
	+
	E_3^{\prime}(u,h,k,z)
,
\end{align*}
where $|E_3^{\prime}(u,h,k,z)|\le 4|\sin(\pi u)E_2(2u,h,k/4,z)|$.
We apply Propositions \ref{PropositionMuMainTerm1} and 
\ref{PropositionMuMainTerm2} with $u_1=\Fractional{\frac{ku}{2}}$,
$u_2=-2u[-h]_{4k}$, and $\tau = \frac{4([-h]_{4k}+i/z)}{k}$. We
note that $|q|<e^{-4\pi}$. Furthermore, when $u_1=0$ we have that
$u_2$ is at least $\frac{1}{c}$ away from the nearest integer to $u_2$,
and when $u_1\not=0$ we have that $\frac{1}{c}\le u_1\le \frac{c-1}{c}$.

In the case when $c$ divides $\frac{ka}{2}$, we have that 
$\Fractional{\frac{ku}{2}}=0$, and so by Propositions \ref{PropositionMuMainTerm1} and
\ref{PropositionMuMainTerm2}, 
\begin{align*}
&\exp\left(
	- \tfrac{4\pi i([-h]_{4k}+i/z) }{k}\left( \Fractional{\tfrac{ku}{2}} - \tfrac{1}{4}\right)^2
\right)
\mu\left(
	\Fractional{\tfrac{ku}{2}}\tfrac{4([-h]_{4k}+i/z)}{k} -2u[-h]_{4k}, 
	\tfrac{[-h]_{4k}+i/z}{k}; \tfrac{4([-h]_{4k}+i/z)}{k}
\right)
\\
&=
	-\exp\left( -\tfrac{\pi i([-h]_{4k}+i/z) }{4k} \right)
	\frac{1}{2\sin(-2\pi u[-h]_{4k})}	
	+E,
\end{align*}
where $|E|<0.036\csc(\frac{\pi}{c})+0.071$, and
\begin{align*}
&\exp\left(
	- \tfrac{4\pi i([-h]_{4k}+i/z) }{k}\left( \Fractional{\tfrac{ku}{2}} + \tfrac{1}{4}\right)^2
\right)
\mu\left(
	\Fractional{\tfrac{ku}{2}}\tfrac{4([-h]_{4k}+i/z)}{k} -2u[-h]_{4k}, 
	-\tfrac{[-h]_{4k}+i/z}{k}; \tfrac{4([-h]_{4k}+i/z)}{k}
\right)
\\
&=
	\exp\left( -\tfrac{\pi i([-h]_{4k}+i/z) }{4k} \right)
	\frac{1}{2\sin(-2\pi u[-h]_{4k})}	
	+E,
\end{align*}
where $|E|<0.068\csc(\frac{\pi}{c})+0.073$.
Thus
\begin{align*}
&R2\left(e^{2\pi i u}; e^{\frac{2\pi i(h+iz)}{k}}\right)
\\
&=
	-\wt{h}(-1)^{\frac{ku}{2} }
	\frac{\sin(\pi u)}{\sin(\pi u[-h]_{4k})}		
		z^{-\frac{1}{2}}
	\xi\left(h,[-h]_{4k},\tfrac{k}{4}\right)
	\exp\left(
		-\tfrac{\pi z}{4k} 
		+\tfrac{\pi}{4kz}
		-\pi i[-h]_{4k} ku^2
	\right)
	+E_3(u,k,n)	
	.
\end{align*}
Here we see we can bound $E_3(u,k,n)$ by
\begin{align*}
|E_3(u,k,n)|
&\le
	2|\sin(\pi u)| |z|^{-\frac{1}{2}}
	\left(0.104\csc(\tfrac{\pi}{c})+0.144\right)
	+
	4|\sin(\pi u) E_2(2u,h,k/4,z) |
\\
&\le
	|\sin(\pi u)| \sqrt{\tfrac{n}{k}} 
	\left(0.208\csc(\tfrac{\pi}{c})+0.288 + 1.49\sqrt{k}
	\right)
.
\end{align*}

In the case when $c$ does not divide $\frac{ka}{2}$, we have that 
$\Fractional{\frac{ku}{2}}\not=0$, and so by Propositions \ref{PropositionMuMainTerm1} and
\ref{PropositionMuMainTerm2},
\begin{align*}
&\exp\left(
	- \tfrac{4\pi i([-h]_{4k}+i/z) }{k}\left( \Fractional{\tfrac{ku}{2}} - \tfrac{1}{4}\right)^2
\right)
\mu\left(
	\Fractional{\tfrac{ku}{2}}\tfrac{4([-h]_{4k}+i/z)}{k} -2u[-h]_{4k}, 
	\tfrac{[-h]_{4k}+i/z}{k}; \tfrac{4([-h]_{4k}+i/z)}{k}
\right)
\\
&=
	i\sum_{m=0}^{M_1} 
	\exp\left(
		-2\pi iu[-h]_{4k}(2m+1) 
		+
		\tfrac{8\pi i([-h]_{4k}+i/z)}{k}
		\left(
			-\tfrac{1}{2}\Fractional{\tfrac{ku}{2}}^2 + \tfrac{3}{4}\Fractional{\tfrac{ku}{2}} 
			+m\Fractional{\tfrac{ku}{2}} - \tfrac{1}{32} 
		\right)
	\right)		
	\\&\quad	
	+
	i\sum_{m=0}^{M_2} 
	\exp\left( 
		2\pi iu[-h]_{4k}(2m+1) 
		+
		\tfrac{8\pi i([-h]_{4k}+i/z)}{k}
		\left(
			-\tfrac{1}{2}\Fractional{\tfrac{ku}{2}}^2 - \tfrac{1}{4}\Fractional{\tfrac{ku}{2}} 
			+m\left(1-\Fractional{\tfrac{ku}{2}}\right) + \tfrac{23}{32} 
		\right)
	\right)		
	+
	E
	,
\end{align*}
where $|E|<\frac{2.15}{1-\exp\left(-\frac{4\pi}{c}\right)}+0.0000003$, and
\begin{align*}
&\exp\left(
	- \tfrac{4\pi i([-h]_{4k}+i/z) }{k}\left( \Fractional{\tfrac{ku}{2}} + \tfrac{1}{4}\right)^2
\right)
\mu\left(
	\Fractional{\tfrac{ku}{2}}\tfrac{4([-h]_{4k}+i/z)}{k} -2u[-h]_{4k}, 
	-\tfrac{[-h]_{4k}+i/z}{k}; \tfrac{4([-h]_{4k}+i/z)}{k}
\right)
\\
&=
	-i\sum_{m=0}^{M_3} 
	\exp\left(
		-2\pi iu[-h]_{4k}(2m+1) 
		+
		\tfrac{8\pi i([-h]_{4k}+i/z)}{k}
		\left(
			-\tfrac{1}{2}\Fractional{\tfrac{ku}{2}}^2 + \tfrac{1}{4}\Fractional{\tfrac{ku}{2}} 
			+m\Fractional{\tfrac{ku}{2}} - \tfrac{1}{32} 
		\right)
	\right)	
	\\&\quad
	-i\sum_{m=0}^{M_4} 
	\exp\left(
		-2\pi iu[-h]_{4k}(2m+1) 
		+
		\tfrac{8\pi i([-h]_{4k}+i/z)}{k}
		\left( 
			-\tfrac{1}{2}\Fractional{\tfrac{ku}{2}}^2 + \tfrac{1}{4}\Fractional{\tfrac{ku}{2}} 
			+m\Fractional{\tfrac{ku}{2}} + \tfrac{7}{32} 
		\right)
	\right)		
	\\&\quad
	-i\sum_{m=0}^{M_5} 
	\exp\left(
		2\pi iu[-h]_{4k}(2m+1) 
		+
		\tfrac{8\pi i([-h]_{4k}+i/z)}{k}
		\left( 
			-\tfrac{1}{2}\Fractional{\tfrac{ku}{2}}^2 - \tfrac{3}{4}\Fractional{\tfrac{ku}{2}} 
			+m\left(1-\Fractional{\tfrac{ku}{2}}\right)	+ \tfrac{39}{32} 
		\right)
	\right)	
	+
	E
	,
\end{align*}
where $|E|<\frac{3.15}{1-\exp\left(-\frac{4\pi}{c}\right)}+0.00303$.
As such, when $c$ does not divide $\frac{ka}{2}$,
\begin{align*}
&R2\left(e^{2\pi i u}; e^{\frac{2\pi i(h+iz)}{k}}\right)
\\
&=
	-2i\wt{h}\sin(\pi u)(-1)^{\Floor{\frac{ku}{2}}}
	\xi\left( h,[-h]_{4k},\tfrac{k}{4}\right)
	z^{-\frac{1}{2}}
	\\&\quad\times
	\Bigg(
	\sum_{m=0}^{M_1} 
	\exp\left( 
		-2\pi iu[-h]_{4k}(2m+1) 
		-
		\tfrac{\pi z}{4k} 
		+\tfrac{8\pi }{kz}\left(
			\tfrac{1}{2}\Fractional{\tfrac{ku}{2}}^2 - \tfrac{3}{4}\Fractional{\tfrac{ku}{2}} 
			-m\Fractional{\tfrac{ku}{2}} + \tfrac{1}{32} 
		\right)		
	\right)
	\\&\qquad\times
	\exp\left(
		\tfrac{4\pi i[-h]_{4k}}{k} \left(
			-\Floor{\tfrac{ku}{2}}^2 
			- \tfrac{1}{2}\Floor{\tfrac{ku}{2}} 
			+ \left( \Fractional{\tfrac{ku}{2}} - \tfrac{1}{4}\right)^2
			-\Fractional{\tfrac{ku}{2}}^2 + \tfrac{3}{2}\Fractional{\tfrac{ku}{2}} 
			+2m\Fractional{\tfrac{ku}{2}} - \tfrac{1}{16}
		\right)
	\right)
	\\&\quad
	+	
	\sum_{m=0}^{M_2} 
	\exp\left(
		 2\pi iu[-h]_{4k}(2m+1)
		-\tfrac{\pi z}{4k} 
		+\tfrac{8\pi }{kz}\left(
			\tfrac{1}{2}\Fractional{\tfrac{ku}{2}}^2 + \tfrac{1}{4}\Fractional{\tfrac{ku}{2}} 
			-m\left(1-\Fractional{\tfrac{ku}{2}}\right) - \tfrac{23}{32} 
		\right)		 
	\right)
	\\&\qquad\times
	\exp\left(
		\tfrac{4\pi i[-h]_{4k}}{k} \left(
			-\Floor{\tfrac{ku}{2}}^2 
			- \tfrac{1}{2}\Floor{\tfrac{ku}{2}} 
			+ \left( \Fractional{\tfrac{ku}{2}} - \tfrac{1}{4}\right)^2
			-\Fractional{\tfrac{ku}{2}}^2 - \tfrac{1}{2}\Fractional{\tfrac{ku}{2}} 
			+2m\left(1-\Fractional{\tfrac{ku}{2}}\right) + \tfrac{23}{16}
		\right)
	\right)
	\\&\quad
	+
	\sum_{m=0}^{M_3} 
	\exp\left(
		-2\pi iu[-h]_{4k}(2m+1) 
		-\tfrac{\pi z}{4k} 
		+\tfrac{8\pi }{kz}\left(
			\tfrac{1}{2}\Fractional{\tfrac{ku}{2}}^2 - \tfrac{1}{4}\Fractional{\tfrac{ku}{2}} 
			-m\Fractional{\tfrac{ku}{2}} + \tfrac{1}{32} 
		\right)
	\right)
	\\&\qquad\times	
	\exp\left(
		\tfrac{4\pi i[-h]_{4k}}{k} \left(
			-\Floor{\tfrac{ku}{2}}^2 
			+ \tfrac{1}{2}\Floor{\tfrac{ku}{2}} 
			+ \left( \Fractional{\tfrac{ku}{2}} + \tfrac{1}{4}\right)^2
			-\Fractional{\tfrac{ku}{2}}^2 + \tfrac{1}{2}\Fractional{\tfrac{ku}{2}} 
			+2m\Fractional{\tfrac{ku}{2}} - \tfrac{1}{16} 
		\right)
	\right)	
	\\&\quad
	+
	\sum_{m=0}^{M_4}
	\exp\left(
		-2\pi iu[-h]_{4k}(2m+1) 
		-\tfrac{\pi z}{4k} 
		+\tfrac{8\pi }{kz}\left(
			\tfrac{1}{2}\Fractional{\tfrac{ku}{2}}^2 - \tfrac{1}{4}\Fractional{\tfrac{ku}{2}} 
			-m\Fractional{\tfrac{ku}{2}} - \tfrac{7}{32} 
		\right)
	\right)
	\\&\qquad\times	
	\exp\left(
		 \tfrac{4\pi i[-h]_{4k}}{k} \left(
			-\Floor{\tfrac{ku}{2}}^2 
			+ \tfrac{1}{2}\Floor{\tfrac{ku}{2}} 
			+ \left( \Fractional{\tfrac{ku}{2}} + \tfrac{1}{4}\right)^2
			-\Fractional{\tfrac{ku}{2}}^2 + \tfrac{1}{2}\Fractional{\tfrac{ku}{2}} 
			+2m\Fractional{\tfrac{ku}{2}} + \tfrac{7}{16} 
		\right)
	\right)
	\\&\quad
	+
	\sum_{m=0}^{M_5}
	\exp\left( 
		2\pi iu[-h]_{4k}(2m+1) 
		-\tfrac{\pi z}{4k} 
		+\tfrac{8\pi }{kz}\left(
			\tfrac{1}{2}\Fractional{\tfrac{ku}{2}}^2 + \tfrac{3}{4}\Fractional{\tfrac{ku}{2}} 
			-m\left(1-\Fractional{\tfrac{ku}{2}}\right) - \tfrac{39}{32} 
		\right)
	\right)
	\\&\qquad\times
	\exp\left(
		\tfrac{4\pi i[-h]_{4k}}{k} \left(
			-\Floor{\tfrac{ku}{2}}^2 
			+ \tfrac{1}{2}\Floor{\tfrac{ku}{2}} 
			+ \left( \Fractional{\tfrac{ku}{2}} + \tfrac{1}{4}\right)^2
			-\Fractional{\tfrac{ku}{2}}^2 - \tfrac{3}{2}\Fractional{\tfrac{ku}{2}} 
			+2m\left(1-\Fractional{\tfrac{ku}{2}}\right) + \tfrac{39}{16}
		\right)
	\right)
	\\&\quad
	+
	E_3(u,k,n)
\\
&=
	-2i\wt{h}\sin(\pi u)(-1)^{\Floor{\frac{ku}{2}}}
	\xi\left(h,[-h]_{4k},\tfrac{k}{4}\right)
	z^{-\frac{1}{2}}
	\\&\quad\times
	\Bigg(
	\sum_{m=0}^{M_1}
	\exp\left(
		\tfrac{4\pi i[-h]_{4k}}{k} \left(
			-\Floor{\tfrac{ku}{2}}^2 
			- (2m+\tfrac{3}{2})\Floor{\tfrac{ku}{2}} 
		\right)	
		-\tfrac{\pi z}{4k} 
		+\tfrac{8\pi }{kz}\left(
			\tfrac{1}{2}\Fractional{\tfrac{ku}{2}}^2 - \tfrac{3}{4}\Fractional{\tfrac{ku}{2}} 
			-m\Fractional{\tfrac{ku}{2}} + \tfrac{1}{32} 
		\right)
	\right)
	\\&\quad
	+
	\sum_{m=0}^{M_2}
	\exp\left(
		\tfrac{4\pi i[-h]_{4k}}{k} \left(
			-\Floor{\tfrac{ku}{2}}^2 
			+ (2m+\tfrac{1}{2})\Floor{\tfrac{ku}{2}} 
			+ 2m
			+ \tfrac{3}{2}
		\right)	
	\right)
	\\&\qquad\times
	\exp\left(
		-
		\tfrac{\pi z}{4k} 
		+\tfrac{8\pi }{kz}\left(
			\tfrac{1}{2}\Fractional{\tfrac{ku}{2}}^2 + \tfrac{1}{4}\Fractional{\tfrac{ku}{2}} 
			-m\left(1-\Fractional{\tfrac{ku}{2}}\right) - \tfrac{23}{32} 
		\right)
	\right)
	\\&\quad
	+
	\sum_{m=0}^{M_3}
	\exp\left(
		\tfrac{4\pi i[-h]_{4k}}{k} \left(
			-\Floor{\tfrac{ku}{2}}^2 
			- (2m+\tfrac{1}{2})\Floor{\tfrac{ku}{2}} 
		\right)	
		-
		\tfrac{\pi z}{4k} 
		+\tfrac{8\pi }{kz}\left(
			\tfrac{1}{2}\Fractional{\tfrac{ku}{2}}^2 - \tfrac{1}{4}\Fractional{\tfrac{ku}{2}} 
			-m\Fractional{\tfrac{ku}{2}} + \tfrac{1}{32} 
		\right)
	\right)
	\\&\quad
	+
	\sum_{m=0}^{M_4}
	\exp\left(
		\tfrac{4\pi i[-h]_{4k}}{k} \left(
			-\Floor{\tfrac{ku}{2}}^2 
			- (2m+\tfrac{1}{2})\Floor{\tfrac{ku}{2}} 
			+ \tfrac{1}{2} 
		\right)	
		-\tfrac{\pi z}{4k} 
		+\tfrac{8\pi }{kz}\left(
			\tfrac{1}{2}\Fractional{\tfrac{ku}{2}}^2 - \tfrac{1}{4}\Fractional{\tfrac{ku}{2}} 
			-m\Fractional{\tfrac{ku}{2}} - \tfrac{7}{32} 
		\right)
	\right)
	\\&\quad
	+
	\sum_{m=0}^{M_5} 
	\exp\left(
		\tfrac{4\pi i[-h]_{4k}}{k} \left(
			-\Floor{\tfrac{ku}{2}}^2 
			+ (2m+\tfrac{3}{2})\Floor{\tfrac{ku}{2}} 
			+2m
			+ \tfrac{5}{2}
		\right)	
	\right)
	\\&\qquad\times
	\exp\left(
		-
		\tfrac{\pi z}{4k} 
		+\tfrac{8\pi }{kz}\left(
			\tfrac{1}{2}\Fractional{\tfrac{ku}{2}}^2 + \tfrac{3}{4}\Fractional{\tfrac{ku}{2}} 
			-m\left(1-\Fractional{\tfrac{ku}{2}}\right) - \tfrac{39}{32} 
		\right)
	\right)
	\Bigg)
	+
	E_3(u,k,n)
.
\end{align*}
We find that $E_3(u,k,n)$ can be bounded as
\begin{align*}
|E_3(u,k,n)|
&\le
	2|\sin(\pi u)| |z|^{-\frac{1}{2}}
	\left(
		\frac{5.3}{1-\exp\left(-\frac{4\pi}{c}\right)}+0.0031
	\right)
	\\&\quad
	+
	4|\sin(\pi u)| |z|^{-\frac{1}{2}}
	\csc(\tfrac{\pi}{2c}) \exp\left(\tfrac{\pi}{8}\right)
	\left(
		\tfrac{1}{2} + \tfrac{\sqrt{k}}{2} + \tfrac{k}{8}\sqrt{\tfrac{2}{k}}
	\right)	
\\
&\le
	|\sin(\pi u)| \sqrt{\tfrac{n}{k}}
	\left(
		\frac{10.6}{1-\exp\left(-\frac{4\pi}{c}\right)}+0.0062
		+
		\frac{ 2.962+4.01\sqrt{k} }{ \sin(\frac{\pi}{2c}) }	
	\right)
.
\end{align*}
\end{proof}

The following proposition handles the two $\mu$-functions
when $k\equiv2\pmod{4}$. The proof is a sequence of calculations in a manner 
similar to the proof
of Propositions \ref{PropositionMuMainTerm1} and \ref{PropositionMuMainTerm2},
which we omit for the sake of brevity.

\begin{proposition}\label{PropositionMuMainTerm3}
Suppose $u_1,u_2\in\mathbb{R}$ with $0\le u_1<1$, $u_3$ is an odd integer, and
$|q|^2<\frac{1}{2}$.
If $u_1=0$, then
\begin{align*}
\left|
q^{-\frac{1}{2}(u_1-\frac{1}{2})^2}
\mu\left( u_1\tau+u_2, \tfrac{\tau}{2}+\tfrac{u_3}{4};\tau	\right)
\right|
&\le
	\frac{1}{(1-2|q|^2)^2}\left(
		\frac{1}{2|\sin(\pi u_2)|}
		+
		\frac{|q|^{\frac{1}{2}} (1+|q|) }{(1-|q|)^2}
	\right)
,\\
\left|
q^{-\frac{1}{2}(u_1+\frac{1}{2})^2}
\mu\left( u_1\tau+u_2, -\tfrac{\tau}{2}+\tfrac{u_3}{4};\tau	\right)
\right|
&\le
	\frac{1}{(1-2|q|^2)^2}\left(
		\frac{1}{2|\sin(\pi u_2)|}
		+
		\frac{|q|^{\frac{1}{2}} (1+|q|) }{(1-|q|)^2}
	\right)
.
\end{align*}
If $u_1\not=0$, then
\begin{align*}
\left|
q^{-\frac{1}{2}(u_1-\frac{1}{2})^2}
\mu\left( u_1\tau+u_2, \tfrac{\tau}{2}+\tfrac{u_3}{4};\tau	\right)
\right|
&\le
	\frac{1}{(1-2|q|^2)^2}\left(
		\frac{1}{1-|q|^{u_1}}
		+	
		\frac{1}{1-|q|^{1-u_1}}
		+
		\frac{2|q|^{\frac{3}{2}} }{(1-|q|)^2}
	\right)
,\\
q^{-\frac{1}{2}(u_1+\frac{1}{2})^2}\mu\left(u_1\tau+u_2,-\tfrac{\tau}{2}+\tfrac{u_3}{4}\right)
&=
	-iq^{-\frac{u_1^2}{2} } \exp\left(\pi iu_2-\tfrac{\pi iu_3}{4}\right) 
	+
	E
,
\end{align*}
where
\begin{align*}
|E|
&\le
	\frac{1}{1-|q|^{u_1}}
	\left( 1 + \frac{|q|^{\frac{3}{2}}}{(1-2|q|^2)^2} \right)
	+
	\frac{1}{(1-|q|^{1-u_1})(1-2|q|^2)^2}
	+
	\frac{(1+|q|^{\frac{5}{2}})}{(1-2|q|^2)^2(1-|q|)^2}
.
\end{align*}
\end{proposition}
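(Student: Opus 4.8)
The plan is to follow the scheme already used for Propositions \ref{PropositionMuMainTerm1} and \ref{PropositionMuMainTerm2}; the only genuinely new ingredient is that the denominator $\vartheta$ is now evaluated at the half-period shift $\pm\tfrac{\tau}{2}+\tfrac{u_3}{4}$ rather than at $\pm\tfrac{\tau}{4}$, and that $u_3$ is odd.

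First I would expand $\mu$ through its series definition $\mu(u,v;\tau)=\frac{e^{\pi iu}}{\vartheta(v;\tau)}\sum_{n}\frac{(-1)^nq^{n(n+1)/2}e^{2\pi inv}}{1-e^{2\pi iu}q^n}$ with $u=u_1\tau+u_2$ and $v=\pm\tfrac{\tau}{2}+\tfrac{u_3}{4}$, and then apply the product formula for $\vartheta$. Since $u_3$ is odd we have $e^{\pi iu_3/2}=\pm i$, hence $e^{\pi iu_3/2}+e^{-\pi iu_3/2}=0$, so after peeling off the single Pochhammer factor that carries a negative power of $q$ (this occurs only in the $-\tfrac{\tau}{2}$ case and is what supplies the $q^{-\frac12}$ matching the prefactor) the remaining two infinite products collapse, up to a root of unity and an explicit power of $q$, to a $(q^2;q^4)_\infty(q^2;q^2)_\infty$-type expression. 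Each of these two factors is bounded below in modulus by $\prod_{j\ge1}(1-|q|^{2j})$, whose reciprocal is $\le\sum_{n\ge0}p(n)|q|^{2n}\le(1-2|q|^2)^{-1}$ via the crude bound $p(n)\le2^n$ used in the earlier propositions; applied twice this accounts for the factor $(1-2|q|^2)^{-2}$ appearing throughout the statement.

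Next I would split the sum over $n$ into the two boundary terms $n=0$ and $n=-1$, whose denominators are $1-e^{2\pi iu_2}q^{u_1}$ and $1-e^{2\pi iu_2}q^{u_1-1}$ (these carry the pieces responsible for the $\csc(\pi u_2)$, $\tfrac{1}{1-|q|^{u_1}}$ and $\tfrac{1}{1-|q|^{1-u_1}}$ contributions), and the tail $n\in\mathbb{Z}\setminus\{0,-1\}$, which is bounded geometrically using $\sum_{n\ne0,-1}|q|^{n^2/2}\ll|q|^{1/2}$ together with $|1-e^{2\pi iu_2}q^{u_1+n}|^{-1}\le(1-|q|)^{-1}$. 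A short tally of $q$-exponents (combining the prefactor $q^{-\frac12(u_1\mp\frac12)^2}$, the $q^{1/8}$ coming from $1/\vartheta$, the $q^{u_1/2}$ from $e^{\pi iu}$, and the $q^{\pm n/2}$ from $e^{2\pi inv}$) then shows: for $v=\tfrac{\tau}{2}+\tfrac{u_3}{4}$, and also for $u_1=0$ with $v=-\tfrac{\tau}{2}+\tfrac{u_3}{4}$, every surviving exponent is non-negative, so the quantity is dominated by the claimed bound; while for $u_1\ne0$ with $v=-\tfrac{\tau}{2}+\tfrac{u_3}{4}$ exactly the leading ($m=0$) piece of the geometric expansion of $1/(1-e^{2\pi iu_2}q^{u_1})$ inside the $n=0$ term has $q$-exponent $-u_1^2/2$ and coefficient $-i\,e^{\pi iu_2-\pi iu_3/4}$, which is precisely the stated main term, with everything else absorbed into $E$.

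The routine but unavoidable part — and the only real obstacle — is the bookkeeping: confirming, in each of the four regimes, the sign of every $q$-exponent; checking that the roots of unity $e^{\pm\pi iu_3/4}$ arising from $\vartheta$ and from $e^{2\pi inv}$ combine to give exactly the factor $e^{\pi iu_2-\pi iu_3/4}$ in the main term; and adding up the constants so that $E$ is dominated by the three explicit expressions in the statement. As none of this goes beyond the triangle inequality, geometric series, and the partition estimate already in use, I would present the expansion and the extraction of the main term in detail and, as the authors do for the later propositions of this section, leave the constant-chasing to the reader.
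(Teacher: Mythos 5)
Your proposal is correct and follows precisely the route the paper intends: the paper omits this proof, stating only that it is "a sequence of calculations in a manner similar to" Propositions \ref{PropositionMuMainTerm1} and \ref{PropositionMuMainTerm2}, and your argument — series definition of $\mu$, the theta product collapsing via $u_3$ odd to a $(q^2;q^4)_\infty(q^2;q^2)_\infty$-type product bounded by $(1-2|q|^2)^{-2}$ through $p(n)\le 2^n$, splitting off the $n=0,-1$ terms, geometric expansion, and extraction of the $-iq^{-u_1^2/2}e^{\pi iu_2-\pi iu_3/4}$ main term in the $u_1\neq0$, $-\tfrac{\tau}{2}$ case — is exactly that adaptation. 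The remaining exponent and constant bookkeeping you defer is the same material the authors themselves omit.
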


Using Proposition \ref{PropositionMuMainTerm3}, we deduce the transformation
and bounds needed in the case when $k\equiv 2\pmod{4}$. In particular, in 
this case, there is no contribution to the main term.

\begin{proposition}\label{PropFinalBounds2Mod4}
Suppose $a,c,k,n\in\mathbb{Z}$ with $c,k,n>0$ and $k\equiv 2\pmod{4}$,
$u=\frac{a}{c}$ with $2u\not\in\mathbb{Z}$, 
and $z\in\mathbb{C}$ with
$\RE{z}=\frac{k}{n}$ and
$\RE{\frac{1}{z}}>\frac{k}{2}$.
If $c\mid ka$, then
\begin{align*}
\left|R2\left(e^{2\pi i u}; e^{\frac{2\pi i(h+iz)}{k}}\right)\right|
&\le
	|\sin(\pi u)|\sqrt{\tfrac{n}{k}}\left(
		\frac{1.43}{\sin(\frac{\pi}{c})}
		+
		3.42\sqrt{k}
		+
		2.1
	\right)
.
\end{align*}
If $c\nmid ka$, then
\begin{align*}
\left| R2\left( e^{2\pi i u}; e^{\frac{2\pi i(h+iz)}{k}} \right) \right|
&\le
	|\sin(\pi u)| \sqrt{\tfrac{n}{k}}\left(
		\frac{5.72}{1-\exp\left(-\frac{\pi}{c}\right)} 
		+ 6.81\csc(\tfrac{\pi}{c})\sqrt{k}	
		+ 1.6 
	\right)
.
\end{align*}
\end{proposition}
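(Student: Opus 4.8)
The plan is to imitate the proof of Proposition \ref{PropFinalBounds0Mod4}, except that, since $k\equiv2\pmod4$ makes $\tfrac{k}{2}$ odd, the relevant elementary transformation is the one already recorded in Proposition \ref{PropositionMuModularForRankAlt2} and packaged in Corollary \ref{CorollaryMuTransformationWithBounds}. Starting from
\[
R2\bigl(e^{2\pi iu};e^{\frac{2\pi i(h+iz)}{k}}\bigr)=2\sin(\pi u)\sum_{\pm}\mp e^{\pm\pi iu}\,\mu\Bigl(2u,\pm\tfrac{h+iz}{k};\tfrac{4(h+iz)}{k}\Bigr),
\]
a rearrangement of \eqref{M2RankInTermsOfMu}, I would write $\tfrac{h+iz}{k}=\tfrac{(2h)+i(2z)}{4(k/2)}$ and $\tfrac{4(h+iz)}{k}=\tfrac{(2h)+i(2z)}{k/2}$, note that $\gcd(2h,\tfrac{k}{2})=1$ and $\widetilde{2h}=2$, and apply Corollary \ref{CorollaryMuTransformationWithBounds} with $u\mapsto2u$, $h\mapsto2h$, $k\mapsto\tfrac{k}{2}$, $z\mapsto2z$. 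Since $(\tfrac{k}{2})(2u)=ku$, the floor and fractional parts that appear are $\Floor{ku}$ and $\Fractional{ku}$; writing $\tau=\tfrac{[-2h]_{k/2}+i/(2z)}{k/2}$, $u_1=\Fractional{ku}$, $u_2=-2u[-2h]_{k/2}$ and using $1+2h[-2h]_{k/2}=(\tfrac{k}{2})m$ with $m$ necessarily odd, the inner $\mu$-functions become $\mu\bigl(u_1\tau+u_2,\pm\tfrac{\tau}{2}+\tfrac{u_3}{4};\tau\bigr)$ with $u_3=\mp m$ odd, which is exactly the shape treated in Proposition \ref{PropositionMuMainTerm3}; the hypothesis $|q|^2<\tfrac12$ there holds because $\RE{\tfrac1z}>\tfrac{k}{2}$ forces $|e^{2\pi i\tau}|<e^{-\pi}$.

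Next I would apply Proposition \ref{PropositionMuMainTerm3} and collect terms. When $c\mid ka$ we have $u_1=0$: the two inner $\mu$-functions are controlled by the $u_1=0$ bounds (the distance of $2u[-2h]_{k/2}$ to $\mathbb{Z}$ being at least $\tfrac1c$, just as in Proposition \ref{PropFinalBounds0Mod4}, which produces the $\csc(\tfrac{\pi}{c})$ contribution), the exponentially large prefactor $\exp\bigl(\tfrac{\pi}{kz}\cdot\tfrac14\bigr)$ being absorbed by the $q^{-1/8}$ pulled out in that proposition, and the remaining errors being handled by the $E_2$-bound of Corollary \ref{CorollaryMuTransformationWithBounds}; feeding in $\RE{z}=\tfrac{k}{n}$ gives the first inequality. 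When $c\nmid ka$ we have $\tfrac1c\le u_1\le1-\tfrac1c$; the ``$+\tfrac{\tau}{2}$'' inner $\mu$ is again merely bounded, with its prefactor $\exp\bigl(\tfrac{\pi}{kz}(u_1-\tfrac12)^2\bigr)$ cancelled by the factor $q^{-\frac12(u_1-\frac12)^2}$ of Proposition \ref{PropositionMuMainTerm3}, but the ``$-\tfrac{\tau}{2}$'' inner $\mu$ produces the term $-iq^{-u_1^2/2}e^{\pi iu_2-\pi iu_3/4}$, which after multiplication by its prefactor has modulus of order $\exp\bigl(\tfrac{\pi u_1^2}{k}\RE{\tfrac1z}\bigr)$, i.e.\ it is exponentially large. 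The key point is that this term is exactly cancelled by the explicit correction term in the ``$\tfrac12<\Fractional{ku}\mp\tfrac{\widetilde h}{4}<\tfrac32$'' branch of Corollary \ref{CorollaryMuTransformationWithBounds}, which is present precisely because $u_1+\tfrac12\in(\tfrac12,\tfrac32)$ on the ``$-$'' branch when $u_1\ne0$; both terms carry $\xi(2h,[-2h]_{k/2},\tfrac{k}{2})$, $(-1)^{\Floor{ku}}$ and the same $q$-power $\exp\bigl(\tfrac{\pi u_1^2}{kz}\bigr)$, and after reducing the remaining exponents their phases match, so that, weighted by $2\sin(\pi u)(\mp e^{\pm\pi iu})$, they sum to zero. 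This is the analogue of the cancellation verified in the proof of Lemma \ref{LemmaR2Transformations}(2), and it explains why $k\equiv2\pmod4$ does not appear in Theorem \ref{TheoremAsymptoticForM2Rank}. After this cancellation everything left is $O\bigl(|z|^{-1/2}\sqrt{k}\bigr)$-sized, and assembling the explicit constants from Propositions \ref{PropositionMuMainTerm3}, \ref{PropositionHBounds} and Corollary \ref{CorollaryMuTransformationWithBounds} gives the second inequality.

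The hardest part will be that last cancellation: matching the root-of-unity factors and exponents in the exponentially growing ``main term'' of the $-\tfrac{\tau}{2}$ $\mu$-function against the correction term of Corollary \ref{CorollaryMuTransformationWithBounds} precisely enough to see it vanish, and then carrying all the explicit numerical constants through the various error contributions --- the $E_2$-bound, the geometric-series tails in Proposition \ref{PropositionMuMainTerm3}, and the Mordell-integral bounds of Proposition \ref{PropositionHBounds} --- to reach the stated numbers. This is lengthy but routine bookkeeping, entirely parallel to the proof of Proposition \ref{PropFinalBounds0Mod4}; the genuinely new feature here is only that the two $\mu$-branches conspire so that no exponentially growing term survives.
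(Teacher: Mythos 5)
Your proposal follows essentially the same route as the paper: apply Corollary \ref{CorollaryMuTransformationWithBounds} with $u\mapsto 2u$, $h\mapsto 2h$, $k\mapsto k/2$, $z\mapsto 2z$ (so $\wt{2h}=2$), feed the resulting inner $\mu$-functions into Proposition \ref{PropositionMuMainTerm3}, and in the case $c\nmid ka$ observe that the exponentially large main term from the $-\tfrac{\tau}{2}$ branch cancels exactly against the extra correction term of the corollary arising because $\Fractional{ku}+\tfrac12>\tfrac12$, leaving only error terms of the stated size. This is precisely the paper's argument (which likewise suppresses the explicit phase-matching and constant bookkeeping), so the plan is correct and matches in approach.
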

\begin{proof}
As the proof is similar to that of Proposition \ref{PropFinalBounds0Mod4},
we are brief with the details.
We apply 
Corollary \ref{CorollaryMuTransformationWithBounds}
with $u\mapsto 2u$, $k\mapsto k/2$, $h\mapsto 2h$, $z\mapsto 2z$ and note that
$\wt{2h}=2$. We have that
$-\frac{1}{2}\le \Fractional{ku}-\frac{1}{2} \le \frac{1}{2}$
for all $k$ and $u$,
and 
$-\frac{1}{2}\le \Fractional{ku}+\frac{1}{2} \le \frac{1}{2}$
exactly when $c$ divides $ka$.
In writing out the result of applying Corollary \ref{CorollaryMuTransformationWithBounds},
we only give the case where $c$ does divide $ka$ and note the other case has a similar expression.
Here we find that
\begin{align*}
&R2\left(e^{2\pi i u}; e^{\frac{2\pi i(h+iz)}{k}}\right)
\\
&=
	\sqrt{2}\sin(\pi u) (-1)^{\Floor{ku} } 
	\xi\left(2h,[-2h]_{\frac{k}{2}},\tfrac{k}{2}\right)
	z^{-\frac{1}{2}}
	\exp\left( -\tfrac{2\pi i [-2h]_{\frac{k}{2}} \Floor{ku}^2}{k}   \right)
	\\&\quad\times
	\sum_{\pm}\mp 
	\exp\left(
		-\tfrac{\pi z}{4k} 
	 	+ \tfrac{2\pi i [-2h]_{\frac{k}{2}}}{k}\left( \Fractional{ku} \mp \tfrac{1}{2}\right)^2  
	\right)
	\\&\qquad\times
	\exp\left( 
		\pm \tfrac{\pi i \Floor{ku}}{k} \left(1+2h[-2h]_{\frac{k}{2}}-2[-2h]_{\frac{k}{2}}\right)  
		-
		\tfrac{2\pi i\left([-2h]_{\frac{k}{2}}+\frac{i}{2z}\right)}{k}\left( \Fractional{ku} \mp \tfrac{1}{2}\right)^2	
	\right)
	\\&\qquad\times
	\mu\left(
		\Fractional{ku}\tfrac{2\left([-2h]_{\frac{k}{2}}+\frac{i}{2z}\right)}{k}  -2u[-2h]_{\frac{k}{2}}, 
		\pm \tfrac{[-2h]_{\frac{k}{2}}+\frac{i}{2z}}{k} \mp \tfrac{1+2h[-2h]_{\frac{k}{2}}}{2k}; 
		\tfrac{2\left([-2h]_{\frac{k}{2}}+\frac{i}{2z}\right)}{k}
	\right)
	\\&\quad
	+
	E_4^{\prime}(u,h,k,z)
,
\end{align*}
where $|E_4^{\prime}(u,h,k,z)|\le 4|\sin(\pi u)E_2(2u,2h,k/2,2z)|$.
We apply Proposition \ref{PropositionMuMainTerm3}
with $u_1=\Fractional{ku}$, $u_2=-2u[-2h]_{k/2}$, 
$u_3 = \mp\frac{1+2h[-2h]_{k/2}}{k/2}$ and 
$\tau=\frac{2( [-2h]_{k/2}+\frac{i}{2z} )}{k}$. We note that 
$|q|<e^{-\pi}$. 

In the case that $c$ divides $ka$, we have that $u_1=0$ and so
\begin{align*}
\left|
\exp\left(
	- \tfrac{2\pi i\left([-2h]_{\frac{k}{2}}+\frac{i}{2z}\right)}{4k}	
\right)
\mu\left(
	-2u[-2h]_{\frac{k}{2}}, 
	\pm \tfrac{[-2h]_{\frac{k}{2}}+\frac{i}{2z}}{k} \mp \tfrac{1+2h[-2h]_{\frac{k}{2}}}{2k}; 
	\tfrac{2\left([-2h]_{\frac{k}{2}}+\frac{i}{2z}\right)}{k}
\right)
\right|
&\le
	\frac{0.504}{\sin(\frac{\pi}{c})} + 0.24
.
\end{align*}
From this, it follows that
\begin{align*}
\left|R2(e^{2\pi i u}; e^{\frac{2\pi i(h+iz)}{k}})\right|
&\le
	|\sin(\pi u)|\sqrt{\tfrac{n}{k}}\left(
		\frac{1.43}{\sin(\frac{\pi}{c})}
		+
		3.42\sqrt{k}
		+
		2.1
	\right)
.
\end{align*}

When $c$ does not divide $ka$, we instead have
\begin{gather*}
\left|
q^{-\frac{1}{2}(u_1-\frac{1}{2})^2}
\mu\left(u_1\tau+u_2,\tfrac{\tau}{2}+\tfrac{u_3}{4};\tau\right)
\right|
\le
	\frac{2.02}{1-|q|^{\frac{1}{c}}}+0.02
,\\
q^{-\frac{1}{2}(u_1+\frac{1}{2})^2}
\mu\left(u_1\tau+u_2,-\tfrac{\tau}{2}+\tfrac{u_3}{4};\tau\right)
=
	-i\exp\left(
		-\tfrac{2\pi i\left([-2h]_{\frac{k}{2}}+\frac{i}{2z} \right) \Fractional{ku}^2 }{k}
		-2\pi iu[-2h]_{\frac{k}{2}}
		-\tfrac{\pi i\left(1+2h[-2h]_{\frac{k}{2}}\right) }{2k}
	\right)
	\\\hspace{-10em}
	+E
,
\end{gather*}
where $|E|<\frac{2.02}{1-|q|^{\frac{1}{c}}}+1.103$.
Noting there is an extra term from Corollary 
\ref{CorollaryMuTransformationWithBounds} since
$\Fractional{ku}+\frac{1}{2}>\frac{1}{2}$, we find that
\begin{align*}
&R2(e^{2\pi i u}; e^{\frac{2\pi i(h+iz)}{k}})
\\
&=
	-i\sqrt{2}\sin(\pi u)(-1)^{\Floor{ku}}\xi\left(2h,[-2h]_{\frac{k}{2}},\tfrac{k}{2}\right)z^{-\frac{1}{2}}
	\exp\left(
		-\tfrac{\pi z}{4k} + \tfrac{\pi\Fractional{ku}^2}{kz}
	\right)
	\\&\quad\times
	\exp\left(
		\tfrac{2\pi i[-2h]_{\frac{k}{2}}}{k}\left(
			-\Floor{ku}^2 + \left(\Fractional{ku}+\tfrac{1}{2}\right)^2
			-h\Floor{ku} + \Floor{ku} - \Fractional{ku}^2 - ku -\tfrac{h}{2}
		\right)
		-\tfrac{\pi i(2\Floor{ku}+1)}{2k}
	\right)
	\\&\quad
	+
	i\sqrt{2}\sin(\pi u) (-1)^{\Floor{ku} } 
	\xi\left(2h,[-2h]_{\frac{k}{2}},\tfrac{k}{2}\right)
	z^{-\frac{1}{2}}
	\exp\left(
		-\tfrac{\pi z}{4k} + \tfrac{\pi\Fractional{ku}^2}{kz}
	\right)
	\\&\quad\times
	\exp\left(
		\tfrac{2\pi i[-2h]_{\frac{k}{2}}}{k}\left(
			-\Floor{ku}^2 - \Floor{ku} 
			-\tfrac{(2\Floor{ku}+1)(2h-2)}{4}		
			-\tfrac{1}{4}
		\right)
		-
		\tfrac{\pi i(2\Floor{ku}+1)}{2k}
	\right)
	+
	E_4(u,k,n)
\\
&=
	E_4(u,k,n)
.
\end{align*}
We see that we can bound $E_4(u,k,n)$ as
\begin{align*}
|E_4(u,k,n)|
&\le	
	|\sin(\pi u)| \sqrt{\tfrac{n}{k}}\left(
		\frac{5.72}{1-\exp\left(-\frac{\pi}{c}\right)} 
		+ 6.81\csc(\tfrac{\pi}{c})\sqrt{k}	
		+ 1.6 
\right)
.
\end{align*}
\end{proof}

We now consider the two $\mu$-functions appearing in the case when 
$k\equiv\pm1\pmod{4}$. The following proposition handles both of these
$\mu$-functions.
The proof is much the same as that of Propositions \ref{PropositionMuMainTerm1}
and \ref{PropositionMuMainTerm2},
and as such is omitted.

\begin{proposition}\label{PropositionMuMainTerm5}
Suppose $u_1,u_2\in\mathbb{R}$ with $0\le u_1<1$, $u_3$ is an odd integer, and
$|q|<\frac{1}{2}$.
If $u_1=0$, then
\begin{align*}
q^{-\frac{u_1^2}{2}} \mu\left( u_1\tau+u_2, \tfrac{u_3}{4};\tau	\right)
&=
	\frac{-iq^{-\frac{1}{8} } }{4\sin\left(\frac{\pi u_3}{4}\right) \sin(\pi u_2)}
	+
	E
,
\end{align*}
where
\begin{align*}
|E|
&\le
	\frac{\sqrt{2}|q|^{\frac{7}{8}}}{2(1-|q|)}
	\left( 1+\frac{1}{1-2|q|} \right)
	+
	\frac{\sqrt{2}|q|^{\frac{7}{8}}}{4(1-2|q|)|\sin(\pi u_2)|}
	+
	\frac{\sqrt{2}|q|^{\frac{7}{8}} (1+|q|) }
	{2(1-2|q|)(1-|q|)^2}
.
\end{align*}
If $u_1\not=0$, then
\begin{align*}
q^{-\frac{u_1^2}{2}} \mu\left( u_1\tau+u_2, \tfrac{u_3}{4};\tau	\right)
&=
	-\frac{1}{2\sin\left(\frac{\pi u_3}{4}\right)}
	\sum_{m=0}^{M_6} e^{\pi i(2m+1)u_2} q^{-\frac{u_1^2}{2}+\frac{u_1}{2}-\frac{1}{8}+mu_1}
	\\&\quad
	+
	\frac{i^{u_3}}{2\sin\left(\frac{\pi u_3}{4}\right)}
	\sum_{m=0}^{M_7} e^{-\pi i(2m+1)u_2} q^{-\frac{u_1^2}{2}-\frac{u_1}{2}+\frac{7}{8} +m(1-u_1)}
	+
	E
,
\end{align*}
where
$M_6=\left\lceil\frac{4u_1^2-12u_1+1}{8u_1}\right\rceil$,
$M_7=\left\lceil\frac{4u_1^2+12u_1-15}{8(1-u_1)}\right\rceil$, and
\begin{align*}
|E|
&\le
	\frac{\sqrt{2}}{2}\left(
		\frac{1}{1-|q|^{u_1}}
		+
		\frac{1}{1-|q|^{1-u_1}}
	\right)
	\left(	1 +	\frac{|q|^{\frac{7}{8}}}{1-2|q|}	\right)
	+
	\frac{\sqrt{2}|q|^{\frac{7}{8}}(1+|q|)}{2(1-2|q|)(1-|q|)^2}
.
\end{align*}
\end{proposition}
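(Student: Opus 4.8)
The plan is to follow the proofs of Propositions \ref{PropositionMuMainTerm1} and \ref{PropositionMuMainTerm2} almost verbatim, expanding $\mu$ from its series definition, simplifying the theta quotient, and isolating the finitely many terms that carry negative powers of $q$. First I would evaluate the theta constant: since $u_3$ is odd, $e^{2\pi i(u_3/4)}=i^{u_3}$ and $i^{-u_3}=-i^{u_3}$, so $\aqprod{i^{u_3}q}{q}{\infty}\aqprod{i^{-u_3}q}{q}{\infty}=\aqprod{-q^2}{q^2}{\infty}$; combined with $1-e^{\pi iu_3/2}=-2i\sin(\tfrac{\pi u_3}{4})e^{\pi iu_3/4}$ this gives
\[
\vartheta\!\left(\tfrac{u_3}{4};\tau\right)=-2q^{\frac18}\sin\!\left(\tfrac{\pi u_3}{4}\right)\aqprod{q}{q}{\infty}\aqprod{-q^2}{q^2}{\infty}.
\]
Using in addition $e^{\pi i(u_1\tau+u_2)}=q^{u_1/2}e^{\pi iu_2}$, $e^{2\pi in(u_3/4)}=(i^{u_3})^n$, and $(-1)^n(i^{u_3})^n=(-i^{u_3})^n$, the definition of $\mu$ yields
\[
q^{-\frac{u_1^2}{2}}\mu\!\left(u_1\tau+u_2,\tfrac{u_3}{4};\tau\right)=\frac{q^{-\frac{u_1^2}{2}+\frac{u_1}{2}-\frac18}e^{\pi iu_2}}{-2\sin\!\left(\frac{\pi u_3}{4}\right)\aqprod{q}{q}{\infty}\aqprod{-q^2}{q^2}{\infty}}\sum_{n=-\infty}^{\infty}\frac{(-i^{u_3})^nq^{\frac{n(n+1)}{2}}}{1-e^{2\pi iu_2}q^{n+u_1}}.
\]
Note that, in contrast with Propositions \ref{PropositionMuMainTerm1} and \ref{PropositionMuMainTerm2}, the product $\aqprod{q}{q}{\infty}\aqprod{-q^2}{q^2}{\infty}=1+O(q)$ is an honest power series in integral powers of $q$, so only its constant term needs to be split off, which is why the final formula has just two sums.

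Next I would split the inner sum into $n=0$, $n=-1$, and $n\notin\{0,-1\}$. For $n\ge1$ one has $|q^{n+u_1}|<1$, so $|1-e^{2\pi iu_2}q^{n+u_1}|^{-1}\le(1-|q|)^{-1}$, and the total $q$-exponent $-\tfrac{u_1^2}{2}+\tfrac{u_1}{2}-\tfrac18+\tfrac{n(n+1)}{2}$ is at least $\tfrac78$ on $0\le u_1<1$; for $n\le-2$ one rewrites $\tfrac{1}{1-e^{2\pi iu_2}q^{n+u_1}}=\tfrac{-e^{-2\pi iu_2}q^{-(n+u_1)}}{1-e^{-2\pi iu_2}q^{-(n+u_1)}}$, and the exponent is at least $\tfrac{23}{8}$; summing the resulting geometric bounds (exactly as in the proof of Proposition \ref{PropositionMuMainTerm1}) makes the $n\notin\{0,-1\}$ part $O(|q|^{7/8})$ with an explicit constant. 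I would bound the product by $\big|\aqprod{q}{q}{\infty}\aqprod{-q^2}{q^2}{\infty}\big|^{-1}=\big|\aqprod{q}{q^2}{\infty}\aqprod{q^4}{q^4}{\infty}\big|^{-1}\le\sum_{n\ge0}p(n)|q|^n\le(1-2|q|)^{-1}$, valid for $|q|<\tfrac12$, and split off the constant term $1$ of $\big(\aqprod{q}{q}{\infty}\aqprod{-q^2}{q^2}{\infty}\big)^{-1}$, the remainder being $O(|q|)$; this is where the factors $(1-2|q|)^{-1}$ in the claimed bound come from.

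Finally, the $n=0$ and $n=-1$ terms carry the main term. If $u_1=0$, the $n=0$ term is $\tfrac{1}{1-e^{2\pi iu_2}}$, and $\tfrac{e^{\pi iu_2}}{1-e^{2\pi iu_2}}=\tfrac{i}{2\sin(\pi u_2)}$ produces exactly $\tfrac{-iq^{-1/8}}{4\sin(\pi u_3/4)\sin(\pi u_2)}$, while the $n=-1$ term is $O(|q|^{7/8})$. If $u_1>0$ I would write
\[
\frac{1}{1-e^{2\pi iu_2}q^{u_1}}=\sum_{m=0}^{M_6}e^{2\pi imu_2}q^{mu_1}+\frac{e^{2\pi i(M_6+1)u_2}q^{(M_6+1)u_1}}{1-e^{2\pi iu_2}q^{u_1}},
\]
and, using $(-i^{u_3})^{-1}=i^{u_3}$ (valid since $u_3$ is odd), $\tfrac{(-i^{u_3})^{-1}}{1-e^{2\pi iu_2}q^{-1+u_1}}=-i^{u_3}\sum_{m=0}^{M_7}e^{-2\pi i(m+1)u_2}q^{(m+1)(1-u_1)}+(\text{tail})$; here $M_6$ and $M_7$ are the largest integers for which the retained $q$-exponents $-\tfrac{u_1^2}{2}+\tfrac{u_1}{2}-\tfrac18+mu_1$ and $-\tfrac{u_1^2}{2}-\tfrac{u_1}{2}+\tfrac78+m(1-u_1)$ stay negative, which a short computation identifies with the stated $M_6=\lceil(4u_1^2-12u_1+1)/(8u_1)\rceil$ and $M_7=\lceil(4u_1^2+12u_1-15)/(8(1-u_1))\rceil$. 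After reindexing, the retained sums are precisely the two sums in the statement; the two tails, bounded by $(1-|q|^{u_1})^{-1}$ and $(1-|q|^{1-u_1})^{-1}$, together with the product-correction term and the $n\notin\{0,-1\}$ sum, assemble into the claimed bound on $|E|$.

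The routine but delicate part is the constant-chasing at the end: matching signs and the factors $i^{u_3}$, $\sin(\tfrac{\pi u_3}{4})$ (note $|\sin(\tfrac{\pi u_3}{4})|=\tfrac{1}{\sqrt2}$, which is what turns the prefactor $\tfrac{1}{2\sin(\pi u_3/4)}$ into the $\tfrac{\sqrt2}{2}$ and $\tfrac{\sqrt2}{4}$ appearing in $|E|$), and verifying the thresholds $\tfrac78$ and $\tfrac{23}{8}$ that push every $n\notin\{0,-1\}$ term into the error. No new idea beyond Propositions \ref{PropositionMuMainTerm1}--\ref{PropositionMuMainTerm2} is needed.
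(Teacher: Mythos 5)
Your reconstruction is correct and is exactly the argument the paper intends (the paper omits this proof, noting it is ``much the same'' as Propositions \ref{PropositionMuMainTerm1} and \ref{PropositionMuMainTerm2}): evaluate $\vartheta\!\left(\tfrac{u_3}{4};\tau\right)=-2q^{\frac18}\sin\!\left(\tfrac{\pi u_3}{4}\right)\aqprod{q}{q}{\infty}\aqprod{-q^2}{q^2}{\infty}$, isolate the $n=0,-1$ terms, expand the two geometric series up to the thresholds $M_6,M_7$, and absorb the product correction (bounded via $|q|<\tfrac12$ by $(1-2|q|)^{-1}$) and the remaining $n$ into the stated error, with $\left|\sin\!\left(\tfrac{\pi u_3}{4}\right)\right|=\tfrac{1}{\sqrt2}$ producing the $\sqrt2$ factors. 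One minor slip: for $n\le-2$ the total $q$-exponent is only bounded below by $\tfrac{15}{8}$ uniformly in $0\le u_1<1$ (your $\tfrac{23}{8}$ holds only at $u_1=0$), but since the final bound only uses the exponents $\tfrac78$ and $\tfrac78+1$, this does not affect the argument.
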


We now give the last proposition for a transformation formula and
bounds for $R2\big( e^{2\pi i u}; e^{\frac{2\pi i(h+iz)}{k}} \big)$.
This corresponds to the case when $k\equiv\pm1\pmod{4}$ and
uses Proposition \ref{PropositionMuMainTerm5}.

\begin{proposition}\label{PropFinalBoundsOdd}
Suppose $a,c,k,n\in\mathbb{Z}$ with $c,k,n>0$ and $k\equiv 1\pmod{2}$,
$u=\frac{a}{c}$ with $2u\not\in\mathbb{Z}$, 
and $z\in\mathbb{C}$ with
$\RE{z}=\frac{k}{n}$ and
$\RE{\frac{1}{z}}>\frac{k}{2}$.
If $c\mid 2ka$, then
\begin{align*}
&R2\left( e^{2\pi i u}; e^{\frac{2\pi i(h+iz)}{k}} \right)
\\
&=
	\frac{i (-1)^{2ku} \sin(\pi u) \cos( \pi u (1+h[-h]_{k}) ) } 
		{2 \sin\left( \frac{\pi k}{4} \right) \sin\left( \frac{\pi u[-h]_{k}}{2} \right) }
	z^{-\frac{1}{2}} \xi\left( 4h,\tfrac{[-h]_k}{4},k \right)
	\exp\left( 
		-\pi iku^2[-h]_k
		-\tfrac{\pi i[-h]_k}{16k}
		-\tfrac{\pi z}{4k} 
		+\tfrac{\pi}{16kz}
	\right)	
	\\&\quad
	+
	E_5(u,k,n)
,
\end{align*}
where
\begin{align*}
|E_5(u,k,n)|
&\le
	|\sin(\pi u)| \sqrt{\tfrac{n}{k}}
	\left( 4.04\csc(\tfrac{\pi}{c}) + 1515\sqrt{k}+ 55.86 \right)
.
\end{align*}
If $c\nmid 2ka$, then
\begin{align*}
&R2\left( e^{2\pi i u}; e^{\frac{2\pi i(h+iz)}{k}} \right)
\\
&=
	(-1)^{\Floor{2ku}}
	\sin(\pi u) 	
	z^{-\frac{1}{2}} 
	\xi\left(4h,\tfrac{[-h]_k}{4},k\right)
	\\&\quad\times
	\Bigg(
	-\frac{ \cos\left( \frac{\pi\Floor{2ku}k}{2} \right)} {\sin\left( \frac{\pi k}{4} \right)}	
	\sum_{m=0}^{M_6}
	\exp\left(
		\tfrac{\pi i[-h]_k}{4k} \left(
			-\Floor{2ku}^2
			-\Floor{2ku}(2m+1)
			-\tfrac{1}{4}
		\right)	
	\right)		
	\\&\qquad\times
	\exp\left(
		-
		\tfrac{\pi z}{4k} 
		+		
		\tfrac{\pi}{2kz}
		\left(
			\tfrac{\Fractional{2ku}^2}{2}-\tfrac{\Fractional{2ku}}{2}-m\Fractional{2ku}+\tfrac{1}{8}
		\right)
	\right)
	\\&\quad
	-
	i^{1+k}
	\frac{ \sin\left( \frac{ \pi\Floor{2ku}k }{2} \right)}
		{\sin\left( \frac{ \pi k}{4} \right)}
	\sum_{m=0}^{M_7} 
	\exp\left( 
		\tfrac{\pi i[-h]_k}{4k}\left( 
			-\Floor{2ku}^2
			+\Floor{2ku}(2m+1)
			+2m+\tfrac{7}{4}
		\right)
	\right)
	\\&\qquad\times
	\exp\left(
		-
		\tfrac{\pi z}{4k} 
		+		
		\tfrac{\pi}{2kz}
		\left(
			\tfrac{\Fractional{2ku}^2}{2}+\tfrac{\Fractional{2ku}}{2}-m(1-\Fractional{2ku})-\tfrac{7}{8} 
		\right)
	\right)
	\\&\quad
	+
	\delta_3 2
	\sin\left( 	\tfrac{\pi(2\Floor{2ku}+1)k}{4} \right)
	\exp\left(
		\tfrac{\pi i[-h]_k}{4k}\left( -\Floor{2ku}^2-\Floor{2ku} -\tfrac{1}{4} \right)
		-
		\tfrac{\pi z}{4k}
		+
		\tfrac{\pi}{4kz}\left(\Fractional{2ku}-\tfrac{1}{2}\right)^2
	\right)
	\Bigg)
	+	
	E_5(u,k,n)
,
\end{align*}
where 
$M_6=\left\lceil\frac{4\Fractional{2ku}^2-12\Fractional{2ku}+1}{8\Fractional{2ku}}\right\rceil$,
$M_7=\left\lceil\frac{4\Fractional{2ku}^2+12\Fractional{2ku}-15}{8(1-\Fractional{2ku})}\right\rceil$, 
\begin{align*}
\delta_3
&=
\begin{cases}
	1	& \text{if } \frac{1}{2} < \Fractional{2ku},
	\\
	0	& \text{else}.
\end{cases}
\end{align*}
and
\begin{align*}
|E_5(u,k,n)|
&\le
	|\sin(\pi u)| \sqrt{\tfrac{n}{k}} 
	\left(	
		\frac{18.98}{1-\exp\left(-\frac{\pi}{4c}\right)}
		+
		1515\csc(\tfrac{\pi}{2c})\sqrt{k}
		+
		4.83\sqrt{k}
		+  
		40.72		
		\right)
.
\end{align*}
\end{proposition}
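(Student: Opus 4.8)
The plan is to follow the template of the proofs of Propositions~\ref{PropFinalBounds0Mod4} and~\ref{PropFinalBounds2Mod4}, specialized to odd $k$. First I would rewrite $R2$ via \eqref{M2RankInTermsOfMu} exactly as at the start of the proof of Lemma~\ref{LemmaR2Transformations}, obtaining $R2(e^{2\pi iu};e^{\frac{2\pi i(h+iz)}{k}})=2\sin(\pi u)\sum_{\pm}\mp e^{\pm\pi iu}\mu(2u,\pm\frac{h+iz}{k};\frac{4(h+iz)}{k})$. Since $\frac{4(h+iz)}{k}=\frac{4h+i(4z)}{k}$ and $\pm\frac{h+iz}{k}=\pm\frac{4h+i(4z)}{4k}$, I would then apply Corollary~\ref{CorollaryMuTransformationWithBounds} with $u\mapsto 2u$, $h\mapsto 4h$, $z\mapsto 4z$, $k\mapsto k$; this is legitimate because $\gcd(4h,k)=1$ for odd $k$, because $2u=2a/c\notin\mathbb{Z}$ is exactly the hypothesis $c\nmid 2a$, and because our standing convention gives $[-4h]_k=\frac{[-h]_k}{4}$ while $\widetilde{4h}=0$. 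The two cases of Corollary~\ref{CorollaryMuTransformationWithBounds} become $\Fractional{2ku}\le\frac12$ (so $\delta_3=0$) and $\Fractional{2ku}>\frac12$ (so $\delta_3=1$), the latter contributing the extra explicit exponential that will become the $\delta_3$-term; in both cases the $H$-contributions are already packaged into the error $E_2$ bounded there.

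Next I would read off the transformed $\mu$: it is $\mu(\Fractional{2ku}\tau'+u_2,\frac{u_3}{4};\tau')$ with $\tau'=\frac{[-h]_k+i/z}{4k}$, $u_2=-\frac{u[-h]_k}{2}$, and $u_3=-\epsilon\frac{1+h[-h]_k}{k}$ for the sign $\epsilon=\pm1$; here $u_3$ is an odd integer, because $h[-h]_k\equiv-1\pmod k$ makes $\frac{1+h[-h]_k}{k}$ an integer and $32\mid[-h]_k$ makes $1+h[-h]_k$ odd while $k$ is odd. Moreover $|q|=|e^{2\pi i\tau'}|=e^{-\frac{\pi}{2k}\RE{1/z}}<e^{-\pi/4}<\frac12$, so Proposition~\ref{PropositionMuMainTerm5} applies with $u_1=\Fractional{2ku}$, $u_2$, and $u_3$ as above. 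When $c\mid 2ka$ we have $u_1=0$, and Proposition~\ref{PropositionMuMainTerm5} supplies the main term $\frac{-iq^{-1/8}}{4\sin(\pi u_3/4)\sin(\pi u_2)}$; when $c\nmid 2ka$ we have $u_1\neq0$ and it supplies the two finite sums up to $M_6$ and $M_7$. Expanding $q=e^{2\pi i\tau'}$ then turns the negative $q$-exponents into the displayed $-\frac{\pi z}{4k}+\frac{\pi}{2kz}(\cdots)$ (resp. $\frac{\pi}{16kz}$) contributions and into phases of the form $\exp(\frac{\pi i[-h]_k}{4k}(\cdots))$; this is routine but lengthy exponent-matching.

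The remaining structural work is the $\epsilon$-sum. Combining the $\epsilon=+1$ and $\epsilon=-1$ contributions together with the prefactor $-\epsilon\,e^{\epsilon\pi iu}$ and the elliptic-shift phase $\exp(\frac{\pi i\Floor{2ku}}{k}(-\frac{[-h]_k}{4}\Floor{2ku}+\epsilon\frac{1+h[-h]_k}{2}))$ coming from Corollary~\ref{CorollaryMuTransformationWithBounds}, and using reflection identities in $\ell$ for $\xi^{\pm}_\ell$ and $\alpha^{\pm}(\ell,k)$ of the same kind used in the proof of part~(2) of Lemma~\ref{LemmaR2Transformations}, the sign-sum collapses: in the $c\nmid 2ka$ case the $M_6$-sum acquires the coefficient $-\frac{\cos(\pi\Floor{2ku}k/2)}{\sin(\pi k/4)}$ and the $M_7$-sum the coefficient $-i^{1+k}\frac{\sin(\pi\Floor{2ku}k/2)}{\sin(\pi k/4)}$, the two boundary terms merge into the single $\delta_3$-term with coefficient $2\sin(\frac{\pi(2\Floor{2ku}+1)k}{4})$, and in the $c\mid 2ka$ case (where $\Floor{2ku}=2ku$) the main term becomes $\frac{i(-1)^{2ku}\sin(\pi u)\cos(\pi u(1+h[-h]_k))}{2\sin(\pi k/4)\sin(\pi u[-h]_k/2)}$ times the displayed exponential $\exp(-\pi iku^2[-h]_k-\frac{\pi i[-h]_k}{16k}-\frac{\pi z}{4k}+\frac{\pi}{16kz})$. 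A small but essential point here is the congruence $\frac{1+h[-h]_k}{k}\equiv k\pmod 8$ (a consequence of $32\mid[-h]_k$ together with $k^2\equiv1\pmod 8$ for odd $k$), which is what lets the phase $\exp(\epsilon\frac{\pi i(2\Floor{2ku}+1)(1+h[-h]_k)/k}{4})$ be rewritten in terms of $\sin(\frac{\pi(2\Floor{2ku}+1)k}{4})$ and $\sin(\pi k/4)$; note also $\sin(\frac{\pi k}{4})=\pm\frac{\sqrt2}{2}\neq0$ for odd $k$ so no denominator vanishes, and $|\sin(\pi u_3/4)|=\frac{\sqrt2}{2}$ is harmless.

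Finally, for the error term $E_5(u,k,n)$ I would collect the error from Proposition~\ref{PropositionMuMainTerm5} together with the error $E_2$ from Corollary~\ref{CorollaryMuTransformationWithBounds}, both multiplied by $2\sin(\pi u)z^{-1/2}$ and summed over $\epsilon$. Using $\RE{z}=\frac{k}{n}$, so $|z|^{-1/2}\le\sqrt{n/k}$, together with $\RE{1/z}>\frac{k}{2}$, so $\RE{1/z}^{-1/2}<\sqrt{2/k}$ and $|q|<e^{-\pi/4}$, turns every term into an explicit numerical constant times $|\sin(\pi u)|\sqrt{n/k}$: the $\frac{1}{1-|q|^{u_1}}$-type terms of Proposition~\ref{PropositionMuMainTerm5} give the $\frac{1}{1-\exp(-\pi/(4c))}$ contribution via $\frac{1}{c}\le\Fractional{2ku}\le\frac{c-1}{c}$, the $\frac{1}{\sin(\pi u_2)}$-type term (present only when $u_1=0$) gives the $\csc(\frac{\pi}{c})$ contribution via the fact that $u_2=-\frac{a[-h]_k}{2c}$ stays $\gg\frac1c$ away from $\mathbb{Z}$, and the $\sec(\pi\Fractional{2ku})$ factor in the $E_2$-bound (using that $\Fractional{2ku}$ is $\ge\frac{1}{2c}$ away from $\frac12$) multiplied by $\frac{k}{2}\RE{1/z}^{-1/2}$ and by the factor $\exp(\frac{\pi k}{4}\RE{1/(4z)}^{-1})<e^{2\pi}$ produces the dominant $1515\csc(\frac{\pi}{2c})\sqrt{k}$ (respectively $1515\sqrt{k}$) coefficient. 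The step I expect to be the main obstacle is precisely this $\epsilon$-collapse plus the accounting of the boundary/$\delta_3$-terms — showing that the reflection symmetry pairs the $+$ and $-$ summands so as to produce exactly the displayed $\cos$, $\sin$, and $i^{1+k}$ factors — and, intertwined with it, keeping the numerical constants honest rather than merely finite.
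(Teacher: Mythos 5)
Your proposal follows exactly the paper's route: the paper's proof is precisely an application of Corollary \ref{CorollaryMuTransformationWithBounds} with $u\mapsto 2u$, $h\mapsto 4h$, $z\mapsto 4z$ (using $\wt{4h}=0$ and $[-4h]_k=\tfrac{[-h]_k}{4}$), followed by Proposition \ref{PropositionMuMainTerm5} and the lengthy exponent-matching you describe. Your added details (oddness of $u_3$, the congruence $\tfrac{1+h[-h]_k}{k}\equiv k\pmod 8$, the $\delta_3$ boundary case, and the sources of the explicit constants) are correct and consistent with the paper's treatment, so this is essentially the same proof, carried out more explicitly.
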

\begin{proof}
As with the proofs of Propositions \ref{PropFinalBounds0Mod4} and 
\ref{PropFinalBounds2Mod4}, we begin with an application of 
Corollary \ref{CorollaryMuTransformationWithBounds}.
Here we use $u\mapsto 2u$, $h\mapsto 4h$, $z\mapsto 4z$ and note that
$\wt{4h}=0$ and $[-4h]_k=\frac{[-h]_k}{4}$. The proposition then
follows from Proposition \ref{PropositionMuMainTerm5} and a lengthy
calculation. 
\end{proof}

Lastly we require a proposition to rewrite the terms corresponding
to negative powers of $q$ as Bessel functions. As mentioned in Section 3,
this process is well known. However, we require a version with the 
error terms bounded explicitly. We give this is the following proposition.

\begin{proposition}\label{MainTermFromIntegrals}
Suppose
$\frac{h_0}{k_0},\frac{h}{k},\frac{h_1}{k_1}$ are three
consecutive Farey fractions of level $N=\lfloor\sqrt{n}\rfloor$,
$r>0$,
and $z=\frac{k}{n}-ik\Phi$. Then for $n\ge 2$,
\begin{align*}
\int_{-\vartheta^{\prime}_{h,k}}^{\vartheta^{\prime\prime}_{h,k}}
	z^{-\frac{1}{2}}
	\exp\left( \tfrac{2\pi nz}{k}-\tfrac{\pi z }{4k} + \tfrac{\pi r}{kz}\right)
	d\Phi
&=
	\frac{4}{k^{\frac{1}{2}}(8n-1)^{\frac{1}{2}}}
	\cosh\left(\frac{\pi}{k}\sqrt{r(8n-1)}\right)
	+
	E,
\end{align*}
where 
\begin{align*}
|E|
&\le
	\frac{\sqrt{2}\left(1+6e^{2\pi(1+2r)} \right)}{3n^{\frac34}}
	+
	\frac{2(2-\sqrt{2})}{kn^{\frac{1}{4}}}
.
\end{align*}
\end{proposition}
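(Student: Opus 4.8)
The plan is to carry out the standard Rademacher evaluation of a Farey-arc integral of the form $\int z^{-1/2}\exp(cz+B/z)\,d\Phi$, extracting the hyperbolic cosine from the Hankel integral representation of the modified Bessel function $I_{-1/2}$, and then to keep every constant explicit in bounding the discarded pieces; this is the computation underlying \cite[Lemma 6.1]{Waldherr1} and behind \eqref{EqCircleMethodRankMoment4}--\eqref{EqCircleMethodRankMoment5}, the new feature being only the uniform numerical control needed in Section 7.

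First I would combine the two linear exponentials into $-\frac{\pi z}{4k}+\frac{2\pi nz}{k}=\frac{\pi(8n-1)z}{4k}$, so that with $c:=\frac{\pi(8n-1)}{4k}>0$ and $B:=\frac{\pi r}{k}>0$ the integrand is $z^{-1/2}\exp(cz+B/z)$; then, using $d\Phi=\frac{i}{k}\,dz$ along $z=\frac kn-ik\Phi$, the left-hand side equals $\frac ik\int_{\Gamma}z^{-1/2}\exp(cz+B/z)\,dz$, where $\Gamma$ is the vertical segment running from $z_1=\frac kn+ik\vartheta^{\prime}_{h,k}$ down to $z_2=\frac kn-ik\vartheta^{\prime\prime}_{h,k}$. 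Completing $\Gamma$ to the whole line $\RE z=\frac kn$ and deforming that line leftward onto a Hankel contour $\mathcal H$ that encircles the branch point $z=0$ once counterclockwise --- legitimate because $z^{-1/2}\exp(cz+B/z)$ is analytic in the slit half-plane $\{\RE z<\tfrac kn\}\setminus(-\infty,0]$, decays as $\RE z\to-\infty$, and on the line decays like $|\IM z|^{-1/2}$ with an oscillatory factor --- the substitution $t=cz$ together with $\int_{\mathcal H}t^{-1/2}e^{t+\mu/t}\,dt=2\sqrt\pi\,i\,\cosh(2\sqrt\mu)$ (a restatement of $I_{-1/2}(x)=\sqrt{2/(\pi x)}\cosh x$ in Hankel form) gives, with $\mu=cB$ and $2\sqrt\mu=\frac\pi k\sqrt{r(8n-1)}$, the value $\frac ik\cdot\big(-c^{-1/2}\,2\sqrt\pi\,i\big)\cosh\!\big(\tfrac\pi k\sqrt{r(8n-1)}\big)=\frac{4}{\sqrt k\sqrt{8n-1}}\cosh\!\big(\tfrac\pi k\sqrt{r(8n-1)}\big)$. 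Thus the main term emerges exactly as claimed, and $E$ equals $-\frac ik$ times the two ``tail'' integrals, over $\{z=\tfrac kn+is:s\ge k\vartheta^{\prime}_{h,k}\}$ and $\{z=\tfrac kn+is:s\le-k\vartheta^{\prime\prime}_{h,k}\}$.

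To bound each tail I would deform it leftward onto the horizontal ray through its endpoint, out to $\RE z=-\infty$ (again no singularity is crossed and the closing arc at $\IM z=\pm\infty$ vanishes). On such a ray $|z|\ge k\vartheta\ge\frac1{2N}\ge\frac1{2\sqrt n}$, so $|z^{-1/2}|\le(2N)^{1/2}$; also $|e^{cz}|=e^{c\RE z}\le e^{ck/n}=e^{\pi(8n-1)/(4n)}<e^{2\pi}$, and a short case analysis (using $k\vartheta\ge\frac1{2N}$, $k\le N$, $N^2\le n$) shows $\RE(1/z)\le 4k$ along the whole ray, whence $|e^{B/z}|\le e^{4kB}=e^{4\pi r}$ --- and the product $e^{2\pi}e^{4\pi r}=e^{2\pi(1+2r)}$ is precisely the exponential in the stated bound. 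Integrating $(2N)^{1/2}e^{c\RE z}e^{4\pi r}$ over $\RE z\in(-\infty,\tfrac kn]$ gives $(2N)^{1/2}\frac{e^{2\pi(1+2r)}}{c}$ per tail, so $|E|\le\frac2k(2N)^{1/2}\frac{e^{2\pi(1+2r)}}{c}$; since $\frac1{kc}=\frac4{\pi(8n-1)}$, $(2N)^{1/2}\le\sqrt2\,n^{1/4}$, and $8n-1\ge\frac{15}2n$ for $n\ge2$, this is at most $\frac{16\sqrt2}{15\pi}\cdot\frac{e^{2\pi(1+2r)}}{n^{3/4}}$, comfortably inside the stated bound $\frac{\sqrt2(1+6e^{2\pi(1+2r)})}{3n^{3/4}}+\frac{2(2-\sqrt2)}{kn^{1/4}}$.

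The structure here is classical, so I do not anticipate a conceptual obstacle; the work is the explicit estimation, and the one place I expect to need real care is the justification that the line integral $\int_{\RE z=k/n}$ may be deformed onto $\mathcal H$ despite only conditional convergence at $\IM z=\pm\infty$. If one prefers to sidestep both that subtlety and the leftward deformation of the tails, one instead performs a single integration by parts in each tail integral to exploit the oscillation of $e^{ic\,\IM z}$; this reintroduces a $\frac1{kc}(k\vartheta)^{-3/2}$-type term and is, I believe, exactly how the $\frac{1}{kn^{1/4}}$ and the non-exponential contributions in the stated bound arise. Beyond that, the proof is a routine if lengthy constant chase, uniform in $h,k,r$ and in $n\ge2$.
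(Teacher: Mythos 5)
Your proposal is correct, and it rests on the same two pillars as the paper's proof: Watson's loop-integral representation of $I_\nu$ specialized to $\nu=-\tfrac12$ (so that $I_{-1/2}(u)=\sqrt{2/(\pi u)}\cosh(u)$ yields the hyperbolic cosine), followed by a Cauchy deformation and elementary max-length estimates on the discarded contour pieces. The difference is in the decomposition. The paper never introduces the full vertical line: it deforms the Hankel loop directly onto a seven-segment path whose middle segment is exactly the Farey segment from $\tfrac{k}{n}-\tfrac{i}{k_1+k}$ to $\tfrac{k}{n}+\tfrac{i}{k_0+k}$, with long horizontal rays at heights $\mp\tfrac{1}{2N}$, short vertical connectors, and short horizontal legs reaching the Farey endpoints; the three terms in the stated bound are precisely the contributions of these three types of auxiliary segments (on the rays one has $\RE{z}\le 0$, hence $\RE{1/z}\le 0$ and no exponential factor, giving the $\tfrac{\sqrt{2}}{3}n^{-3/4}$ part; the vertical connectors give the $\tfrac{2(2-\sqrt{2})}{k}n^{-1/4}$ term; the short legs with $0\le\RE{z}\le\tfrac{k}{n}$ give the $e^{2\pi(1+2r)}$ term, via the same inequality $\RE{1/z}\le 4k$ that you use). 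You instead complete the Farey segment to the Bromwich line and handle two vertical tails, pushing them onto horizontal rays at heights $k\vartheta^{\prime}_{h,k}$ and $-k\vartheta^{\prime\prime}_{h,k}$ and applying the bound $e^{2\pi(1+2r)}$ along their entire length; this is coarser on the portion $\RE{z}\le 0$, but, as your arithmetic shows, it still lands strictly inside the stated bound (indeed without needing the $n^{-1/4}$ term at all). The conditional-convergence worry you flag is genuine for the full-line formulation but dissolves on reorganization: once the two tails are deformed onto the horizontal rays, the Farey segment together with those rays forms an absolutely convergent Hankel-type loop, and comparing it with the standard loop is exactly the paper's (and, in effect, your) single application of Cauchy's theorem, so no separate justification of the Bromwich identity at $\nu=-\tfrac12$ is required.
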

\begin{proof}
We note that
\begin{align*}
\int_{-\vartheta^{\prime}_{h,k}}^{\vartheta^{\prime\prime}_{h,k}}
	z^{-\frac{1}{2}}
	\exp\left(\tfrac{2\pi nz}{k}-\tfrac{\pi z }{4k} + \tfrac{\pi r}{kz}\right)
	d\Phi
&=
	\frac{1}{ik}
	\int_{\frac{k}{n}-\frac{i}{k_1+k}}^{\frac{k}{n}+\frac{i}{k_0+k}}
	z^{-\frac{1}{2}}
	\exp\left(\tfrac{2\pi nz}{k}-\tfrac{\pi z }{4k} + \tfrac{\pi r}{kz}\right)
	dz	
.
\end{align*}

We use a well known integral representation of $I_v(u)$ \cite[p. 181]{Watson2} given by
\begin{align*}
I_v(u)
&=
	\frac{(u/2)^v}{2\pi i} \int_{-\infty}^{(0+)}
	t^{-v-1} \exp\left(t+\tfrac{u^2}{4t}\right) dt.
\end{align*}
Here the path of integration starts just below the negative real axis, loops
counterclockwise around the origin, and ends just above the negative real axis.
Using the change of variable $t=\frac{z\pi(8n-1)}{4k}$, this becomes
\begin{align*}
I_v(u)
&=
	\frac{1}{2\pi i}
	\left(\frac{2ku}{\pi(8n-1)}\right)^v
	\int_{-\infty}^{(0+)}
	z^{-v-1}
	\exp\left(\tfrac{2\pi n}{k}-\tfrac{\pi z}{4k}+\tfrac{ku^2}{z\pi(8n-1)}\right)
	dz
.
\end{align*}
Setting $v=-\frac{1}{2}$ and $u=\frac{\pi}{k}\sqrt{r(8n-1)}$ yields
\begin{align*}
I_{-\frac{1}{2}}\left(\frac{\pi}{k}\sqrt{r(8n-1)}\right)	
&=
	-\frac{i\sqrt{2}(8n-1)^{\frac{1}{4}}}{4\pi r^{\frac{1}{4}}}
	\int_{-\infty}^{(0+)}
	z^{-\frac{1}{2}}
	\exp\left(\tfrac{2\pi n}{k}-\tfrac{\pi z}{4k}+\tfrac{\pi r}{kz}\right)
	dz
.
\end{align*}
As such,
\begin{align}\label{EqBesselFunctionIntegral}
\int_{-\infty}^{(0+)}
	z^{-\frac{1}{2}}
	\exp\left(\tfrac{2\pi n}{k}-\tfrac{\pi z}{4k}+\tfrac{\pi r}{kz}\right)
	dz
&=
	\frac{2\sqrt{2} \pi i r^{\frac{1}{4}}}{(8n-1)^{\frac{1}{4}}}
	I_{-\frac{1}{2}}\left(\frac{\pi}{k}\sqrt{r(8n-1)}\right)	
.
\end{align}
By Cauchy's theorem, we can alter the path of integration 
in \eqref{EqBesselFunctionIntegral}
to the path indicated 
in Figure \ref{Figure1} (noting that $k_i+k\le 2N$).
\begin{figure}[h]\caption{} \label{Figure1}
\begin{tikzpicture}[scale=1,>=stealth]
	\draw[dashed] (-5,0) -- (3,0);
	\draw[dashed] (0,2.5) -- (0,-2.5);
	\node at (0.15,0.15) {$0$};

	\draw (-5,-0.5)--(0,-0.5);	
	\node at (-2.5,-0.5) [inner sep=-1,label=below:$L_1$] {$>$};
	\node [circle,fill,inner sep=1] at (0,-0.5) [label=below left:$\scriptstyle-\frac{i}{2N}$] {};		

	\draw (0,-0.5)--(0,-1.75);	
	\node at (0,-2.25/2) [inner sep=-1, label=right:$L_2$] {$\vee$};
	\node [circle,fill,inner sep=1] at (0,-1.75) [label=below left:$\scriptstyle-\frac{i}{k_1+k}$] {};		
	
	\draw (0,-1.75)--(1.5,-1.75);	
	\node at (0.75,-1.75) [inner sep=-1, label=below:$L_3$] {$>$};
	\node [circle,fill,inner sep=1] at (1.5,-1.75) [label=below right:$\scriptstyle\frac{k}{n}-\frac{i}{k_1+k}$] {};		

	\draw (1.5,-1.75)--(1.5,1.75);
	\node at (1.5,0) [inner sep=-1, label=right:$L_4$] {$\wedge$};

	\draw (1.5,1.75)--(0,1.75);	
	\node at (0.75,1.75) [inner sep=-1, label=above:$L_5$] {$<$};
	\node [circle,fill,inner sep=1] at (1.5,1.75) [label=above right:$\scriptstyle\frac{k}{n}+\frac{i}{k_0+k}$] {};		

	\draw (0,1.75)--(0,0.5);	
	\node at (0,2.25/2) [inner sep=-1, label=right:$L_6$] {$\vee$};
	\node [circle,fill,inner sep=1] at (0,1.75) [label=above left:$\scriptstyle\frac{i}{k_0+k}$] {};		

	\draw (0,0.5)--(-5,0.5);	
	\node at (-2.5,0.5) [inner sep=-1, label=above:$L_7$] {$<$};
	\node [circle,fill,inner sep=1] at (0,0.5) [label=above left:$\scriptstyle\frac{i}{2N}$] {};		
\end{tikzpicture}
\end{figure}

Since
\begin{align*}
\int_{\frac{k}{n}-\frac{i}{k_1+k}}^{\frac{k}{n}+\frac{i}{k_0+k}}
	z^{-\frac{1}{2}}
	\exp\left(\tfrac{2\pi nz}{k}-\tfrac{\pi z }{4k} + \tfrac{\pi r}{kz}\right)
	dz	
&=
\int_{L_4}
	z^{-\frac{1}{2}}
	\exp\left( \tfrac{2\pi nz}{k}-\tfrac{\pi z }{4k} + \tfrac{\pi r}{kz}\right)
	dz	
,
\end{align*}
we bound the integrals over the remaining line segments. As these bounds are
only max-length estimates and evaluations of elementary integrals, we simply
state them. The following bounds hold,
\begin{align*}
\left|\int_{L_1}
	z^{-\frac{1}{2}}
	\exp\left(\tfrac{2\pi nz}{k}-\tfrac{\pi z }{4k} + \tfrac{\pi r}{kz}\right)
	dz	
\right|
&\le 
	\frac{\sqrt{2}k}{6n^{\frac34}}
,\\
\left|\int_{L_2}
	z^{-\frac{1}{2}}
	\exp\left(\tfrac{2\pi nz}{k}-\tfrac{\pi z }{4k} + \tfrac{\pi r}{kz}\right)
	dz	
\right|
&\le	(2-\sqrt{2})n^{-\frac{1}{4}}
,\\
\left|\int_{L_3}
	z^{-\frac{1}{2}}
	\exp\left(\tfrac{2\pi nz}{k}-\tfrac{\pi z }{4k} + \tfrac{\pi r}{kz}\right)
	dz	
\right|
&\le
	\frac{\sqrt{2} e^{2\pi(1+2r)} k}{n^{\frac{3}{4}}}
.
\end{align*}
We note that we obtain the same bound on $L_7$ as $L_1$,
on $L_6$ as $L_2$, and on $L_5$ as $L_3$.
Thus
\begin{align*}
\int_{-\vartheta^{\prime}_{h,k}}^{\vartheta^{\prime\prime}_{h,k}}
	z^{-\frac{1}{2}}
	\exp\left(\tfrac{2\pi nz}{k}-\tfrac{\pi z }{4k} + \tfrac{\pi r}{kz}\right)
	d\Phi
&=
	\frac{1}{ki}
	\int_{\frac{k}{n}-\frac{i}{k_1+k}}^{\frac{k}{n}+\frac{i}{k_0+k}}
	z^{-\frac{1}{2}}
	\exp\left(\tfrac{2\pi nz}{k}-\tfrac{\pi z }{4k} + \tfrac{\pi r}{kz}\right)
	dz
\\
&=
	\frac{2\sqrt{2}\pi r^{\frac{1}{4}}}{k(8n-1)^{\frac{1}{4}}}
	I_{-\frac{1}{2}}\left(\frac{\pi}{k}\sqrt{r(8n-1)}\right)
	+
	E,
\end{align*}
where 
\begin{align*}
|E|
&\le
	\frac{\sqrt{2}}{3n^{\frac34}}
	+
	\frac{2(2-\sqrt{2})}{kn^{\frac{1}{4}}}
	+
	\frac{2\sqrt{2} e^{2\pi(1+2r)} }{n^{\frac{3}{4}}}
.
\end{align*}
To finish the proof we use that half order Bessel functions can be 
expressed in terms of trigonometric functions \cite[page 80]{Watson2}. In 
particular,
$I_{-\frac{1}{2}}(u)=\sqrt{\frac{2}{\pi u}}\cosh(u)$.
\end{proof}

\section{Proof of Theorem \ref{TheoremAsymptoticForM2Rank} }

\begin{proof}
We recall
\begin{align*}
A\left(\frac{a}{c};n\right)
&=
\sum_{\substack{0\leq h<k\leq N\\(h,k)=1}}
\exp\left(-\tfrac{2\pi ihn}{k}\right)
\int_{-\vartheta^{\prime}_{h,k}}^{\vartheta^{\prime\prime}_{h,k}}
R2\left(e^{\frac{2\pi ia}{c}}; e^{\frac{2\pi i(h+iz)}{k}}\right)
\exp\left(\tfrac{2\pi nz}{k}\right) d\Phi,
\end{align*}
where $z=\frac{k}{n}-ik\Phi$ and $N=\Floor{\sqrt{n}}$.
We let $\Sigma_0$, $\Sigma_1$, and $\Sigma_2$ denote the sums when 
$k\equiv 0$, $\pm 1$, and $2$ modulo $4$ respectively. Next we work out their
contributions to the main term.

We take the main terms arising from
Propositions \ref{PropFinalBounds0Mod4} and \ref{PropFinalBoundsOdd},
which are of the form
\begin{align*}
\sum_{\substack{0\leq h<k\leq N\\(h,k)=1,\\k\equiv j\pmod{4}}}
\exp\left(-\tfrac{2\pi ihn}{k}\right) 
\int_{-\vartheta^{\prime}_{h,k}}^{\vartheta^{\prime\prime}_{h,k}}
	z^{-\frac12}
	\exp\left(\tfrac{2\pi nz}{k}-\tfrac{\pi z}{4k} +\tfrac{\pi r}{kz}\right)
	d\Phi,
\end{align*}
and evaluate them with Proposition \ref{MainTermFromIntegrals}.
We omit most of the details, as they are little more than copying the statements
of Propositions \ref{PropFinalBounds0Mod4} and \ref{PropFinalBoundsOdd}
with $u$ replaced by $\frac{a}{c}$. However, we do briefly explain
where each main term comes from. 

When $k\equiv 0\pmod{4}$, we apply Proposition \ref{PropFinalBounds0Mod4}.
The case when $c\mid \frac{ka}{2}$ gives the $C_{0,a,c,k,n}$ term.
The case when $c\nmid \frac{ka}{2}$ gives
the $D_{j,a,c,k,n}(m)$ terms (for $1\le j\le 5$) from the
sums with $M_j$, and the $\delta_1$ and $\delta_2$ terms combine to
give the $D^{+}_{a,c,k,n}$ and $D^{-}_{a,c,k,n}$ terms.
When $k\equiv 1\pmod{2}$, we apply Proposition \ref{PropFinalBoundsOdd}.
The case when $c\mid 2ka$ gives the $C_{1,a,c,k,n}$ term.
The case when $c\nmid 2ka$ gives
the $D_{j,a,c,k,n}(m)$ terms (for $j=6,7$) from the
sums with $M_j$, and the $\delta_3$ term
gives the $D_{a,c,k,n}$ term. We note there are no main terms
corresponding to $k\equiv 2\pmod{4}$.

It then only remains to obtain an explicit bound on the error terms.
We note the following bounds hold:
\begin{align*}
\sum_{\substack{1\le k\le N,\\ k\equiv 0\pmod{m}}}\frac{1}{\sqrt{k}}	
	&\le \frac{2\sqrt{N}}{m},&
\sum_{\substack{1\le k\le N,\\ k\not\equiv 0\pmod{m}}}\frac{1}{\sqrt{k}}	
	&\le \frac{2(m-1)\sqrt{N}}{m},
\\
\sum_{\substack{1\le k\le N,\\ k\equiv 0\pmod{m}}}k	
	&\le \frac{N(N+m)}{2m},&
\sum_{\substack{1\le k\le N,\\ k\not\equiv 0\pmod{m}}} k	
	&\le \frac{(m-1)(N+1)^2}{2m}
.
\end{align*}
In simplifying the error terms, we assume that $a$ and $c$ are relatively prime.
This assumption does affect the explicit constants, but does not
change the big $O$ term. We begin with $\Sigma_2$, as it is the simplest.

Applying Proposition \ref{PropFinalBounds2Mod4} leads to
\begin{align*}
\left| {\sum}_2 \right|
&\le
	\sum_{\substack{0\le h<k\le N,\\(h,k)=1,\\k\equiv 2\pmod{4}}}
	\int_{-\vartheta^\prime_{h,k}}^{\vartheta^{\prime\prime}_{h,k}}
	\left| R2\left(e^{\frac{2\pi ia}{c}} ; e^{\frac{2\pi i(h+iz)}{k}} \right)\right|
	\exp\left( \tfrac{2\pi n}{k}\RE{z}\right)
	d\Phi
\\
&\le
	2e^{2\pi} \left|\sin\left(\frac{\pi a}{c}\right)\right|
	\sum_{\substack{1\le k\le N,\\ k\equiv 2\pmod{4},\\ c\mid ka}}
	\frac{1}{\sqrt{k}}	
	\left(
		\frac{1.43}{\sin\left(\frac{\pi}{c}\right)}	
		+ 3.42\sqrt{k}
		+ 2.1
	\right)
	\\&\quad
	+
	2e^{2\pi} \left|\sin\left(\frac{\pi a}{c}\right)\right|
	\sum_{\substack{1\le k\le N,\\ k\equiv 2\pmod{4},\\ c\nmid ka}}
	\frac{1}{\sqrt{k}}
	\left(
		\frac{5.72}{1- \exp\left(-\frac{\pi}{c}\right) }	
		+ \frac{6.81\sqrt{k}}{\sin\left(\frac{\pi}{c}\right)}
		+ 1.6
	\right)
\\
&\le
	2e^{2\pi} \left|\sin\left(\frac{\pi a}{c}\right)\right|
	\left(
		\left(3.42 + \frac{6.81}{\sin\left(\frac{\pi}{c}\right)}\right)n^{\frac12}
		+
		\frac{2}{c}\left(
			2.1 + 1.6(c-1) + \frac{1.43}{\sin\left(\frac{\pi}{c}\right)}
			+ \frac{5.72(c-1)}{1-\exp\left(-\frac{\pi}{c}\right) }
		\right)n^{\frac14}
	\right)
.
\end{align*}

To bound the error term from $\Sigma_1$, we must determine an explicit 
upper bound on the number of summands appearing in the sums with index bounds
$M_6$ and $M_7$ in Proposition \ref{PropFinalBoundsOdd}. This amounts to only an
elementary calculus exercise. The number of terms appearing in the
sums with $M_6$ and $M_7$ are bounded above, respectively, by
$B_{6,c}$ and $B_{7,c}$, where 
\begin{align*} 
B_{6,c} &:= \left\lceil \frac{c^2-4c+4}{8c}  \right\rceil
,&
B_{7,c} &:= \begin{cases}
0 &\mbox{ if } c<12
,\\
\left\lceil \frac{c^2-12c+4}{8c}  \right\rceil &\mbox{ if } c\geq 12. 
\end{cases}
\end{align*}
Using this, along with the fact that $r_{j,a,c,k}(m)\leq \frac{1}{16}$ for $j=6,7$,
we deduce (after a very lengthy calculation) that the error term from $\Sigma_1$ is
bounded by
\begin{align*}
&	\frac{\left|\sin\left(\frac{\pi a}{c}\right)\right| (c_1-1) \left(1+6\exp\left(\frac{9\pi}{4}\right) \right) 
		(1+B_{6,c}+B_{7,c})}{3c_1}
	n^{-\frac{3}{4}}	
	\\&
	+
	\left|\sin\left(\frac{\pi a}{c}\right)\right| \left(1+6\exp\left(\tfrac{9\pi}{4}\right) \right)
	\left(
		\frac{1}{6 \sin\left(\frac{\pi}{2c}\right)}
		+\frac{2(c_1-1)(1+B_{6,c}+B_{7,c})}{3c_1}
	\right) 
	n^{-\frac{1}{4}}	
	\\&
	+
	\frac{\left|\sin\left(\frac{\pi a}{c}\right)\right| }{c_1}
	\Bigg(
		\left(1+6\exp\left(\tfrac{9\pi}{4}\right) \right)\left(
			\frac{1}{6 \sin\left(\frac{\pi}{2c}\right)}
			+\frac{(c_1-1)(1+B_{6,c}+B_{7,c})}{3}
		\right)
		\\&\quad
		+
		\sqrt{2}(2-\sqrt{2})\left(
			\frac{1}{\sin\left(\frac{\pi}{2c}\right)}
			+2(c_1-1)(1+B_{6,c}+B_{7,c})
		\right)
		\\&\quad
		+
		2e^{2\pi} \left(
			\frac{8.08}{\sin\left(\frac{\pi}{c}\right)}
			+ 111.72
			+ \frac{37.96(c_1-1)}{1-\exp\left(-\frac{\pi}{4c}\right) }
			+ 81.44(c_1-1)
		\right)
	\Bigg)
	n^{\frac{1}{4}}
	\\&
	+
	\frac{\left|\sin\left(\frac{\pi a}{c}\right)\right| 2e^{2\pi} }{c_1}\left(
		1515 + \frac{1515(c_1-1)}{\sin\left(\frac{\pi}{2c}\right)}
		+ 4.83(c_1-1)
	\right)
	n^{\frac{1}{2}}	
.
\end{align*}
with $c_1:=\frac{c}{(2,c)}$.

To bound the error from $\Sigma_0$, we first note that the number of terms 
appearing in the sums with the $M_i$ are bounded above by $B_{i,c}$, with
\begin{align*}
B_{1,c} &:= \left\lceil\frac{c^2-24c+16}{32c}\right\rceil
,
&B_{2,c} &:= \left\lceil\frac{c^2-40c+16}{32c}\right\rceil
,
&B_{3,c} &:= 
	\begin{cases}
	1 & \mbox{ if } c=4,\\ 
	\left\lceil\frac{c^2-8c+16}{32c}\right\rceil & \mbox{ if } c\not=4,
	\end{cases}
\\
B_{4,c} &:= \left\lceil\frac{c^2-24c+16}{32c(c-1)}\right\rceil
,
&B_{5,c} &:= 
	\begin{cases}
	0 & \mbox{ if } c<24,\\ 
	\left\lceil\frac{c^2-56c+16}{32c}\right\rceil & \mbox{ if } c>24.
	\end{cases}
&&
\end{align*}
After another long calculation, we find that the error term from $\Sigma_0$ 
is bounded by
\begin{align*}
&	\frac{\left|\sin\left(\frac{\pi a}{c}\right)\right| \sqrt{2}(c-1)
		\left(
			(1+B_{1,c}+B_{2,c}+B_{4,c}+B_{5,c})(1+6e^{3\pi} )
			+ (1+B_{3,c})(1+6e^{11\pi} )
		\right)
	}{3c}
	n^{-\frac34}	
	\\&
	+
	\left|\sin\left(\tfrac{\pi a}{c}\right)\right|\sqrt{2}
	\Bigg(
		\frac{1+6e^{3\pi} }{6\sin\left(\frac{\pi}{c}\right)}	
		+\frac{(1+B_{1,c}+B_{2,c}+B_{4,c}+B_{5,c})2(c-1)(1+6e^{3\pi} )}{3c}
		\\&\quad
		+\frac{(1+B_{3,c})2(c-1)(1+6e^{11\pi} )}{3c}
	\Bigg)	
	n^{-\frac14}
	\\&
	+
	\frac{\left|\sin\left(\frac{\pi a}{c}\right)\right|}{c}
	\Bigg(
		4e^{2\pi}\left( \frac{0.208}{\sin\left(\frac{\pi}{c}\right)} + 0.288 \right)	
		+
		4e^{2\pi}(c-1)\left( 
			\frac{10.6}{1-\exp\left(-\frac{4\pi}{c}\right)} + 0.0062 + \frac{2.962}{\sin\left(\frac{\pi}{2c}\right)} 
		\right)
		\\&\quad
		+
		\sqrt{2}(1+6e^{3\pi})\left(
			\frac{1}{6\sin\left(\frac{\pi}{c}\right)}
			+ \frac{(1+B_{1,c}+B_{2,c}+B_{4,c}+B_{5,c})(c-1)}{3}
		\right)
		+
		\frac{ \sqrt{2}(1+6e^{11\pi}) (1+B_{3,c}) (c-1) }{3}
		\\&\quad
		+
		2(2-\sqrt{2})\left(
			\frac{1}{\sin\left(\frac{\pi}{c}\right)}
			+2(c-1)(2+B_{1,c}+B_{2,c}+B_{3,c}+B_{4,c}+B_{5,c})
		\right)
	\Bigg)
	n^{\frac14}
	\\&
	+
	\frac{\left|\sin\left(\frac{\pi a}{c}\right)\right| 2e^{2\pi} }{c}
	\left(
		1.49+\frac{4.01(c-1)}{\sin\left(\frac{\pi}{2c}\right)}
	\right)		
	n^{\frac12}	
.
\end{align*}

We now see that the error term in Theorem \ref{TheoremAsymptoticForM2Rank} is
indeed $O(\sqrt{n})$. Furthermore, for $\gcd(a,c)=1$ and $n\ge2$, and explicit
upper bound on the size of the error is
\begin{align}\label{EqFinalBoundOnErrorTerm}
\left|\sin\left(\frac{\pi a}{c}\right)\right|\left(
a_1n^{-\frac34}+a_2n^{-\frac14}+a_3n^{\frac14}+a_4n^{\frac12}
\right),
\end{align}
where
\begin{align*}
a_1 
&:=
	\frac{2350(c_1-1)(1+B_{6,c}+B_{7,c})}{c_1}	
	+
	\frac{35050(c-1)(1+B_{1,c}+B_{2,c}+B_{4,c}+B_{5,c})}{c}	
	\\&\quad
	+
	\frac{2.9\cdot10^{15}(c-1)(1+B_{3,c})}{c}	
,\\
a_2 
&:=
	\frac{1175}{\sin\left(\frac{\pi}{2c}\right)}
	+
	\frac{17525}{\sin\left(\frac{\pi}{c}\right)}
	+
	\frac{4699(c_1-1)(1+B_{6,c}+B_{7,c})}{c_1}	
	+
	\frac{70099(c-1)(1+B_{1,c}+B_{2,c}+B_{4,c}+B_{5,c})}{c}	
	\\&\quad
	+
	\frac{5.8\cdot10^{15}(c-1)(1+B_{3,c})}{c}	
,\\
a_3
&:=	
	\frac{5116}{c}
	+ 
	\frac{1.2\cdot 10^5}{c_1}
	+ 
	\frac{2.9\cdot 10^{15}(c-1)}{c}
	+ 
	\frac{89572(c_1-1)}{c_1}
	+ 
	\frac{35052(c-1)(B_{1,c}+B_{2,c}+B_{4,c}+B_{5,c})}{c}
	\\&\quad
	+
	\frac{ 2.9\cdot 10^{15} (c-1)B_{3,c} }{c}
	+
	\frac{2351(c_1-1)(B_{6,c}+B_{7,c})}{c_1}
	+
	\frac{21035}{c\sin\left(\frac{\pi}{c}\right)}
	+
	\frac{8654}{c_1\sin\left(\frac{\pi}{c}\right)}
	+
	\frac{1176}{c_1 \sin\left(\frac{\pi}{2c}\right)}
	+
	\frac{6345(c-1)}{c\sin\left(\frac{\pi}{2c}\right)} 
	\\&\quad	
	+ 
	\frac{22705(c-1)}{c\left(1-\exp\left(-\frac{4\pi}{c}\right)\right)} 
	+
	\frac{12253 (c-1)}{c\left(1-\exp\left(-\frac{\pi}{c}\right)\right)}
	+ 
	\frac{40655(c_1-1)}{c_1\left(1-\exp\left(-\frac{\pi}{4c}\right)\right)}
,\\
a_4 
&:= 
	3663 
	+ \frac{1596}{c}
	+\frac{1.7\cdot 10^6}{c_1}
	+ \frac{5173(c_1-1)}{c_1}
	+\frac{7294}{\sin\left(\frac{\pi}{c}\right)}
	+\frac{4295(c-1)}{c\sin\left(\frac{\pi}{2c}\right)}
	+\frac{1.7\cdot 10^6(c_1-1)}{c_1\sin\left(\frac{\pi}{2c}\right)}
.
\end{align*}

As a further bound of use, we find that if $r_1$ and $r_2$ are the first and
second largest values of $r$, with $
\frac{\cosh\left(\pi r\sqrt{8n-1}\right)}{\sqrt{8n-1}}$ appearing with non-zero
coefficient in the main term, then the contributions to the main term other than
from $r_1$ may be bounded by
\begin{align*}
&\frac{8\left(5+B_{1,c}+B_{2,c}+B_{3,c}+B_{4,c}+B_{5,c}+B_{6,c}+B_{7,c}\right)
\cosh\left(\pi r_2\sqrt{8n-1}\right)}{\sqrt{8n-1}}
\sum_{k=1}^{N}\sqrt{k}
\\
&\le
\frac{16\left(5+B_{1,c}+B_{2,c}+B_{3,c}+B_{4,c}+B_{5,c}+B_{6,c}+B_{7,c}\right)
	\left(\sqrt{n}+1\right)^{\frac{3}{2}} \cosh\left(\pi r_2\sqrt{8n-1}\right)}
{3\sqrt{8n-1}}
.
\end{align*}

\end{proof}

\section{A Few Inequalities}

We let $N2(r,m,n)$ denote the number of partitions of $n$ without repeated odd
parts and with $M_2$-rank congruent to $r$ modulo $m$. For convenience we set
$\zeta_c:=\exp\left(\frac{2\pi i}{c}\right)$.
It is not difficult to
see $N2(r,m,n)$ is relevant to our study of $R2(\zeta_c^a;q)$, due
to the fact that
\begin{align*}
R2\left(\zeta_c^a;q\right)
&=
\sum_{n=0}^\infty \sum_{r=0}^{c-1} N2(r,c,n)\zeta_c^{ar} q^n.
\end{align*}
With partitions ranks, such as $R(\zeta;q)$ and $R2(\zeta;q)$,
a common point of study is dissection formulas for $R(\zeta_c;q)$
(or in our case $R2(\zeta_c;q)$) and the equivalent identities
among the $N(r,c,n)$ (or $N2(r,c,n)$). 

For example, 
$N(r,5,5n+4)=\frac{p(5n+4)}{5}$ follows from showing that
\begin{align}\label{EqRankThing1}
\sum_{n\ge0}\left(N(r,5,5n+4)-N(0,5,5n+4)\right)q^n&=0,
\end{align}
for $1\le r\le 4$. By using the minimal polynomial for $\zeta_5$
and the linear independence over $\mathbb{Q}$ of
$\zeta_5$, $\zeta_5^2$, $\zeta_5^3$, $\zeta_5^4$ one finds 
\eqref{EqRankThing1} is equivalent to determining that the 
$q^{5n+4}$ terms of $R(\zeta_5;q)$ are all zero.
For the rank function $R(z;q)$, identities equivalent to the
$5$-dissection of $R(\zeta_5;q)$ and the $7$-dissection of
$R(\zeta_7;q)$ were established by Atkin and Swinnerton-Dyer 
\cite{AtkinSwinnertonDyer1} to prove Dyson's conjectures on
$N(r,5,5n+4)$ and $N(r,7,7n+5)$. Identities equivalent to the $3$-dissection
of $R2(\zeta_3;q)$ and the $5$-dissection of $R2(\zeta_5;q)$ were given by
Lovejoy and Osburn \cite{LovejoyOsburn1} and identities equivalent to the
$3$-dissection of $R2(\zeta_6;q)$ and the $5$-dissection of $R2(\zeta_{10};q)$
were given by Mao \cite{Mao1}. 

In some cases, one can deduce inequalities from such dissection formulas. For example,
one of the formulas from \cite{LovejoyOsburn1} is
\begin{align*}
\sum_{n\ge0} \left(N2(0,3,3n+1)-N2(1,3,3n+1)\right) q^{3n}
&=
\frac{\aqprod{-q^3,q^6}{q^6}{\infty}}{\aqprod{q^2,q^4}{q^6}{\infty}}.
\end{align*}
One finds, with the assistance of the $q$-binomial theorem 
\cite[Theorem 2.1]{Andrews1}, that the above product has non-negative
coefficients when viewed as a series in $q$. As such it must be that
$N2(0,3,3n+1)\ge N2(1,3,3n+1)$ for $n\ge0$. This is one of the many inequalities 
Mao established for $N2(r,m,n)$ in \cite{Mao1}. Among these inequalities are
$N2(0,6,3n+j)+N2(1,6,3n+j)> N2(2,6,3n+j)+N2(2,6,3n+j)$ for $j=0,1$;
$N2(0,10,5n+j)+N2(1,10,5n+j)> N2(4,10,5n+j)+N2(5,10,5n+j)$ for $j=1,2,3$;
and
$N2(1,10,5n+j)+N2(2,10,5n+j)\ge N2(3,10,5n+j)+N2(4,10,5n+j)$ for $j=1,3,4$.
Mao conjectured additional inequalities, which we rephrase as
\begin{align}\label{EqMaoConjecture1}
N2(0,6,n)+N2(1,6,n)&> N2(2,6,n)+N2(3,6,n) &\mbox{for }n\ge0
,\\\label{EqMaoConjecture2}
N2(0,10,n)+N2(1,10,n)&> N2(4,10,n)+N2(5,10,n) &\mbox{for }n\ge0
,\\\label{EqMaoConjecture3}
N2(1,10,n)+N2(2,10,n)&> N2(3,10,n)+N2(4,10,n) &\mbox{for }n\ge3.
\end{align}
The restricted cases of \eqref{EqMaoConjecture1} when $n=9m+5$ and $n=9m+8$
were proved by Barman and Pal Singh Sachdevain in \cite{BarmanSachdeva1}, 
and \eqref{EqMaoConjecture2} was 
fully resolved in \cite{AlwaiseIannuzziSwisher1}
by Alwaise, Iannuzzi, and Swisher.
Shortly we will prove inequalities \eqref{EqMaoConjecture1} and  \eqref{EqMaoConjecture3}
(along with several others) by finding they hold asymptotically and then
verifying the inequality for a suitable number of initial values of $n$.

To begin, we note the following identities hold, all of which follow from the
standard properties of roots of unity and the fact that $N2(r,m,n)=N2(m-r,m,n)$.
In each case $\zeta_c^a$ is assumed to be a primitive $c$-th root of unity,
that is to say $a$ and $c$ are relatively prime. We have that
\begin{align*}
R2(\zeta_3^a,q)
&=
	\sum_{n\ge0}	
	\left(N2(0,3,n)-N2(1,3,n)\right)q^n,
\\
R2(\zeta_4^a,q)
&=
	\sum_{n\ge0}
	\left(N2(0,4,n)-N2(2,4,n)\right)q^n,
\\
R2(\zeta_5^a,q)
&=
	\sum_{n\ge0}
	\left(N2(0,5,n)-N2(1,5,n)\right)q^n
	+\left(\zeta_5^{2a}+\zeta_5^{3a}\right)\left(N2(0,5,n)-N2(1,5,n)\right)q^n,
\\
R2(\zeta_6^a,q)
&=
	\sum_{n\ge0}
	\left(N2(0,6,n)+N2(1,6,n)-N2(2,6,n)-N2(3,6,n)\right)q^n,
\\
R2(\zeta_7^a,q)
&=
	\sum_{n\ge0}
	\left(N2(0,7,n)-N2(1,7,n)\right)q^n
	+\left(\zeta_7^{2a}+\zeta_7^{5a}\right)\left(N2(2,7,n)-N2(1,7,n)\right)q^n
	\\&\qquad\qquad
	+\left(\zeta_7^{3a}+\zeta_7^{4a}\right)\left(N2(3,7,n)-N2(1,7,n)\right)q^n,
\\
R2(\zeta_8^a,q)
&=
	\sum_{n\ge0}
	\left(N2(0,8,n)-N2(4,8,n)\right)q^n
	+\left(\zeta_8^{a}-\zeta_8^{3a}\right)\left(N2(1,8,n)-N2(3,8,n)\right)q^n,
\\
R2(\zeta_9^a,q)
&=
	\sum_{n\ge0}
	\left(N2(0,9,n)-N2(3,9,n)\right)q^n
	+\left(\zeta_9^{a}-\zeta_9^{2a}-\zeta_9^{5a}\right)\left(N2(1,9,n)-N2(2,9,n)\right)q^n
	\\&\qquad\qquad
	+\zeta_9^{4a}\left(N2(4,9,n)-N2(2,9,n)\right)q^n,
\\
R2(\zeta_{10}^a;q)
&=
	\sum_{n\ge0}\left(N2(0,10,n)+N2(1,10,n)-N2(4,10,n)-N2(5,10,n)\right)q^n
	\\&\qquad\qquad
	+(\zeta_{10}^{2a}-\zeta_{10}^{3a})
	\left(N2(1,10,n)+N2(2,10,n)-N2(3,10,n)-N2(4,10,n)\right)q^n
.
\end{align*}

Using Theorem \ref{TheoremAsymptoticForM2Rank}, we see that the 
asymptotic value of $A\left(\frac{a}{c};n\right)$ is obtained by taking
the term with largest $r$ in $\cosh(\pi r\sqrt{8n-1})$
that has a non-zero coefficient.
Some care must be taken in determining which values of $k$ give this leading
term. In particular, a single
value of $k$ may appear in multiple sums and different values of $k$
may give the same hyperbolic cosine. Furthermore, if the hyperbolic cosine
with largest $r$ appears multiple times, there may be cancellation.
In determining the values of $k$ that contribute to the leading term,
it is useful to note that
$r_{j,a,c,k}(m)\le \frac{1}{4}$ for $j=1,2,4,5$,
$r_{3,a,c,k}(m)\le \frac{9}{4}$, and
$r_{j,a,c,k}(m)\le \frac{1}{16}$ for $j=6,7$.
We find that the lead term asymptotics for various
$A\left(\frac{a}{c};n\right)$,
the coefficients of $R2(e^{\frac{2\pi i a}{c}},q)$, are as follows:
\begin{align*}
A\left(\frac{1}{3};n\right)
&\sim
   	\frac{\varepsilon_3(n) \cosh\left( \frac{\pi\sqrt{8n-1}}{12} \right)}{3\sqrt{8n-1}} 
,&
A\left(\frac15;n\right)
&\sim 
	\frac{\varepsilon_5(n) \sin\left(\frac{\pi}{5}\right) \cosh\left(\frac{\pi\sqrt{8n-1}}{20}\right)}
	{5\sqrt{8n-1}}
,\\
A\left(\frac{1}{6};n\right)
&\sim
	\frac{2\sqrt{2}\cosh\left(\frac{\pi}{12}\sqrt{8n-1}\right)}{\sqrt{8n-1}}
,&
A\left(\frac{1}{7};n\right)
&\sim
	\frac{4\sqrt{2}\sin\left(\frac{\pi}{7}\right)\cosh\left( \frac{3\pi\sqrt{8n-1}}{28} \right) }
	{\sqrt{8n-1}}
,\\
A\left(\frac{1}{8};n\right)
&\sim
	\frac{4\sqrt{2}\sin\left(\frac{\pi}{8}\right)\cosh\left( \frac{\pi\sqrt{8n-1}}{8} \right) }
	{\sqrt{8n-1}}
,&
A\left(\frac{1}{9};n\right)
&\sim
	\frac{4\sqrt{2}\sin\left(\frac{\pi}{9}\right)\cosh\left( \frac{5\pi\sqrt{8n-1}}{36} \right) }
	{\sqrt{8n-1}}
,\\
A\left(\frac{1}{10};n\right)
&\sim
	\frac{\sqrt{2}(\sqrt{5}-1) \cosh\left( \frac{3\pi\sqrt{8n-1}}{20} \right)}
	{\sqrt{8n-1}} 
,
\end{align*}
where
\begin{align*}
\varepsilon_3(n) &:=
\begin{cases}
    -4\sqrt{6} 	&	\mbox{if } n \equiv 0 \pmod{3},
	\\
    2\sqrt{6} 	&    \mbox{if } n\equiv1,2\pmod{3},
\end{cases}
&
\varepsilon_5(n) &:=
\begin{cases}
	6\sqrt{2}\left(5-\sqrt{5}\right)
	&
	\mbox{ if } n\equiv 0 \pmod{5}
	,\\
	2\sqrt{2}\left(5+\sqrt{5}\right)
	&
	\mbox{ if } n\equiv 1 \pmod{5}
	,\\
	20\sqrt{2}
	&
	\mbox{ if } n\equiv 2 \pmod{5}
	,\\
	2\sqrt{2}\left(5-\sqrt{5}\right)
	&
	\mbox{ if } n\equiv 3 \pmod{5}
	,\\
	6\sqrt{2}\left(5+\sqrt{5}\right)
	&
	\mbox{ if } n\equiv 4 \pmod{5}
	.
\end{cases}
\end{align*}

Although we can determine formulas for other values of $\frac{a}{c}$, we have
restricted ourselves to displaying those values with simple formulas for $c\leq10$.

\begin{proof}[Proof of \eqref{EqMaoConjecture1}]
We are to show that $A(\frac{1}{6};n)>0$ for $n\ge0$. From the asymptotic
for $A\left(\frac{1}{6};q\right)$, the inequality holds for $n$ sufficiently 
large. We find that the next exponential term arising in the exact formula for
$A(\frac{1}{6};q)$ is 
$\frac{\cosh\left(\frac{\pi}{24}\sqrt{8n-1}\right)}{\sqrt{8n-1}}$,
from which we deduce that 
\begin{align*}
A\left(\frac{1}{6};q\right)
&= \frac{2\sqrt{2}\cosh\left(\frac{\pi\sqrt{8n-1}}{12}\right)}{\sqrt{8n-1}}+E,
\end{align*}
where $E$ is bounded as
\begin{align*}
|E|&\le
\frac{112 \left(\sqrt{n}+1\right)^\frac{3}{2} \cosh\left(\frac{\pi}{24}\sqrt{8n-1}\right)}	
	{3\sqrt{8n-1}}
+10^{16}\sqrt{n}.
\end{align*}
This shows the inequality holds for $n\ge 3823$, and with the assistance of MAPLE
we find that the inequality also holds for the initial values of $n$.
\end{proof}

\begin{proof}[Proof of \eqref{EqMaoConjecture3}]
Since
\begin{align*}
\frac{R2(\zeta_{10};q)-R2(\zeta_{10}^3;q)}
{\left(4\cos\left(\frac{2\pi}{5}\right)+1\right)}
&=
	\sum_{n=0}^\infty\left(N2(1,10,n)+N2(2,10,n)-N2(3,10,n)-N2(4,10,n)\right)q^n,
\end{align*}
we are to prove that $A(\frac{1}{10};n)>A(\frac{3}{10};n)$ for $n\ge3$. We find the
the second largest exponential term in the expansion for $A\left(\frac{1}{10};n\right)$
is $\frac{\cosh\left(\frac{\pi}{40}\sqrt{8n-1}\right)}{\sqrt{8n-1}}$.
For $A\left(\frac{3}{10};n\right)$ we find that the largest exponential term
is at most $\frac{\cosh\left(\frac{\pi}{20}\sqrt{8n-1}\right)}{\sqrt{8n-1}}$
(in fact, it is exponentially smaller for $n\equiv 2\mod{3}$). Therefore,
\begin{align*}
R2(\zeta_{10};q)-R2(\zeta_{10}^3;q)
&=
\frac{\sqrt{2}(\sqrt{5}-1)\cosh\left(\frac{3\pi\sqrt{8n-1}}{20}\right)}{\sqrt{8n-1}}
+E,
\end{align*}
where $E$ is bounded as
\begin{align*}
|E|&\le
\frac{112 \left(\sqrt{n}+1\right)^\frac{3}{2}\left( 
	\cosh\left(\frac{\pi}{20}\sqrt{8n-1}\right)+\cosh\left(\frac{\pi}{40}\sqrt{8n-1}\right)   
\right)}{3\sqrt{8n-1}}
+2.4\cdot10^{16}\sqrt{n}.
\end{align*}
This shows the inequality holds for $n\ge 1190$, and with the assistance of MAPLE
we find that the inequality also holds for the initial values of $n$.
\end{proof}

We now give a few new inequalities. As the proof method is the same as above,
we summarize the results in Table \ref{TableInequalities}. The columns are
arranged to state the inequality for a combination of
$N2(r,c,n)$, the value of the lower bound on $n$ that is required for the inequality,
the equivalent inequality between certain $A\left(\frac{a}{c};n\right)$,
the asymptotic value of the previous column, and the number of
initial terms we must check with a computer.
For clarity, we include the two inequalities proved above.

\begin{longtable}[h]{ccc@{$\!$}c@{$\!\!$}c}
\caption{Some Inequalities}
\label{TableInequalities}
\\[-2ex]
	\toprule
	\mbox{Inequality} & $n$ 	& \mbox{Equivalent Inequality} & \mbox{Asymptotic} 
	& $\begin{array}{c}\mbox{Initial}\\ \mbox{Terms}\end{array}$ 
	\\\midrule
	$N2(1,3,3n)>N2(0,3,3n) $
	&			
	$1$
	&
	$-A\left(\frac{1}{3};3n\right)>0$
	&
	$\frac{4\sqrt{6}\cosh\left(\frac{\pi\sqrt{24n-1}}{12}\right)}{3\sqrt{24n-1}}	$
	&
	1286	
	\\ \midrule
	$N2(2,4,8n)>N2(0,4,8n)$
	&			
	$6$
	&
	$-A\left(\frac{1}{4};8n\right)>0$
	&	
	$\frac{4\sin\left(\frac{\pi}{16}\right) \cosh\left(\frac{\pi\sqrt{64n-1}}{16}\right)}{\sqrt{64n-1}}	$
	&
 	934		
	\\ \midrule
	$N2(0,4,8n+1)>N2(2,4,8n+1)$
	&			
	$0$
	&
	$A\left(\frac{1}{4};8n+1\right)>0$
	&	
	$\frac{4\sin\left(\frac{5\pi}{16}\right) \cosh\left(\frac{\pi\sqrt{8(8n+1)-1}}{16}\right)}{\sqrt{8(n+1)-1}}	$
	&
 	876		
	\\ \midrule
	$N2(0,4,8n+2)>N2(2,4,8n+2)$
	&			
	$0$
	&
	$A\left(\frac{1}{4};8n+2\right)>0$
	&	
	$\frac{4\sin\left(\frac{7\pi}{16}\right) \cosh\left(\frac{\pi\sqrt{8(8n+2)-1}}{16}\right)}{\sqrt{8(n+2)-1}}	$
	&
 	870		
	\\ \midrule
	$N2(2,4,8n+3)>N2(0,4,8n+3)$
	&			
	$3$
	&
	$-A\left(\frac{1}{4};8n+3\right)>0$
	&	
	$\frac{4\sin\left(\frac{3\pi}{16}\right) \cosh\left(\frac{\pi\sqrt{8(8n+3)-1}}{16}\right)}{\sqrt{8(n+3)-1}}	$
	&
 	892		
	\\ \midrule
	$N2(0,4,8n+4)>N2(2,4,8n+4)$
	&			
	$8$
	&
	$A\left(\frac{1}{4};8n+4\right)>0$
	&	
	$\frac{4\sin\left(\frac{\pi}{16}\right) \cosh\left(\frac{\pi\sqrt{8(8n+4)-1}}{16}\right)}{\sqrt{8(n+4)-1}}	$
	&
 	934		
	\\ \midrule
	$N2(2,4,8n+5)>N2(0,4,8n+5)$
	&			
	$1$
	&
	$-A\left(\frac{1}{4};8n+5\right)>0$
	&
	$\frac{4\sin\left(\frac{5\pi}{16}\right) \cosh\left(\frac{\pi\sqrt{8(8n+5)-1}}{16}\right)}{\sqrt{8(n+5)-1}}	$
	&
 	876		
	\\ \midrule
	$N2(2,4,8n+6)>N2(0,4,8n+6)$
	&			
	$0$
	&
	$-A\left(\frac{1}{4};8n+6\right)>0$
	&	
	$\frac{4\sin\left(\frac{7\pi}{16}\right) \cosh\left(\frac{\pi\sqrt{8(8n+6)-1}}{16}\right)}{\sqrt{8(n+6)-1}}	$
	&
 	869		
	\\ \midrule
	$N2(0,4,8n+7)>N2(2,4,8n+7)$
	&			
	$0$
	&
	$A\left(\frac{1}{4};8n+7\right)>0$
	&	
	$\frac{4\sin\left(\frac{3\pi}{16}\right) \cosh\left(\frac{\pi\sqrt{8(8n+7)-1}}{16}\right)}{\sqrt{8(n+7)-1}}	$
	&
 	892		
	\\\midrule
	$\begin{array}{l}N2(0,6,n)+N2(1,6,n)\\>N2(2,6,n)+N2(3,6,n)\end{array}$
	&
	$0$
	&
	$A(\frac{1}{6};n)>0$
	&
	$\frac{2\sqrt{2}\cosh\left(\frac{\pi\sqrt{8n-1}}{12}\right)}{\sqrt{8n-1}}$
	&3823
	\\ \midrule
	$\begin{array}{l}N2(1,10,n)+N2(2,10,n)\\>N2(3,10,n)+N2(4,10,n)\end{array}$
	&
	$3	$
	&
	$A(\frac{1}{10};n)-A(\frac{3}{10};n)>0$
	&
	$\frac{\sqrt{2}(\sqrt{5}-1)\cosh\left(\frac{3\pi\sqrt{8n-1}}{20}\right)}{\sqrt{8n-1}}$
	&1190
	\\ \midrule
	$\begin{array}{l}N2(0,10,n)+N2(3,10,n)\\>N2(2,10,n)+N2(5,10,n)\end{array}$
	&
	$7	$
	&
	$\begin{array}{l}\left(1-\cos\left(\frac{2\pi}{5}\right)\right)A(\frac{1}{10};n)\\-A(\frac{3}{10};n)>0\end{array}$
	&
	$\begin{array}{l}\scriptstyle\left(1-\cos\left(\frac{2\pi}{5}\right)\right)\sqrt{2}(\sqrt{5}-1)\\ 
		\times\frac{\cosh\left(\frac{3\pi\sqrt{8n-1}}{20}\right)}{\sqrt{8n-1}}
	\end{array}$
	&1233
\\
\bottomrule
\end{longtable}

With the results in Table \ref{TableInequalities}, it is clear that
we should expect many more inequalities. Below we list additional 
inequalities, but omit much of the information of Table \ref{TableInequalities}.
However, we do note that among these inequalities,
the strictest requirement on $n$ is $n\ge 36$
and the largest number of initial terms we must verify is $2838$.
By \cite{LovejoyOsburn1} we know that $N2(1,5,5n+1)=N2(2,5,5n+1)$
and $N2(0,5,5n+3)=N2(2,5,5n+3)$, and so we omit the inequalities duplicated by
this fact. With the inequalities of Mao \cite{Mao1} and
Table \ref{TableInequalities}, these account for all of the inequalities
that follow immediately from our asymptotics for $3\leq c\leq10$,
\begin{gather*}
N2(1,5,5n)<N2(2,5,5n)<N2(0,5,5n),\\
N2(1,5,5n+1)<N2(0,5,5n+1),\\
N2(2,5,5n+2)<N2(1,5,5n+2)<N2(0,5,5n+2),\\
N2(0,5,5n+3)<N2(1,5,5n+3),\\
N2(2,5,5n+4)<N2(0,5,5n+4)<N2(1,5,5n+4),\\
N2(3,7,n)<N2(2,7,n)<N2(1,7,n)<N2(0,7,n),\\
N2(4,8,n)<N2(0,8,n),\\
N2(3,8,n)<N2(1,8,n),\\
N2(3,9,n)<N2(0,9,n),\\
N2(4,9,n)<N2(2,9,n)<N2(1,9,n).
\end{gather*}
We leave it to the interested reader to derive additional 
inequalities.

\section{Remarks}

We have given asymptotics for the moments of the generating function of
the $M_2$-rank of partitions without repeated odd parts by following the methods
established in \cite{BringmannMahlburgRhoades1}. We note these
methods were used by Mao in \cite{Mao3} to determine asymptotics
for the moments of the generating functions of both the rank of overpartitions 
and the $M_2$-rank of overpartitions. Furthermore, these techniques were used in
\cite{Waldherr1} by Waldherr to determine asymptotics for the moments of 
the generating functions of Garvan's $k$-rank of partitions. As such, it is 
clear that the techniques of \cite{BringmannMahlburgRhoades1} should be 
considered widely applicable.

Motivated by the asymptotics from \cite{Bringmann1} for the coefficients of
$R(e^{\frac{2\pi ia}{c}};q)$, along with the conjectured inequalities from \cite{Mao1},
we gave asymptotics for the coefficients of 
$R2(e^{\frac{2\pi ia}{c}};q)$. However, given the representation used for
for $R2(\zeta;q)$ in terms of $\mu(u,v;\tau)$, we do not have asymptotics
for the coefficients of $R2(-1;q)$. These asymptotics can be obtained by
similar techniques, but one must actually carry out the proofs and 
calculations. In particular, $R2(-1;q)$ is $\mu(-q)$, where $\mu(q)$ a second 
order mock theta function \cite{McIntosh1}. Also, it is worth noting
that Mao \cite{Mao2} has given asymptotics for $N2(m,n)$.

We have proved a number of inequalities among the $N2(r,m,n)$, and have done
so asymptotically. There is a question of which of these inequalities can
also be proved by $q$-series techniques. It is desirable to have both proofs.

\bibliographystyle{abbrv}
\bibliography{AsymptoticsForM2RankRef}

\end{document}